\documentclass[11pt]{article}

\usepackage{array}
\usepackage{amsfonts}
\usepackage{amscd}
\usepackage{amssymb}
\usepackage{amsthm}
\usepackage{amsmath}
\usepackage{stmaryrd}
\usepackage{graphicx}
\usepackage{color}
\usepackage{verbatim}
\usepackage{mathrsfs}
\usepackage{pstricks,tikz}
\usetikzlibrary{calc}
\usepackage{caption}
\usepackage[neveradjust]{paralist}
\usepackage[a4paper, total={15cm, 25cm},centering]{geometry}
\usepackage{float}
\usepackage[absolute]{textpos}
\usepackage{subcaption}
\usepackage{ytableau}
\usepackage{hyperref}

\let\OLDthebibliography\thebibliography
\renewcommand\thebibliography[1]{
  \OLDthebibliography{#1}
  \setlength{\parskip}{0pt}
  \setlength{\itemsep}{0pt plus 0.3ex}
}

\definecolor{green1}{RGB}{153,216,201}
\definecolor{green2}{RGB}{44,190,95}

\definecolor{blue1}{RGB}{158,202,225}
\definecolor{blue2}{RGB}{49,130,189}

\definecolor{RedOrange}{cmyk}{0,0.77,0.87,0} 
\definecolor{Mahogany}{cmyk}{0,0.85,0.87,0.35} 
\definecolor{Maroon}{cmyk}{0,0.87,0.68,0.32} 
\definecolor{BrickRed}{cmyk}{0,0.89,0.94,0.28} 
\definecolor{Red}{cmyk}{0,1.,1.,0} 
\definecolor{OrangeRed}{cmyk}{0,1.,0.50,0} 

\definecolor{purple}{rgb}{0.8,0.12,0.8}
\definecolor{orange}{rgb}{1.0,0.7,0.0}
\definecolor{pink}{rgb}{1,0.5,0.8}
\definecolor{blackg}{rgb}{0.1,0.25,0.1}
\definecolor{ForestGreen}{cmyk}{0.91,0,0.88,0.42}
\definecolor{Turquoise}{cmyk}{0.85,0,0.20,0}

 \theoremstyle{plain}

\newtheorem{thm}{Theorem}[section]
\newtheorem{lemma}[thm]{Lemma}
\newtheorem{prop}[thm]{Proposition}
\newtheorem{cor}[thm]{Corollary}
\newtheorem{conjecture}[thm]{Conjecture}

\theoremstyle{definition}
\newtheorem{defn}[thm]{Definition}

\newtheorem{remark}[thm]{Remark}
\newtheorem{example}[thm]{Example}

\newtheorem{convention}[thm]{Convention}
\numberwithin{equation}{section}

\setlength{\evensidemargin}{1in}
\addtolength{\evensidemargin}{-1in}
\setlength{\oddsidemargin}{1in}
\addtolength{\oddsidemargin}{-1in} \setlength{\topmargin}{1in}
\addtolength{\topmargin}{-1.75in}

\setlength{\textwidth}{16cm} \setlength{\textheight}{24.5cm}

\newcommand{\Wfin}{W_0}
\newcommand{\Waff}{W}
\newcommand{\Wext}{\widetilde{W}}
\newcommand{\Haff}{\mathcal{H}}
\newcommand{\Hext}{\widetilde{\mathcal{H}}}
\newcommand{\Ifin}{I}
\newcommand{\Iaff}{\{0\}\cup I}
\newcommand{\WJ}{\mathbb{W}^J}
\newcommand{\TJ}{\mathbb{T}_J}
\newcommand{\zJ}{\zeta_J}
\newcommand{\WJaff}{W_J^{\mathrm{aff}}}
\newcommand{\WKaff}{W_K^{\mathrm{aff}}}
\newcommand{\HJaff}{\mathcal{H}_J^{\mathrm{aff}}}
\newcommand{\HKaff}{\mathcal{H}_K^{\mathrm{aff}}}
\newcommand{\Jaff}{J^{\mathrm{aff}}}
\newcommand{\st}{\tau}

\newcommand{\Mat}{\mathrm{Mat}}

\def\sA{\mathsf{A}}
\def\sB{\mathsf{B}}
\def\sC{\mathsf{C}}

\def\sF{\mathsf{F}}
\def\sG{\mathsf{G}}

\def\sBC{\mathsf{BC}}

\def\cA{\mathcal{A}}

\def\cH{\mathcal{H}}

\def\cK{\mathcal{K}}
\def\cL{\mathcal{L}}

\def \cP{\mathcal{P}}
\def\cQ{\mathcal{Q}}

\def\nZ{\mathbb{Z}}
\def\nN{\mathbb{N}}

\def\ssA{\mathsf{A}}

\def\ssC{\mathsf{C}}
\def\ssBC{\mathsf{BC}}

\newcommand{\la}{\lambda}

\newcommand{\Ga}{\Gamma}

\def\CC{\mathbb{C}}

\def\NN{\mathbb{N}}

\def\RR{\mathbb{R}}

\def\ZZ{\mathbb{Z}}

\DeclareMathOperator\height{\mathrm{ht}}

\def\la{\lambda}

\def\<{\langle}
\def\>{\rangle}

\def\la{\lambda}

\def\sR{\mathsf{R}}

\def\bm{\mathbf{m}}

\newcommand{\fc}{\mathfrak{c}}

\newcommand{\sw}{\mathsf{w}}
\newcommand{\sy}{\mathsf{y}}

\newcommand{\sv}{\mathsf{v}}

\newcommand{\wt}{\mathrm{wt}}

\newcommand{\ba}{\mathbf{a}}

\newcommand{\sq}{\mathsf{q}}

\makeatletter
\renewcommand{\@makefnmark}{\mbox{\textsuperscript{}}}
\makeatother

\newcommand{\bla}[1]{\textcolor{gray}{ #1}}

\title{On $J$-folded alcove paths and combinatorial representations of affine Hecke algebras}
\author{J\'er\'emie Guilhot, Eloise Little, James Parkinson}
\date{\today}

\begin{document}

\maketitle

\vspace{-0.5cm}

\begin{abstract}
We introduce the combinatorial model of $J$-folded alcove paths in an affine Weyl group and construct representations of affine Hecke algebras using this model. We study boundedness of these representations, and we state conjectures linking our combinatorial formulae to Kazhdan-Lusztig theory and Opdam's Plancherel Theorem.
\end{abstract}

\tableofcontents

\section{Introduction}

Path models have a long history in combinatorial representation theory, starting with the celebrated Littelmann path model for computing multiplicities in the representation theory of symmetrisable Kac-Moody algebras~\cite{Lit:94,Lit:95}. In \cite{GL:05} Gaussent and Littelmann adapted the path model to study Mirkovic-Vilonen cycles in the affine Grassmannian. The work of Ram~\cite{Ram:06} and  C. Schwer~\cite{Sch:06} on positively folded alcove paths connects path models to the representation theory of affine Hecke algebras, and the work of Parkinson, Ram and C. Schwer~\cite{PRS:09} uses labelled positively folded alcove paths to index points in the affine Grassmannian. In \cite{GP:19,GP:19b} Guilhot and Parkinson used alcove paths as a primary tool to prove Lusztig's conjectures P1--P15 for $\tilde{\sG}_2$ and $\tilde{\sC}_2$ with arbitrary parameters. In \cite{MST:19} Mili\'cevi\'c, P. Schwer and Thomas~\cite{MST:19} apply alcove paths to the study of affine Diligne-Lusztig varieties, and in~\cite{MNST:22} Mili\'cevi\'c, Naqvi, P. Schwer and Thomas develop a path model for certain double coset intersections, and relate these to chimney retractions in affine buildings.

In this paper we introduce the combinatorial model of \textit{$J$-folded alcove paths} in an affine Weyl group, and use this model to construct and analyse representations of affine Hecke algebras with arbitrary parameters. This gives rise to large class of representations of affine Hecke algebras admitting a combinatorial description in terms of positively $J$-folded alcove paths. On the one hand this generalises known formulae for the principal series representation and Macdonald spherical functions in terms of (classical) positively folded alcove paths (see \cite{Ram:06,Sch:06}), and on the other hand these combinatorial representations have recently found applications in Kazhdan-Lusztig theory (see \cite{CGLP:24,GP:19,GP:19b}). Indeed, a primary motivation of the the present paper is to extend the rank~$2$ analysis from \cite{GP:19,GP:19b} to arbitrary rank, and to provide conjectural connections between the combinatorics of $J$-folded alcove paths and Kazhdan-Lusztig theory.

Let $\Wext$ be an irreducible extended affine Weyl group with spherical Weyl group~$\Wfin$ and spherical root system~$\Phi$ in an $n$-dimensional Euclidean vector space~$V$ with simple roots $\alpha_1,\ldots,\alpha_n$. For the purpose of this introduction we will assume that $\Phi$ is reduced (that is, if $\alpha,k\alpha\in\Phi$ then $k=\pm 1$). This simplifying assumption excludes the $\sBC_n$ case, however the main results of the paper also hold for this important case.  Let  $\Hext$ be the associated extended affine Hecke algebra defined over the ring 
$\sR=\ZZ[\sq_0^{\pm 1},\ldots,\sq_n^{\pm 1}]$ where $\sq_0,\ldots,\sq_n$ are commuting invertible indeterminates subject to the constraint $\sq_i=\sq_j$ whenever $s_i$ and $s_j$ are conjugate in~$\Wext$ (where $s_0,s_1,\ldots,s_n$ are the Coxeter generators of the non-extended affine Weyl group). 

The combinatorial representations $\pi_{J,\sv}$ of~$\Hext$ constructed in this paper depend on two pieces of data: a subset $J\subseteq\Ifin$ (with $\Ifin$ an indexing set for the generators of $W_0$) and a \textit{$J$-parameter system}~$\sv$. The latter is a  family $\sv=(\sv_{\alpha})_{\alpha\in\Phi_J}$ such that if $j\in J$ then $\sv_{\alpha_j}\in\{\sq_j,-\sq_j^{-1}\}$, and $\sv_{\alpha}=\sv_{\beta}$ whenever $\alpha,\beta\in\Phi_J$ with $\beta\in W_J\alpha$ (here $W_J$ is the $J$-parabolic subgroup of~$\Wfin$ and $\Phi_J$ the associated sub-root system of~$\Phi$).  As mentioned above, our construction relies on the notion of positively $J$-folded alcove paths: these are folded alcove paths (in the classical sense of~\cite{Ram:06}) satisfying a certain positivity condition that are confined to remain in the \textit{fundamental $J$-alcove}
$$
\cA_J=\{x\in V\mid 0\leq \langle x,\alpha\rangle\leq 1\text{ for all }\alpha\in\Phi_J^+\}
$$
(see Figure~\ref{fig:G2example0} where $\cA_J$ in type $\tilde{\sG}_2$ with $J=\{1\}$ is shaded green, and a positively folded $J$-alcove path is illustrated). This constraint forces the path to `bounce' on the walls of the fundamental $J$-alcove, as illustrated in the fourth, fourteenth, and twenty first steps of the path in Figure~\ref{fig:G2example0}. These bounces play a different role in the theory to the folds (illustrated in steps eight and twenty four in the figure), and are a new feature of our model.

\begin{figure}[H]
\centering
\begin{tikzpicture}[scale=1.2]
\clip ({-4*0.866+0.433},{-2}) rectangle + ({6*0.866},{5.5});
\path [fill=green2] ({-4*0.866},-3)--({-2*0.866},-3)--({3*0.866},4.5)--({1*0.866},4.5)--({-4*0.866},-3);
\node at (0.15,0.6) {$e$};
\draw[line width=2pt,-latex](0,0)--({-3*0.866},1.5);
\draw[line width=2pt,-latex](0,0)--({2*0.866},0);
\node at ({-3*0.866+0.5},1.5) {$\alpha_1^{\vee}$};
\node at ({2*0.866},{0.5}) {$\alpha_2^{\vee}$};
\draw(-4.33,4.5)--(4.33,4.5);
\draw(-4.33,3)--(4.33,3);
\draw(-4.33,1.5)--(4.33,1.5);
\draw(-4.33,0)--(4.33,0);
\draw(-4.33,-1.5)--(4.33,-1.5);
\draw(-4.33,-3)--(4.33,-3);
\draw(-4.33,-3)--(-4.33,4.5);
\draw(-3.464,-3)--(-3.464,4.5);
\draw(-2.598,-3)--(-2.598,4.5);
\draw(-1.732,-3)--(-1.732,4.5);
\draw(-.866,-3)--(-.866,4.5);
\draw(0,-3)--(0,4.5);
\draw(.866,-3)--(.866,4.5);
\draw(1.732,-3)--(1.732,4.5);
\draw(2.598,-3)--(2.598,4.5);
\draw(3.464,-3)--(3.464,4.5);
\draw(4.33,-3)--(4.33,4.5);
\draw(-4.33,3.5)--({-3*0.866},4.5);
\draw(-4.33,2.5)--({-1*0.866},4.5);
\draw(-4.33,1.5)--({1*0.866},4.5);
\draw(-4.33,.5)--({3*0.866},4.5);
\draw(-4.33,-.5)--(4.33,4.5);
\draw(-4.33,-1.5)--(4.33,3.5);
\draw(-4.33,-2.5)--(4.33,2.5);
\draw(-3.464,-3)--(4.33,1.5);
\draw(-1.732,-3)--(4.33,.5);
\draw(0,-3)--(4.33,-.5);
\draw(1.732,-3)--(4.33,-1.5);
\draw(3.464,-3)--(4.33,-2.5);
\draw(4.33,3.5)--({3*0.866},4.5);
\draw(4.33,2.5)--({1*0.866},4.5);
\draw(4.33,1.5)--({-1*0.866},4.5);
\draw(4.33,.5)--({-3*0.866},4.5);
\draw(4.33,-.5)--(-4.33,4.5);
\draw(4.33,-1.5)--(-4.33,3.5);
\draw(4.33,-2.5)--(-4.33,2.5);
\draw(3.464,-3)--(-4.33,1.5);
\draw(1.732,-3)--(-4.33,.5);
\draw(0,-3)--(-4.33,-.5);
\draw(-1.732,-3)--(-4.33,-1.5);
\draw(-3.464,-3)--(-4.33,-2.5);
\draw(-4.33,-1.5)--(-3.464,-3);
\draw(-4.33,1.5)--(-1.732,-3);
\draw(-4.33,4.5)--(0,-3);
\draw({-3*0.866},4.5)--(1.732,-3);
\draw({-1*0.866},4.5)--(3.464,-3);
\draw({1*0.866},4.5)--(4.33,-1.5);
\draw({3*0.866},4.5)--(4.33,1.5);
\draw(4.33,-1.5)--(3.464,-3);
\draw(4.33,1.5)--(1.732,-3);
\draw(4.33,4.5)--(0,-3);
\draw({3*0.866},4.5)--(-1.732,-3);
\draw({1*0.866},4.5)--(-3.464,-3);
\draw({-1*0.866},4.5)--(-4.33,-1.5);
\draw({-3*0.866},4.5)--(-4.33,1.5);
\draw[line width=2pt]({-5*0.866},1.5)--({-3*0.866},4.5);
\draw[line width=2pt]({-5*0.866},-1.5)--({-1*0.866},4.5);
\draw[line width=2pt]({-4*0.866},-3)--({1*0.866},4.5);
\draw[line width=2pt]({-2*0.866},-3)--({3*0.866},4.5);
\draw[line width=2pt]({0*0.866},-3)--({5*0.866},4.5);
\draw[line width=2pt]({2*0.866},-3)--({5*0.866},1.5);
\draw[line width=2pt]({4*0.866},-3)--({5*0.866},-1.5);
\draw[line width=1pt,-latex](-0.47,0.45)--(-0.68,0.15);
\draw[line width=1pt,-latex](-0.68,0.15)--(-0.68,-0.2);
\draw[line width=1pt,-latex](-0.68,-0.2)--(-0.47,-0.45);
\draw[line width=1pt,-latex] plot[smooth] coordinates {(-0.47,-0.45) (-0.38,-0.52) (-0.42,-0.6) (-0.55,-0.54)};
\draw[line width=1pt,-latex](-0.55,-0.54)--(-0.75,-0.9);
\draw[line width=1pt,-latex](-0.75,-0.9)--(-1.02,-0.9);
\draw[line width=1pt,-latex](-1.02,-0.9)--(-1.3,-1.1);
\draw[line width=1pt,-latex] plot[smooth] coordinates {(-1.3,-1.1) (-1.36,-1.17) (-1.45,-1.12) (-1.35,-1)};
\draw[line width=1pt,-latex](-1.35,-1)--(-1.02,-0.78);
\draw[line width=1pt,-latex](-1.02,-0.78)--(-1.25,-0.45);
\draw[line width=1pt,-latex](-1.25,-0.45)--(-1.05,-0.15);
\draw[line width=1pt,-latex](-1.05,-0.15)--(-1.05,0.15);
\draw[line width=1pt,-latex](-1.05,0.15)--(-1.3,0.4);
\draw[line width=1pt,-latex] plot[smooth] coordinates {(-1.3,0.4) (-1.4,0.45) (-1.36,0.55) (-1.2,0.45)};
\draw[line width=1pt,-latex](-1.2,0.45)--(-1,0.8);
\draw[line width=1pt,-latex](-1,0.8)--(-0.7,0.8);
\draw[line width=1pt,-latex](-0.7,0.8)--(-0.35,1);
\draw[line width=1pt,-latex](-0.35,1)--(-0.15,1.3);
\draw[line width=1pt,-latex](-0.15,1.3)--(0.22,1.3);
\draw[line width=1pt,-latex](0.22,1.3)--(0.38,1.02);
\draw[line width=1pt,-latex] plot[smooth] coordinates {(0.38,1.02)(0.48,0.96)(0.55,1.05)(0.42,1.125)};
\draw[line width=1pt,-latex](0.44,1.12)--(0.3,1.35);
\draw[line width=1pt,-latex](0.3,1.35)--(0.3,1.65);
\draw[line width=1pt,-latex] plot[smooth] coordinates {(0.3,1.65)(0.05,1.65)(0.05,1.72)(0.28,1.72)};
\draw[line width=1pt,-latex](0.28,1.72)--(0.48,1.91);
\draw[line width=1pt,-latex](0.48,1.91)--(0.75,2.1);
\draw[line width=1pt,-latex](0.75,2.1)--(0.5,2.5);
\draw[line width=1pt,-latex](0.5,2.5)--(0.68,2.79);
\node at ({-4*0.866},{2*1.5}) {\Large{$\bullet$}};
\node at ({-2*0.866},{2*1.5}) {\Large{$\bullet$}};
\node at ({0*0.866},{2*1.5}) {\Large{$\bullet$}};
\node at ({2*0.866},{2*1.5}) {\Large{$\bullet$}};
\node at ({4*0.866},{2*1.5}) {\Large{$\bullet$}};
\node at ({-3*0.866},{1.5}) {\Large{$\bullet$}};
\node at ({-1*0.866},{1.5}) {\Large{$\bullet$}};
\node at ({1*0.866},{1.5}) {\Large{$\bullet$}};
\node at ({3*0.866},{1.5}) {\Large{$\bullet$}};
\node at ({-4*0.866},0) {\Large{$\bullet$}};
\node at ({-2*0.866},0) {\Large{$\bullet$}};
\node at ({0*0.866},0) {\Large{$\bullet$}};
\node at ({2*0.866},0) {\Large{$\bullet$}};
\node at ({4*0.866},0) {\Large{$\bullet$}};
\node at ({-3*0.866},{-1*1.5}) {\Large{$\bullet$}};
\node at ({-1*0.866},{-1*1.5}) {\Large{$\bullet$}};
\node at ({1*0.866},{-1*1.5}) {\Large{$\bullet$}};
\node at ({3*0.866},{-1*1.5}) {\Large{$\bullet$}};
\end{tikzpicture}
\caption{The fundamental $J$-alcove and a positively $J$-folded alcove path, with $J=\{1\}$}\label{fig:G2example0}
\end{figure}
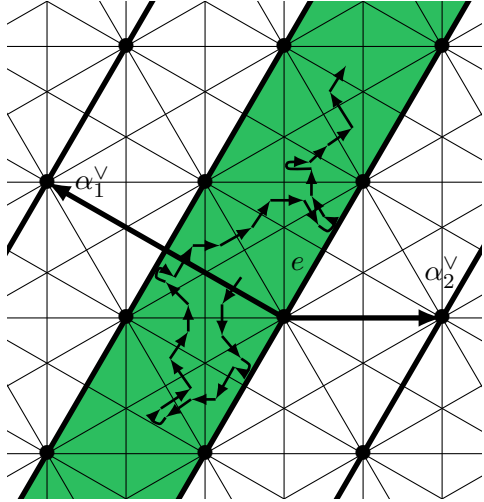

Our representations $\pi_{J,\sv}$ are constructed in Theorem~\ref{thm:module}, and are inspired by the work of Deodhar~\cite{Deo:87,Deo:90,Deo:91,Deo:97} (see in particular \cite[\S2]{Deo:87}) and Lusztig~\cite[Lemma~4.7]{Lus:97}, and in Theorem~\ref{thm:mainpath1} we obtain an explicit formula for the matrices $\pi_{J,\sv}(T_w)$ in terms of the positively $J$-folded alcove paths described above (here $T_w$ denotes the standard basis of $\Hext$).

Parabolic induction plays a central role in the representation theory of Hecke algebras, and in the case of affine Hecke algebras this parabolic induction takes the form of induction of finite dimensional representations of Levi subalgebras. Given a subset $J\subseteq I$ and a $J$-parameter system~$\sv$, one can define a generic 1-dimensional representation of the associated Levi subalgebra. Our second main result is that the set of representations obtained by inducing these 1-dimensional representations is exactly the set of representations~$\pi_{J,\sv}$.

Our combinatorial constructions are connected to Kazhdan-Lusztig theory via the notion of \textit{bounded} representations, inspired by the work of Geck~\cite{Geck:11}. To define this concept, we assume that there are integers $a_0,\ldots,a_n>0$ such that $\sq_i=\sq^{a_i}$. Thus $\sR=\ZZ[\sq,\sq^{-1}]$, and one may talk of degree (in $\sq$) of elements of $\sR$. The matrix representation $\pi_{J,\sv}$ is called \textit{bounded} if the degree of the matrix entries of $\pi_{J,\sv}(T_w)$ are uniformly bounded, for all $w\in \Wext$. The \textit{bound} $\ba_{J,\sv}$ of a bounded representation $\pi_{J,\sv}$ is the maximum of the degrees of the matrix entries of all $\pi_{J,\sv}(T_w)$, $w\in\Wext$. Our third main result is the complete classification of bounded representations $\pi_{J,\sv}$.

There is strong evidence from \cite{CGLP:24,GP:19,GP:19b} that bounded representations have deep connections with Kazhdan-Lusztig theory and Opdam's Plancherel Theorem, and we discuss these connections in Section~\ref{sec:conj}. In particular, we give a general conjectural formula for the bound $\ba_{J,\sv}$ in terms of Macdonald's $c$-function~\cite{Mac:71} and Opdam's Plancherel Theorem~\cite{Opd:04} and make conjectural links between the bound $\ba_{J,\sv}$ and Kazhdan-Lusztig theory.

To summarise, the main results of this paper are as follows.

 \begin{compactenum}
\item[\raisebox{0.35ex}{\tiny$\bullet$}] For each subset $J \subseteq I$  and each $J$-parameter system $\sv$ we construct a finite dimensional representation $\pi_{J,\sv}$ of the extended affine Hecke algebra $\Hext$ together with a combinatorial description in terms of  $J$-folded alcove paths (see Theorem \ref{thm:module} and Theorem~\ref{thm:mainpath1})
\item[\raisebox{0.35ex}{\tiny$\bullet$}] We show that our representations $\pi_{J,\sv}$ are isomorphic to induced representations from generic 1-dimensional representations of Levi subalgebras, and that all such induced representations arise from our combinatorial construction (see Theorem \ref{thm:induced}).
\item[\raisebox{0.35ex}{\tiny$\bullet$}] We classify all the subsets $J \subseteq I$ and all the $J$-parameter systems $\sv$ for which the representation $\pi_{J,\sv}$ is bounded (see Theorem \ref{thm:bound} and Proposition \ref{prop:classifybounded}). 
\end{compactenum}
\smallskip

We now give a brief overview of the structure of the paper. In Section~\ref{sec:background} we give background material on root systems, Weyl groups, positively folded alcove paths, and affine Hecke algebras. In Section~\ref{sec:Jalcoves} we study the geometry of the fundamental $J$-alcove in preparation for Section~\ref{sec:Jpaths} where we introduce and develop the combinatorial model of $J$-folded alcove paths. In Section~\ref{sec:TheModule} we construct our $\Hext$-modules $M_{J,\sv}$ (Theorem~\ref{thm:module}), and develop a combinatorial formula for the matrix entries of this representation in terms of positively $J$-folded alcove paths (Theorems~\ref{thm:mainpath1} and~\ref{thm:mainpath2}), and prove that these modules are induced from Levi subalgebras (Theorem~\ref{thm:induced}). In Section~\ref{sec:boundedreps} we recall the notion of bounded representations from \cite{GP:19,GP:19b} for weighted affine Hecke algebras, and we classify the subsets $J\subseteq\Ifin$ and $J$-parameter systems $\sv$ for which $M_{J,\sv}$ is bounded (Theorem~\ref{thm:bound}). We apply the theory of $J$-folded alcove paths to study the bound $\ba_{J,\sv}$ of $M_{J,\sv}$, and state conjectural formulae for $\ba_{J,\sv}$ and conjectural links with Lusztig's $\ba$-function and Kazhdan-Lusztig cells (see Conjectures~\ref{conj:strongconjecture} and~\ref{conj:cells}). We verify these conjectures in various cases (including $2$-dimensional affine Hecke algebras with general parameters). In Section~\ref{sec:An} we consider the $\tilde{\sA}_n$ case with $J=\{1,2,\ldots,n-1\}$, giving both an example of the theory developed in the paper, and verifying our conjectures for this case.

Finally, we note that throughout the paper we work in the general setting of multiparameter affine Hecke algebras, with the non-reduced root systems of type $\sBC_n$ being employed to deal with the $3$-parameter affine Hecke algebras with affine Weyl group of type $\tilde{\sC}_n$ (and the $2$-parameter $\tilde{\sA}_1$ Hecke algebras, see Convention~\ref{conv:parameters}). At times this level of generality leads to more complicated formulae. However since this work is primarily directed towards understanding Kazhdan-Lusztig theory for Hecke algebras with unequal parameters (where the deep geometric interpretations of Kazhdan and Lusztig~\cite{KL:79} and the associated positivity of Elias and Williamson~\cite{EW:14} for the equal parameter case typically do not hold), and since the $3$-parameter affine Hecke algebras are the most extreme cases of such algebras, we believe that this level of generality is warranted and valuable. As a general guide to the reader unacquainted with the non-reduced setting one may wish to assume the reduced case on first reading, in which case all symbols like $\sv_{2\alpha}$ or $\sq_{2\alpha}$ may be read as being~$1$.

\section{Background}\label{sec:background}

This section contains background on root systems, Weyl groups, positively folded alcove paths, affine Hecke algebras and the Bernstein-Lusztig presentation. Our main references are~\cite{Bou:02} (for root systems and Weyl groups, \cite{PRS:09,Ram:06} (for positively folded alcove paths), and \cite{Lus:89,NR:03,Ram:06} (for the affine Hecke algebra and Bernstein-Lusztig presentation).

\subsection{Root systems}\label{sec:root}

Let $\Ifin=\{1,2,\ldots,n\}$. Let $\Phi$ be an irreducible, not necessarily reduced, crystallographic root system of rank~$n$ in a real vector space $V$ with bilinear form $\langle\cdot,\cdot\rangle$. For $\alpha\in V\backslash\{0\}$ let $\alpha^{\vee}=2\alpha/\langle\alpha,\alpha\rangle$, and let $\Phi^{\vee}=\{\alpha^{\vee}\mid \alpha\in\Phi\}$ be the dual root system. Let $\{\alpha_i\mid i\in \Ifin\}$ be a system of simple roots, and let $\Phi^+$ denote the corresponding set of positive roots.  We will adopt Bourbaki conventions~\cite{Bou:02} when labelling the simple roots. 
If $\alpha=\sum_{i\in \Ifin}a_i\alpha_i\in\Phi$ let $\height(\alpha)=\sum_{i\in\Ifin}a_i$ denote the \textit{height} of~$\alpha$. Let $\varphi\in\Phi$ denote the \textit{highest root} of~$\Phi$, and define integers $m_i\geq 1$ by $\varphi=m_1\alpha_1+\cdots+m_n\alpha_n$. 

The \textit{coroot lattice} of $\Phi$ is the $\nZ$-lattice $Q$ spanned by $\Phi^{\vee}$. The \textit{fundamental coweights} of $\Phi$ are the elements $\omega_i\in V$, $i\in\Ifin$, with $\langle \omega_i,\alpha_j\rangle=\delta_{i,j}$. The \textit{coweight lattice} of $\Phi$ is the $\nZ$-lattice~$P$ spanned by the fundamental coweights, that is $P=\nZ\omega_1+\cdots+\nZ\omega_n$. The set of positive coweights is $P^+=\nN\omega_1+\cdots+\nN\omega_n$. Note that $Q\subseteq P$.

\begin{remark}\label{rem:reduced}
A root system $\Phi$ is called \textit{reduced} if $\alpha\in\Phi$ and $k\alpha\in\Phi$ implies that $k\in\{-1,1\}$. In any irreducible reduced root system there are at most two root lengths (the \textit{long roots} and the \textit{short roots}, with all roots considered long if there is only one root length). For each $n\geq 1$ there is a unique non-reduced irreducible crystallographic root system $\Phi$ of rank~$n$ up to isomorphism, denoted $\sBC_n$. Explicitly we can take $V=\mathbb{R}^n$ with standard basis $e_1,\ldots,e_n$, simple roots $\alpha_j=e_j-e_{j+1}$ for $1\leq j\leq n-1$ and $\alpha_n=e_n$. Then
$$
\Phi^+=\{e_i-e_j,e_i+e_j,e_k,2e_k\mid 1\leq i<j\leq n,\,1\leq k\leq n\}.
$$
Note that there are three root lengths in $\Phi$ (if $n>1$). The roots $2e_k$ are the \textit{long roots}. Note that $P=Q$ for the $\ssBC_n$ root system, and that the highest root is $\varphi=2e_1=2(\alpha_1+\cdots+\alpha_n)$. See Figure~\ref{fig:rootsystem}.
\end{remark}

To each root system $\Phi$ we associate reduced root systems $\Phi_0$ and $\Phi_1$ by
$$
\Phi_0=\{\alpha\in\Phi\mid \alpha/2\notin\Phi\}\quad\text{and}\quad 
\Phi_1=\{\alpha\in\Phi\mid 2\alpha\notin\Phi\}.
$$
In particular if $\Phi$ is reduced then $\Phi_0=\Phi_1=\Phi$, and if $\Phi$ is of type $\sBC_n$ then $\Phi_0$ (respectively $\Phi_1$) is a reduced root system of type~$\sB_n$ (respectively $\sC_n$).

\subsection{Weyl groups, affine Weyl groups, and extended affine Weyl groups}\label{sec:affineWeyl}

Let $\Phi$ be as above. For each $\alpha\in \Phi$ let $s_{\alpha}$ be the orthogonal reflection in the hyperplane $H_{\alpha}=\{x\in V\mid \langle x,\alpha\rangle=0\}$ orthogonal to $\alpha$, thus $s_{\alpha}(x)=x-\langle x,\alpha\rangle\alpha^{\vee}$ for $x\in V$. Write $s_i=s_{\alpha_i}$ for $i\in\Ifin$. The \textit{Weyl group} of $\Phi$ is the subgroup $W_0$ of $GL(V)$ generated by the reflections $s_1,\ldots,s_n$. The \textit{inversion set} of $w\in W_0$ is
$
\Phi(w)=\{\alpha\in\Phi_0^+\mid w^{-1}\alpha\in-\Phi_0^+\}
$
(note the convention that $\Phi(w)\subseteq\Phi_0$).

For each $\alpha\in\Phi$ and $k\in\nZ$ let $H_{\alpha,k}=\{x\in V\mid \langle x,\alpha\rangle=k\}$, and let $s_{\alpha,k}$ be the orthogonal reflection in the affine hyperplane $H_{\alpha,k}$. Explicitly, $s_{\alpha,k}(x)=x-(\langle x,\alpha\rangle-k)\alpha^{\vee}$. The \textit{affine Weyl group} is $\Waff=\langle\{s_{\alpha,k}\mid \alpha\in\Phi,\,k\in\mathbb{Z}\}\rangle$ (a subgroup of $\mathrm{Aff}(V)$). We have $\Waff=Q\rtimes W_0$, where we identify $\lambda\in V$ with the translation $t_{\lambda}(x)=x+\lambda$. The \textit{extended affine Weyl group} is $\Wext=P\rtimes W_0$. Since $Q\subseteq P$ we have $\Waff\leq\Wext$. If $w\in\Wext$ we define the \textit{linear part} $\theta(w)\in W_0$ and the \textit{translation coweight} $\wt(w)\in P$ by the equation
\begin{align}\label{eq:weightandfinaldirection}
w=t_{\wt(w)}\theta(w).
\end{align}

The Weyl group $W_0$ is a Coxeter group with Coxeter generators $\{s_i\mid i\in \Ifin\}$. The affine Weyl group $W$ is a Coxeter group with Coxeter generators $S=\{s_i\mid i\in \Iaff\}$, where $s_0=s_{\varphi,1}=t_{\varphi^{\vee}}s_{\varphi}.$
 If $\Phi=\ssBC_n$ then $\Waff=\Wext$ is a Coxeter group of type $\tilde{\ssC}_n$.

The extended affine Weyl group is typically not a Coxeter group. Writing $\Sigma=P/Q$ we have
$
\Wext\cong W\rtimes \Sigma.
$
Each $\sigma\in\Sigma$ induces a permutation (also denoted by $\sigma$) of $\Iaff$ by $\sigma s_i\sigma^{-1}=s_{\sigma(i)}$. In this way we can identify $\Sigma$ with a subgroup of the group of automorphisms of the extended Dynkin diagram. For example, in type $\ssA_n$ the group $\Sigma$ is generated by the permutation $i\mapsto i+1\mod n+1$ of $\Iaff$.

Let $\ell:W\to \nN$ denote the length function on the Coxeter group $W$. We extend this length function to the extended affine Weyl group $\Wext$ by setting $\ell(w\sigma)=\ell(w)$ for all $w\in W$ and $\sigma\in\Sigma$. Thus $\Sigma=\{w\in \Wext\mid \ell(w)=0\}$. By a \textit{reduced expression} for $w\in \Wext$ we shall mean a decomposition $w=s_{i_1}\cdots s_{i_{\ell}}\sigma$ with $\ell=\ell(w)$ and $\sigma\in\Sigma$.

The closures of the open connected components of $V\setminus\left(\bigcup_{\alpha,k}H_{\alpha,k}\right)$ are called \textit{alcoves}. Let $\mathbb{A}$ denote the set of all alcoves. The \textit{fundamental alcove} is given by
$$
A_0=\{x\in V\mid 0\leq\langle x,\alpha\rangle\leq 1\text{ for all $\alpha\in\Phi^+$}\}.
$$
The hyperplanes bounding $A_0$ are called the \textit{walls} of $A_0$. Explicitly these walls are $H_{\alpha_i,0}$ with $i\in\Ifin$ and $H_{\varphi,1}$. We say that a \textit{panel} of $A_0$ (that is, a codimension $1$ facet) has \textit{type} $i$ for $i\in\Ifin$ if it lies on the wall $H_{\alpha_i,0}$, and type $0$ if it lies on the wall $H_{\varphi,1}$.

Note that if $\Phi$ is not reduced, and if $\alpha\in\Phi^+$ with $2\alpha\in\Phi$, then $H_{\alpha,2k}=H_{2\alpha,k}$. Thus there are two distinct positive roots associated to this hyperplane. However note that every hyperplane $H$ can be expressed uniquely as $H=H_{\alpha,k}$ for some $\alpha\in\Phi_1^+$ and $k\in\ZZ$.

The (non-extended) affine Weyl group $\Waff$ acts simply transitively on $\mathbb{A}$. We use the action of $W$ to transfer the notions of walls, panels, and types of panels to arbitrary alcoves. Alcoves $A$ and $A'$ are called \textit{$i$-adjacent} (written $A\sim_{i}A'$) if $A\neq A'$ and $A$ and $A'$ share a common type~$i$ panel (with $i\in \Iaff$). Thus the alcoves $wA_0$ and $ws_iA_0$ are $i$-adjacent for all $w\in W$ and $i\in\Iaff$.

The extended affine Weyl group $\Wext$ acts transitively on $\mathbb{A}$, and the stabiliser of $A_0$ is $\Sigma$. The vertex set of $A_0$ is $\{x_i\mid i\in \Iaff\}$ where $x_0=0$ and $x_i=\omega_i/m_i$ for $i\in I$ (with $m_i$ as in Section~\ref{sec:root}), and the action of $\sigma\in\Sigma$ on this set of vertices is given by $\sigma(x_i)=x_{\sigma(i)}$.

Each affine hyperplane $H_{\alpha,k}$ with $\alpha\in\Phi^+$ and $k\in\nZ$ divides $V$ into two half-spaces, denoted
$$
H_{\alpha,k}^+=\{\alpha\in V\mid \langle x,\alpha\rangle\geq k\}\quad\text{and}\quad H_{\alpha,k}^-=\{x\in V\mid \langle x,\alpha\rangle\leq k\}.
$$
This ``orientation'' of the hyperplanes is called the \textit{periodic orientation} (see Figure~\ref{fig:rootsystem} for an illustration in the non-reduced $\sBC_2$ case).

\begin{figure}[H]
\begin{center}
\begin{tikzpicture}[scale=1]
\path [fill=gray!90] (0,0) -- (1,0) -- (1,1) -- (0,0);
\draw (-4,-4)--(4,-4);
\draw (-4,-3)--(4.5,-3);
\draw (-4,-2)--(4.5,-2);
\draw (-4,-1)--(4.5,-1);
\draw (-4,0)--(4.5,0);
\draw (-4,1)--(4.5,1);
\draw (-4,2)--(4.5,2);
\draw (-4,3)--(4.5,3);
\draw (-4,4)--(4,4);
\draw (-4,-4)--(-4,4);
\draw (-3,-4)--(-3,4.5);
\draw (-2,-4)--(-2,4.5);
\draw (-1,-4)--(-1,4.5);
\draw (0,-4)--(0,4.5);
\draw (1,-4)--(1,4.5);
\draw (2,-4)--(2,4.5);
\draw (3,-4)--(3,4.5);
\draw (4,-4)--(4,4);
\draw (-4,-4)--(4,4);
\draw (-4.6,-2.25)--(-4.25,-2.25);
\draw (-4.25,-2.25)--(2,4);
\draw (-4.6,-0.25)--(-4.25,-0.25);
\draw (-4.25,-0.25)--(0,4);
\draw (-4.6,1.75)--(-4.25,1.75);
\draw (-4.25,1.75)--(-2,4);
\draw (-2,-4)--(4,2);
\draw (0,-4)--(4,0);
\draw (2,-4)--(4,-2);
\draw (-4,-2)--(-1.75,-4.25);
\draw (-1.75,-4.25)--(-1.75,-4.6);
\draw (-4,0)--(0.25,-4.25);
\draw (0.25,-4.25)--(0.25,-4.6);
\draw (-4,2)--(2.25,-4.25);
\draw (2.25,-4.25)--(2.25,-4.6);
\draw (-4,4)--(4,-4);
\draw (-2,4)--(4,-2);
\draw (0,4)--(4,0);
\draw (2,4)--(4,2);
\node at (-2.75,4.3) {\small{$+$}};
\node at (-3.25,4.3) {\small{$-$}};
\node at (-1.75,4.3) {\small{$+$}};
\node at (-2.25,4.3) {\small{$-$}};
\node at (-0.75,4.3) {\small{$+$}};
\node at (-1.25,4.3) {\small{$-$}};
\node at (0.25,4.3) {\small{$+$}};
\node at (-0.25,4.3) {\small{$-$}};
\node at (1.25,4.3) {\small{$+$}};
\node at (0.75,4.3) {\small{$-$}};
\node at (2.25,4.3) {\small{$+$}};
\node at (1.75,4.3) {\small{$-$}};
\node at (3.25,4.3) {\small{$+$}};
\node at (2.75,4.3) {\small{$-$}};
\node at (4.3,3.25) {\small{$+$}};
\node at (4.3,2.75) {\small{$-$}};
\node at (4.3,2.25) {\small{$+$}};
\node at (4.3,1.75) {\small{$-$}};
\node at (4.3,1.25) {\small{$+$}};
\node at (4.3,0.75) {\small{$-$}};
\node at (4.3,0.25) {\small{$+$}};
\node at (4.3,-0.25) {\small{$-$}};
\node at (4.3,-0.75) {\small{$+$}};
\node at (4.3,-1.25) {\small{$-$}};
\node at (4.3,-1.75) {\small{$+$}};
\node at (4.3,-2.25) {\small{$-$}};
\node at (4.3,-2.75) {\small{$+$}};
\node at (4.3,-3.25) {\small{$-$}};
\node at (-4.4,-2) {\small{$-$}};
\node at (-4.4,-2.5) {\small{$+$}};
\node at (-4.4,0) {\small{$-$}};
\node at (-4.4,-0.5) {\small{$+$}};
\node at (-4.4,2) {\small{$-$}};
\node at (-4.4,1.5) {\small{$+$}};
\node at (-1.5,-4.4) {\small{$+$}};
\node at (-2,-4.4) {\small{$-$}};
\node at (0.5,-4.4) {\small{$+$}};
\node at (0,-4.4) {\small{$-$}};
\node at (2.5,-4.4) {\small{$+$}};
\node at (2,-4.4) {\small{$-$}};
\draw [latex-latex,line width=1pt] (-4,0)--(4,0);
\draw [latex-latex,line width=1pt] (0,-4)--(0,4);
\draw [latex-latex,line width=1pt] (-2,0)--(2,0);
\draw [latex-latex,line width=1pt] (0,-2)--(0,2);
\draw [latex-latex,line width=1pt] (-2,-2)--(2,2);
\draw [latex-latex,line width=1pt] (-2,2)--(2,-2);
\node at (2.6,-1.7) {\small{$\alpha_1=\alpha_1^{\vee}$}};
\node at (2.5,2) {\small{$\omega_2$}};
\node at (1,2.25) {\small{$(2\alpha_2)^{\vee}=\alpha_2^{\vee}/2$}};
\node at (0.4,3.8) {\small{$2\alpha_2$}};
\node at (2.25,0.25) {\small{$\omega_1$}};
\node at (3.75,0.25) {\small{$\varphi$}};
\node at (0.6,0.3) {\small{$e$}};
\node at (1.4,0.3) {\small{$s_0$}};
\node at (0.6,-0.3) {\small{$s_2$}};
\node at (0.3,0.65) {\small{$s_1$}};
\end{tikzpicture}
\caption{Root system of type $\ssBC_2$}
\label{fig:rootsystem}
\end{center}
\end{figure}
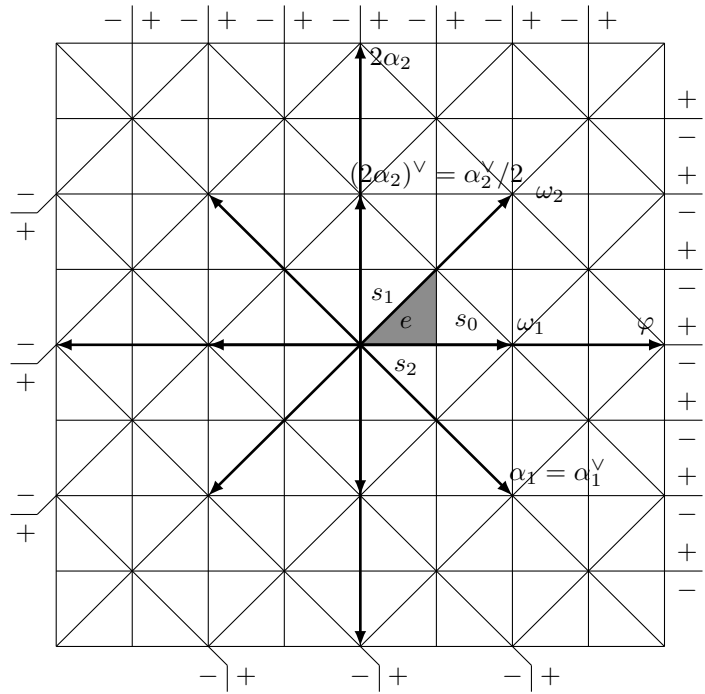

For $w\in\Wext$ and $i\in\{0\}\cup\Ifin$ we shall use the notation $$wA_0\,{^-}|^+\, ws_iA_0$$ to indicate that $ws_iA_0$ is on the positive side of the hyperplane separating $wA_0$ and $ws_iA_0$.  We will also say that $w\to ws_i$ is a \textit{positive crossing} (or simply is \textit{positive}), and similarly if $wA_0\,{^+}|^-\, ws_iA_0$ we say that $w\to ws_i$ is negative (see also Section~\ref{sec:alc}). 

\subsection{Affine root system}\label{sec:affrtsys}

It is convenient to have the notion of the \textit{affine root system} 
$$
\widetilde{\Phi}=\{\alpha+k\delta\mid \alpha\in\Phi,\,k\in\ZZ\}
$$
in the space $V\oplus\RR\delta$. Identifying $V$ with its dual, one may regard $\delta$ as the (nonlinear) constant function $\delta:V\to \RR$ with $\delta(v)=1$ for all $v\in V$. Writing $\langle \lambda,\alpha+k\delta\rangle=\langle \lambda,\alpha\rangle+k$ (however note this is no longer bilinear) we have 
$
H_{\alpha,k}=\{x\in V\mid \langle x,\alpha-k\delta\rangle=0\},
$
and so the hyperplane $H_{\alpha,k}$ corresponds to the affine roots $\pm(\alpha-k\delta)$. 

The simple roots of the affine root system are $\alpha_0=-\varphi+\delta$ and $\alpha_i+0\delta$ with $i\in\Ifin$. These choices give the set of positive affine roots as
$$
\widetilde{\Phi}^+=(\Phi^++\ZZ_{\geq 0}\delta)\cup(-\Phi^++\ZZ_{>0}\delta).
$$
Let $\widetilde{\Phi}_0=\Phi_0+\ZZ\delta$ (and so $\widetilde{\Phi}_0=\widetilde{\Phi}$ if $\Phi$ is reduced), and let $\widetilde{\Phi}_0^+=\widetilde{\Phi}^+\cap\widetilde{\Phi}_0$. 

The action of $\Wext$ on the affine root system (given by the action on half spaces) is given by
\begin{align*}
w(\alpha+k\delta)=w\alpha+k\delta\quad\text{and}\quad t_{\lambda}(\alpha+k\delta)=\alpha+(k-\langle\lambda,\alpha\rangle)\delta \quad \text{for $w\in W_0$ and $\lambda\in P$}.
\end{align*}
The (affine) inversion set of $w\in\Wext$ is 
$$
\widetilde{\Phi}(w)=\{\alpha+k\delta\in\widetilde{\Phi}_0^+\mid w^{-1}(\alpha+k\delta)\in-\widetilde{\Phi}_0^+\}.
$$

In the notation of the previous section, for $w\in\Wext$ and $i\in\{0\}\cup\Ifin$ we have
\begin{align}\label{eq:affineroots}
wA_0\,{^-}|^+\, ws_iA_0\quad\text{if and only if}\quad w\alpha_i\in -\Phi^++\ZZ\delta. 
\end{align}

\subsection{Parabolic subgroups}\label{sec:parabolics}

Let $J\subseteq I$. The \textit{$J$-parabolic subgroup} of $W_0$ is the subgroup $W_J=\langle \{s_j\mid j\in J\}\rangle$. Since $W_J$ is finite there exists a unique longest element of $W_J$, denoted~$\sw_J$, and we have $\ell(\sw_Jw)=\ell(w\sw_J)=\ell(\sw_J)-\ell(w)$ for all $w\in W_J$. We write $\sw_0=\sw_I$. It is well known (see, for example \cite[Proposition~2.20]{AB:08}) that each coset $W_Jw$ with $w\in W_0$ contains a unique representative of minimal length. Let $W^J$ be the transversal of these minimal length coset representatives. Then each $w\in W_0$ has a unique decomposition
\begin{align}
\label{eq:WJdecomposition}w=yu\quad\text{with $y\in W_J$, $u\in W^J$},
\end{align}
and moreover whenever $y\in W_J$ and $u\in W^J$ we have $\ell(yu)=\ell(y)+\ell(u)$.

The \textit{support} of a root $\alpha\in\Phi$ is $\mathrm{supp}(\alpha)=\{i\in I\mid c_i\neq 0\}$, where $\alpha=\sum_{i\in I}c_i\alpha_i$.  For $J\subseteq I$ let 
$
\Phi_J=\{\alpha\in\Phi\mid\mathrm{supp}(\alpha)\subseteq J\},
$
and for $w\in W_0$ write $\Phi_J(w)=\Phi(w)\cap\Phi_J$. The following lemma is well known (see, for example \cite[Corollary~2.13]{HNW:16}), however we provide a proof for completeness. 

\begin{lemma}\label{lem:decomposition}
Let $J\subseteq I$. If $w=yu$ with $y\in W_J$ and $u\in W^J$ then $\Phi(y)=\Phi(w)\cap \Phi_J$. In particular, we have
$
W^J=\{u\in W\mid \Phi_J(u)=\emptyset\}.
$
\end{lemma}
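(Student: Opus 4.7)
The plan is to establish the ``In particular'' characterisation of $W^J$ first, since both inclusions of the main equality depend on it. Since $u\in W^J$ is equivalent to $u$ being of minimal length in $W_Ju$, which in turn is equivalent to $\ell(s_ju)>\ell(u)$---i.e.\ $\alpha_j\notin\Phi(u)$---for every $j\in J$, the task is to promote this property from the $J$-simple roots to every positive root of $\Phi_J$. Given $\alpha\in\Phi_J\cap\Phi_0^+$, expand $\alpha=\sum_{j\in J}c_j\alpha_j$ with $c_j\geq 0$; since each $u^{-1}\alpha_j$ is a positive root,
$$
u^{-1}\alpha=\sum_{j\in J}c_j\,u^{-1}\alpha_j
$$
is a nonnegative combination of positive roots, hence positive. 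Thus $\alpha\notin\Phi(u)$, which gives $\Phi_J(u)=\emptyset$; the converse is trivial since $\alpha_j\in\Phi_J\cap\Phi_0^+$ for all $j\in J$.

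For the equality $\Phi(y)=\Phi(w)\cap\Phi_J$, I would use the previous characterisation together with the fact that $W_J$ preserves $\Phi_J$ (so that $y^{-1}$ permutes $\Phi_J$). For ``$\subseteq$'', let $\alpha\in\Phi(y)$. Since $y\in W_J$, the inversion set $\Phi(y)$ is automatically contained in $\Phi_J\cap\Phi_0^+$, so $\alpha\in\Phi_J$. Writing $\beta=y^{-1}\alpha$, we have $\beta\in\Phi_J\cap(-\Phi_0^+)$, so $-\beta\in\Phi_J\cap\Phi_0^+$; by the characterisation of $W^J$ applied to $u$, we get $u^{-1}(-\beta)\in\Phi_0^+$, hence $w^{-1}\alpha=u^{-1}\beta\in-\Phi_0^+$, proving $\alpha\in\Phi(w)$. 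For ``$\supseteq$'', suppose $\alpha\in\Phi(w)\cap\Phi_J$ but $\alpha\notin\Phi(y)$; then $y^{-1}\alpha\in\Phi_J\cap\Phi_0^+$, and the $W^J$-characterisation forces $w^{-1}\alpha=u^{-1}y^{-1}\alpha\in\Phi_0^+$, contradicting $\alpha\in\Phi(w)$.

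Mild as it is, the one nontrivial step is the upgrade from ``no $J$-simple root lies in $\Phi(u)$'' to ``no positive root of $\Phi_J$ lies in $\Phi(u)$''; once that is in hand, the main statement reduces to straightforward sign-tracking along the factorisation $w^{-1}=u^{-1}y^{-1}$, using only that $W_J$ stabilises $\Phi_J$.
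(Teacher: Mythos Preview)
Your proof is correct and takes a genuinely different route from the paper's. The paper argues only the inclusion $\Phi(w)\cap\Phi_J\subseteq\Phi(y)$ by a length-based contradiction: if $\beta\in\Phi_J(w)\setminus\Phi(y)$ then $s_\beta\in W_J$, and setting $y'=s_\beta y$ one obtains $\ell(y'u)<\ell(y')+\ell(u)$, violating the minimal-representative property of $u\in W^J$; the reverse inclusion is left implicit (it follows from length-additivity $\ell(yu)=\ell(y)+\ell(u)$), and the characterisation of $W^J$ is then read off by specialising $y=e$. You invert the logical order: you establish the $W^J$ characterisation first by a direct positivity argument (upgrading from $J$-simple roots to all of $\Phi_J^+$ via nonnegative expansions), and then derive both inclusions of the main equality symmetrically by sign-tracking along $w^{-1}=u^{-1}y^{-1}$. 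Your argument is more self-contained---it avoids the fact that $\beta\in\Phi(v)\Leftrightarrow\ell(s_\beta v)<\ell(v)$ and treats both inclusions on equal footing---while the paper's stays closer to the Coxeter-combinatorial definition of $W^J$ via minimal coset representatives.
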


\begin{proof}
Suppose there exists $\beta\in\Phi_J(w)\backslash\Phi(y)$. Since $\beta\notin\Phi(y)$ we have $\ell(s_{\beta}y)>\ell(y)$, and since $\beta\in\Phi_J(w)$ we have $\ell(s_{\beta}w)<\ell(w)$. Since $\beta\in\Phi_J$ we have $s_{\beta}\in W_J$, but then the element $y'=s_{\beta}y\in W_J$ satisfies
$
\ell(y'u)=\ell(s_{\beta}w)<\ell(w)=\ell(y)+\ell(u)<\ell(s_{\beta}y)+\ell(u)=\ell(y')+\ell(u),
$
contradicting the fact that $u\in W^J$. 
\end{proof}

\begin{defn}\label{defn:theta}
Let $w\in \Wext$ and recall the definition of $\theta(w)$ from~(\ref{eq:weightandfinaldirection}). Define $\theta_J(w)\in W_J$ and $\theta^J(w)\in W^J$ by the equation 
$
\theta(w)=\theta_J(w)\theta^J(w).
$
\end{defn}

For $J\subseteq\Ifin$ let
$$
V_J=\sum_{j\in J}\mathbb{R}\alpha_j^{\vee}\quad\text{and}\quad V^J=\sum_{i\in I\backslash J}\mathbb{R}\omega_i.
$$
Then $V=V_J\oplus V^J$ (orthogonal direct sum). 

Let $\lambda\mapsto\lambda^J$ denote the orthogonal projection $V\to V^J$, and let
$$
P^J=\{\lambda^J\mid \lambda\in P\}\subseteq V^J.
$$
Note that in general the $\ZZ$-lattice $P^J$ is not a subset of $P$. For example if $\Phi$ is of type $\sA_2$ and $J=\{1\}$ then $P^J\ni \omega_1^J=\frac{1}{2}\omega_2\notin P$ (see Figure~\ref{fig:A2}, where $V^J$ is denoted as a solid line, and $P^J$ is indicated by heavy dots). Let 
$\{\overline{\omega}_i\mid i\in I\backslash J\}$ be a choice of $\ZZ$-basis of $P^J$ (in the example of Figure~\ref{fig:A2} we may take $\overline{\omega}_2=\omega_2/2$).

\begin{figure}[H]
\centering
\begin{tikzpicture}[scale=1.5]
\clip ({-5*0.5},{-2.5*0.866}) rectangle + ({5},{5*0.866});
    \path [fill=gray!90] (0,0) -- (-0.5,0.866) -- (0.5,0.866) -- (0,0);
    \draw (2.5, {-3*0.866})--( 3.5, {-1*0.866} );
    \draw (1.5, {-3*0.866})--( 3.5, {1*0.866} );
    \draw  (0.5, {-3*0.866})--( 3.5, {3*0.866} );
    \draw  (-0.5, {-3*0.866})--( 3, {4*0.866} );
    \draw  [line width=2pt](-1.5, {-3*0.866})--( 2, {4*0.866} );
  \draw  (-2.5, {-3*0.866})--( 1, {4*0.866} );
    \draw  (-3.5, {-3*0.866})--(0, {4*0.866} );
    \draw  (-3.5, {-1*0.866})--(-1, {4*0.866} );
    \draw  (-3.5, {1*0.866})--(-2, {4*0.866} );
    \draw  (-3.5, {3*0.866})--(-3, {4*0.866} );
   \draw (-2.5, {-3*0.866})--( -3.5, {-1*0.866} );
    \draw (-1.5, {-3*0.866})--( -3.5, {1*0.866} );
    \draw  (-0.5, {-3*0.866})--( -3.5, {3*0.866} );
    \draw  (0.5, {-3*0.866})--( -3, {4*0.866} );
  \draw (1.5, {-3*0.866})--( -2, {4*0.866} );
    \draw  (2.5, {-3*0.866})--( -1, {4*0.866} );
    \draw  (3.5, {-3*0.866})--(0, {4*0.866} );
    \draw  (3.5, {-1*0.866})--(1, {4*0.866} );
    \draw  (3.5, {1*0.866})--(2, {4*0.866} );
    \draw  (3.5, {3*0.866})--(3, {4*0.866} );
    \draw (-3.5, -2.598)--( 3.5, -2.598);
    \draw (-3.5, -1.732)--( 3.5, -1.732);
    \draw (-3.5, -0.866)--( 3.5, -0.866);
    \draw (-3.5, 0)--( 3.5, 0);
    \draw (-3.5, 3.464)--( 3.5, 3.464 );
    \draw (-3.5, 2.598)--( 3.5, 2.598);
    \draw (-3.5, 1.732)--( 3.5, 1.732);
        \draw (-3.5, 0.866)--( 3.5, 0.866);
    \draw [latex-latex, line width=1pt] (-1.5,-0.866)--(1.5,0.866);
    \draw [latex-latex, line width=1pt] (1.5,-0.866)--(-1.5,0.866);
    \draw [latex-latex, line width=1pt] (0,-1.732)--(0,1.732);
    \node at (-2,1.1) {\small{$\alpha_1^{\vee}$}};
    \node at (2,1.1) {\small{$\alpha_2^{\vee}$}};
    \node at (-0.8,1.05) {\small{$\omega_1$}};
    \node at (0.9,1.05) {\small{$\omega_2$}};
    \node at ({0*0.5},{0*0.866}) {\Large{$\bullet$}};
     \node at ({0.5*0.5},{0.5*0.866}) {\Large{$\bullet$}};
     \node at ({1*0.5},{1*0.866}) {\Large{$\bullet$}};
      \node at ({2*0.5},{2*0.866}) {\Large{$\bullet$}};
       \node at ({1.5*0.5},{1.5*0.866}) {\Large{$\bullet$}};
       \node at ({2.5*0.5},{2.5*0.866}) {\Large{$\bullet$}};
        \node at ({-0.5*0.5},{-0.5*0.866}) {\Large{$\bullet$}};
     \node at ({-1*0.5},{-1*0.866}) {\Large{$\bullet$}};
      \node at ({-2*0.5},{-2*0.866}) {\Large{$\bullet$}};
       \node at ({-1.5*0.5},{-1.5*0.866}) {\Large{$\bullet$}};
       \node at ({-2.5*0.5},{-2.5*0.866}) {\Large{$\bullet$}};
       \draw [line width=1pt,dotted] (-0.5,0.866)--(0.25,0.433);
\end{tikzpicture}
\caption{The set $P^J$ for type $\sA_2$ with $J=\{1\}$}\label{fig:A2}
\end{figure}
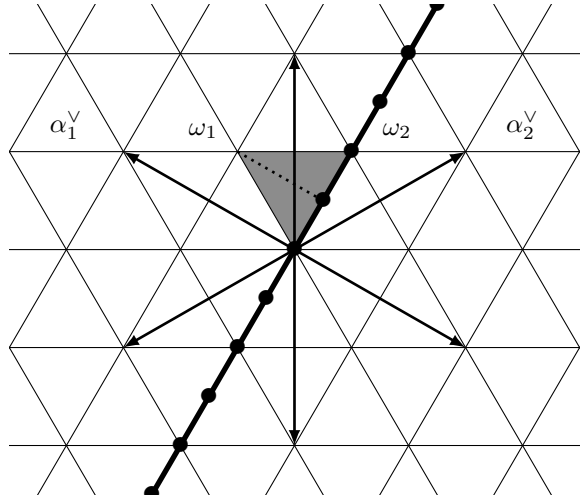

For $J\subseteq\Ifin$ let $\Phi_{J,l}$ and $\Phi_{J,s}$ be the long and short roots of $\Phi_J\cap \Phi_0$, respectively (with $\Phi_{J,s}=\emptyset$ if $\Phi$ is simply laced). Define
\begin{align*}
 \rho_J=\frac{1}{2}\sum_{\alpha\in\Phi_{J,l}^+}\alpha\quad\text{and}\quad\rho_J'=\frac{1}{2}\sum_{\alpha\in\Phi_{J,s}^+}\alpha,
\end{align*}
and let $\{\tilde{\omega}_j\mid j\in J\}$ be the basis of $V_J$ dual to the basis $\{\alpha_j^{\vee}\mid j\in J\}$ (that is, $\langle\alpha_j^{\vee},\tilde{\omega}_i\rangle=\delta_{i,j}$ for all $i,j\in J$). 

\begin{lemma}\label{lem:orthogonal}
We have
$$
\rho_J=\sum_{\{j\in J\,\mid\, \alpha_j\in\Phi_{J,l}\}}\tilde{\omega}_j\quad\text{and}\quad \rho_J'=\sum_{\{j\in J\,\mid\, \alpha_j\in\Phi_{J,s}\}}\tilde{\omega}_j.
$$
In particular $\rho_J$ (respectively $\rho_J'$) is orthogonal to all short (respectively long) simple roots of~$\Phi_J$.
\end{lemma}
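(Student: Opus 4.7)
The plan is to compute $\langle\rho_J,\alpha_i^{\vee}\rangle$ for each $i\in J$ and match the result against the dual-basis characterisation $\langle\tilde{\omega}_j,\alpha_i^{\vee}\rangle=\delta_{i,j}$. Since $\rho_J\in V_J$ (as $\Phi_{J,l}\subseteq V_J$) and $\{\tilde{\omega}_j\mid j\in J\}$ is a basis of $V_J$, this will pin down the expansion of $\rho_J$ uniquely and deliver the first formula. The formula for $\rho_J'$ will follow by the same argument with long and short interchanged, and the final orthogonality statement will then be an immediate consequence of the formulae together with the fact that $\alpha_i^{\vee}$ is proportional to $\alpha_i$.

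The key input I would establish first is the standard observation that for $i\in J$ the simple reflection $s_i$ permutes the set $\Phi_{J,l}^+\setminus\{\alpha_i\}$. Indeed $s_i\in W_J$ preserves $\Phi_J$; $s_i$ is orthogonal and hence preserves root lengths, so it maps the long roots of $\Phi_J\cap\Phi_0$ to themselves; and the classical fact says $s_i$ permutes $\Phi^+\setminus\{\alpha_i\}$. An identical statement holds with $\Phi_{J,s}^+$ in place of $\Phi_{J,l}^+$.

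With this in hand I would split into two cases. If $\alpha_i\in\Phi_{J,l}^+$, isolating $\alpha_i$ in the sum defining $2\rho_J$ and applying $s_i$ yields
\[
s_i(2\rho_J)=-\alpha_i+\sum_{\beta\in\Phi_{J,l}^+\setminus\{\alpha_i\}}\beta=2\rho_J-2\alpha_i,
\]
whence $s_i\rho_J=\rho_J-\alpha_i$ and therefore $\langle\rho_J,\alpha_i^{\vee}\rangle=1$. If on the other hand $\alpha_i\notin\Phi_{J,l}^+$ (equivalently $\alpha_i$ is a short simple root of $\Phi_J$), then $s_i$ permutes $\Phi_{J,l}^+$ in its entirety, so $s_i\rho_J=\rho_J$ and $\langle\rho_J,\alpha_i^{\vee}\rangle=0$. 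These two cases together yield the first formula, and the identical argument for short roots yields the formula for $\rho_J'$. The orthogonality assertion in the last sentence of the lemma is then immediate: since $\rho_J$ is a combination of $\tilde{\omega}_j$ only for $j$ with $\alpha_j\in\Phi_{J,l}$, one has $\langle\rho_J,\alpha_i^{\vee}\rangle=0$ whenever $\alpha_i\in\Phi_{J,s}$, and similarly for $\rho_J'$.

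The main obstacle is little more than bookkeeping in the possibly non-reduced setting, which is absorbed uniformly by working throughout inside the reduced root system $\Phi_0$ and invoking only orthogonality of $s_i$ to preserve root lengths; in particular no case analysis on the type of $\Phi$ is required.
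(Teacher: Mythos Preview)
Your argument is correct and in fact cleaner than the paper's. The paper gives essentially no proof: it cites \cite[VI, \S1, Proposition~29]{Bou:02} for the simply laced case and then asserts that the non-simply laced case ``is readily checked from the classification (it is sufficient to check for irreducible~$J$)''. By contrast, you give a uniform, classification-free argument: you observe that $s_i$ preserves root lengths and permutes $\Phi_0^+\setminus\{\alpha_i\}$, hence permutes $\Phi_{J,l}^+\setminus\{\alpha_i\}$ (or all of $\Phi_{J,l}^+$ if $\alpha_i$ is short), and then read off $\langle\rho_J,\alpha_i^{\vee}\rangle\in\{0,1\}$ directly. This is the standard argument behind the Bourbaki proposition itself, applied componentwise to the long and short subsystems; it handles all cases at once, including the non-reduced setting via restriction to $\Phi_0$, without any case-by-case verification. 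Your approach is both more informative and more self-contained than what the paper records.
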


\begin{proof}
In the simply laced case see \cite[VI, \S1, Proposition 29]{Bou:02}. In the non-simply laced case the claim is readily checked from the classification (it is sufficient to check for irreducible~$J$). 
\end{proof}

\subsection{Positively folded alcove paths}\label{sec:alc}

Since the extended affine Weyl group~$\Wext$ does not act freely on the set of alcoves, it is convenient for our purposes to consider ``alcove paths'' as sequences of elements of $\Wext$ rather than sequences of alcoves. Thus we make the following definition (see~\cite{Ram:06}).

\begin{defn}\label{defn:posfolded}
Let $\vec w=s_{i_1}s_{i_2}\cdots s_{i_{\ell}}\sigma$ be an expression for $w\in \Wext$ (not necessarily reduced) with $\sigma\in \Sigma$, and let~$v\in \Wext$. A \textit{folded alcove path of type~$\vec w$ starting at $v$, and ending at $\mathrm{end}(p)=v_{\ell}\sigma$} is a sequence $$p=(v_0,v_1,\ldots,v_{\ell},v_{\ell}\sigma)$$ with $v_0,\ldots,v_{\ell}\in \Wext$ such that $v_0=v$ and $v_k\in\{v_{k-1},v_{k-1}s_{i_k}\}$ for $1\leq k\leq \ell$. A \textit{positively folded alcove path} is a folded alcove path $p=(v_0,v_1,\ldots,v_{\ell},v_{\ell}\sigma)$ such that:
\begin{center}
if $v_{k-1}=v_k$ then $v_{k-1}A_0\, {^+}|^-\,v_{k-1}s_{i_k}A_0$. 
\end{center}
\end{defn}

For $v\in \Wext$ let
$$
\mathcal{P}(\vec{w},v)=\{\text{all positively folded alcove paths of type $\vec{w}$ starting at $v$}\}.
$$ 

Let $\vec w=s_{i_1}s_{i_2}\cdots s_{i_{\ell}}\sigma$ and let $p=(v_0,\ldots,v_{\ell},v_{\ell}\sigma)\in\mathcal{P}(\vec{w},v)$. The index~$k\in\{1,2,\ldots,\ell\}$ is called:
\begin{compactenum}[$(1)$]
\item a \textit{positive (respectively, negative) $i_k$-crossing} if $v_k=v_{k-1}s_{i_k}$ and $v_{k}A_0$ is on the positive (respectively, negative) side of the hyperplane separating the alcoves $v_{k-1}A_0$ and $v_kA_0$;
\item an \textit{$i_k$-fold} if $v_k=v_{k-1}$ (in which case $v_{k-1}A_0$ is necessarily on the positive side of the hyperplane separating $v_{k-1}A_0$ and $v_{k-1}s_{i_k}A_0$).
\end{compactenum}
If $p$ has no folds we say that $p$ is \textit{straight}. Less formally, the above crossings and folds can be visualised as follows (where $x=v_{k-1}$):

\begin{center}
\begin{tikzpicture}[xscale=0.7, yscale=0.6]
\draw (-8.5,-1)--(-8.5,1);
\draw[line width=0.5pt,-latex](-9,0)--(-8,0);
\node at (-9.5,1){{ $-$}};
\node at (-9.5,.2){{$x$}};
\node at (-7.5,.2){{$xs_{i_k}$}};
\node at (-7.9,1){{ $+$}};
\node at (-8.5,-1.5){{ \text{(positive $i_k$-crossing)}}};
\draw(-2,-1)--(-2,1);
\node at (-3,1){{ $-$}};
\node at (-3,.2){{ $xs_{i_k}$}};
\node at (-1,.2){{ $x$}};
\node at (-1.4,1){{ $+$}};
\draw[line width=0.5pt,-latex] plot[smooth] coordinates {(-1.35,0)(-1.85,0)(-1.85,-.15)(-1.35,-.15)};
\node at (-2,-1.5){{ \text{($i_k$-fold)}}};
%
\draw(4.5,-1)--(4.5,1);
\draw[line width=0.5pt,-latex](5,0)--(4,0);
\node at (5,1){{ $+$}};
\node at (5.5,.2){{$x$}};
\node at (3.25,.2){{ $xs_{i_k}$}};
\node at (3.6,1){{ $-$}};
\node at (4.5,-1.5){{ \text{(negative $i_k$-crossing)}}};
\end{tikzpicture}
\end{center}

 If $p$ is a positively folded alcove path, then for each $i\in \Iaff$ we define
\begin{align*}
f_i(p)&=\#\textrm{($i$-folds in $p$)}\quad\text{and}\quad f(p)=\#\textrm{(folds in $p$)}=\sum_{i=0}^nf_i(p).
\end{align*}
The \textit{coweight} and \textit{final direction} of a positively folded alcove path $p$ are (c.f. (\ref{eq:weightandfinaldirection}))
\begin{align}\label{eq:weighttheta}
\wt(p)=\wt(\mathrm{end}(p))\quad\text{and}\quad \theta(p)=\theta(\mathrm{end}(p)),
\end{align}
and we write $\theta_J(p)=\theta_J(\mathrm{end}(p))$ and $\theta^J(p)=\theta^J(\mathrm{end}(p))$.

\subsection{Affine Hecke algebras and the Bernstein-Lusztig presentation}\label{sec:hecke}

Let $(\sq_i)_{i\in \Iaff}$ be a family of commuting invertible indeterminates with the property that $\sq_i=\sq_{j}$ whenever $s_i$ and $s_j$ are conjugate in~$\Wext$. Let $\sR=\mathbb{Z}[(\sq_i^{\pm 1})_{i\in \Iaff}]$. The \textit{extended affine Hecke algebra} is the $\sR$-algebra $\Hext$ with basis $\{T_w\mid w\in \Wext\}$ and multiplication given by (for $w,v\in \Wext$ and $i\in\Iaff$)
\begin{align}\label{eq:relations}
\begin{aligned}
T_wT_v&=T_{wv}&&\text{if $\ell(wv)=\ell(w)+\ell(v)$}\\
T_wT_{s_i}&=T_{ws_i}+(\sq_i-\sq_i^{-1})T_w&&\text{if $\ell(ws_i)=\ell(w)-1$}.
\end{aligned}
\end{align}
Note that each $T_{s_i}$ is invertible with $T_{s_i}^{-1}=T_{s_i}-(\sq_i-\sq_i^{-1})$, and that each $T_{\sigma}$ (with $\sigma\in\Sigma$) is invertible with $T_{\sigma}^{-1}=T_{\sigma^{-1}}$. It follows that each $T_w$ with $w\in\Wext$ is invertible. The (non-extended) affine Hecke algebra is the subalgebra $\Haff$ spanned by $\{T_w\mid w\in \Waff\}$.

We often write $T_i$ in place of $T_{s_i}$. For $w\in\Wext$ we write
$
\sq_w=\sq_{i_1}\cdots \sq_{i_k}
$
whenever $w=s_{i_1}\cdots s_{i_k}\sigma$ is a reduced expression of $w$ (this can easily be seen to be independent of the choice of reduced expression using Tits' solution to the Word Problem). In particular, note that $\sq_{\sigma}=1$ for all $\sigma\in\Sigma$.

Let $w\in \Wext$ and choose any expression $w=s_{i_1}\cdots s_{i_{\ell}}\sigma$ (not necessarily reduced). Let $v_0=e$ and $v_k=s_{i_1}\cdots s_{i_k}$ for $1\leq k\leq \ell$ (thus $(v_0,v_1,\ldots,v_{\ell})$ is the straight alcove path of type $s_{i_1}\cdots s_{i_{\ell}}$ starting at $e$). Let $A_k=v_kA_0$, and let $\epsilon_1,\ldots,\epsilon_{\ell}$ be the sequence of signs of the crossings, defined by (see (\ref{eq:affineroots})):
\begin{align*}
\epsilon_k=\begin{cases}
+1&\text{if $A_{k-1}\,{^-}|^+\,A_k$ (that is, $v_{k-1}\alpha_{i_k}\in-\Phi^++\ZZ\delta$)}\\
-1&\text{if $A_{k-1}\,{^+}|^-\,A_k$ (that is, $v_{k-1}\alpha_{i_k}\in\Phi^++\ZZ\delta$)}.
\end{cases}
\end{align*}
Then the element 
$$
X_w=T_{i_1}^{\epsilon_1}\cdots T_{i_k}^{\epsilon_k}T_{\sigma}\in\Hext
$$
does not depend on the particular expression $w=s_{i_1}\cdots s_{i_k}\sigma$ chosen (see \cite{Goe:07}). 

From the defining relations~(\ref{eq:relations}) it follows that $X_w-T_w$ is a linear combination of terms $T_v$ with $v<w$ (in extended Bruhat order), and hence $\{X_w\mid w\in\Wext\}$ is a basis of $\Hext$.

If $\lambda\in P$ we write 
$$
X^{\lambda}=X_{t_{\lambda}}.
$$
Then $X^{\lambda}X^{\mu}=X^{\lambda+\mu}=X^{\mu}X^{\lambda}$ for all $\lambda,\mu\in P$, and for $w\in\Wext$ we have
\begin{align}\label{eq:splitting}
X_w=X_{t_{\lambda}u}=X^{\lambda}X_{u}=X^{\lambda}T_{u^{-1}}^{-1}
\end{align}
where $\lambda=\wt(w)$ and $u=\theta(w)$ (the second equality follows since $t_{\lambda}$ is on the positive side of every hyperplane through $\lambda$, and the third equality follows since $X_u=T_{u^{-1}}^{-1}$ for all $u\in W_0$). Thus the set $\{X^{\lambda}T_{u^{-1}}^{-1}\mid \lambda\in P,\,u\in W_0\}$ is a basis of~$\Hext$ (called the \textit{Bernstein-Lusztig basis}).

Since $s_0=t_{\varphi^{\vee}}s_{\varphi}$ equation~(\ref{eq:splitting}) gives
\begin{align}\label{eq:T0}
T_0=X^{\varphi^{\vee}}T_{s_{\varphi}}^{-1}.
\end{align}

The combinatorics of positively folded alcove paths encode the change of basis from the standard basis $(T_w)_{w\in \Wext}$ of $\Hext$ to the Bernstein-Lusztig basis $(X_w)_{w\in\Wext}$. This is seen by taking $u=e$ in the following proposition.

\begin{prop}{\cite[Theorem~3.3]{Ram:06}}\label{prop:basischange}
Let $u,w\in\Wext$ and let $\vec{w}$ be any reduced expression for~$w$. Then 
$$
X_uT_w=\sum_{p\in \cP(\vec w,u)}\cQ(p)X_{\mathrm{end}(p)}\quad\text{where}\quad \cQ(p)=\prod_{i\in \Iaff}(\sq_i-\sq_i^{-1})^{f_i(p)}.
$$
\end{prop}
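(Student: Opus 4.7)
The natural approach is induction on $\ell(w)$, using the given reduced expression $\vec{w} = s_{i_1}\cdots s_{i_\ell}\sigma$. For the base case $\ell(w) = 0$, we have $w = \sigma \in \Sigma$, the set $\cP(\vec{w},u)$ consists of the single path $(u, u\sigma)$ with $\cQ(p) = 1$, and the claim reduces to $X_u T_\sigma = X_{u\sigma}$. This follows directly from the definition of $X_{u\sigma}$ via a reduced expression for $u$ followed by $\sigma$, together with the fact that $T_{u\sigma} = T_u T_\sigma$ on such a reduced expression.

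For the inductive step, factor $\vec{w} = \vec{w}''\, s_{i_\ell}\sigma$ where $\vec{w}'' = s_{i_1}\cdots s_{i_{\ell-1}}$ is reduced. Then $T_w = T_{w''}T_{i_\ell}T_\sigma$, so by the inductive hypothesis applied to $\vec{w}''$ (treating its trivial $\Sigma$-tail as $e$),
\[
X_u T_w \;=\; \sum_{p''\in\cP(\vec{w}'',u)} \cQ(p'')\, X_{\mathrm{end}(p'')}\, T_{i_\ell}\, T_\sigma.
\]
The crux is then the single-step identity: for any $v\in\Wext$ and $i\in\Iaff$,
\[
X_v T_i \;=\;
\begin{cases}
X_{vs_i} & \text{if } vA_0\,{^-}|^+\, vs_iA_0,\\[2pt]
X_{vs_i} + (\sq_i - \sq_i^{-1})X_v & \text{if } vA_0\,{^+}|^-\, vs_iA_0.
\end{cases}
\]
This follows from the well-definedness of the Bernstein-Lusztig elements (via \cite{Goe:07}): appending $s_i$ to any expression for $v$ yields an expression for $vs_i$ whose associated sign $\epsilon_{v,i}$ is $+1$ or $-1$ according to whether the crossing $v\to vs_i$ is positive or negative, so $X_{vs_i} = X_v T_i^{\epsilon_{v,i}}$, and inverting using $T_i^{-1} = T_i - (\sq_i - \sq_i^{-1})$ gives the displayed identity.

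Apply this identity with $v = \mathrm{end}(p'')$. The two cases correspond exactly to the permitted extensions of $p''$ by one step of type $i_\ell$: a positive crossing contributes the single term $X_{vs_{i_\ell}}$, while a negative crossing can be traversed either as a genuine crossing (contributing $X_{vs_{i_\ell}}$) or as a fold (contributing $(\sq_{i_\ell}-\sq_{i_\ell}^{-1})X_v$, which is precisely the factor recorded by $\cQ$). Summing over all $p''$ and all admissible one-step extensions yields a sum over $\cP(s_{i_1}\cdots s_{i_\ell},u)$; multiplying on the right by $T_\sigma$ turns each $X_{\mathrm{end}(p')}$ into $X_{\mathrm{end}(p')\sigma}$, which matches the definition of $\mathrm{end}(p)$ for the corresponding path $p\in\cP(\vec{w},u)$. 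The fold counts $f_i$ transform correctly because only the $i_\ell$-fold case increments $f_{i_\ell}$ by one.

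The only real subtlety is the bookkeeping with signs in the Bernstein-Lusztig identity and checking that the two subcases of the negative crossing align precisely with the two allowed extensions of the path; once that identity is in hand, the induction proceeds cleanly without any further manipulation of relations in $\Hext$.
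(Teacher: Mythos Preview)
Your proof is correct and follows the standard induction argument for this result. Note that the paper does not supply its own proof of this proposition: it is quoted directly from \cite[Theorem~3.3]{Ram:06}, so there is no in-paper argument to compare against. Your approach---induction on $\ell(w)$ via the single-step identity $X_vT_i = X_{vs_i}$ (positive crossing) or $X_vT_i = X_{vs_i} + (\sq_i-\sq_i^{-1})X_v$ (negative crossing), deduced from the well-definedness of $X_w$ together with $T_i = T_i^{-1} + (\sq_i-\sq_i^{-1})$---is precisely the standard proof, and is essentially how Ram proves it as well.
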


\begin{convention}\label{conv:parameters}
It is convenient to make the following convention, and we will do so henceforth: If $\Phi=\sA_1$ then $\sq_0=\sq_1$, and if $\Phi=\sC_n$ ($n\geq 2$) then $\sq_0=\sq_n$ (note that there is no loss of generality as the case $\sq_0\neq \sq_n$ is covered by the non-reduced system~$\sBC_n$ for $n\geq 1$). Thus if $\sigma\in \Sigma$ we have $\sq_{\sigma(i)}=\sq_i$ for all $i\in\{0\}\cup\Ifin$ (recall that $\Sigma$ is trivial if $\Phi=\sBC_n$). 
\end{convention}

With Convention~\ref{conv:parameters} in force, a complete set of relations of $\Hext$ in the Bernstein-Lusztig basis is given by (for $i,j\in \Ifin$ with $i\neq j$ and $\lambda,\mu\in P$):
\begin{align*}
T_i^2&=I+(\sq_i-\sq_i^{-1})T_i\\
T_iT_{j}T_i\cdots&=T_{j}T_iT_{j}\cdots\quad\text{($m_{ij}$ terms on each side)}\\
X^{\lambda}X^{\mu}&=X^{\lambda+\mu}\\
T_iX^{\lambda}&=\begin{cases}X^{s_i\lambda}T_i+(\sq_i-\sq_i^{-1})\frac{X^{\lambda}-X^{s_i\lambda}}{1-X^{-\alpha_i^{\vee}}}&\text{if $(\Phi,i)\neq(\sBC_n,n)$}\\
X^{s_n\lambda}T_n+\big[\sq_n-\sq_n^{-1}+(\sq_0-\sq_0^{-1})X^{-\alpha_n^{\vee}/2}\big]\frac{X^{\lambda}-X^{s_n\lambda}}{1-X^{-\alpha_n^{\vee}}}&\text{if $(\Phi,i)=(\ssBC_n,n)$.}
\end{cases}
\end{align*}
The final relation is known as the Bernstein-Lusztig relation (see \cite[Proposition~3.6]{Lus:89}). Note that since $s_i\lambda=\lambda-\langle\lambda,\alpha_i\rangle\alpha_i^{\vee}$ and $\langle\lambda,\alpha_i\rangle\in\mathbb{Z}$ the right hand side of the Bernstein-Lusztig relation is in~$\Hext$.

\section{The fundamental $J$-alcove $\cA_J$}\label{sec:Jalcoves}

In this section we extend the definition of ``strips'' in \cite{GP:19,GP:19b} to arbitrary affine type, and study the geometry of these subsets of $V$ (following Lusztig \cite[\S2]{Lus:97} we call these generalisations \textit{$J$-alcoves} for reasons that will be explained below). It turns out that the  geometry of the fundamental $J$-alcove $\cA_J$ will be crucial to our combinatorial formulae, and so we develop some of the basic properties of $\cA_J$ and its symmetries here.

\subsection{$J$-alcoves and $J$-affine Weyl groups}\label{sec:JAffineWeyl}

\begin{defn}
Let $J\subseteq \Ifin$. The \textit{fundamental $J$-alcove} is the set 
$$
\cA_J=\{x\in V\mid 0\leq \langle x,\alpha\rangle \leq 1\text{ for all }\alpha\in\Phi_J^+\}.
$$
\end{defn}

See Example~\ref{ex:Other} for an illustration of a fundamental $J$-alcove. Let us explain this terminology. To begin with, in the case $J=I$ we have $\cA_I=A_0$, the fundamental alcove. For general $J\subseteq I$ let $\mathbb{H}_J$ be the set of all hyperplanes $H_{\alpha,k}$ with $\alpha\in\Phi_J^+$ and $k\in\ZZ$. Let $\mathbb{A}_J$ denote the set of ``alcoves'' of this hyperplane arrangement: by definition these are the closures of the open connected components of $V\backslash \left(\bigcup_{H\in\mathbb{H}_J}H\right)$. We shall call elements of $\mathbb{A}_J$ \textit{$J$-alcoves}, and it is clear that $\cA_J$ is indeed a $J$-alcove.

Let 
$$
Q_J=\sum_{\alpha\in\Phi_J}\mathbb{Z}\alpha^{\vee}.
$$
Note that if $\Phi_J$ is not reduced (hence $\Phi$ is of type $\sBC_n$ and $n\in J$) then $\alpha_n^{\vee}/2=(2\alpha_n)^{\vee}\in Q_J$. The \textit{$J$-affine Weyl group} is 
$$
\WJaff=\langle s_{\alpha,k}\mid \alpha\in\Phi_J^+,\,k\in\mathbb{Z}\rangle=Q_J\rtimes W_J.
$$
By the general theory of Section~\ref{sec:affineWeyl} (c.f. \cite[\S V.3]{Bou:02}) the group $\WJaff$ acts simply transitively on the set $\mathbb{A}_J$, and $\cA_J$ is a fundamental domain for the action of $\WJaff$ on~$V$, see \cite[\S V.3]{Bou:02}.

\begin{remark}
The notion of $J$-alcoves can be traced back to Lusztig~\cite[\S2]{Lus:97}. There is a connection between $J$-alcoves and the \textit{$J$-sectors} of Mili\'cevi\'c, Naqvi, Schwer, and Thomas \cite[\S3.1]{MNST:22}, which in turn are special cases of the  \textit{chimneys} of Rousseau \cite[\S3.1]{Rou:01}. Indeed, the $J$-sectors of \cite{MNST:22} are precisely the intersection of the fundamental $J$-alcove with a set of the form
$$
\bigcap_{\beta\in\Phi^+\backslash \Phi_J}H_{\beta,k_{\beta}}^-
$$
for some choice of integers $k_{\beta}$, $\beta\in\Phi^+\backslash \Phi_J$. See \cite[Remark~3.3]{MNST:22} for a discussion of the history of these concepts. 
\end{remark}

Let $\cK(J)$ denote the set of connected components of $J$ (that is, the connected components of the Coxeter graph of the Coxeter system $(W_J,J)$). For example, in type $\tilde{\sA}_6$ if $J=\{1,3,4,6\}$ then $\cK(J)=\{\{1\},\{3,4\},\{6\}\}$. For connected subsets $K\subseteq \Ifin$ let $\varphi_K$ be the highest root of $\Phi_K$. Note that 
\begin{align}\label{eq:disjoint}
\Phi_J=\bigsqcup_{K\in\cK(J)}\Phi_K.
\end{align}
Note that if $\Phi_J$ is not reduced, then $\Phi$ is of type $\sBC_n$, and $J$ contains a connected component $K=\{k+1,k+2,\ldots,n\}$ (of type $\sBC_{n-k}$).

\begin{lemma}\label{lem:boundingwalls}
Let $J\subseteq \Ifin$. The walls of the fundamental $J$-alcove $\cA_J$ are $H_{\alpha_j,0}$ and $H_{\varphi_K,1}$ with $j\in J$ and $K\in\cK(J)$. That is,
$$
\cA_J=\{x\in V\mid \langle x,\alpha_j\rangle\geq 0\text{ for all $j\in J$ and }\langle x,\varphi_K\rangle\leq 1\text{ for all $K\in\cK(J)$}\}.
$$
\end{lemma}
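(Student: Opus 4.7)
The plan is to prove the set-theoretic equality asserted in the ``That is'' clause of the lemma, and then read off the wall description from it. The inclusion $\cA_J \subseteq \{x \in V : \langle x, \alpha_j\rangle \geq 0 \text{ for all } j \in J,\ \langle x, \varphi_K\rangle \leq 1 \text{ for all } K \in \cK(J)\}$ is immediate from the definition of $\cA_J$, since $\alpha_j \in \Phi_J^+$ for each $j \in J$ and $\varphi_K \in \Phi_K^+ \subseteq \Phi_J^+$ for each $K \in \cK(J)$.

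For the reverse inclusion, I would fix $x$ satisfying the right-hand side conditions and an arbitrary $\alpha \in \Phi_J^+$, and verify $0 \leq \langle x, \alpha\rangle \leq 1$. By the decomposition~(\ref{eq:disjoint}) we have $\alpha \in \Phi_K^+$ for a unique $K \in \cK(J)$. Since $K$ is connected in the Dynkin diagram, expanding $\alpha$ in the simple roots of $\Phi_K$ gives $\alpha = \sum_{k \in K} c_k \alpha_k$ with $c_k \in \ZZ_{\geq 0}$, so $\langle x, \alpha\rangle = \sum_{k \in K} c_k \langle x, \alpha_k\rangle \geq 0$ by hypothesis. For the upper bound, I invoke the standard fact that in an irreducible crystallographic root system the highest root dominates every positive root in the root partial order: $\varphi_K - \alpha = \sum_{k \in K} d_k \alpha_k$ with $d_k \in \ZZ_{\geq 0}$. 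Pairing with $x$ yields $\langle x, \alpha\rangle \leq \langle x, \varphi_K\rangle \leq 1$.

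To obtain the wall description, I appeal to the discussion in Section~\ref{sec:JAffineWeyl}: $\cA_J$ is a fundamental domain for $\WJaff$ acting on $V$, and using the fact that reflections in hyperplanes $H_{\alpha,k}$ with $\alpha \in V_J$ act trivially on $V^J$, one has $\WJaff = \prod_{K \in \cK(J)} \WKaff$, a product of irreducible affine Coxeter groups. In each factor the simple reflections are $\{s_{\alpha_k} : k \in K\} \cup \{s_{\varphi_K,1}\}$, so the walls of $\cA_J$ are in bijection with these simple reflections across all $K$, producing exactly the $|J| + |\cK(J)|$ hyperplanes in the claimed list. I do not anticipate a substantive obstacle; the only point to watch is whether some $\Phi_K$ is non-reduced (forcing $\Phi$ to be $\sBC_n$ and $K = \{k+1,\ldots,n\}$), but even there $\Phi_K$ still has a unique highest root dominating all positive roots, so the argument applies uniformly.
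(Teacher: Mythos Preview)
Your proof of the set-theoretic equality is correct and follows essentially the same argument as the paper: both directions are handled identically, using the decomposition~(\ref{eq:disjoint}) and the fact that $\varphi_K - \alpha$ is a nonnegative integer combination of simple roots in $K$. Your additional paragraph deriving the wall description from the $\WJaff$-structure is not in the paper's proof (the paper simply proves the displayed equality and leaves the wall statement implicit), but it is correct and the paper records exactly this Coxeter decomposition in~(\ref{eq:Jaffine}) immediately after the lemma.
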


\begin{proof}
Write $\cA_J'$ for the right hand side of the displayed equation in the statement of the lemma. Clearly $\cA_J\subseteq\cA_J'$, and so it suffices to show that $\cA_J'\subseteq \cA_J$. Thus suppose that $x\in \cA_J'$. Let $\alpha\in \Phi_J$. Writing $\alpha=\sum_{j\in J}a_j\alpha_j$ we have
$
\langle x,\alpha\rangle=\sum_{j\in J}a_j\langle x,\alpha_j\rangle\geq 0.
$
Since $\alpha\in\Phi_J$ we have $\alpha\in\Phi_K$ for some $K\in\cK(J)$, by~(\ref{eq:disjoint}). Since $\langle x,\alpha_j\rangle\geq 0$ for all $j\in J$ and $\varphi_K-\alpha$ is a nonnegative linear combination of roots $\alpha_k$ with $k\in K\subseteq J$ we have
$
\langle x,\varphi_K-\alpha\rangle\geq 0,
$
and so $\langle x,\alpha\rangle\leq \langle x,\varphi_K\rangle\leq 1$, hence $x\in \cA_J$. 
\end{proof}

The reflections in the walls of $\cA_J$ generate the $J$-affine Coxeter group, and we have
\begin{align}\label{eq:Jaffine}
\WJaff=\prod_{K\in\cK(J)}\WKaff.
\end{align}
Let $s_j'=s_j$ for $j\in J$, and let $s_{0_K}'=s_{\varphi_K,1}$ for $K\in\cK(J)$ (where for each $K\in\cK(J)$ we introduce a symbol $0_K$). Then $\{s_{0_K}'\}\cup\{s_k'\mid k\in K\}$ are the Coxeter generators of $\WKaff$. Write $\Jaff=\{0_K\mid K\in\cK(J)\}\cup J$, and so $\{s_j'\mid j\in \Jaff\}$ is the set of Coxeter generators of $\WJaff$.

\subsection{The set $P^{(J)}=\cA_J\cap P$}

In this section we determine the set $P^{(J)}=\cA_J\cap P$ of coweights contained in the fundamental $J$-alcove. 

\begin{lemma}\label{lem:projection}
Let $J\subseteq \Ifin$ and let $\lambda\in P$. There exists a unique $\lambda^*\in (\lambda+Q_J)\cap \cA_J$. 
\end{lemma}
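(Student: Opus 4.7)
The plan is to observe that, for $\lambda \in P$, the $\WJaff$-orbit of $\lambda$ is precisely the coset $\lambda + Q_J$, and then to invoke the standard fact that the closed fundamental alcove $\cA_J$ is a strict fundamental domain for $\WJaff$ acting on $V$; equivalently, every $\WJaff$-orbit meets $\cA_J$ in exactly one point (see \cite[Ch.~V, \S3]{Bou:02} and the discussion in Section~\ref{sec:JAffineWeyl}).

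First I would establish by induction on $\ell(w_0)$ that $w_0\lambda \in \lambda + Q_J$ for all $\lambda \in P$ and $w_0 \in W_J$. The inductive step reduces to showing $s_j\lambda - \lambda \in Q_J$ for $j \in J$ and $\lambda \in P$, which follows from $s_j\lambda - \lambda = -\langle \lambda,\alpha_j\rangle \alpha_j^\vee \in \ZZ\alpha_j^\vee \subseteq Q_J$, together with the $W_J$-invariance of $Q_J$. Since $\WJaff = Q_J \rtimes W_J$, this yields
$$
\WJaff\lambda \;=\; W_J\lambda + Q_J \;=\; \lambda + Q_J \qquad \text{for all } \lambda \in P,
$$
so the intersection $(\lambda + Q_J) \cap \cA_J$ coincides with $\WJaff\lambda \cap \cA_J$. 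The existence part of the lemma is then immediate from the fact that $\cA_J$ meets every $\WJaff$-orbit, and uniqueness follows because any two points of $(\lambda + Q_J) \cap \cA_J$ differ by an element $t_\mu \in Q_J \subseteq \WJaff$, forcing them to coincide by the strict fundamental domain property.

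The main subtlety lies in the non-reduced setting, where $\Phi_J$ may contain a component of type $\sBC_k$. There one must verify that $Q_J$ as defined in the paper (which includes $(2\alpha)^\vee = \alpha^\vee/2$ whenever both $\alpha$ and $2\alpha$ lie in $\Phi_J$) really is the full translation subgroup of $\WJaff$; this is a short direct computation using $s_{\alpha,0}s_{\alpha,1} = t_{-\alpha^\vee}$ and $s_{2\alpha,0}s_{2\alpha,1} = t_{-\alpha^\vee/2}$. Once this is in hand, the semidirect decomposition $\WJaff = Q_J \rtimes W_J$ persists and the argument above goes through unchanged.
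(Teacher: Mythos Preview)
Your argument is correct and follows essentially the same approach as the paper's proof: both establish that $\WJaff$ preserves the coset $\lambda+Q_J$ and then appeal to $\cA_J$ being a (strict) fundamental domain for $\WJaff$. The only cosmetic difference is that the paper obtains existence by picking a specific $w\in\WJaff$ via transitivity on $J$-alcoves and checking $w\lambda\in\lambda+Q_J$ using the reflection formula $s_{\alpha,k}(\lambda)=\lambda-(\langle\lambda,\alpha\rangle-k)\alpha^{\vee}$, whereas you first identify the full orbit $\WJaff\lambda=\lambda+Q_J$ via the semidirect product decomposition $\WJaff=Q_J\rtimes W_J$ and then invoke that every orbit meets $\cA_J$; the uniqueness arguments are identical.
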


\begin{proof}
Let $\lambda\in P$. Then $\lambda\in \cA$ for some $J$-alcove $\cA\in \mathbb{A}_J$. Since $\WJaff$ acts transitively on $\mathbb{A}_J$, and since $\cA_J\in \mathbb{A}_J$, there is $w\in \WJaff$ such that $w\cA=\cA_J$ (and in particular, $w\lambda\in\cA_J$). Since $w$ is a product of reflections in hyperplanes $H_{\alpha,k}$ with $\alpha\in\Phi_J^+$ and $k\in\mathbb{Z}$, and since $s_{\alpha,k}(\lambda)=\lambda-(\langle \lambda,\alpha\rangle-k)\alpha^{\vee}$ it follows that $w\lambda\in \lambda+Q_J$, proving the existence of~$\lambda^*$. Uniqueness follows from the fact that $\cA_J$ is a fundamental domain for the action of $\WJaff$ on~$V$.
\end{proof}

\begin{defn}\label{defn:projections}
Let $\lambda^{(J)}$ denote the unique element $\la^*$ of $(\lambda+Q_J)\cap \cA_J$ (c.f. Lemma~\ref{lem:projection}). Note that $\lambda^{(J)}$ and $\lambda^J$ are, in general, distinct. See Example~\ref{ex:Other}.
\end{defn}

The following lemma gives a useful characterisation of $P^{(J)}$. For $\lambda\in P$ let 
\begin{align}\label{eq:Jlambda}
J_{\lambda}=\{j\in J\mid \langle \lambda,\alpha_j\rangle\neq 0\}.
\end{align}

\begin{lemma}\label{lem:restrictingstrip}
Let $J\subseteq\Ifin$ and $\lambda\in P$. Then $\lambda\in\cA_J$ if and only if the set
$
J_{\lambda}
$
has the following properties:
\begin{compactenum}[$(1)$]
\item if $j\in J_{\lambda}$ then $\langle\lambda,\alpha_j\rangle=1$;
\item for each $K\in\cK(J)$ we have $|J_{\lambda}\cap K|\leq 1$;
\item if $j\in J_{\lambda}$ then $\langle\omega_j,\varphi_K\rangle=1$ where $K\in\cK(J)$ is the connected component of $J$ containing~$j$.
\end{compactenum}
\end{lemma}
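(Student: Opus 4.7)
The plan is to reduce everything to Lemma~\ref{lem:boundingwalls}, which says that $\lambda\in\cA_J$ if and only if $\langle\lambda,\alpha_j\rangle\geq 0$ for all $j\in J$ and $\langle\lambda,\varphi_K\rangle\leq 1$ for all $K\in\cK(J)$. Since $\lambda\in P$, both pairings are integers, so $\langle\lambda,\alpha_j\rangle\in\ZZ_{\geq 0}$ for $j\in J$. By definition, $j\in J_\lambda$ means $\langle\lambda,\alpha_j\rangle\geq 1$ while $j\in J\setminus J_\lambda$ means $\langle\lambda,\alpha_j\rangle=0$.

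For each connected component $K\in\cK(J)$, the root system $\Phi_K$ is irreducible, so the highest root $\varphi_K$ expands as $\varphi_K=\sum_{k\in K}c_k\alpha_k$ with every coefficient $c_k\geq 1$. The key identity is that the coefficient is recovered by the pairing with the fundamental coweight: since $\langle\omega_k,\alpha_j\rangle=\delta_{kj}$ for $k,j\in\Ifin$, we get
\[
\langle\omega_k,\varphi_K\rangle=c_k\qquad\text{for all }k\in K.
\]
Using this expansion,
\[
\langle\lambda,\varphi_K\rangle=\sum_{k\in K}c_k\langle\lambda,\alpha_k\rangle
\]
is a sum of non-negative integers in which each nonzero term is at least $c_k\cdot 1\geq 1$.

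For the forward direction, suppose $\lambda\in\cA_J$. Fix $K\in\cK(J)$. The inequality $\langle\lambda,\varphi_K\rangle\leq 1$ combined with the fact that the sum above consists of non-negative integers each of size $\geq c_k\geq 1$ when nonzero forces at most one index $k\in K$ to satisfy $\langle\lambda,\alpha_k\rangle\neq 0$, and in that case $c_k\langle\lambda,\alpha_k\rangle=1$, hence $c_k=1$ and $\langle\lambda,\alpha_k\rangle=1$. This yields (1) $\langle\lambda,\alpha_j\rangle=1$ for $j\in J_\lambda$, (2) $|J_\lambda\cap K|\leq 1$ for each $K\in\cK(J)$, and (3) $\langle\omega_j,\varphi_K\rangle=c_j=1$ for the unique $j\in J_\lambda\cap K$ (if any).

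For the converse, assume (1)--(3). The condition $\langle\lambda,\alpha_j\rangle\geq 0$ for all $j\in J$ is immediate from (1) and the definition of $J_\lambda$. For each $K\in\cK(J)$, property~(2) implies that at most one $k\in K$ contributes to $\langle\lambda,\varphi_K\rangle=\sum_{k\in K}c_k\langle\lambda,\alpha_k\rangle$. If no such $k$ exists, $\langle\lambda,\varphi_K\rangle=0\leq 1$; if $k\in J_\lambda\cap K$ is the unique contributor, then by (1) and (3) we get $\langle\lambda,\varphi_K\rangle=c_k\cdot 1=\langle\omega_k,\varphi_K\rangle=1$. Applying Lemma~\ref{lem:boundingwalls} concludes that $\lambda\in\cA_J$. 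There is no real obstacle here; the only mildly subtle point is the identification $\langle\omega_k,\varphi_K\rangle=c_k$, which relies on the support of $\varphi_K$ being contained in $K$ and on using the fundamental coweights of the ambient system~$\Phi$.
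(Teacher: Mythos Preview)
Your proof is correct and follows essentially the same approach as the paper's. Both arguments hinge on the identity $\langle\omega_j,\varphi_K\rangle=c_j$ (the coefficient of $\alpha_j$ in $\varphi_K$) and on bounding $\langle\lambda,\varphi_K\rangle=\sum_{k\in K}c_k\langle\lambda,\alpha_k\rangle$ as a sum of non-negative integers; the only cosmetic difference is that you invoke Lemma~\ref{lem:boundingwalls} for the converse, while the paper verifies $0\leq\langle\lambda,\alpha\rangle\leq 1$ for all $\alpha\in\Phi_J^+$ directly via $\langle\omega_j,\alpha\rangle\leq\langle\omega_j,\varphi_K\rangle$.
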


\begin{proof}
Write $\lambda=\sum_{i\in I}c_i\omega_i$. If $\lambda\in P\cap \cA_J$ then $c_j\in\{0,1\}$ for all $j\in J$. Moreover, for each $K\in\cK(J)$ there is at most one $j\in K$ with $c_j=1$, for otherwise $\langle\lambda,\varphi_K\rangle\geq 2$. Moreover, if $j\in K$ with $c_j=1$ then for $\alpha\in\Phi_K^+$ we have $\langle \lambda,\alpha\rangle=\langle\omega_j,\alpha\rangle$. In particular, taking $\alpha=\varphi_K\in\Phi_J$ we have $\langle\omega_j,\varphi_K\rangle=1$. The converse is clear, using $\langle \omega_j,\alpha\rangle\leq\langle\omega_j,\varphi_K\rangle$ for all $\alpha\in\Phi_K^+$. 
\end{proof}

\begin{cor}\label{cor:lambdadecomp} The set $P^{(J)}$ consists precisely of the elements 
\begin{align*}
\lambda=\sum_{i\in I\backslash J}a_i\omega_j+\sum_{j\in J'}\omega_j
\end{align*}
with $a_i\in \ZZ$ and where $J'\subseteq J$ is a set satisfying $|J'\cap K|\leq 1$ for all $K\in\cK(J)$ and if $j\in J'\cap K$ then the coefficient of $\alpha_j$ in $\varphi_K$ is~$1$ (then $J'=J_{\lambda}$). 
\end{cor}

\begin{proof}
This follows immediately from Lemma~\ref{lem:restrictingstrip}.  
\end{proof}

\subsection{The set $\WJ$ and the elements $\sy_{\lambda}$ and $\st_{\lambda}$}

Define a subset $\WJ\subseteq\Wext$ by
\begin{align*}
\WJ=\{w\in \Wext\mid wA_0\subseteq \cA_J\}.
\end{align*}
Thus $\WJ\cap \Waff$ is in bijection with the (classical) alcoves contained in the fundamental $J$-alcove. 

We note, in passing, that $\WJ$ has the following characterisations (in particular, the third characterisation below shows that $\WJ$ may be regarded as an affine analogue of $W^J$, see Lemma~\ref{lem:decomposition}, and recall the definition of $\widetilde{\Phi}(w)$ from Section~\ref{sec:affrtsys}).

\begin{lemma} We have
\begin{align*}
\WJ&=\{w\in \Wext\mid \ell(s_{\beta,k}w)>\ell(w)\text{ for all $\beta+k\delta\in\Phi_J+ \ZZ\delta$}\}\\
&=\{w\in \Wext\mid \ell(s_j'w)>\ell(w)\text{ for all $j\in \Jaff$}\}\\
&=\{w\in\Wext\mid \widetilde{\Phi}(w)\cap(\Phi_J+\ZZ\delta)=\emptyset\}.
\end{align*}
\end{lemma}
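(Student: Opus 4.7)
The plan is to establish a cycle of inclusions $\WJ \subseteq \mathcal{W}_2 \subseteq \mathcal{W}_3 \subseteq \WJ$ together with the separate equivalence $\mathcal{W}_2 = \mathcal{W}_4$, where I temporarily label the second, third, and fourth displayed right-hand sides as $\mathcal{W}_2$, $\mathcal{W}_3$, $\mathcal{W}_4$ respectively.

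First I would show $\WJ \subseteq \mathcal{W}_2$. Since $\Phi_J^+ \subseteq \Phi^+$ one has $A_0 \subseteq \cA_J$, so if additionally $wA_0 \subseteq \cA_J$ then both alcoves lie in the single $J$-alcove $\cA_J$ and no hyperplane $H_{\beta,k}$ of the arrangement $\mathbb{H}_J$ separates them. By the standard dictionary between Coxeter length and alcove-separation (which transfers from $\Waff$ to $\Wext$ using $\ell(w\sigma) = \ell(w)$ and $\sigma A_0 = A_0$ for $\sigma\in\Sigma$), this gives $\ell(s_{\beta,k}w) > \ell(w)$ for every $\beta + k\delta \in \Phi_J + \ZZ\delta$. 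The inclusion $\mathcal{W}_2 \subseteq \mathcal{W}_3$ is immediate, since each $s_j'$ with $j \in \Jaff$ has the form $s_{\alpha_j, 0}$ or $s_{\varphi_K, 1}$ and so is a reflection $s_{\beta,k}$ with $\beta \in \Phi_J^+$. The closing inclusion $\mathcal{W}_3 \subseteq \WJ$ invokes Lemma~\ref{lem:boundingwalls}, which describes $\cA_J$ as the intersection of the half-spaces $H_{\alpha_j,0}^+$ ($j \in J$) and $H_{\varphi_K,1}^-$ ($K \in \cK(J)$); the hypothesis says that no wall of $\cA_J$ separates $A_0$ from $wA_0$, so $wA_0$ lies on the $\cA_J$-side of each wall, hence inside the intersection, hence in $\cA_J$.

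It remains to identify $\mathcal{W}_2$ with $\mathcal{W}_4$. This is the standard correspondence between length-reducing reflections and the affine inversion set: for each pair $(\beta, k)$ with $\beta + k\delta \in \Phi_J + \ZZ\delta$, the positive affine root $\gamma \in \widetilde{\Phi}_0^+$ associated to the reflection $s_{\beta,k}$ satisfies $\ell(s_{\beta,k}w) < \ell(w)$ if and only if $\gamma \in \widetilde{\Phi}(w)$; as $(\beta, k)$ varies, these $\gamma$ exhaust $(\Phi_J + \ZZ\delta) \cap \widetilde{\Phi}_0^+$, and since $\widetilde{\Phi}(w) \subseteq \widetilde{\Phi}_0^+$ the intersection $\widetilde{\Phi}(w) \cap (\Phi_J + \ZZ\delta)$ equals the set of affine roots witnessing failure of the $\mathcal{W}_2$-condition. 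The main obstacle I anticipate is the bookkeeping in the non-reduced case $\Phi = \sBC_n$, where the hyperplane $H_{\alpha, 2k}$ coincides with $H_{2\alpha, k}$ and so a single reflection is parametrized by two elements of $\Phi_J$; the correspondence must be set up so that this redundancy, together with the restriction of $\widetilde{\Phi}(w)$ to $\widetilde{\Phi}_0^+$, does not disturb the equivalence $\mathcal{W}_2 = \mathcal{W}_4$. Apart from this subtlety, the argument rests only on Lemma~\ref{lem:boundingwalls} and standard facts about Coxeter lengths and hyperplane arrangements.
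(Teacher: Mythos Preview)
Your proposal is correct and follows essentially the same approach as the paper: both rely on the dictionary between ``$H_{\beta,k}$ separates $A_0$ from $wA_0$'' and ``$\ell(s_{\beta,k}w)<\ell(w)$'', invoke Lemma~\ref{lem:boundingwalls} for the $s_j'$ characterisation, and use the standard inversion-set correspondence for the $\widetilde{\Phi}(w)$ characterisation. The only difference is organisational: the paper proves each characterisation directly equivalent to the geometric definition of $\WJ$, whereas you thread a cycle $\WJ \subseteq \mathcal{W}_2 \subseteq \mathcal{W}_3 \subseteq \WJ$; the content is the same.
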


\begin{proof}
We have $w\in \WJ$ if and only if the alcove $wA_0$ lies on the same side of $H_{\beta,k}$ as $A_0$ for all $(\beta,k)\in\Phi_J\times \ZZ$, if and only if $\ell(s_{\beta,k}w)>\ell(w)$. Hence the first equality. For the second equality, we similarly have $w\in \WJ$ if and only if the alcove $wA_0$ lies on the same side of $H_{\alpha_j,0}$ and $H_{\varphi_K,1}$ as $A_0$ for all $j\in J$ and $K\in\cK(J)$ (see Lemma~\ref{lem:boundingwalls}). The third characterisation follows from the first characterisation and the fact that if $\beta-k\delta\in\widetilde{\Phi}^+$ then $\ell(s_{\beta,k}w)<\ell(w)$ if and only if $\beta+k\delta\in\widetilde{\Phi}(w)$. 
\end{proof}

\begin{defn}\label{defn:utau}
For $\lambda\in P^{(J)}$ let 
$$
\sy_{\lambda}=\sw_{J\backslash J_{\lambda}}\sw_J\quad\text{and}\quad \st_{\lambda}=t_{\lambda}\sy_{\lambda},
$$
where $J_{\lambda}$ is as in~(\ref{eq:Jlambda}).
\end{defn}

Note that both $\sy_{\lambda}$ and $\st_{\lambda}$ depend on the subset $J$, however this dependence is suppressed in the notation.

\begin{lemma}\label{lem:utbasics1}
For $\lambda\in P^{(J)}$ we have
$
\Phi(\sy_{\lambda})=\Phi_J^+\backslash\Phi_{J\backslash J_{\lambda}}^+=\{\alpha\in\Phi_J^+\mid \langle\lambda,\alpha\rangle=1\}.
$
\end{lemma}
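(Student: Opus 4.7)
The plan is to prove the two equalities separately, with the first being a purely Coxeter-theoretic computation about longest elements of parabolic subgroups, and the second an elementary calculation using the structure of $\cA_J \cap P$ established in Lemma~\ref{lem:restrictingstrip}.

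For the first equality, write $J' = J \setminus J_\lambda$, so that $\sy_\lambda = \sw_{J'}\sw_J$ and (since $\sw_{J'}$ is an involution) $\sw_J = \sw_{J'}\sy_\lambda$. Because $W_{J'} \subseteq W_J$ we have $\sw_{J'} \leq \sw_J$ in Bruhat order, and the standard length formula for longest elements of parabolic subgroups gives $\ell(\sw_J) = \ell(\sw_{J'}) + \ell(\sy_\lambda)$. Applying the standard identity $\Phi(uv) = \Phi(u) \sqcup u\Phi(v)$ (valid when $\ell(uv)=\ell(u)+\ell(v)$) to $\sw_J = \sw_{J'} \sy_\lambda$, together with $\Phi(\sw_J)=\Phi_J^+$ and $\Phi(\sw_{J'})=\Phi_{J'}^+$, yields
\[
\sw_{J'}\Phi(\sy_\lambda) = \Phi_J^+ \setminus \Phi_{J'}^+, \qquad \text{hence} \qquad \Phi(\sy_\lambda) = \sw_{J'}(\Phi_J^+ \setminus \Phi_{J'}^+).
\]
To finish, observe that $\sw_{J'} \in W_{J'}$ acts on roots in $\Phi_J$ by modifying only the coefficients of the $\alpha_j$ with $j \in J'$, leaving the coefficients on $\alpha_j$ with $j \in J \setminus J' = J_\lambda$ unchanged. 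Since $\Phi_J^+ \setminus \Phi_{J'}^+$ is precisely the set of $\alpha \in \Phi_J^+$ whose support meets $J_\lambda$, this set is stable under $\sw_{J'}$ (and stays inside $\Phi_J^+$ since some unchanged coefficient remains strictly positive). Therefore $\sw_{J'}(\Phi_J^+ \setminus \Phi_{J'}^+) = \Phi_J^+ \setminus \Phi_{J'}^+$, proving the first equality.

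For the second equality, fix $\alpha \in \Phi_J^+$. By the decomposition~(\ref{eq:disjoint}), $\alpha \in \Phi_K^+$ for a unique $K \in \cK(J)$; write $\alpha = \sum_{j \in K} c_j \alpha_j$ with $c_j \geq 0$. Then $\langle \lambda, \alpha\rangle = \sum_{j \in K} c_j \langle\lambda,\alpha_j\rangle$, and by Lemma~\ref{lem:restrictingstrip} the values $\langle\lambda,\alpha_j\rangle$ are $1$ for $j \in J_\lambda$ and $0$ otherwise, with $|J_\lambda \cap K| \leq 1$. If $J_\lambda \cap K = \emptyset$ then $\langle\lambda,\alpha\rangle=0$ and $\alpha \in \Phi_{J\setminus J_\lambda}^+$, so neither condition holds. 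If $J_\lambda \cap K = \{k\}$, then $\langle\lambda,\alpha\rangle = c_k$. By Lemma~\ref{lem:restrictingstrip}(3) the coefficient of $\alpha_k$ in $\varphi_K$ is $\langle \omega_k,\varphi_K\rangle = 1$, and since the coefficient of any simple root in an element of $\Phi_K^+$ is bounded above by its coefficient in $\varphi_K$, we conclude $c_k \in \{0,1\}$. Finally, $\alpha \notin \Phi_{J\setminus J_\lambda}^+$ iff $k \in \mathrm{supp}(\alpha)$ iff $c_k = 1$ iff $\langle\lambda,\alpha\rangle = 1$, completing the proof.

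The main obstacle — though each step is fairly light — is the stability argument at the end of paragraph one (that $\sw_{J'}$ preserves $\Phi_J^+ \setminus \Phi_{J'}^+$), since one must leverage that $\sw_{J'}$ acts ``internally'' to the $J'$-directions while $\Phi_J^+ \setminus \Phi_{J'}^+$ is characterized by a non-vanishing $J_\lambda$-component. The second equality is a direct unpacking of Lemma~\ref{lem:restrictingstrip}, with the only subtlety being the bound $c_k \leq 1$ coming from the highest-root condition in part (3) of that lemma.
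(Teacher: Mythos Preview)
Your proof is correct and follows the same overall approach as the paper: the second equality is established exactly as the paper does, by unpacking Lemma~\ref{lem:restrictingstrip} to see that any $\alpha\in\Phi_J^+$ whose support meets $J_\lambda$ must have coefficient exactly $1$ at the unique $k\in J_\lambda\cap K$. The only difference is that the paper dismisses the first equality as ``clear from the definition'' (it is the standard fact that for $J'\subseteq J$ the element $\sw_{J'}\sw_J$ has inversion set $\Phi_J^+\setminus\Phi_{J'}^+$), whereas you supply an explicit derivation via the length-additive inversion-set formula and the observation that $\sw_{J'}$ stabilises $\Phi_J^+\setminus\Phi_{J'}^+$.
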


\begin{proof}
It is clear from the definition that $\Phi(\sy_{\lambda})=\Phi_J^+\backslash\Phi_{J\backslash J_{\lambda}}^+$. A root $\alpha\in\Phi_J^+$ does not lie in $\Phi_{J\backslash J_{\lambda}}^+$ if and only there exists $j\in J_{\lambda}$ such that the coefficient of $\alpha_j$ in $\alpha$ is strictly positive. By Lemma~\ref{lem:restrictingstrip} this occurs if and only if the coefficient of $\alpha_j$ in $\alpha$ is $1$, and hence the result. 
\end{proof}

The following theorem gives an explicit decomposition of $\WJ$ (see Example~\ref{ex:Other} for an illustration). 

\begin{thm}\label{thm:WJ}
We have $\WJ=\{\st_{\lambda}u\mid \lambda\in P^{(J)}\text{ and }u\in W^J\}.$
\end{thm}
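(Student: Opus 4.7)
The plan is to prove both inclusions using the third characterization from the preceding lemma, namely $\WJ = \{w \in \Wext \mid \widetilde{\Phi}(w) \cap (\Phi_J + \ZZ\delta) = \emptyset\}$, together with the explicit description of $\Phi(\sy_\lambda)$ in Lemma~\ref{lem:utbasics1} and the structure of $W^J$ from Lemma~\ref{lem:decomposition}.

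For the forward inclusion, I would fix $\lambda \in \cA_J \cap P$ and $u \in W^J$ and show $w = \st_\lambda u = t_\lambda \sy_\lambda u$ satisfies $\widetilde{\Phi}(w) \cap (\Phi_J + \ZZ\delta) = \emptyset$. For an affine root $\beta + k\delta \in \widetilde{\Phi}_0^+$ with $\beta \in \Phi_J \cap \Phi_0$, compute
$$
w^{-1}(\beta + k\delta) = u^{-1}\sy_\lambda^{-1}\beta + (k + \langle\lambda,\beta\rangle)\delta.
$$
Observe that for any $\beta' \in \Phi_J^+$ one has $\langle \lambda,\beta'\rangle \in \{0,1\}$ (by Lemma~\ref{lem:restrictingstrip}), and further $\sy_\lambda^{-1}\beta' \in \Phi_J$ is negative iff $\langle\lambda,\beta'\rangle = 1$ (Lemma~\ref{lem:utbasics1}); since $\Phi_J(u) = \emptyset$ (Lemma~\ref{lem:decomposition}), the element $u^{-1}$ preserves the sign of any root of $\Phi_J$, so $u^{-1}\sy_\lambda^{-1}\beta' \in \Phi_J$ is negative iff $\langle\lambda,\beta'\rangle = 1$. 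Splitting into the cases $\beta > 0$ with $k \geq 0$ and $\beta < 0$ with $k > 0$, and subcases on the value of $\langle\lambda,|\beta|\rangle$, one checks that in every case the image $w^{-1}(\beta + k\delta)$ lies in $\widetilde{\Phi}_0^+$ rather than in $-\widetilde{\Phi}_0^+$; hence $\st_\lambda u \in \WJ$.

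For the reverse inclusion, take $w \in \WJ$ and write $w = t_\mu v$ with $\mu = \wt(w)$ and $v = \theta(w)$. Since $w(0) = \mu$ is a vertex of $wA_0$ and $wA_0 \subseteq \cA_J$ with $\cA_J$ closed, we obtain $\mu \in \cA_J \cap P$. To recover the finite-part decomposition, fix $\alpha \in \Phi_J \cap \Phi_0^+$ and evaluate $w^{-1}$ on the affine roots $\alpha + 0\delta$ and $-\alpha + \delta$ (both in $\widetilde{\Phi}_0^+$): the hypothesis that neither lies in $\widetilde{\Phi}(w)$ forces, via the sign of $v^{-1}\alpha$ shifted by $\langle\mu,\alpha\rangle \in \{0,1\}$, the equivalence $\alpha \in \Phi(v) \Longleftrightarrow \langle\mu,\alpha\rangle = 1$. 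By Lemma~\ref{lem:utbasics1} this gives $\Phi(v) \cap \Phi_J = \Phi(\sy_\mu)$, so by Lemma~\ref{lem:decomposition} the parabolic decomposition of $v$ has $\theta_J(w) = \sy_\mu$, whence $v = \sy_\mu\, \theta^J(w)$ and $w = \st_\mu\, \theta^J(w)$ with $\theta^J(w) \in W^J$. Uniqueness of the presentation $\st_\lambda u$ follows immediately: the translation coweight recovers $\lambda = \wt(w)$, and then $u = \sy_\lambda^{-1}\theta(w)$ is determined.

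The main obstacle is the sign bookkeeping in the forward inclusion — carefully tracking how translation, $\sy_\lambda$, and $u$ each act on affine roots of $\Phi_J + \ZZ\delta$. In the non-reduced $\sBC_n$ setting the restriction to $\Phi_0$ needs to be handled consistently with the definitions of $\widetilde{\Phi}_0$ and of inversion sets, but once one checks that all relevant roots live in $\Phi_{J}\cap\Phi_0$ the argument reduces uniformly to the reduced case.
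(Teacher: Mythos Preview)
Your argument is correct and takes a genuinely different route from the paper. The paper proves both inclusions geometrically: for the forward direction it first shows $\st_\lambda\in\WJ$ by identifying the hyperplanes in $\Phi_J$-parallel classes that separate $t_\lambda A_0$ from $\cA_J$ and matching them with $\Phi(\sy_\lambda)$, then argues by contradiction (using Lemma~\ref{lem:decomposition}) that appending $u\in W^J$ cannot push the alcove across a wall of $\cA_J$; for the reverse direction it writes $w=t_\lambda u_1u_2$ with $u_1\in W_J$, $u_2\in W^J$, uses $\Phi_J(u_2)=\emptyset$ to strip off $u_2$, and invokes the first paragraph to force $u_1=\sy_\lambda$. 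You instead work entirely with the affine-root characterization $\WJ=\{w\mid\widetilde\Phi(w)\cap(\Phi_J+\ZZ\delta)=\emptyset\}$, computing $w^{-1}(\beta+k\delta)$ explicitly and doing a clean sign analysis driven by Lemma~\ref{lem:utbasics1}. Your reverse direction is particularly tidy: testing against the two specific affine roots $\alpha$ and $-\alpha+\delta$ pins down $\Phi(v)\cap\Phi_J=\Phi(\sy_\mu)$ directly, whence $\theta_J(w)=\sy_\mu$ by Lemma~\ref{lem:decomposition} and the fact that inversion sets determine Weyl group elements. The paper's approach is more visual and ties in with the alcove picture used elsewhere; yours is more algebraic and self-contained, and handles the $\sBC_n$ case uniformly once one restricts to $\Phi_0$, exactly as you note.
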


\begin{proof}
We first show that $\st_{\lambda}\in\WJ$ for all $\lambda\in P^{(J)}$. The hyperplane in the parallelism class of $\alpha\in\Phi_J$ passing through $\lambda$ is $H_{\alpha,\langle\lambda,\alpha\rangle}$, and we have $\langle\lambda,\alpha\rangle\in\{0,1\}$ (as $\lambda\in P^{(J)}$). Thus the hyperplanes in the parallelism classes of $\Phi_J$ separating the alcove $t_{\lambda}A_0$ from $\cA_J$ are precisely the hyperplanes $H_{\alpha,1}$ with $\alpha\in\Phi_J^+$ such that $\langle\lambda,\alpha\rangle=1$. By Lemma~\ref{lem:utbasics1} these are precisely the hyperplanes separating $t_{\lambda}A_0$ from $t_{\lambda}\sy_{\lambda}A_0$ (because the corresponding linear hyperplanes $H_{\alpha,0}$ are the hyperplanes separating $e$ from $\sy_{\lambda}$, and translation by $\lambda$ preserves orientation and shifts these hyperplanes). Hence $\st_{\lambda}A_0\subseteq \cA_J$ and so $\st_{\lambda}\in\WJ$.

Let $u\in W^J$ and suppose that $\st_{\lambda}u\notin\WJ$. That is, $\st_{\lambda}uA_0\not\subseteq\cA_J$. Since $\st_{\lambda}A_0\subseteq \cA_J$ (from the previous paragraph) it follows that there is a hyperplane $H_{\alpha,k}$ separating $t_{\lambda}\sy_{\lambda}A_0$ from $t_{\lambda}\sy_{\lambda}uA_0$ with $\alpha\in\Phi_J^+$ and $k\in\{0,1\}$. Translating by $t_{-\lambda}$ implies that the hyperplane $H_{\alpha,0}$ separates $\sy_{\lambda}$ from $\sy_{\lambda}u$, contradicting Lemma~\ref{lem:decomposition}. Thus $\{\st_{\lambda}u\mid \lambda\in P^{(J)}\text{ and }u\in W^J\}\subseteq \WJ$. 

For the reverse containment, suppose that $w\in\WJ$. Write $w=t_{\lambda}u_1u_2$ with $\lambda=\wt(w)$, $u_1\in W_J$, and $u_2\in W^J$. It is clear that $\lambda\in P^{(J)}$ (as $w(0)=\lambda$ and $0\in A_0$). Since $\Phi_J(u_2)=\emptyset$ (by Lemma~\ref{lem:decomposition}) there are no walls of $\cA_J$ separating $t_{\lambda}u_1$ from $t_{\lambda}u_1u_2$. Thus $t_{\lambda}u_1A_0\subseteq\cA_J$. The argument of the first paragraph, combined with the fact that $\Phi_J(u_2)=\emptyset$, shows that the hyperplanes separating $t_{\lambda}$ from $t_{\lambda}u_1$ are precisely the hyperplanes $H_{\alpha,1}$ with $\alpha\in\Phi_J^+$ with $\langle\lambda,\alpha\rangle=1$, and it follows that $u_1=\sy_{\lambda}$. So $w=\st_{\lambda}u_2$ with $u_2\in W^J$.
\end{proof}

\begin{remark}
The first paragraph of the proof of Theorem~\ref{thm:WJ} shows that if $\lambda\in P^{(J)}$ then $\sy_{\lambda}$ may be characterised as the unique element of $W_J$ such that $t_{\lambda}\sy_{\lambda}\in \WJ$.
\end{remark}

The following immediate consequence of Theorem~\ref{thm:WJ} will play a role later. Recall the definition of $\theta_J(w)$ and $\theta^J(w)$ from Definition~\ref{defn:theta}. 

\begin{cor}\label{cor:splitting2}
If $w\in\WJ$ then $\theta_J(w)=\sy_{\wt(w)}$, and so $w=\st_{\wt(w)}\theta^J(w)$.  
\end{cor}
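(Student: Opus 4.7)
The plan is to apply Theorem~\ref{thm:WJ} to write $w$ in the form $\st_{\lambda}u$ and then invoke uniqueness of the parabolic decomposition~(\ref{eq:WJdecomposition}) to identify the factors with $\theta_J(w)$ and $\theta^J(w)$.

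More concretely: given $w\in\WJ$, Theorem~\ref{thm:WJ} produces $\lambda\in\cA_J\cap P$ and $u\in W^J$ with $w=\st_{\lambda}u=t_{\lambda}\sy_{\lambda}u$. Since $\sy_{\lambda}\in W_J$ (by its definition as $\sw_{J\backslash J_{\lambda}}\sw_J$), this exhibits a decomposition of $\theta(w)$ as a product of an element of $W_J$ and an element of $W^J$. First I would read off $\wt(w)=\lambda$ from the equation $w=t_{\lambda}(\sy_{\lambda}u)$ (the linear part $\sy_{\lambda}u$ lies in $W_0$, so the translation part must be $\lambda$). Then $\theta(w)=\sy_{\lambda}u$, and by uniqueness of the decomposition in~(\ref{eq:WJdecomposition}) we conclude $\theta_J(w)=\sy_{\lambda}=\sy_{\wt(w)}$ and $\theta^J(w)=u$.

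Plugging this back into $w=t_{\wt(w)}\theta(w)$ gives
\[
w=t_{\wt(w)}\sy_{\wt(w)}\theta^J(w)=\st_{\wt(w)}\theta^J(w),
\]
which is the second claim. There is no real obstacle here: the whole content of the corollary has been packaged into Theorem~\ref{thm:WJ}, and the only thing to verify is that the factorisation produced by that theorem coincides with the canonical parabolic factorisation, which is immediate from uniqueness in~(\ref{eq:WJdecomposition}).
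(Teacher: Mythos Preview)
Your proposal is correct and follows essentially the same approach as the paper. The paper's proof is simply a terse one-liner invoking Theorem~\ref{thm:WJ} to write $w=\st_{\lambda}u=t_{\lambda}\sy_{\lambda}u$ with $\lambda=\wt(w)$ and $u\in W^J$, and then says ``hence the result''; you have just spelled out the implicit step of matching this with the unique decomposition~(\ref{eq:WJdecomposition}).
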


\begin{proof}
By Theorem~\ref{thm:WJ} $w=\st_{\lambda}u=t_{\lambda}\sy_{\lambda}u$ with $\lambda=\wt(w)$ and $u\in W^J$, hence the result. 
\end{proof}

 \subsection{The $J$-translation group~$\TJ$}

We have seen in Lemma~\ref{lem:projection} that $ P^{(J)}$ is in bijection with $P/Q_J$ and hence the former inherits a group structure of the later via $\lambda\mapsto \lambda+Q_J$. In this section we introduce the \textit{$J$-translation group}
$$
\TJ=\{\st_{\lambda}\mid \lambda\in P^{(J)}\}
$$
and show that this subset of $\Wext$ is a group realising the above group structure (see Corollary~\ref{cor:Tiso}). This group interpolates between $P$ (when $J=\emptyset$) and $\Sigma$ (when $J=I$). The following lemma gives some important basic properties of the elements $\sy_{\lambda}$ and $\st_{\lambda}$.

\begin{lemma}\label{lem:utbasics}
Let $\lambda,\mu\in P^{(J)}$. Then
\begin{compactenum}[$(1)$]
\item $(\lambda+\mu)^{(J)}=\lambda+\sy_{\lambda}\mu$ and $(-\lambda)^{(J)}=-\sy_{\lambda}^{-1}\lambda$;
\item $\sy_{\lambda}\sy_{\mu}=\sy_{(\lambda+\mu)^{(J)}}=\sy_{\mu}\sy_{\lambda}$ and $\sy_{\lambda}^{-1}=\sy_{(-\lambda)^{(J)}}$;
\item $\st_{\lambda}\st_{\mu}=\st_{(\lambda+\mu)^{(J)}}=\st_{\mu}\st_{\lambda}$ and $\st_{\lambda}^{-1}=\st_{(-\lambda)^{(J)}}$.
\end{compactenum}
\end{lemma}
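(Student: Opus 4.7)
The plan is to deduce all three items from a single geometric fact: $\st_\lambda$, viewed as an affine map on $V$, sends the fundamental $J$-alcove $\cA_J$ into itself. Once this is established, the products in (2), (3) and the first formula in (1) drop out directly from Theorem~\ref{thm:WJ}, and the inverses follow formally.

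First I would verify $\st_\lambda(\cA_J) \subseteq \cA_J$. For $x\in \cA_J$ and $\alpha\in \Phi_J^+$, compute
\[
\langle \st_\lambda x,\alpha\rangle \;=\; \langle \lambda,\alpha\rangle + \langle x,\sy_\lambda^{-1}\alpha\rangle,
\]
and split on whether $\alpha\in\Phi(\sy_\lambda)$, using the identification $\Phi(\sy_\lambda)=\{\alpha\in\Phi_J^+:\langle\lambda,\alpha\rangle=1\}$ from Lemma~\ref{lem:utbasics1}. In the first case $\langle \lambda,\alpha\rangle = 1$ and $\sy_\lambda^{-1}\alpha = -\beta$ for some $\beta\in \Phi_J^+$, giving the value $1-\langle x,\beta\rangle\in [0,1]$. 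In the second case $\alpha\in\Phi_{J\setminus J_\lambda}^+$, so $\langle \lambda,\alpha\rangle=0$ and $\sy_\lambda^{-1}\alpha\in\Phi_J^+$, giving $\langle x,\sy_\lambda^{-1}\alpha\rangle\in[0,1]$. Applied to $x=\mu\in\cA_J\cap P$ this yields $\lambda+\sy_\lambda\mu\in\cA_J$, and since $\sy_\lambda\in W_J$ we also have $\lambda+\sy_\lambda\mu\in\lambda+\mu+Q_J$; by uniqueness in Lemma~\ref{lem:projection} we conclude $(\lambda+\mu)^{(J)} = \lambda+\sy_\lambda\mu$.

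Next I would upgrade the geometric fact to alcoves: $\st_\lambda\st_\mu A_0\subseteq \st_\lambda\cA_J\subseteq \cA_J$, so $\st_\lambda\st_\mu\in \WJ$. A direct computation gives $\st_\lambda\st_\mu = t_{(\lambda+\mu)^{(J)}}\,\sy_\lambda\sy_\mu$, and Corollary~\ref{cor:splitting2} forces $\sy_\lambda\sy_\mu = \sy_{(\lambda+\mu)^{(J)}}\,\theta^J(\st_\lambda\st_\mu)$. Since $\sy_\lambda\sy_\mu\in W_J$, uniqueness of the $W_J W^J$ decomposition~(\ref{eq:WJdecomposition}) forces $\theta^J(\st_\lambda\st_\mu)=e$, yielding $\sy_\lambda\sy_\mu=\sy_{(\lambda+\mu)^{(J)}}$ and $\st_\lambda\st_\mu = \st_{(\lambda+\mu)^{(J)}}$. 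Commutativity of both products is then immediate from symmetry of $(\lambda+\mu)^{(J)}$ in $\lambda,\mu$.

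For the inverses, note that $\sy_0 = \sw_J \sw_J = e$ and hence $\st_0 = e$. Since $\lambda+(-\lambda)^{(J)}\in Q_J$ and every translation $t_q$ with $q\in Q_J$ lies in $\WJaff$, the $\WJaff$-orbit of $0$ is all of $Q_J$; the fundamental-domain property then gives $Q_J\cap\cA_J=\{0\}$, so $(\lambda+(-\lambda)^{(J)})^{(J)}=0$. The product formula now yields $\st_\lambda\st_{(-\lambda)^{(J)}}=\st_0=e$, hence $\st_\lambda^{-1}=\st_{(-\lambda)^{(J)}}$ and similarly $\sy_\lambda^{-1}=\sy_{(-\lambda)^{(J)}}$. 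Comparing the translation parts of $\st_\lambda^{-1} = t_{-\sy_\lambda^{-1}\lambda}\,\sy_\lambda^{-1}$ and $\st_{(-\lambda)^{(J)}}$ finally delivers $(-\lambda)^{(J)}=-\sy_\lambda^{-1}\lambda$. The main obstacle throughout is the initial case analysis establishing $\st_\lambda(\cA_J)\subseteq \cA_J$; after that everything is formal manipulation using Theorem~\ref{thm:WJ}.
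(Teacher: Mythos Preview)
Your proof is correct, and it takes a genuinely different route from the paper's.

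For the first formula in (1) both you and the paper carry out essentially the same verification that $\lambda+\sy_\lambda\mu\in\cA_J$ by splitting on $\langle\lambda,\alpha\rangle\in\{0,1\}$ and using Lemma~\ref{lem:utbasics1}. The approaches then diverge. For (2) the paper gives a direct, somewhat intricate comparison of the inversion sets $\Phi(\sy_\lambda\sy_\mu)$ and $\Phi(\sy_{(\lambda+\mu)^{(J)}})$ via a four--case analysis, and separately verifies $\sy_\lambda^{-1}=\sy_{-\sy_\lambda^{-1}\lambda}$ using $\Phi(w^{-1})=\{-w\alpha\mid\alpha\in\Phi(w)\}$; part~(3) is then assembled from (1) and (2). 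By contrast, you bypass all the inversion--set bookkeeping: from $\st_\lambda(\cA_J)\subseteq\cA_J$ you get $\st_\lambda\st_\mu\in\WJ$, and then Corollary~\ref{cor:splitting2} together with the uniqueness in~(\ref{eq:WJdecomposition}) forces $\sy_\lambda\sy_\mu=\sy_{(\lambda+\mu)^{(J)}}$ and $\st_\lambda\st_\mu=\st_{(\lambda+\mu)^{(J)}}$ simultaneously. Your treatment of the inverses is also tidier: rather than checking $-\sy_\lambda^{-1}\lambda\in\cA_J$ separately (as the paper does), you derive everything formally from the product formula and $(\lambda+(-\lambda)^{(J)})^{(J)}=0$.

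The trade--off is that your argument leans on Theorem~\ref{thm:WJ} and Corollary~\ref{cor:splitting2}, which are available at this point in the paper, whereas the paper's proof of (2) is entirely self--contained at the level of root--system combinatorics. Your route is shorter and conceptually cleaner; the paper's route is more elementary in the sense of not invoking the structure theorem for~$\WJ$.
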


\begin{proof}
(1) Since $\sy_{\lambda}\in W_J$ we have $\lambda+\sy_{\lambda}\mu\in \lambda+\mu+Q_J$, and so to prove that $(\lambda+\mu)^{(J)}=\lambda+\sy_{\lambda}\mu$ it suffices, by the uniqueness in Lemma~\ref{lem:projection}, to show that $\lambda+\sy_{\lambda}\mu\in  P^{(J)}$. To do this, let $\alpha\in\Phi_J^+$, and write $\beta=\sy_{\lambda}^{-1}\alpha$. Then 
$
\langle\lambda+\sy_{\lambda}\mu,\alpha\rangle=\langle\lambda,\alpha\rangle+\langle\mu,\beta\rangle.
$
Since $\lambda\in P^{(J)}$ we have $\langle\lambda,\alpha\rangle\in \{0,1\}$. If $\langle\lambda,\alpha\rangle=0$ then $\alpha\notin\Phi(\sy_{\lambda})$ (by Lemma~\ref{lem:utbasics1}) and so $\beta=\sy_{\lambda}^{-1}\alpha\in\Phi_J^+$, and so $\langle\lambda+\sy_{\lambda}\mu,\alpha\rangle=\langle\mu,\beta\rangle$, giving 
$0\leq \langle\lambda+\sy_{\lambda}\mu,\beta\rangle\leq 1$ (as $\beta\in\Phi_J^+$ and $\mu\in\cA_J$). If $\langle\lambda,\alpha\rangle=1$ then $\alpha\in\Phi(\sy_{\lambda})$ and so $\beta\in-\Phi_J^+$. Thus $\langle\lambda+\sy_{\lambda}\mu,\alpha\rangle=1+\langle\mu,\beta\rangle$ and so $0\leq \langle\lambda+\sy_{\lambda}\mu,\alpha\rangle\leq 1$ (as $-1\leq \langle\mu,\beta\rangle\leq 0$ as $\beta\in-\Phi_J^+$ and $\mu\in\cA_J$). Hence $\lambda+\sy_{\lambda}\mu\in  P^{(J)}$. 

To show that $(-\lambda)^{(J)}=-\sy_{\lambda}^{-1}\lambda$ one shows, in a similar way, that $-\sy_{\lambda}^{-1}\lambda\in  P^{(J)}$.

(2) By (1) it suffices to show that $\sy_{\lambda}\sy_{\mu}=\sy_{\lambda+\sy_{\lambda}\mu}$ and $\sy_{\lambda}^{-1}=\sy_{-\sy_{\lambda}^{-1}\lambda}$. To prove the first statement we shall show that $\Phi(\sy_{\lambda+\sy_{\lambda}\mu})=\Phi(\sy_{\lambda}\sy_{\mu})$. It follows from Lemma~\ref{lem:utbasics1} that
$$
\Phi(\sy_{\lambda+\sy_{\lambda}\mu})=\{\alpha\in\Phi_J^+\mid \langle\lambda,\alpha\rangle=0\text{ and }\langle \mu,\sy_{\lambda}^{-1}\alpha\rangle=1,\text{ or } \langle\lambda,\alpha\rangle=1\text{ and }\langle \mu,\sy_{\lambda}^{-1}\alpha\rangle=0\}.
$$
Suppose that $\alpha\in\Phi(\sy_{\lambda+\sy_{\lambda}\mu})$. If $\langle\lambda,\alpha\rangle=1$ and $\langle \mu,\sy_{\lambda}^{-1}\alpha\rangle=0$ then $\alpha\in\Phi(\sy_{\lambda})$ and $-\sy_{\lambda}^{-1}\alpha\notin\Phi(\sy_{\mu})$, and so $\alpha\in\Phi(\sy_{\lambda}\sy_{\mu})$. If $\langle\lambda,\alpha\rangle=0$ and $\langle\mu,\sy_{\lambda}^{-1}\alpha\rangle=1$ then $\alpha\in\Phi_J^+\backslash\Phi(\sy_{\lambda})$ and $\sy_{\lambda}^{-1}\alpha\in\Phi(\sy_{\mu})$, giving $\alpha\in\Phi(\sy_{\lambda}\sy_{\mu})$. Conversely, suppose that $\alpha\in\Phi(\sy_{\lambda}\sy_{\mu})$. Then $\alpha\in\Phi_J^+$ with $\sy_{\mu}^{-1}\sy_{\lambda}^{-1}\alpha<0$, and there are two cases. If $\sy_{\lambda}^{-1}\alpha>0$ and $\sy_{\mu}^{-1}(\sy_{\lambda}^{-1}\alpha)<0$ then $\langle\lambda,\alpha\rangle=0$ (as $\alpha\notin\Phi(\sy_{\lambda})$) and $\langle \mu,\sy_{\lambda}^{-1}\alpha\rangle=1$, so $\alpha\in\Phi(\sy_{\lambda+\sy_{\lambda}\mu})$. If $\sy_{\lambda}^{-1}\alpha<0$ and $\sy_{\mu}^{-1}(-\sy_{\lambda}^{-1}\alpha)>0$ then $\langle\lambda,\alpha\rangle=1$ and $\langle\mu,-\sy_{\lambda}^{-1}\alpha\rangle=0$, and so again $\alpha\in\Phi(\sy_{\lambda+\sy_{\lambda}\mu})$. Hence $\sy_{\lambda}\sy_{\mu}=\sy_{\lambda+\sy_{\lambda}\mu}=\sy_{(\lambda+\mu)^{(J)}}=\sy_{\mu}\sy_{\lambda}$.

The statement $\sy_{\lambda}^{-1}=\sy_{-\sy_{\lambda}^{-1}\lambda}$ follows from the general formula
$
\Phi(w^{-1})=\{-w\alpha\mid \alpha\in\Phi(w)\}.
$

(3) By (1) and (2) we have
$$
\st_{\lambda}\st_{\mu}=t_{\lambda}\sy_{\lambda}t_{\mu}\sy_{\mu}=t_{\lambda+\sy_{\lambda}\mu}\sy_{\lambda}\sy_{\mu}=t_{(\lambda+\mu)^{(J)}}\sy_{(\lambda+\mu)^{(J)}}=\st_{(\lambda+\mu)^{(J)}}.$$ 
Similarly $\st_{\lambda}^{-1}=\st_{(-\lambda)^{(J)}}$.
\end{proof}

\begin{cor}\label{cor:Tiso}
We have $\TJ\cong P/Q_J$. In particular, $\TJ$ is an abelian group of rank~$|I\backslash J|$. Moreover $\TJ$ acts freely on $\WJ$ with fundamental domain $W^J$. 
\end{cor}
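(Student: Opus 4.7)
The plan is to build an explicit isomorphism $P/Q_J \xrightarrow{\sim} \TJ$, and then deduce the action statements from Theorem~\ref{thm:WJ} combined with the group law on $\TJ$ established in Lemma~\ref{lem:utbasics}.

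First I would define $\varphi : P \to \TJ$ by $\varphi(\lambda) = \st_{\lambda^{(J)}}$, which makes sense by Lemma~\ref{lem:projection}. To check it is a homomorphism, Lemma~\ref{lem:utbasics}(3) gives
$$
\varphi(\lambda)\varphi(\mu) \;=\; \st_{\lambda^{(J)}}\st_{\mu^{(J)}} \;=\; \st_{(\lambda^{(J)}+\mu^{(J)})^{(J)}},
$$
and since $\lambda^{(J)}+\mu^{(J)} \in \lambda+\mu+Q_J$, the uniqueness clause of Lemma~\ref{lem:projection} forces $(\lambda^{(J)}+\mu^{(J)})^{(J)} = (\lambda+\mu)^{(J)}$, so $\varphi(\lambda)\varphi(\mu) = \varphi(\lambda+\mu)$. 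Surjectivity is immediate from $\varphi(\mu) = \st_\mu$ for $\mu \in \cA_J\cap P$. For the kernel, $\varphi(\lambda) = e = \st_0$ forces $\lambda^{(J)} = 0$, and since $0 \in \cA_J \cap P$ lies in $\lambda+Q_J$ precisely when $\lambda \in Q_J$, uniqueness in Lemma~\ref{lem:projection} identifies $\ker\varphi$ with $Q_J$. Hence $\TJ \cong P/Q_J$. The rank is then obtained by tensoring with $\RR$: $P\otimes\RR = V$ and $Q_J\otimes\RR = V_J$, so $(P/Q_J)\otimes\RR \cong V/V_J$ has dimension $|I\setminus J|$ (in the non-reduced case one checks that $\alpha_n^\vee/2 = (2\alpha_n)^\vee$ is still an element of $P$, so $Q_J \subseteq P$ as a sublattice of rank~$|J|$).

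For the action of $\TJ$ on $\WJ$ by left multiplication: by Theorem~\ref{thm:WJ}, every $w \in \WJ$ admits a unique expression $w = \st_\mu u$ with $\mu \in \cA_J \cap P$ and $u \in W^J$. Using Lemma~\ref{lem:utbasics}(3), left multiplication by $\st_\lambda$ sends this to $\st_{(\lambda+\mu)^{(J)}} u$, which again lies in $\WJ$ by Theorem~\ref{thm:WJ}, so the action is well-defined. Freeness follows from the uniqueness in Theorem~\ref{thm:WJ}: if $\st_\lambda w = w$ then $(\lambda+\mu)^{(J)} = \mu$, hence $\lambda \in Q_J$; combined with $\lambda \in \cA_J \cap P$, uniqueness in Lemma~\ref{lem:projection} forces $\lambda = 0$. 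Finally, the orbit of $u \in W^J$ is $\{\st_\mu u \mid \mu \in \cA_J \cap P\}$, and Theorem~\ref{thm:WJ} says these orbits partition $\WJ$ with exactly one representative in $W^J$, so $W^J$ is a fundamental domain.

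There is no substantive obstacle here: the group law on $\TJ$ is already encoded in Lemma~\ref{lem:utbasics}(3), and Theorem~\ref{thm:WJ} provides exactly the unique decomposition needed to turn that structure into a free action with fundamental domain $W^J$. The most delicate bookkeeping step is the identification $Q_J \cap \cA_J \cap P = \{0\}$, which rests entirely on the uniqueness assertion of Lemma~\ref{lem:projection} applied to $\lambda \in Q_J$ together with the fact that $0 \in \cA_J \cap P$.
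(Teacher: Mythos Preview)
Your proof is correct and follows essentially the same approach as the paper: define the homomorphism $P\to\TJ$ by $\lambda\mapsto\st_{\lambda^{(J)}}$, identify its kernel with $Q_J$ via Lemma~\ref{lem:projection}, and use Theorem~\ref{thm:WJ} together with Lemma~\ref{lem:utbasics}(3) for the action statement. You have simply supplied more detail than the paper (the explicit homomorphism check, the rank computation via tensoring, and the freeness verification), all of which is correct.
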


\begin{proof}
Lemma~\ref{lem:utbasics}(3) shows that $\TJ$ is a group, and that the map $f:P\to \TJ$ given by $f(\lambda)=\st_{\lambda^{(J)}}$ is a group homomorphism. This homomorphism is surjective (as $\lambda^{(J)}=\lambda$ for $\lambda\in  P^{(J)}$) and $\mathrm{ker}(f)=Q_J$.

If $\lambda\in P^{(J)}$ and $w\in \WJ$ then by Theorem~\ref{thm:WJ} we have $w=\st_{\mu}u$ with $\mu=\wt(w)\in  P^{(J)}$ and $u\in W^J$. By Lemma~\ref{lem:utbasics}(3) we have $\st_{\lambda}\cdot w=\st_{\lambda}\st_{\mu}u=\st_{(\lambda+\mu)^{(J)}}u$, and hence $\st_{\lambda}\cdot w\in \WJ$ by Theorem~\ref{thm:WJ}. Thus $\TJ$ acts on $\WJ$. It is clear that this action is free, and that $W^J$ is a fundamental domain. 
\end{proof}

\begin{remark}
By Corollary~\ref{cor:Tiso} the map $\TJ\to P/Q_J$ with $\st_{\lambda}\to \lambda^{(J)}+Q_J$ is an isomorphism. In contrast, we note that the map $\TJ\to P^J$ with $\st_{\lambda}\mapsto\lambda^J$ is a surjective, not necessarily injective, group homomorphism (the fact that the map is a surjective homomorphism follows from Lemma~\ref{lem:utbasics}(3) and the obvious fact that $(\lambda^{(J)})^J=\lambda^J$, and to see that the map is not necessarily injective consider the extreme case $J=I$ where $P^J=\{0\}$ while $\TJ=P/Q=\Sigma$ may be nontrivial). 
\end{remark}

Recall the classical formula $\ell(t_{\la})=\sum_{\alpha\in\Phi_1^+}|\langle\la,\alpha\rangle|$ (obtained by counting hyperplanes crossed). The following proposition gives an analogous formula for the elements~$\tau_{\la}$. 

\begin{prop}\label{prop:tlength}
For $\la\in P^{(J)}$ we have 
$$
\ell(\tau_{\la})=\sum_{\alpha\in\Phi_1^+\backslash \Phi_{J}}|\langle\la,\alpha\rangle|.
$$
\end{prop}

\begin{proof}
By \cite[(2.4.1)]{Mac:03} we have (for $\la\in P$ and $w\in \Wfin$)
$$
\ell(t_{\la}w)=\sum_{\alpha\in\Phi_1^+}|\langle\la,\alpha\rangle-\chi^-(w^{-1}\alpha)|,
$$
where $\chi^{-}(\cdot)$ is the characteristic function of $-\Phi^+$. Now suppose that $\la\in P^{(J)}$ and consider the contribution
$|\langle\la,\alpha\rangle-\chi^-(\sy_{\la}^{-1}\alpha)|$ to $\ell(\tau_{\la})$ in the above sum from $\alpha\in\Phi_1^+$. If $\alpha\in \Phi_{J}^+$ then either $\langle \la,\alpha\rangle=0$ in which case $\chi^-(\sy_{\la}^{-1}\alpha)=0$, or $\langle\la,\alpha\rangle=1$ in which case $\chi^-(\sy_{\la}^{-1}\alpha)=1$ (in both cases using Lemma~\ref{lem:utbasics1}). Thus if $\alpha\in\Phi_{J}^+$ then the contribution to the sum is~$0$. If $\alpha\in\Phi_1^+\backslash\Phi_{J}$ then $\chi^-(\sy_{\la}^{-1}\alpha)=0$, hence the result. 
\end{proof}

\begin{example}\label{ex:Other}
Figure~\ref{fig:G2TJ} illustrates the decomposition of $\Wext$ (which equals $\Waff$ in this case) into $J$-alcoves for type $\sG_2$ with $J=\{1\}$. The $J$ alcoves are shaded blue and green, with the dark green region denoting the fundamental $J$-alcove $\cA_J$ (the grey alcoves are also part of~$\cA_J$). Moreover the elements $\st_{\lambda}$ with $\lambda\in  P^{(J)}$ are shaded grey, and the decomposition of~$\cA_J$ given by Theorem~\ref{thm:WJ} and Corollary~\ref{cor:Tiso} is illustrated by dotted lines. Note that $\la^J\neq\la^{(J)}$ in general (c.f. Definition~\ref{defn:projections}), for example $\omega_1^{(J)}=\omega_1$ while $\omega_1^J=\frac{3}{2}\omega_2$. We have $\omega_1-\omega_2=\frac{1}{2}\omega_2$ and so $P^J=\mathbb{Z}\overline{\omega}_2$ where $\overline{\omega}_2=\frac{1}{2}\omega_2$ (this lattice is denoted as the union of the solid and open circles on the hyperplane $H_{\alpha_1,0}$). We have $P_J/Q_J=\ZZ/2\ZZ$.

\begin{figure}[H]
\centering
\begin{tikzpicture}[scale=1.5]
\clip ({-5*0.866+0.433},{-2}) rectangle + ({8*0.866},{5.5});
\path [fill=green2] ({-4*0.866},-3)--({-2*0.866},-3)--({3*0.866},4.5)--({1*0.866},4.5)--({-4*0.866},-3);
\path [fill=green1] ({-5*0.866},1.5)--({-3*0.866},4.5)--({-5*0.866},4.5)--({-5*0.866},1.5);
\path [fill=blue1] ({-5*0.866},-1.5)--({-1*0.866},4.5)--({-3*0.866},4.5)--({-5*0.866},1.5)--({-5*0.866},-1.5);
\path [fill=green1] ({-5*0.866},-1.5)--({-5*0.866},-3)--({-4*0.866},-3)--({1*0.866},4.5)--({-1*0.866},4.5)--({-5*0.866},-1.5);
\path [fill=green1] ({0*0.866},-3)--({5*0.866},4.5)--({3*0.866},4.5)--({-2*0.866},-3)--({0*0.866},-3);
\path [fill=blue1] ({2*0.866},-3)--({5*0.866},1.5)--({5*0.866},4.5)--({0*0.866},-3)--({2*0.866},-3);
\path [fill=green1] ({2*0.866},-3)--({5*0.866},1.5)--({5*0.866},-1.5)--({4*0.866},-3)--({2*0.866},-3);
\path [fill=blue1] ({4*0.866},-3)--({5*0.866},-1.5)--({5*0.866},-3)--({4*0.866},-3);
\path [fill=black!70] (0,0) -- (0.433,0.75) -- (0,1) -- (0,0);
\path [fill=black!50] ({0+2*0.433},{0+1.5}) -- ({0.433+2*0.433},{0.75+1.5}) -- ({0+2*0.433},{1+1.5}) -- ({0+2*0.433},{0+1.5});
\path [fill=black!50] ({0+4*0.433},{0+2*1.5}) -- ({0.433+4*0.433},{0.75+2*1.5}) -- ({0+4*0.433},{1+2*1.5}) -- ({0+4*0.433},{0+2*1.5});
\path [fill=black!50] ({0-2*0.433},{0-1.5}) -- ({0.433-2*0.433},{0.75-1.5}) -- ({0-2*0.433},{1-1.5}) -- ({0-2*0.433},{0-1.5});
\path [fill=black!50] ({0-4*0.433},{0-2*1.5}) -- ({0.433-4*0.433},{0.75-2*1.5}) -- ({0-4*0.433},{1-2*1.5}) -- ({0-4*0.433},{0-2*1.5});
\path [fill=black!50] ({0-6*0.433},{0-1*1.5}) -- ({0-6*0.433+0.433},{0-1*1.5+0.75}) -- ({0-6*0.433+2*0.433},{0-1*1.5+0.5}) -- ({0-6*0.433},{0-1*1.5});
\path [fill=black!50] ({0-4*0.433},{0-0*1.5}) -- ({0-4*0.433+0.433},{0-0*1.5+0.75}) -- ({0-4*0.433+2*0.433},{0-0*1.5+0.5}) -- ({0-4*0.433},{0-0*1.5});
\path [fill=black!50] ({0-2*0.433},{0+1*1.5}) -- ({0-2*0.433+0.433},{0+1*1.5+0.75}) -- ({0-2*0.433+2*0.433},{0+1*1.5+0.5}) -- ({0-2*0.433},{0+1*1.5});
\path [fill=black!50] ({0-0*0.433},{0+2*1.5}) -- ({0-0*0.433+0.433},{0+2*1.5+0.75}) -- ({0-0*0.433+2*0.433},{0+2*1.5+0.5}) -- ({0-0*0.433},{0+2*1.5});
\draw [line width=2pt,dotted] (0.433,0.75)--(0,1)--(-2*0.433,0.5)--(-2*0.433,-0.5)--(-0.433,-0.75);
\draw [line width=2pt,dotted] (-2*0.433,-0.5)--(-4*0.433,-1)--(-4*0.433,-2);
\draw [line width=2pt,dotted] (0,1)--(0,2)--(2*0.433,2.5)--(3*0.433,2.25);
\draw [line width=2pt,dotted] (2*0.433,2.5)--(2*0.433,3.5);
\draw [line width=2pt,dotted] (0,2)--(-0.433,2.25);
\draw [line width=2pt,dotted] (0-2*0.433,2-1.5)--(-0.433-2*0.433,2.25-1.5);
\draw [line width=2pt,dotted] (0-4*0.433,2-2*1.5)--(-0.433-4*0.433,2.25-2*1.5);
\draw(-4.33,4.5)--(4.33,4.5);
\draw(-4.33,3)--(4.33,3);
\draw(-4.33,1.5)--(4.33,1.5);
\draw(-4.33,0)--(4.33,0);
\draw(-4.33,-1.5)--(4.33,-1.5);
\draw(-4.33,-3)--(4.33,-3);
\draw(-4.33,-3)--(-4.33,4.5);
\draw(-3.464,-3)--(-3.464,4.5);
\draw(-2.598,-3)--(-2.598,4.5);
\draw(-1.732,-3)--(-1.732,4.5);
\draw(-.866,-3)--(-.866,4.5);
\draw(0,-3)--(0,4.5);
\draw(.866,-3)--(.866,4.5);
\draw(1.732,-3)--(1.732,4.5);
\draw(2.598,-3)--(2.598,4.5);
\draw(3.464,-3)--(3.464,4.5);
\draw(4.33,-3)--(4.33,4.5);
\draw(-4.33,3.5)--({-3*0.866},4.5);
\draw(-4.33,2.5)--({-1*0.866},4.5);
\draw(-4.33,1.5)--({1*0.866},4.5);
\draw(-4.33,.5)--({3*0.866},4.5);
\draw(-4.33,-.5)--(4.33,4.5);
\draw(-4.33,-1.5)--(4.33,3.5);
\draw(-4.33,-2.5)--(4.33,2.5);
\draw(-3.464,-3)--(4.33,1.5);
\draw(-1.732,-3)--(4.33,.5);
\draw(0,-3)--(4.33,-.5);
\draw(1.732,-3)--(4.33,-1.5);
\draw(3.464,-3)--(4.33,-2.5);
\draw(4.33,3.5)--({3*0.866},4.5);
\draw(4.33,2.5)--({1*0.866},4.5);
\draw(4.33,1.5)--({-1*0.866},4.5);
\draw(4.33,.5)--({-3*0.866},4.5);
\draw(4.33,-.5)--(-4.33,4.5);
\draw(4.33,-1.5)--(-4.33,3.5);
\draw(4.33,-2.5)--(-4.33,2.5);
\draw(3.464,-3)--(-4.33,1.5);
\draw(1.732,-3)--(-4.33,.5);
\draw(0,-3)--(-4.33,-.5);
\draw(-1.732,-3)--(-4.33,-1.5);
\draw(-3.464,-3)--(-4.33,-2.5);
\draw(-4.33,-1.5)--(-3.464,-3);
\draw(-4.33,1.5)--(-1.732,-3);
\draw(-4.33,4.5)--(0,-3);
\draw({-3*0.866},4.5)--(1.732,-3);
\draw({-1*0.866},4.5)--(3.464,-3);
\draw({1*0.866},4.5)--(4.33,-1.5);
\draw({3*0.866},4.5)--(4.33,1.5);
\draw(4.33,-1.5)--(3.464,-3);
\draw(4.33,1.5)--(1.732,-3);
\draw(4.33,4.5)--(0,-3);
\draw({3*0.866},4.5)--(-1.732,-3);
\draw({1*0.866},4.5)--(-3.464,-3);
\draw({-1*0.866},4.5)--(-4.33,-1.5);
\draw({-3*0.866},4.5)--(-4.33,1.5);
\draw[line width=2pt]({-5*0.866},1.5)--({-3*0.866},4.5);
\draw[line width=2pt]({-5*0.866},-1.5)--({-1*0.866},4.5);
\draw[line width=2pt]({-4*0.866},-3)--({1*0.866},4.5);
\draw[line width=2pt]({-2*0.866},-3)--({3*0.866},4.5);
\draw[line width=2pt]({0*0.866},-3)--({5*0.866},4.5);
\draw[line width=2pt]({2*0.866},-3)--({5*0.866},1.5);
\draw[line width=2pt]({4*0.866},-3)--({5*0.866},-1.5);
\node at ({-4*0.866},{2*1.5}) {\Large{$\bullet$}};
\node at ({-2*0.866},{2*1.5}) {\Large{$\bullet$}};
\node at ({0*0.866},{2*1.5}) {\Large{$\bullet$}};
\node at ({2*0.866},{2*1.5}) {\Large{$\bullet$}};
\node at ({4*0.866},{2*1.5}) {\Large{$\bullet$}};
\node at ({-3*0.866},{1.5}) {\Large{$\bullet$}};
\node at ({-1*0.866},{1.5}) {\Large{$\bullet$}};
\node at ({1*0.866},{1.5}) {\Large{$\bullet$}};
\node at ({3*0.866},{1.5}) {\Large{$\bullet$}};
\node at ({-4*0.866},0) {\Large{$\bullet$}};
\node at ({-2*0.866},0) {\Large{$\bullet$}};
\node at ({0*0.866},0) {\Large{$\bullet$}};
\node at ({2*0.866},0) {\Large{$\bullet$}};
\node at ({4*0.866},0) {\Large{$\bullet$}};
\node at ({-3*0.866},{-1*1.5}) {\Large{$\bullet$}};
\node at ({-1*0.866},{-1*1.5}) {\Large{$\bullet$}};
\node at ({1*0.866},{-1*1.5}) {\Large{$\bullet$}};
\node at ({3*0.866},{-1*1.5}) {\Large{$\bullet$}};
\node at ({0.433},{0.75}) {\huge{$\circ$}};
\node at ({3*0.433},{3*0.75}) {\huge{$\circ$}};
\node at ({-1*0.433},{-1*0.75}) {\huge{$\circ$}};
\node at (-6*0.433-0.3,1.5+0.3) {$\alpha_1^{\vee}$};
\node at (4*0.433+0.4,0*1.5+0.2) {$\alpha_2^{\vee}$};
\node at (-0.4,3) {$\omega_1$};
\node at (2*0.433+0.3,1.5) {$\omega_2$};
\draw[line width=2pt,-latex](0,0)--(-6*0.433,1.5);
\draw[line width=2pt,-latex](0,0)--(4*0.433,0);
\end{tikzpicture}
\caption{$J$-alcoves and the group $\TJ$ for $\Phi=\sG_2$ and $J=\{1\}$}\label{fig:G2TJ}
\end{figure}
\end{example}

\subsection{Symmetries of the fundamental $J$-alcove}

In this section we determine the subgroup of $\Wext$ stabilising the fundamental $J$-alcove~$\cA_J$.

\begin{defn}
Let $G_{J}=\{g\in\Wfin\mid g\cA_{J}=\cA_{J}\}$ be the subgroup of $\Wfin$ stabilising $\cA_{J}$. 
\end{defn}

\begin{thm}\label{thm:mainG}
We have $G_{J}=\{g\in \Wfin\mid g\Phi_{J}^+=\Phi_{J}^+\}$. Moreover, for $g\in G_{J}$ and $\alpha\in \Phi_{J}^+$ we have $\height(g\alpha)=\height(\alpha)$. In particular $g$ maps the simple roots of $\Phi_{J}$ to the simple roots of $\Phi_{J}$, and hence induces a permutation of~$J$. This permutation maps connected components of $J$ to connected components of~$J$. 
\end{thm}

\begin{proof}
Let $g\in G_{J}$. Then for all $\alpha\in\Phi_{J}^+$ and all $\la\in \cA_{J}$ we have $0\leq \langle g\la,\alpha\rangle\leq 1$. We claim that this forces $g^{-1}\Phi_{J}^+=\Phi_{J}^+$. For if $g^{-1}\alpha\notin\Phi_{J}^+$ then either $g^{-1}\alpha\in -\Phi_{J}^+$, or there is $i\in \mathrm{supp}(g^{-1}\alpha)$ with $i\notin J$. In the former case, choose any $j\in\mathrm{supp}(g^{-1}\alpha)$ (and so $j\in J$) and let $m_j^K$ be the coefficient of $\alpha_j$ in $\varphi_K$, where $K$ is the connected component of $J$ containing $j$. Then $\la=\omega_j/m_j^K\in\cA_J$ (for if $\beta\in\Phi_J^+\backslash\Phi_K$ then $\langle\la,\beta\rangle=0$ and if $\beta\in\Phi_K^+$ then $0\leq \langle\la,\beta\rangle\leq \langle\la,\varphi_K\rangle=1$), and $\langle \la,g^{-1}\alpha\rangle<0$ gives a contradiction. In the latter case, taking $\la=2\omega_i\in \cA_{J}$ gives $|\langle g\la,\alpha\rangle|=|\langle \la,g^{-1}\alpha\rangle|\geq 2$, again a  contradiction. Thus $g\Phi_{J}^+=\Phi_{J}^+$. On the other hand, if $g\Phi_{J}^+=\Phi_{J}^+$ and $\la\in\cA_{J}$ then for $\alpha\in\Phi_{J}^+$ we have $\langle g\la,\alpha\rangle=\langle \la,g^{-1}\alpha\rangle$, which lies between $0$ and $1$ (since $g^{-1}\alpha\in\Phi_{J}^+$ and $\la\in\cA_{J}$), and so $g\cA_{J}=\cA_{J}$.

Let $g\in G_{J}$. Since $g\Phi_{J}^+=\Phi_{J}^+$ we have $\height(g\alpha)\geq \mathrm{ht}(\alpha)$ (because each $g\alpha_j$ has height at least~$1$ and $g$ is linear). Since $g^{-1}\in G_{J}$ we also have $\mathrm{ht}(g^{-1}\beta)\geq \mathrm{ht}(\beta)$ for all $\beta\in\Phi_{J}^+$, and by hypothesis we have $\beta=g\alpha$ for some $\alpha\in\Phi_{J}^+$, and so $\mathrm{ht}(\alpha)\geq \mathrm{ht}(g\alpha)$. Thus $\mathrm{ht}(g\alpha)=\mathrm{ht}(\alpha)$ for all $\alpha\in\Phi_{J}^+$. In particular each $g\in G_J$ maps simple roots of $\Phi_{J}$ to simple roots of $\Phi_{J}$, and hence induces a permutation of~$J$ by $\alpha_{g(j)}=g\alpha_j$ for $j\in J$.  

We now show that for each $g\in G_{J}$ the permutation $g:J\to J$ maps connected components to connected components. Let $K$ be a connected component of $J$, and let $\varphi_K$ be the highest root of $\Phi_K$. We claim that $g\varphi_K=\varphi_{K'}$ is the highest root of some connected component $K'\subseteq J$. For if not, then there is $i\in J$ such that $\alpha=g\varphi_K+\alpha_i$ is a root of $\Phi_{J}$. But then $g^{-1}\alpha=\varphi_K+\alpha_{g^{-1}(i)}$ is a root, necessarily of $\Phi_K$, of height exceeding $\mathrm{ht}(\varphi_K)$, a contradiction. Thus $g$ maps highest roots to highest roots, and so $g$ maps $K$ to $K'$. 
\end{proof}

\begin{lemma} If $\la\in P^{(J)}$ and $g\in G_{J}$ then 
$$
g\sy_{\la}g^{-1}=\sy_{g\la},\quad g\tau_{\la}g^{-1}=\tau_{g\la},\quad\text{and}\quad \ell(\tau_{g\la})=\ell(\tau_{\la}).
$$ 
\end{lemma}

\begin{proof} 
To prove that $g\sy_{\la}g^{-1}=\sy_{g\la}$ it is sufficient to show that $\Phi(g\sy_{\la}g^{-1})=\Phi(\sy_{g\la})$. If $\alpha\in\Phi(\sy_{g\la})$ then $\alpha\in\Phi_{J}^+$ with $\langle g\la,\alpha\rangle=1$. Thus $\langle\la,g^{-1}\alpha\rangle=1$ and so $g^{-1}\alpha\in\Phi(\sy_{\la})$. Hence $g\sy_{\la}g^{-1}\alpha\in-\Phi_{J}^+$, showing that $\Phi(\sy_{g\la})\subseteq \Phi(g\sy_{\la}g^{-1})$. On the other hand, suppose that $\alpha\in\Phi(g\sy_{\la}g^{-1})$. Since $g$ maps simple roots of $\Phi_{J}$ to simple roots of $\Phi_{J}$ we have $gs_jg^{-1}=s_{g(j)}\in W_J$ for all $j\in J$, and so $g\sy_{\la}g^{-1}\in W_{J}$. Thus $\alpha\in\Phi_{J}$, and so $g^{-1}\alpha\in\Phi_{J}^+$. If $g^{-1}\alpha\notin\Phi(\sy_{\la})$ then $g\sy_{\la}g^{-1}\alpha\in\Phi_{J}^+$, a contradiction, and so $g^{-1}\alpha\in\Phi(\sy_{\la})$. Thus $\langle \la,g^{-1}\alpha\rangle=1$, and so $\langle g\la,\alpha\rangle=1$, giving $\alpha\in\Phi(\sy_{g\la})$ as required. 

It then follows that $g\tau_{\la}g^{-1}=t_{g\la}(g\sy_{\la}g^{-1})=t_{g\la}\sy_{g\la}=\tau_{g\la}$ for all $\la\in P^{(J)}$. By Proposition~\ref{prop:tlength} we have $\ell(\tau_{g\la})=\sum_{\alpha\in\Phi_1^+\backslash \Phi_{J}}|\langle\la,g^{-1}\alpha\rangle|=\sum_{\alpha\in\Phi_1^+\backslash \Phi_{J}}|\langle\la,\alpha\rangle|=\ell(\tau_{\la})$.
\end{proof}

\begin{cor}
The subgroup of $\Wext$ stabilising $\cA_{J}$ is $\TJ\rtimes G_{J}$. 
\end{cor}

\begin{proof}
Let $w\in \Wext$ and suppose that $w\cA_{J}=\cA_{J}$. Let $\la=\wt(w)$, and so $w(0)=\la\in P^{(J)}$. Then $\tau_{\la}^{-1}w(0)=0$, and so $g=\tau_{\la}^{-1}w\in \Wfin$ with $g\cA_{J}=\cA_{J}$, and so $g\in G_{J}$. Thus $w=\tau_{\la}g\in \TJ G_{J}$. 
\end{proof}

Note that the group $\TJ\rtimes G_J$ plays the role of the ``extended affine Weyl group'' of $\cA_J$ in the sense that if $J=\emptyset$ we have $\TJ\rtimes G_{J}=\Wext$. 

\goodbreak

\section{$J$-folded alcove paths and $J$-parameter systems}\label{sec:Jpaths}

In this section we introduce positively $J$-folded alcove paths, generalising the positively folded alcove paths of Ram~\cite{Ram:06} and the $2$-dimensional theory from~\cite{GP:19,GP:19b}. We also introduce the notion of a \textit{$J$-parameter system}, a combinatorial object that will be useful in indexing later objects in this paper.

\subsection{$J$-folded alcove paths}

We introduce the following definition, giving a $J$-relative version of positively folded alcove~paths. 

\begin{defn}\label{defn:Jfold}
Let $\vec w=s_{i_1}s_{i_2}\cdots s_{i_{\ell}}\sigma$ be an expression for $w\in \Wext$ (not necessarily reduced) with $\sigma\in \Sigma$, and let~$v\in \WJ$. A \textit{$J$-folded alcove path of type~$\vec w$ starting at $v$} is a folded alcove path 
$p=(v_0,v_1,\ldots,v_{\ell},v_{\ell}\sigma)$, as in Definition~\ref{defn:posfolded}, with $v_0,\ldots,v_{\ell}\in \WJ$. A \textit{positively $J$-folded alcove path} is a $J$-folded alcove path $p=(v_0,v_1,\ldots,v_{\ell},v_{\ell}\sigma)$ such that:
\begin{center} if $v_{k-1}=v_k$ with $v_{k-1}s_{i_k}A_0\subseteq \cA_J$ then $v_{k-1}A_0\, {^+}|^-\,v_{k-1}s_{i_k}A_0$.
\end{center}
The \textit{end} of the $J$-folded alcove path $p=(v_0,\ldots,v_{\ell},v_{\ell}\sigma)$ is $\mathrm{end}(p)=v_{\ell}\sigma$. 
\end{defn}

Thus $J$-folded alcove paths are folded alcove paths confined to the fundamental $J$-alcove. However note that positively $J$-folded alcove paths are not necessarily positively folded alcove paths, because if $v_{k-1}=v_k$ with $v_{k-1}s_{i_k}A_0\not\subseteq \cA_J$ then there is no requirement that $v_{k-1}A_0\, {^+}|^-\,v_{k-1}s_{i_k}A_0$. See Example~\ref{ex:G2fold} for an illustration. Positively $\emptyset$-folded alcove paths are the same as positively folded alcove paths.

For $v\in \WJ$ let
$$
\mathcal{P}_J(\vec{w},v)=\{\text{all positively $J$-folded alcove paths of type $\vec{w}$ starting at $v$}\}.
$$ 

\goodbreak

\begin{defn}
Let $\vec w=s_{i_1}s_{i_2}\cdots s_{i_{\ell}}\sigma$ and let $p=(v_0,\ldots,v_{\ell},v_{\ell}\sigma)$ be a positively $J$-folded alcove path. The index~$k\in\{1,2,\ldots,\ell\}$ is:
\begin{compactenum}[$(1)$]
\item a \textit{positive (respectively, negative) $s_{i_k}$-crossing} if $v_k=v_{k-1}s_{i_k}$ and $v_{k}A_0$ is on the positive (respectively, negative) side of the hyperplane separating the alcoves $v_{k-1}A_0$ and $v_kA_0$;
\item a \textit{(positive) $s_{i_k}$-fold} if $v_k=v_{k-1}$ and $v_{k-1}s_{i_k}A_0\subseteq\cA_J$ (in which case $v_{k-1}A_0$ is necessarily on the positive side of the hyperplane separating $v_{k-1}A_0$ and $v_{k-1}s_{i_k}A_0$);
\item a \textit{positive (respectively, negative) bounce} if $v_k=v_{k-1}$ and $v_{k-1}s_{i_k}A_0\not\subseteq \cA_J$ and $v_{k-1}A_0$ is on the positive (respectively, negative) side of the hyperplane $H$ separating $v_{k-1}A_0$ and $v_{k-1}s_{i_k}A_0$. Necessarily $H=H_{\alpha,0}$ (respectively, $H=H_{\alpha,1}$) for some $\alpha\in\Phi_J^+$ and we say that $k$ \textit{occurs} on the hyperplane $H_{\alpha,0}$ (respectively, $H_{\alpha,1}$).
\end{compactenum}
\end{defn}

Less formally, these steps are denoted as follows (where $x=v_{k-1}$ and $s=s_{i_k}$). In each case, the alcoves contained in $\cA_J$ are shaded green. 
\begin{center}
\begin{tikzpicture}[xscale=0.5, yscale=0.5]
\path [fill=green2] (-8.2,-1)--(-3.8,-1)--(-3.8,1.5)--(-8.2,1.5);
\draw[line width=1pt] (-6,-1)--(-6,1);
\draw[line width=0.5pt,-latex](-6.5,0)--(-5.5,0);
\node at (-6.8,1) {{ $-$}};
\node at (-7.25,.2) {\footnotesize{ $xA_0$}};
\node at (-4.75,.2) {\footnotesize{ $xsA_0$}};
\node at (-5.4,1) {{ $+$}};
\node at (-6,-1.5) {\footnotesize{positive $s$-crossing}};
\path [fill=green2] (-2.2,-1)--(2.2,-1)--(2.2,1.5)--(-2.2,1.5);
\draw[line width=1pt] (-0,-1)--(-0,1);
\draw[line width=0.5pt,-latex] plot[smooth] coordinates {(.65,0)(0.15,0)(0.15,-.15)(.65,-.15)};
\node at (-.8,1){{ $-$}};
\node at (-1.25,.2){\footnotesize{ $xsA_0$}};
\node at (1.3,.2){\footnotesize{ $xA_0$}};
\node at (.6,1){{ $+$}};
\node at (0,-1.5){\footnotesize{$s$-fold}};
\path [fill=green2] (3.8,-1)--(8.2,-1)--(8.2,1.5)--(3.8,1.5);
\draw[line width=1pt] (6,-1)--(6,1);
\draw[line width=0.5pt,-latex] (6.5,0)--(5.5,0);
\node at (6.5,1){{ $+$}};
\node at (7.25,.2){\footnotesize{ $xA_0$}};
\node at (4.7,.2){\footnotesize{ $xsA_0$}};
\node at (5.2,1){{ $-$}};
\node at (-3,1.5){\scriptsize{\vphantom{$H_{\alpha,0}$}}};
\node at (6,-1.5){\footnotesize{negative $s$-crossing}};
\end{tikzpicture}
\begin{tikzpicture}[xscale=0.5, yscale=0.5]
\path [fill=green2] (0.5,-1)--(3,-1)--(3,1.5)--(0.5,1.5);
\path [fill=green2] (-0.5,-1)--(-3,-1)--(-3,1.5)--(-0.5,1.5);
\draw[line width=1pt] (3,-1)--(3,1);
\draw[line width=0.5pt,-latex] plot[smooth] coordinates {(2.35,0)(2.85,0)(2.85,-.15)(2.35,-.15)};
\node at (3.5,1){{ $+$}};
\node at (4.25,.2){\footnotesize{ $xsA_0$}};
\node at (1.7,.2){\footnotesize{$xA_0$}};
\node at (2.2,1){{ $-$}};
\node at (3,2){\scriptsize{$H_{\alpha,1}$}};
\node at (3,-1.5){\footnotesize{negative bounce}};
\draw[line width=1pt] (-3,-1)--(-3,1);
\draw[line width=0.5pt,-latex] plot[smooth] coordinates {(-2.35,0)(-2.85,0)(-2.85,-.15)(-2.35,-.15)};
\node at (-3.8,1){{ $-$}};
\node at (-4.25,.2){\footnotesize{ $xsA_0$}};
\node at (-1.75,.2){\footnotesize{ $xA_0$}};
\node at (-2.4,1){{ $+$}};
\node at (-3,2){\scriptsize{$H_{\alpha,0}$}};
\node at (-3,-1.5){\footnotesize{positive bounce}};
\end{tikzpicture}
\end{center}

It will turn out that bounces play a very different role in the theory to folds, and so we emphasise the distinction between these two concepts. Put briefly, all of the interactions a path makes with the walls of $\cA_J$ are bounces, and the folds can only occur in the ``interior'' of $\cA_J$ (that is, folds occur on the panel between two alcoves that both lie in $\cA_J$, while bounces occur on panels that lie on the boundary of~$\cA_J$). 

Let $p$ be a positively $J$-folded alcove path. For each $i\in \Iaff$ and $\alpha\in\Phi_J^+$ we write 
\begin{align*}
f_i(p)&=\#(\text{$i$-folds in $p$})\\
b_{\alpha}^+(p)&=\#(\text{positive bounces in $p$ occurring on $H_{\alpha,0}$})\\
b_{\alpha}^-(p)&=\#(\text{negative bounces in $p$ occurring on $H_{\alpha,1}$})\\
b_{\alpha}(p)&=b_{\alpha}^+(p)+b_{\alpha}^-(p).
\end{align*}

\begin{remark}
By Lemma~\ref{lem:boundingwalls} we have $b_{\alpha}^-(p)=0$ unless $\alpha=\varphi_K$ for some $K\in\cK(J)$, and $b_{\alpha}^+(p)=0$ unless $\alpha=\alpha_j$ or $\alpha=2\alpha_j$ for some $j\in J$. Of course the case $\alpha=2\alpha_j$ only occurs if $\Phi_J$ is not reduced and $j=n$, and in this case $b_{\alpha_n}^+(p)=b_{2\alpha_n}^+(p)$.
\end{remark}

\subsection{Coweights and final directions of positively $J$-folded alcove paths}

By Corollary~\ref{cor:Tiso} the set $W^J$ is a fundamental domain for the action of $\TJ$ on $\WJ$. While in many respects this choice of fundamental domain is ``natural'', in examples and applications it turns out to be important to have additional flexibility in order to better incorporate symmetries present (see, for example, Section~\ref{sec:An}).

Let $\sF$ be a fundamental domain for the action of $\TJ$ on $\WJ$. Thus each $w\in\WJ$ has a unique expression as 
$$
w=\st_{\lambda}u\quad\text{with $\lambda\in P^{(J)}$ and $u\in\sF$},
$$
and we define the \textit{coweight of $w$ relative to $\sF$} and the \textit{final direction of $w$ relative to $\sF$} by
$$
\wt(w,\sF)=\lambda\quad\text{and}\quad \theta(w,\sF)=u.
$$
If $p$ is a positively $J$-folded alcove path then the \textit{coweight of $p$ relative to $\sF$} and the \textit{final direction of $p$ relative to $\sF$} are
\begin{align}\label{eq:generalweights}
\wt(p,\sF)=\wt(\mathrm{end}(p),\sF)\quad\text{and}\quad \theta(p,\sF)=\theta(\mathrm{end}(p),\sF).
\end{align}
In particular, note that $\wt(p,W^J)=\wt(p)$ and $\theta(p,W^J)=\theta^J(p)$, where $\wt(p)=\wt(\mathrm{end}(p))$ and $\theta^J(p)=\theta^J(\mathrm{end}(p))$ (see Definition~\ref{defn:theta}).

\begin{example}\label{ex:G2fold}
Figure~\ref{fig:G2example} illustrates a positively $J$-folded alcove path in type $\sG_2$ with $J=\{1\}$ with two choices of fundamental domain $\sF$ shaded blue (see Figure~\ref{fig:G2TJ} for the root system conventions). In both cases the path has $3$ bounces (two positive, and one negative, occurring at the fourth, fourteenth, and twenty first steps), and 2 folds (a $2$-fold and a $0$-fold, occurring at the eighth and twenty fourth steps). In Figure~\ref{fig:G2example}(a) we have $\theta^J(p)=21$ and $\wt(p,W^J)=\wt(p)=\omega_1$. In Figure~\ref{fig:G2example}(b) we have $\theta(p,\sF)=020$ and $\wt(p,\sF)=\omega_2$.

\begin{figure}[H]
\centering
\begin{subfigure}{.495\textwidth}
\begin{center}
\begin{tikzpicture}[scale=1.3]
\clip ({-4*0.866+0.433},{-2}) rectangle + ({6*0.866},{5.5});
\path [fill=green2] ({-4*0.866},-3)--({-2*0.866},-3)--({3*0.866},4.5)--({1*0.866},4.5)--({-4*0.866},-3);
\path [fill=blue2] ({-1*0.433},-0.75)--({-2*0.433},-0.5)--({-2*0.433},0.5)--({0*0.433},1)--({1*0.433},0.75)--(-1*0.433,-0.75);
\node at (0.15,0.6) {$e$};
\draw(-4.33,4.5)--(4.33,4.5);
\draw(-4.33,3)--(4.33,3);
\draw(-4.33,1.5)--(4.33,1.5);
\draw(-4.33,0)--(4.33,0);
\draw(-4.33,-1.5)--(4.33,-1.5);
\draw(-4.33,-3)--(4.33,-3);
\draw(-4.33,-3)--(-4.33,4.5);
\draw(-3.464,-3)--(-3.464,4.5);
\draw(-2.598,-3)--(-2.598,4.5);
\draw(-1.732,-3)--(-1.732,4.5);
\draw(-.866,-3)--(-.866,4.5);
\draw(0,-3)--(0,4.5);
\draw(.866,-3)--(.866,4.5);
\draw(1.732,-3)--(1.732,4.5);
\draw(2.598,-3)--(2.598,4.5);
\draw(3.464,-3)--(3.464,4.5);
\draw(4.33,-3)--(4.33,4.5);
\draw(-4.33,3.5)--({-3*0.866},4.5);
\draw(-4.33,2.5)--({-1*0.866},4.5);
\draw(-4.33,1.5)--({1*0.866},4.5);
\draw(-4.33,.5)--({3*0.866},4.5);
\draw(-4.33,-.5)--(4.33,4.5);
\draw(-4.33,-1.5)--(4.33,3.5);
\draw(-4.33,-2.5)--(4.33,2.5);
\draw(-3.464,-3)--(4.33,1.5);
\draw(-1.732,-3)--(4.33,.5);
\draw(0,-3)--(4.33,-.5);
\draw(1.732,-3)--(4.33,-1.5);
\draw(3.464,-3)--(4.33,-2.5);
\draw(4.33,3.5)--({3*0.866},4.5);
\draw(4.33,2.5)--({1*0.866},4.5);
\draw(4.33,1.5)--({-1*0.866},4.5);
\draw(4.33,.5)--({-3*0.866},4.5);
\draw(4.33,-.5)--(-4.33,4.5);
\draw(4.33,-1.5)--(-4.33,3.5);
\draw(4.33,-2.5)--(-4.33,2.5);
\draw(3.464,-3)--(-4.33,1.5);
\draw(1.732,-3)--(-4.33,.5);
\draw(0,-3)--(-4.33,-.5);
\draw(-1.732,-3)--(-4.33,-1.5);
\draw(-3.464,-3)--(-4.33,-2.5);
\draw(-4.33,-1.5)--(-3.464,-3);
\draw(-4.33,1.5)--(-1.732,-3);
\draw(-4.33,4.5)--(0,-3);
\draw({-3*0.866},4.5)--(1.732,-3);
\draw({-1*0.866},4.5)--(3.464,-3);
\draw({1*0.866},4.5)--(4.33,-1.5);
\draw({3*0.866},4.5)--(4.33,1.5);
\draw(4.33,-1.5)--(3.464,-3);
\draw(4.33,1.5)--(1.732,-3);
\draw(4.33,4.5)--(0,-3);
\draw({3*0.866},4.5)--(-1.732,-3);
\draw({1*0.866},4.5)--(-3.464,-3);
\draw({-1*0.866},4.5)--(-4.33,-1.5);
\draw({-3*0.866},4.5)--(-4.33,1.5);
\draw[line width=2pt]({-5*0.866},1.5)--({-3*0.866},4.5);
\draw[line width=2pt]({-5*0.866},-1.5)--({-1*0.866},4.5);
\draw[line width=2pt]({-4*0.866},-3)--({1*0.866},4.5);
\draw[line width=2pt]({-2*0.866},-3)--({3*0.866},4.5);
\draw[line width=2pt]({0*0.866},-3)--({5*0.866},4.5);
\draw[line width=2pt]({2*0.866},-3)--({5*0.866},1.5);
\draw[line width=2pt]({4*0.866},-3)--({5*0.866},-1.5);
\draw[line width=1pt,-latex](-0.47,0.45)--(-0.68,0.15);
\draw[line width=1pt,-latex](-0.68,0.15)--(-0.68,-0.2);
\draw[line width=1pt,-latex](-0.68,-0.2)--(-0.47,-0.45);
\draw[line width=1pt,-latex] plot[smooth] coordinates {(-0.47,-0.45) (-0.38,-0.52) (-0.42,-0.6) (-0.55,-0.54)};
\draw[line width=1pt,-latex](-0.55,-0.54)--(-0.75,-0.9);
\draw[line width=1pt,-latex](-0.75,-0.9)--(-1.02,-0.9);
\draw[line width=1pt,-latex](-1.02,-0.9)--(-1.3,-1.1);
\draw[line width=1pt,-latex] plot[smooth] coordinates {(-1.3,-1.1) (-1.36,-1.17) (-1.45,-1.12) (-1.35,-1)};
\draw[line width=1pt,-latex](-1.35,-1)--(-1.02,-0.78);
\draw[line width=1pt,-latex](-1.02,-0.78)--(-1.25,-0.45);
\draw[line width=1pt,-latex](-1.25,-0.45)--(-1.05,-0.15);
\draw[line width=1pt,-latex](-1.05,-0.15)--(-1.05,0.15);
\draw[line width=1pt,-latex](-1.05,0.15)--(-1.3,0.4);
\draw[line width=1pt,-latex] plot[smooth] coordinates {(-1.3,0.4) (-1.4,0.45) (-1.36,0.55) (-1.2,0.45)};
\draw[line width=1pt,-latex](-1.2,0.45)--(-1,0.8);
\draw[line width=1pt,-latex](-1,0.8)--(-0.7,0.8);
\draw[line width=1pt,-latex](-0.7,0.8)--(-0.35,1);
\draw[line width=1pt,-latex](-0.35,1)--(-0.15,1.3);
\draw[line width=1pt,-latex](-0.15,1.3)--(0.22,1.3);
\draw[line width=1pt,-latex](0.22,1.3)--(0.38,1.02);
\draw[line width=1pt,-latex] plot[smooth] coordinates {(0.38,1.02)(0.48,0.96)(0.55,1.05)(0.42,1.125)};
\draw[line width=1pt,-latex](0.44,1.12)--(0.3,1.35);
\draw[line width=1pt,-latex](0.3,1.35)--(0.3,1.65);
\draw[line width=1pt,-latex] plot[smooth] coordinates {(0.3,1.65)(0.05,1.65)(0.05,1.72)(0.28,1.72)};
\draw[line width=1pt,-latex](0.28,1.72)--(0.48,1.91);
\draw[line width=1pt,-latex](0.48,1.91)--(0.75,2.1);
\draw[line width=1pt,-latex](0.75,2.1)--(0.5,2.5);
\draw[line width=1pt,-latex](0.5,2.5)--(0.68,2.79);
\node at ({-4*0.866},{2*1.5}) {\Large{$\bullet$}};
\node at ({-2*0.866},{2*1.5}) {\Large{$\bullet$}};
\node at ({0*0.866},{2*1.5}) {\Large{$\bullet$}};
\node at ({2*0.866},{2*1.5}) {\Large{$\bullet$}};
\node at ({4*0.866},{2*1.5}) {\Large{$\bullet$}};
\node at ({-3*0.866},{1.5}) {\Large{$\bullet$}};
\node at ({-1*0.866},{1.5}) {\Large{$\bullet$}};
\node at ({1*0.866},{1.5}) {\Large{$\bullet$}};
\node at ({3*0.866},{1.5}) {\Large{$\bullet$}};
\node at ({-4*0.866},0) {\Large{$\bullet$}};
\node at ({-2*0.866},0) {\Large{$\bullet$}};
\node at ({0*0.866},0) {\Large{$\bullet$}};
\node at ({2*0.866},0) {\Large{$\bullet$}};
\node at ({4*0.866},0) {\Large{$\bullet$}};
\node at ({-3*0.866},{-1*1.5}) {\Large{$\bullet$}};
\node at ({-1*0.866},{-1*1.5}) {\Large{$\bullet$}};
\node at ({1*0.866},{-1*1.5}) {\Large{$\bullet$}};
\node at ({3*0.866},{-1*1.5}) {\Large{$\bullet$}};
\draw [line width=2pt,dotted] (0.433,0.75)--(0,1)--(-2*0.433,0.5)--(-2*0.433,-0.5)--(-0.433,-0.75);
\draw [line width=2pt,dotted] (-2*0.433,-0.5)--(-4*0.433,-1)--(-4*0.433,-2);
\draw [line width=2pt,dotted] (0,1)--(0,2)--(2*0.433,2.5)--(3*0.433,2.25);
\draw [line width=2pt,dotted] (2*0.433,2.5)--(2*0.433,3.5);
\draw [line width=2pt,dotted] (0,2)--(-0.433,2.25);
\draw [line width=2pt,dotted] (0-2*0.433,2-1.5)--(-0.433-2*0.433,2.25-1.5);
\draw [line width=2pt,dotted] (0-4*0.433,2-2*1.5)--(-0.433-4*0.433,2.25-2*1.5);
\end{tikzpicture}
\caption{Fundamental domain $\mathsf{F}=W^J$}
\end{center}
\end{subfigure}
\begin{subfigure}{.495\textwidth}
\begin{center}
\begin{tikzpicture}[scale=1.3]
\clip ({-4*0.866+0.433},{-2}) rectangle + ({6*0.866},{5.5});
\path [fill=green2] ({-4*0.866},-3)--({-2*0.866},-3)--({3*0.866},4.5)--({1*0.866},4.5)--({-4*0.866},-3);
\path [fill=blue2] (0,0)--(2*0.433,1.5)--(0,2)--(0,1.5)--(-2*0.433,1.5)--(0,1)--(-0.433,0.75)--(0,0);
\node at (0.15,0.6) {$e$};
\draw(-4.33,4.5)--(4.33,4.5);
\draw(-4.33,3)--(4.33,3);
\draw(-4.33,1.5)--(4.33,1.5);
\draw(-4.33,0)--(4.33,0);
\draw(-4.33,-1.5)--(4.33,-1.5);
\draw(-4.33,-3)--(4.33,-3);
\draw(-4.33,-3)--(-4.33,4.5);
\draw(-3.464,-3)--(-3.464,4.5);
\draw(-2.598,-3)--(-2.598,4.5);
\draw(-1.732,-3)--(-1.732,4.5);
\draw(-.866,-3)--(-.866,4.5);
\draw(0,-3)--(0,4.5);
\draw(.866,-3)--(.866,4.5);
\draw(1.732,-3)--(1.732,4.5);
\draw(2.598,-3)--(2.598,4.5);
\draw(3.464,-3)--(3.464,4.5);
\draw(4.33,-3)--(4.33,4.5);
\draw(-4.33,3.5)--({-3*0.866},4.5);
\draw(-4.33,2.5)--({-1*0.866},4.5);
\draw(-4.33,1.5)--({1*0.866},4.5);
\draw(-4.33,.5)--({3*0.866},4.5);
\draw(-4.33,-.5)--(4.33,4.5);
\draw(-4.33,-1.5)--(4.33,3.5);
\draw(-4.33,-2.5)--(4.33,2.5);
\draw(-3.464,-3)--(4.33,1.5);
\draw(-1.732,-3)--(4.33,.5);
\draw(0,-3)--(4.33,-.5);
\draw(1.732,-3)--(4.33,-1.5);
\draw(3.464,-3)--(4.33,-2.5);
\draw(4.33,3.5)--({3*0.866},4.5);
\draw(4.33,2.5)--({1*0.866},4.5);
\draw(4.33,1.5)--({-1*0.866},4.5);
\draw(4.33,.5)--({-3*0.866},4.5);
\draw(4.33,-.5)--(-4.33,4.5);
\draw(4.33,-1.5)--(-4.33,3.5);
\draw(4.33,-2.5)--(-4.33,2.5);
\draw(3.464,-3)--(-4.33,1.5);
\draw(1.732,-3)--(-4.33,.5);
\draw(0,-3)--(-4.33,-.5);
\draw(-1.732,-3)--(-4.33,-1.5);
\draw(-3.464,-3)--(-4.33,-2.5);
\draw(-4.33,-1.5)--(-3.464,-3);
\draw(-4.33,1.5)--(-1.732,-3);
\draw(-4.33,4.5)--(0,-3);
\draw({-3*0.866},4.5)--(1.732,-3);
\draw({-1*0.866},4.5)--(3.464,-3);
\draw({1*0.866},4.5)--(4.33,-1.5);
\draw({3*0.866},4.5)--(4.33,1.5);
\draw(4.33,-1.5)--(3.464,-3);
\draw(4.33,1.5)--(1.732,-3);
\draw(4.33,4.5)--(0,-3);
\draw({3*0.866},4.5)--(-1.732,-3);
\draw({1*0.866},4.5)--(-3.464,-3);
\draw({-1*0.866},4.5)--(-4.33,-1.5);
\draw({-3*0.866},4.5)--(-4.33,1.5);
\draw[line width=2pt]({-5*0.866},1.5)--({-3*0.866},4.5);
\draw[line width=2pt]({-5*0.866},-1.5)--({-1*0.866},4.5);
\draw[line width=2pt]({-4*0.866},-3)--({1*0.866},4.5);
\draw[line width=2pt]({-2*0.866},-3)--({3*0.866},4.5);
\draw[line width=2pt]({0*0.866},-3)--({5*0.866},4.5);
\draw[line width=2pt]({2*0.866},-3)--({5*0.866},1.5);
\draw[line width=2pt]({4*0.866},-3)--({5*0.866},-1.5);
\draw[line width=1pt,-latex](-0.47,0.45)--(-0.68,0.15);
\draw[line width=1pt,-latex](-0.68,0.15)--(-0.68,-0.2);
\draw[line width=1pt,-latex](-0.68,-0.2)--(-0.47,-0.45);
\draw[line width=1pt,-latex] plot[smooth] coordinates {(-0.47,-0.45) (-0.38,-0.52) (-0.42,-0.6) (-0.55,-0.54)};
\draw[line width=1pt,-latex](-0.55,-0.54)--(-0.75,-0.9);
\draw[line width=1pt,-latex](-0.75,-0.9)--(-1.02,-0.9);
\draw[line width=1pt,-latex](-1.02,-0.9)--(-1.3,-1.1);
\draw[line width=1pt,-latex] plot[smooth] coordinates {(-1.3,-1.1) (-1.36,-1.17) (-1.45,-1.12) (-1.35,-1)};
\draw[line width=1pt,-latex](-1.35,-1)--(-1.02,-0.78);
\draw[line width=1pt,-latex](-1.02,-0.78)--(-1.25,-0.45);
\draw[line width=1pt,-latex](-1.25,-0.45)--(-1.05,-0.15);
\draw[line width=1pt,-latex](-1.05,-0.15)--(-1.05,0.15);
\draw[line width=1pt,-latex](-1.05,0.15)--(-1.3,0.4);
\draw[line width=1pt,-latex] plot[smooth] coordinates {(-1.3,0.4) (-1.4,0.45) (-1.36,0.55) (-1.2,0.45)};
\draw[line width=1pt,-latex](-1.2,0.45)--(-1,0.8);
\draw[line width=1pt,-latex](-1,0.8)--(-0.7,0.8);
\draw[line width=1pt,-latex](-0.7,0.8)--(-0.35,1);
\draw[line width=1pt,-latex](-0.35,1)--(-0.15,1.3);
\draw[line width=1pt,-latex](-0.15,1.3)--(0.22,1.3);
\draw[line width=1pt,-latex](0.22,1.3)--(0.38,1.02);
\draw[line width=1pt,-latex] plot[smooth] coordinates {(0.38,1.02)(0.48,0.96)(0.55,1.05)(0.42,1.125)};
\draw[line width=1pt,-latex](0.44,1.12)--(0.3,1.35);
\draw[line width=1pt,-latex](0.3,1.35)--(0.3,1.65);
\draw[line width=1pt,-latex] plot[smooth] coordinates {(0.3,1.65)(0.05,1.65)(0.05,1.72)(0.28,1.72)};
\draw[line width=1pt,-latex](0.28,1.72)--(0.48,1.91);
\draw[line width=1pt,-latex](0.48,1.91)--(0.75,2.1);
\draw[line width=1pt,-latex](0.75,2.1)--(0.5,2.5);
\draw[line width=1pt,-latex](0.5,2.5)--(0.68,2.79);
\node at ({-4*0.866},{2*1.5}) {\Large{$\bullet$}};
\node at ({-2*0.866},{2*1.5}) {\Large{$\bullet$}};
\node at ({0*0.866},{2*1.5}) {\Large{$\bullet$}};
\node at ({2*0.866},{2*1.5}) {\Large{$\bullet$}};
\node at ({4*0.866},{2*1.5}) {\Large{$\bullet$}};
\node at ({-3*0.866},{1.5}) {\Large{$\bullet$}};
\node at ({-1*0.866},{1.5}) {\Large{$\bullet$}};
\node at ({1*0.866},{1.5}) {\Large{$\bullet$}};
\node at ({3*0.866},{1.5}) {\Large{$\bullet$}};
\node at ({-4*0.866},0) {\Large{$\bullet$}};
\node at ({-2*0.866},0) {\Large{$\bullet$}};
\node at ({0*0.866},0) {\Large{$\bullet$}};
\node at ({2*0.866},0) {\Large{$\bullet$}};
\node at ({4*0.866},0) {\Large{$\bullet$}};
\node at ({-3*0.866},{-1*1.5}) {\Large{$\bullet$}};
\node at ({-1*0.866},{-1*1.5}) {\Large{$\bullet$}};
\node at ({1*0.866},{-1*1.5}) {\Large{$\bullet$}};
\node at ({3*0.866},{-1*1.5}) {\Large{$\bullet$}};
\draw [line width=2pt,dotted] (-6*0.433,-1.5)--(-4*0.433,-1.5)--(-4*0.433,-1)--(-2*0.433,-1.5);
\draw [line width=2pt,dotted] (-4*0.433,0)--(-2*0.433,0)--(-2*0.433,0.5)--(0*0.433,0);
\draw [line width=2pt,dotted] (-2*0.433,1.5)--(0*0.433,1.5)--(0*0.433,2)--(2*0.433,1.5);
\draw [line width=2pt,dotted] (0*0.433,3)--(2*0.433,3)--(2*0.433,3.5)--(4*0.433,3);
\draw [line width=2pt,dotted] (-6*0.433,-1.5)--(-4*0.433,-2);
\draw [line width=2pt,dotted] (-4*0.433,0)--(-2*0.433,-0.5)--(-3*0.433,-0.75)--(-2*0.433,-1.5);
\draw [line width=2pt,dotted] (-2*0.433,1.5)--(0*0.433,1)--(-1*0.433,0.75)--(0*0.433,0);
\draw [line width=2pt,dotted] (0*0.433,3)--(2*0.433,2.5)--(1*0.433,2.25)--(2*0.433,1.5);
\draw [line width=2pt,dotted] (4*0.433,3)--(3*0.433,3.75);
\end{tikzpicture}
\caption{Fundamental domain $\mathsf{F}=\{e,2,020,021,02,0\}$}
\end{center}
\end{subfigure}
\caption{Positively $J$-folded alcove path for $\Phi=\sG_2$ with $J=\{1\}$}\label{fig:G2example}
\end{figure}
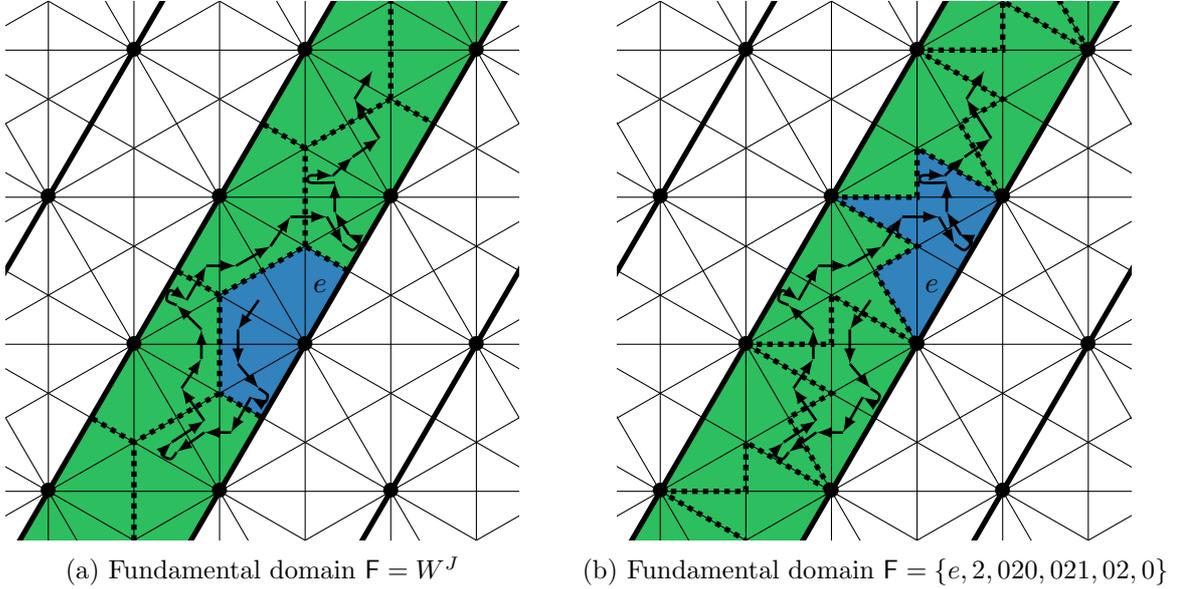
\end{example}

\goodbreak

There is a natural action of $\TJ$ on positively $J$-folded alcove paths, as follows (recall the definition of $\lambda^{(J)}$ from Definition~\ref{defn:projections}).

\begin{lemma}\label{lem:actiononpaths}
If $p=(v_0,v_1,\ldots,v_{\ell},v_{\ell}\sigma)$ is a positively $J$-folded alcove path of type $\vec{w}=s_{i_1}\cdots s_{i_{\ell}}\sigma$ and $\lambda\in  P^{(J)}$ then 
$$
\st_{\lambda}\cdot p=(\st_{\lambda}v_0,\st_{\lambda}v_1,\ldots,\st_{\lambda}v_{\ell},\st_{\lambda}v_{\ell}\sigma)
$$
is a positively $J$-folded alcove path of type $\vec{w}$, and folds are mapped to folds and bounces to bounces. For any fundamental domain $\sF$ we have
$$
\wt(\st_{\lambda}\cdot p,\sF)=(\lambda+\wt(p,\sF))^{(J)}\quad\text{and}\quad \theta(\st_{\lambda}\cdot p,\sF)=\theta(p,\sF).
$$
\end{lemma}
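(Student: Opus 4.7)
The plan is to prove the three assertions in sequence: (i) $\st_\la\cdot p$ is a valid $J$-folded alcove path, (ii) the type (crossing, fold, bounce) of each step is preserved, and (iii) the coweight/final-direction formulas hold. The guiding principle is that $\st_\la$, being an affine map, acts on every hyperplane as $H_{\beta,m}\mapsto H_{\sy_\la\beta,m+\langle\la,\sy_\la\beta\rangle}$, and the periodic orientation on this image hyperplane agrees with that on $H_{\beta,m}$ precisely when $\sy_\la\beta\in\Phi^+$.

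First I would observe that by Corollary~\ref{cor:Tiso}, $\TJ$ acts on $\WJ$, so each $\st_\la v_k\in\WJ$ and, crucially, $\st_\la v_{k-1}s_{i_k}\in\WJ$ if and only if $v_{k-1}s_{i_k}\in\WJ$. This immediately partitions steps correctly: the case distinction in Definition~\ref{defn:Jfold}(2) is driven solely by membership of $v_{k-1}s_{i_k}$ in $\WJ$, so crossings go to crossings, folds to folds, and bounces to bounces (the lemma does not claim signs of bounces are preserved). To finish (i) and (ii), I must only check that positive crossings/negative crossings/positive folds retain their sign. In each such case both $v_{k-1}A_0$ and $v_{k-1}s_{i_k}A_0$ lie in $\cA_J$, so the separating hyperplane $H_{\beta,m}$ (with $\beta\in\Phi^+$) is not a $J$-hyperplane, giving $\beta\notin\Phi_J^+$ (and also $2\beta\notin\Phi_J^+$ in the non-reduced case). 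By Lemma~\ref{lem:utbasics1}, $\Phi(\sy_\la)\subseteq\Phi_J^+$, and since inversion sets of $W_J$-elements lie in $\Phi_J^+$, we conclude $\sy_\la$ preserves $\Phi^+\setminus\Phi_J^+$. Hence $\sy_\la\beta\in\Phi^+$, the periodic orientation is preserved by $\st_\la$, and the sign of the crossing or fold is unchanged.

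For (iii), write $\mathrm{end}(p)=\st_\mu u$ with $\mu=\wt(p,\sF)\in\cA_J\cap P$ and $u=\theta(p,\sF)\in\sF$. Then
\[
\mathrm{end}(\st_\la\cdot p)=\st_\la\cdot\mathrm{end}(p)=\st_\la\st_\mu u=\st_{(\la+\mu)^{(J)}}\,u,
\]
where the last equality is Lemma~\ref{lem:utbasics}(3). Since $(\la+\mu)^{(J)}\in\cA_J\cap P$ and $u\in\sF$, this is the unique $\TJ$-decomposition of $\mathrm{end}(\st_\la\cdot p)$ relative to $\sF$, which yields the two formulas at once.

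The main obstacle is the orientation check in part (ii): identifying that $\beta\notin\Phi_J^+$ in the crossing/fold cases and then invoking the characterisation of $\Phi(\sy_\la)$ from Lemma~\ref{lem:utbasics1} to conclude $\sy_\la\beta\in\Phi^+$. By contrast, for bounces the separating hyperplane is a wall of $\cA_J$ (so $\beta\in\Phi_J^+$), and $\sy_\la\beta$ may indeed be negative; this explains why positive bounces can become negative bounces under the action, which is consistent with what is illustrated in Example~\ref{ex:G2fold}.
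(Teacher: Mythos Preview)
Your proof is correct and follows essentially the same approach as the paper: you use the action of $\TJ$ on $\WJ$ (Corollary~\ref{cor:Tiso}) to preserve the crossing/fold/bounce trichotomy, then observe that for folds the separating hyperplane has linear part $\beta\in\Phi^+\setminus\Phi_J$, so $\sy_\la\in W_J$ sends $\beta$ to a positive root and the orientation is preserved, and finally you invoke Lemma~\ref{lem:utbasics}(3) for the coweight and final direction. Two minor remarks: checking the sign of crossings is harmless but unnecessary (the definition of $J$-folded path imposes no sign condition on crossings), and your closing reference to Example~\ref{ex:G2fold} is slightly off, since that example displays a single path rather than the effect of the $\TJ$-action on bounce signs.
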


\begin{proof}
Since $\TJ$ acts on $\WJ$ we have $\st_{\lambda}v_0,\ldots,\st_{\lambda}v_{\ell},\st_{\lambda}v_{\ell}\sigma\in \WJ$, and moreover bounces are mapped to bounces, and folds to folds (because if $v_{k}=v_{k-1}$ then $v_{k-1}s_{i_k}\in\WJ$ if and only if $\st_{\lambda}v_{k-1}s_{i_k}\in\WJ$). It remains to show that the image of a fold remains positively oriented. Thus, suppose that $v_{k-1}=v_k$ and $v_{k-1}s_{i_k}\in\WJ$. Then $v_{k-1}A_0$ is on the positive side of the hyperplane separating the alcoves $v_{k-1}A_0$ and $v_{k-1}s_{i_k}A_0$, and this hyperplane has a linear root $\alpha\in\Phi^+$ with $\alpha\notin\Phi_J$. We have $\st_{\lambda}=t_{\lambda}\sy_{\lambda}$, and since $\sy_{\lambda}\in W_J$ we have $\sy_{\lambda}\alpha\in\Phi^+$, and hence the fold remains positively oriented. The statements about the coweight and final direction are then clear using Lemma~\ref{lem:utbasics}. 
\end{proof}

\subsection{$J$-parameter systems and the $\sv$-mass of a path}


Recall the definition of the parameters $\sq_i$, $i\in \{0\}\cup\Ifin$ from Section~\ref{sec:hecke} and Convention~\ref{conv:parameters}. The modules that we construct in Section~\ref{sec:moduleconstruction} will be indexed by two pieces of data: a subset $J\subseteq I$, and a choice of a \textit{$J$-parameter system} $\sv$ as defined below. Roughly speaking (and restricting temporarily to the reduced case), a $J$-parameter system amounts to choosing elements $\sv_{\alpha_j}\in \{\sq_j,-\sq_j^{-1}\}$ for each $j\in J$ in such a way that if $j,j'\in J$ with $s_{j'}$ and $s_j$ conjugate in $W_J$, then $\sv_{\alpha_j}=\sv_{\alpha_{j'}}$. The formal definition is as follows.

\begin{defn}\label{defn:Jparameters}
A \textit{$J$-parameter system} is a family $\sv=(\sv_{\alpha})_{\alpha\in\Phi_J}$ with:
\begin{compactenum}[$(1)$]
\item $\sv_{\alpha}=\sv_{\beta}$ if $\beta\in W_J\alpha$;
\item $\sv_{\alpha_j}\in\{\sq_j,-\sq_j^{-1}\}$ for $j\in J$ with $2\alpha_j\notin\Phi_J$;
\item $\sv_{2\alpha_n}\in\{\sq_0,-\sq_0^{-1}\}$ and $\sv_{\alpha_n}\sv_{2\alpha_n}\in\{\sq_n,-\sq_n^{-1}\}$ if $\Phi_J$ is not reduced.
\end{compactenum}
By convention we set $\sv_{\alpha}=1$ if $\alpha\notin\Phi_J$. Thus we have $\sv_{\alpha_j}\sv_{2\alpha_j}\in\{\sq_j,-\sq_j^{-1}\}$ for all $j\in J$.
\end{defn}

\begin{example} If $\Phi=\sC_{13}$ and $J=\{1,2,3,6,7,9,11,12,13\}$ as depicted below
\begin{center}
 \begin{tikzpicture}[scale=0.5,baseline=-0.5ex]
\node at (0,0.5) {};
\node [inner sep=0.8pt,outer sep=0.8pt] at (-7,0) (-7) {$\bullet$};
\node [inner sep=0.8pt,outer sep=0.8pt] at (-6,0) (-6) {$\bullet$};
\node [inner sep=0.8pt,outer sep=0.8pt] at (-5,0) (-5) {$\bullet$};
\node [inner sep=0.8pt,outer sep=0.8pt] at (-4,0) (-4) {$\bullet$};
\node [inner sep=0.8pt,outer sep=0.8pt] at (-3,0) (-3) {$\bullet$};
\node [inner sep=0.8pt,outer sep=0.8pt] at (-2,0) (-2) {$\bullet$};
\node [inner sep=0.8pt,outer sep=0.8pt] at (-1,0) (-1) {$\bullet$};
\node [inner sep=0.8pt,outer sep=0.8pt] at (0,0) (0) {$\bullet$};
\node [inner sep=0.8pt,outer sep=0.8pt] at (1,0) (1) {$\bullet$};
\node [inner sep=0.8pt,outer sep=0.8pt] at (2,0) (2) {$\bullet$};
\node [inner sep=0.8pt,outer sep=0.8pt] at (3,0) (3) {$\bullet$};
\node [inner sep=0.8pt,outer sep=0.8pt] at (4,0) (4) {$\bullet$};
\node [inner sep=0.8pt,outer sep=0.8pt] at (5,0) (5) {$\bullet$};
\draw (-7,0)--(4,0);
\draw (4,0.1)--(5,0.1);
\draw (4,-0.1)--(5,-0.1);
\draw (4.5+0.15,0.3) -- (4.5-0.08,0) -- (4.5+0.15,-0.3);
\draw [line width=0.5pt,line cap=round,rounded corners] (-7.north west)  rectangle (-5.south east);
\draw [line width=0.5pt,line cap=round,rounded corners] (-2.north west)  rectangle (-1.south east);
\draw [line width=0.5pt,line cap=round,rounded corners] (1.north west)  rectangle (1.south east);
\draw [line width=0.5pt,line cap=round,rounded corners] (3.north west)  rectangle (5.south east);
\end{tikzpicture}
\end{center}
then there are $32$ distinct $J$-parameter systems. Specifically, these systems are given by the independent choices $\sv_{\alpha_1},\sv_{\alpha_6},\sv_{\alpha_9},\sv_{\alpha_{11}}\in\{\sq_1,-\sq_1^{-1}\}$ and $\sv_{\alpha_{13}}\in\{\sq_{13},-\sq_{13}^{-1}\}$ (and then $\sv_{\alpha_j}$ for $j=2,3,7,12$ are determined by condition~(1) in the definition of $J$-parameter systems). If, instead, $\Phi=\sBC_{13}$ then there are $64$ distinct $J$-parameter systems due to the additional freedom in choosing $\sv_{2\alpha_{13}}\in \{\sq_0,-\sq_0^{-1}\}$. 
\end{example}

If $\sv=(\sv_{\alpha})_{\alpha\in\Phi_J}$ is a $J$-parameter system, for $\lambda\in P$ and $y\in W_J$ we write
\begin{align}\label{eq:vlambda}
\sv^{\lambda}=\prod_{\alpha\in\Phi_J^+}\sv_{\alpha}^{\langle\lambda,\alpha\rangle}\quad\text{and}\quad \sv(y)=\prod_{\alpha\in\Phi(y)}\sv_{\alpha}\sv_{2\alpha}.
\end{align}
Note that if $\lambda\in P\cap V^J$ then $\sv^{\lambda}=1$ (as $\langle\lambda,\alpha\rangle=0$ for all $\alpha\in\Phi_J^+$).

In the following definition we introduce the \textit{$\sv$-mass} of a positively $J$-folded alcove path. This quantity will play an important role in the combinatorial formula of Theorem~\ref{thm:mainpath1}. 

\begin{defn}
Let $\sv=(\sv_{\alpha})_{\alpha\in\Phi_J}$ be a $J$-parameter system. The \textit{$\sv$-mass} of a positively $J$-folded alcove path~$p$ is
\begin{align*}
\cQ_{J,\sv}(p)=\bigg[\prod_{\alpha\in \Phi_J^+}\sv_{\alpha}^{b_{\alpha}(p)}\bigg]\bigg[\prod_{i\in \Iaff}(\sq_i-\sq_i^{-1})^{f_i(p)}\bigg].
\end{align*}
\end{defn}

\begin{example}
The path in Figure~\ref{fig:G2example} has $\sv$-mass 
$$
\cQ_{J,\sv}(p)=\sv_{\alpha_1}^3(\sq_2-\sq_2^{-1})(\sq_0-\sq_0^{-1}),
$$
where $\sv_{\alpha_1}\in\{\sq_1,-\sq_1^{-1}\}$. A positively $J$-folded path with $\Phi=\sBC_2$ and $J=\{2\}$ is illustrated in Figure~\ref{fig:BC2}. This path has $\sv$-mass
$
\cQ_{J,\sv}(p)=\sv_{\alpha_2}^3\sv_{2\alpha_2}^5(\sq_1-\sq_1^{-1})=(\sv_{\alpha_2}\sv_{2\alpha_2})^3\sv_{2\alpha_2}^2(\sq_1-\sq_1^{-1}).
$
We have $\sv_{\alpha_2}\sv_{2\alpha_2}\in\{\sq_2,-\sq_2^{-1}\}$ and $\sv_{2\alpha_n}\in\{\sq_0,-\sq_0^{-1}\}$, and note that the exponents $3$ and $2$ in the expression for $\cQ_{J,\sv}(p)$ count the number of bounces on the walls $H_{\alpha_2,0}$ and $H_{2\alpha_2,1}$. 
\begin{figure}[H]
\begin{center}
\begin{tikzpicture}[scale=1.2]
\clip ({-3.5},{-1.5}) rectangle + ({6},{4});
\path [fill=green2] (-4,0)--(-4,1)--(4,1)--(4,0)--(-4,0);
\path [fill=black!60] (0-2,0) -- (1-2,0) -- (1-2,1) -- (0-2,0);
\draw (-4,-4)--(4,-4);
\draw (-4,-3)--(4.5,-3);
\draw (-4,-2)--(4.5,-2);
\draw (-4,-1)--(4.5,-1);
\draw (-4,0)--(4.5,0);
\draw (-4,1)--(4.5,1);
\draw (-4,2)--(4.5,2);
\draw (-4,3)--(4.5,3);
\draw (-4,4)--(4,4);
\draw (-4,-4)--(-4,4);
\draw (-3,-4)--(-3,4.5);
\draw (-2,-4)--(-2,4.5);
\draw (-1,-4)--(-1,4.5);
\draw (0,-4)--(0,4.5);
\draw (1,-4)--(1,4.5);
\draw (2,-4)--(2,4.5);
\draw (3,-4)--(3,4.5);
\draw (4,-4)--(4,4);
\draw (-4,-4)--(4,4);
\draw (-4.6,-2.25)--(-4.25,-2.25);
\draw (-4.25,-2.25)--(2,4);
\draw (-4.6,-0.25)--(-4.25,-0.25);
\draw (-4.25,-0.25)--(0,4);
\draw (-4.6,1.75)--(-4.25,1.75);
\draw (-4.25,1.75)--(-2,4);
\draw (-2,-4)--(4,2);
\draw (0,-4)--(4,0);
\draw (2,-4)--(4,-2);
\draw (-4,-2)--(-1.75,-4.25);
\draw (-1.75,-4.25)--(-1.75,-4.6);
\draw (-4,0)--(0.25,-4.25);
\draw (0.25,-4.25)--(0.25,-4.6);
\draw (-4,2)--(2.25,-4.25);
\draw (2.25,-4.25)--(2.25,-4.6);
\draw (-4,4)--(4,-4);
\draw (-2,4)--(4,-2);
\draw (0,4)--(4,0);
\draw (2,4)--(4,2);
\draw[line width=1pt,-latex](-0.225-2,0.55)--(-0.6-2,0.2);
\draw[line width=1pt,-latex] plot[smooth] coordinates {(-0.6-2,0.2)(-0.6-2,0.05)(-0.7-2,0.05)(-0.7-2,0.35)};
\draw[line width=1pt,-latex](-0.7-2,0.35)--(-0.25-2,0.75);
\draw[line width=1pt,-latex] plot[smooth] coordinates {(-0.25-2,0.75)(-0.25-2,0.95)(-0.15-2,0.95)(-0.15-2,0.65)};
\draw[line width=1pt,-latex](-0.15-2,0.65)--(0.25-2,0.65);
\draw[line width=1pt,-latex](0.25-2,0.65)--(0.65-2,0.2);
\draw[line width=1pt,-latex] plot[smooth] coordinates {(0.65-2,0.2)(0.65-2,0.05)(0.75-2,0.05)(0.75-2,0.35)};
\draw[line width=1pt,-latex] plot[smooth] coordinates {(0.75-2,0.35)(0.55-2,0.5)(0.65-2,0.6)(0.85-2,0.4)};
\draw[line width=1pt,-latex](0.85-2,0.4)--(1.3-2,0.4);
\draw[line width=1pt,-latex](1.3-2,0.4)--(1.75-2,0.7);
\draw[line width=1pt,-latex](1.75-2,0.7)--(2.25-2,0.7);
\draw[line width=1pt,-latex] plot[smooth] coordinates {(2.25-2,0.7)(2.25-2,0.95)(2.35-2,0.95)(2.35-2,0.7)};
\draw[line width=1pt,-latex](2.35-2,0.7)--(2.7-2,0.25);
\draw[line width=1pt,-latex] plot[smooth] coordinates {(2.7-2,0.25)(2.7-2,0.05)(2.8-2,0.05)(2.8-2,0.4)};
\draw[line width=1pt,-latex](2.8-2,0.4)--(3.3-2,0.4);
\draw[line width=1pt,-latex](3.3-2,0.4)--(3.7-2,0.7);
\end{tikzpicture}
\caption{Positively $J$-folded path with $\Phi=\sBC_2$ and $J=\{2\}$}
\label{fig:BC2}
\end{center}
\end{figure}
\end{example}

Recall, from Lemma~\ref{lem:actiononpaths}, that $\TJ$ acts on the set of positively $J$-folded alcove paths. 

\begin{lemma}\label{lem:preserveQ}
If $p$ is a positively $J$-folded alcove path and $\lambda\in P^{(J)}$ then $\cQ_{J,\sv}(\st_{\lambda}\cdot p)=\cQ_{J,\sv}(p)$. 
\end{lemma}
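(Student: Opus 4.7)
The plan is to factor $\cQ_{J,\sv}(p)$ into its fold contribution $\prod_{i\in\Iaff}(\sq_i-\sq_i^{-1})^{f_i(p)}$ and its bounce contribution $\prod_{\alpha\in\Phi_J^+}\sv_\alpha^{b_\alpha(p)}$, and then to show that each factor is preserved separately under the $\st_\lambda$-action.

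For the fold factor, Lemma~\ref{lem:actiononpaths} already gives a bijection between folds of $p$ and folds of $\st_\lambda\cdot p$ preserving the step index $k$. Since the expression $\vec{w}=s_{i_1}\cdots s_{i_\ell}\sigma$ is unaltered, the fold-type $i_k$ of each step is preserved, so $f_i(\st_\lambda\cdot p)=f_i(p)$ for all $i\in\Iaff$, giving invariance of the fold factor.

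The bounce factor is slightly more delicate. Again by Lemma~\ref{lem:actiononpaths} we have a bijection on bounces; it remains to show each bounce keeps its $\sv$-weight. A bounce at step $k$ in $p$ occurs on a wall $H_{\alpha,m}$ of $\cA_J$ with $\alpha\in\Phi_J^+$ and $m\in\{0,1\}$, and under the action the corresponding bounce in $\st_\lambda\cdot p$ occurs on $\st_\lambda H_{\alpha,m}$. Using $\st_\lambda=t_\lambda\sy_\lambda$, a direct computation gives
$$
\st_\lambda H_{\alpha,m}=H_{\sy_\lambda\alpha,\,m+\langle\lambda,\sy_\lambda\alpha\rangle}.
$$
Any positive root $\alpha'\in\Phi_J^+$ defining this new hyperplane is of the form $\pm\sy_\lambda\alpha$. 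Since $\sy_\lambda\in W_J$ and, for any $\gamma\in\Phi_J$, the reflection $s_\gamma\in W_J$ sends $\gamma\mapsto-\gamma$, we have $\alpha'\in W_J\cdot\alpha$, and hence $\sv_{\alpha'}=\sv_\alpha$ by Definition~\ref{defn:Jparameters}(1).

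The main subtlety (and essentially the only obstacle) is handling the non-reduced case, where a wall $H_{\alpha,0}=H_{2\alpha,0}$ is defined by \emph{two} positive roots in $\Phi_J^+$, so a single bounce contributes both $\sv_\alpha$ and $\sv_{2\alpha}$ to the product $\prod_{\alpha\in\Phi_J^+}\sv_\alpha^{b_\alpha(p)}$. However the argument above applies verbatim with $\alpha$ replaced by $2\alpha$ (noting that $W_J$ preserves $\Phi_J$ and hence preserves long and short roots separately), so both factors $\sv_\alpha$ and $\sv_{2\alpha}$ are individually preserved. Combining the fold and bounce invariance yields $\cQ_{J,\sv}(\st_\lambda\cdot p)=\cQ_{J,\sv}(p)$.
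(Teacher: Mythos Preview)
Your proof is correct. For folds your argument is in fact more direct than the paper's: since $\st_\lambda\cdot p$ has the same type $\vec w$, each fold keeps its expression-letter $i_k$, so $f_i(\st_\lambda\cdot p)=f_i(p)$ without any appeal to panel types.

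For bounces your route genuinely differs from the paper's. The paper writes $\st_\lambda=w\sigma$ with $w\in\Waff$ and $\sigma\in\Sigma$, observes that the geometric panel type of each step is shifted by $\sigma$, and concludes via $\sq_{\sigma(i)}=\sq_i$ (Convention~\ref{conv:parameters}). You instead use the decomposition $\st_\lambda=t_\lambda\sy_\lambda$ with $\sy_\lambda\in W_J$, compute that the bounce-wall is sent to $H_{\sy_\lambda\alpha,\,m+\langle\lambda,\sy_\lambda\alpha\rangle}$, and invoke $W_J$-invariance of $\sv$ directly from Definition~\ref{defn:Jparameters}(1). Your approach is closer to the definition of the bounce contribution (which is phrased in terms of roots and $W_J$-orbits, not panel types), and it makes the non-reduced case transparent by treating $\alpha$ and $2\alpha$ in parallel. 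The paper's argument is terser but leaves the step from $\sq_{\sigma(i)}=\sq_i$ to $\sv_{\alpha'}=\sv_\alpha$ somewhat implicit.
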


\begin{proof}
We showed in Lemma~\ref{lem:actiononpaths} that folds are mapped to folds and bounces are mapped to bounces. Moreover, if a fold/bounce of $p$ occurs on a panel of type~$i$ then the corresponding fold/bounce of $\st_{\lambda}\cdot p$ occurs on a panel of type~$\sigma(i)$ for some $\sigma\in\Sigma$. Since $\sq_{\sigma(i)}=\sq_i$ (see Convention~\ref{conv:parameters}) the result follows. 
\end{proof}

We shall need the following technical results concerning the functions $\sv^{\lambda}$ and $\sv(y)$ in Section~\ref{sec:TheModule}.

\begin{lemma}\label{lem:Jparameter1}
Let $\sv$ be a $J$-parameter system. If $\lambda\in  P^{(J)}$ and $y\in W_J$ then
$$
\sv(y\sy_{\lambda})=\sv^{y\lambda}\sv(y).
$$
\end{lemma}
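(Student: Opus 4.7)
The plan is to decompose the inversion set $\Phi(y\sy_\lambda)$ using the standard cocycle formula for inversion sets of products in $W_0$, and then compare $\sv(y\sy_\lambda)$ and $\sv^{y\lambda}\sv(y)$ factor by factor.

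First I would apply the standard identity that for $w_1, w_2 \in W_0$ and $\alpha \in \Phi^+_0$, $\alpha \in \Phi(w_1 w_2)$ iff either $\alpha \in \Phi^+_0 \setminus \Phi(w_1)$ with $w_1^{-1}\alpha \in \Phi(w_2)$, or $\alpha \in \Phi(w_1)$ with $-w_1^{-1}\alpha \notin \Phi(w_2)$. Combined with the identification $\Phi(\sy_\lambda) = \{\beta \in \Phi_J^+ : \langle\lambda,\beta\rangle = 1\}$ from Lemma~\ref{lem:utbasics1}, and rewriting $\langle \lambda, y^{-1}\alpha\rangle = \langle y\lambda, \alpha\rangle$, this yields
$$\Phi(y\sy_\lambda) = A \sqcup B,$$
where $A = \{\alpha \in \Phi_J^+ \setminus \Phi(y) : \langle y\lambda, \alpha\rangle = 1\}$ and $B = \{\alpha \in \Phi(y) : \langle y\lambda, \alpha\rangle = 0\}$.

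Next I would expand $\sv^{y\lambda} = \prod_{\alpha \in \Phi_J^+}\sv_\alpha^{\langle y\lambda, \alpha\rangle}$ and split according to membership in $\Phi(y)$. For $\alpha \in \Phi_J^+\setminus \Phi(y)$ we have $y^{-1}\alpha \in \Phi_J^+$, so $\langle y\lambda,\alpha\rangle \in \{0,1\}$; for $\alpha \in \Phi(y)$ we have $-y^{-1}\alpha \in \Phi_J^+$, so $\langle y\lambda,\alpha\rangle \in \{-1,0\}$. Using the $W_J$-invariance of $\sv$ (which in particular gives $\sv_{-\beta} = \sv_\beta$ via $s_\beta \in W_J$), multiplying by $\sv(y) = \prod_{\alpha \in \Phi(y)}\sv_\alpha\sv_{2\alpha}$ cancels the negative exponents to give
$$\sv^{y\lambda}\sv(y) = \prod_{\alpha \in A\sqcup B}\sv_\alpha \cdot \prod_{\alpha \in \Phi(y)}\sv_{2\alpha},$$
to be compared with $\sv(y\sy_\lambda) = \prod_{\alpha \in A\sqcup B}\sv_\alpha\sv_{2\alpha}$. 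The identity therefore reduces to $\prod_{\alpha \in A \sqcup B}\sv_{2\alpha} = \prod_{\alpha \in \Phi(y)}\sv_{2\alpha}$.

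The main obstacle is this residual identity on the $\sv_{2\alpha}$ factors, which is only nontrivial in the non-reduced setting. Since $\sv_{2\alpha} = 1$ unless $2\alpha \in \Phi_J$, only short roots $\alpha$ in the (at most one) BC-component of $J$ can contribute. For any such $\alpha$ the definition of $\cA_J$ forces $\langle\lambda,\alpha\rangle, \langle\lambda, 2\alpha\rangle \in \{0,1\}$, and since $\langle\lambda, 2\alpha\rangle = 2\langle\lambda, \alpha\rangle$ this compels $\langle\lambda, \alpha\rangle = 0$. Because $W_J$ preserves root lengths within each connected component, $y^{-1}\alpha$ is again a short root in that component (possibly with a sign flip), so $\langle y\lambda,\alpha\rangle = \langle\lambda, y^{-1}\alpha\rangle = 0$ as well. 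Hence no $\alpha \in A$ contributes to the $\sv_{2\alpha}$ product, and any $\alpha \in \Phi(y)$ that does contribute automatically lies in $B$. The two products therefore coincide, completing the proof.
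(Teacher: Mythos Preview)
Your proof is correct and follows essentially the same approach as the paper: both arguments compare $\sv(y\sy_\lambda)/\sv(y)$ with $\sv^{y\lambda}$ factor by factor using the cocycle description of $\Phi(y\sy_\lambda)$ together with Lemma~\ref{lem:utbasics1}. The only organisational difference is in the non-reduced case: the paper absorbs the $\sv_{2\alpha}$ factors by rewriting $\prod_{\alpha\in\Phi_J^+\cap\Phi_0}(\sv_\alpha\sv_{2\alpha})^{\sigma_\alpha}$ as $\prod_{\alpha\in\Phi_J^+}\sv_\alpha^{\sigma_\alpha}$, whereas you isolate the residual identity $\prod_{\alpha\in A\sqcup B}\sv_{2\alpha}=\prod_{\alpha\in\Phi(y)}\sv_{2\alpha}$ and dispatch it via the observation that $\langle\lambda,\alpha\rangle=0$ for every short root $\alpha$ in the $\sBC$-component (forced by $\langle\lambda,2\alpha\rangle\in\{0,1\}$ and integrality). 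This is exactly the fact that makes the paper's reindexing valid, so your treatment is arguably more transparent. (The remark about $\sv_{-\beta}=\sv_\beta$ is not actually needed anywhere in your computation and could be dropped.)
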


\begin{proof} From the definition of $\sv(\cdot)$ we have
$$
\frac{\sv(y\sy_{\lambda})}{\sv(y)}=\prod_{\alpha\in\Phi_J^+\cap\Phi_0}(\sv_{\alpha}\sv_{2\alpha})^{\sigma_{\alpha}}=\prod_{\alpha\in\Phi_J^+}\sv_{\alpha}^{\sigma_{\alpha}},
\quad\text{where}\quad 
\sigma_{\alpha}=\begin{cases}
1&\text{if $\alpha\in\Phi(y\sy_{\lambda})\backslash\Phi(y)$}\\
-1&\text{if $\alpha\in\Phi(y)\backslash\Phi(y\sy_{\lambda})$}\\
0&\text{otherwise}.
\end{cases}
$$
On the other hand, it follows from Lemma~\ref{lem:restrictingstrip} (see also Corollary~\ref{cor:lambdadecomp}) that 
$$
\sv^{y\lambda}=\prod_{\alpha\in\Phi_J^+}\sv_{\alpha}^{\langle\lambda,y^{-1}\alpha\rangle}=\prod_{\alpha\in\Phi_J^+}\sv_{\alpha}^{\sigma_{\alpha}'}\quad\text{where}\quad 
\sigma_{\alpha}'=\begin{cases}
1&\text{if $y^{-1}\alpha\in\Phi_J^+\backslash\Phi_{J\backslash J_{\lambda}}$}\\
-1&\text{if $y^{-1}\alpha\in(-\Phi_J^+)\backslash\Phi_{J\backslash J_{\lambda}}$}\\
0&\text{if $y^{-1}\alpha\in \Phi_{J\backslash J_{\lambda}}$}.
\end{cases}
$$
Since $\Phi(\sy_{\lambda})=\Phi_J^+\backslash\Phi_{J\backslash J_{\lambda}}$ (see Lemma~\ref{lem:utbasics1}) it follows that if $\alpha\in\Phi_J^+$ then
\begin{align*}
\alpha\in\Phi(y\sy_{\lambda})\backslash \Phi(y)&\Longleftrightarrow y^{-1}\alpha\in \Phi_J^+\backslash\Phi_{J\backslash J_{\lambda}}\\
\alpha\in\Phi(y)\backslash \Phi(y\sy_{\lambda})&\Longleftrightarrow y^{-1}\alpha\in (-\Phi_J^+)\backslash\Phi_{J\backslash J_{\lambda}}.
\end{align*}
Thus $\sigma_{\alpha}=\sigma_{\alpha}'$, and hence the result.
\end{proof}

\begin{lemma}\label{lem:Jparameter4}
Let $\sv$ be a $J$-parameter system. 
\begin{compactenum}[$(1)$]
\item If $j\in J$ with $2\alpha_j\notin\Phi_J$ then $\sv^{\alpha_j^{\vee}}=\sv_{\alpha_j}^2$. 
\item If $\Phi_J$ is not reduced then $\sv^{\alpha_n^{\vee}/2}=\sv_{\alpha_n}\sv_{2\alpha_n}^2$. 
\end{compactenum}
\end{lemma}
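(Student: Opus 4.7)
The plan is to compute $\sv^{\alpha_j^\vee}$ (resp. $\sv^{\alpha_n^\vee/2}$) directly from the definition $\sv^\lambda=\prod_{\alpha\in\Phi_J^+}\sv_\alpha^{\langle\lambda,\alpha\rangle}$ by exploiting the fact that the two $W_J$-invariance properties of a $J$-parameter system (Definition~\ref{defn:Jparameters}(1)) force the product to telescope along orbits of the simple reflection $s_j$ (resp.\ $s_n$).

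For part (1), assume $j\in J$ with $2\alpha_j\notin\Phi_J$. The simple reflection $s_j$ sends $\alpha_j$ to $-\alpha_j$ and permutes $\Phi_J^+\setminus\{\alpha_j\}$ (this is standard: $\Phi(s_j)=\{\alpha_j\}$ since there is no doubled root). Partition $\Phi_J^+\setminus\{\alpha_j\}$ into $s_j$-orbits. For any pair $\{\alpha,s_j\alpha\}$ with $s_j\alpha\neq\alpha$, we have $\sv_{s_j\alpha}=\sv_\alpha$ by Definition~\ref{defn:Jparameters}(1), and since $s_j$ is an isometry,
\[
\langle\alpha_j^\vee,s_j\alpha\rangle=\langle s_j\alpha_j^\vee,\alpha\rangle=-\langle\alpha_j^\vee,\alpha\rangle,
\]
so the joint contribution of $\alpha$ and $s_j\alpha$ to the product is $\sv_\alpha^{\langle\alpha_j^\vee,\alpha\rangle+\langle\alpha_j^\vee,s_j\alpha\rangle}=1$. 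For a singleton orbit $\{\alpha\}$ with $s_j\alpha=\alpha$, the same computation gives $\langle\alpha_j^\vee,\alpha\rangle=0$, so the contribution is also~$1$. Hence the only surviving factor comes from $\alpha=\alpha_j$, contributing $\sv_{\alpha_j}^{\langle\alpha_j^\vee,\alpha_j\rangle}=\sv_{\alpha_j}^2$.

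For part (2), we work in the non-reduced case $\Phi_J\supseteq\sBC$-type piece, so $\alpha_n,2\alpha_n\in\Phi_J^+$. Note that $\alpha_n^\vee/2=(2\alpha_n)^\vee$. The reflection $s_n$ sends both $\alpha_n$ and $2\alpha_n$ to their negatives and permutes $\Phi_J^+\setminus\{\alpha_n,2\alpha_n\}$. By the identical orbit-cancellation argument, the contributions from $\Phi_J^+\setminus\{\alpha_n,2\alpha_n\}$ all trivialise, leaving
\[
\sv^{\alpha_n^\vee/2}=\sv_{\alpha_n}^{\langle\alpha_n^\vee/2,\,\alpha_n\rangle}\,\sv_{2\alpha_n}^{\langle\alpha_n^\vee/2,\,2\alpha_n\rangle}=\sv_{\alpha_n}^{1}\,\sv_{2\alpha_n}^{2}.
\]

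The argument is essentially bookkeeping, and there is no substantial obstacle; the one point to verify carefully is that $s_j$ (resp.\ $s_n$) really does preserve $\Phi_J^+\setminus\{\alpha_j\}$ (resp.\ $\Phi_J^+\setminus\{\alpha_n,2\alpha_n\}$), which is immediate from the hypothesis $2\alpha_j\notin\Phi_J$ in (1) and from the structure of a non-reduced irreducible component (of type $\sBC$) in (2).
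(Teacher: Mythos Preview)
Your proof is correct and takes a genuinely different route from the paper. The paper groups the positive roots of the connected component $K\ni j$ by length and writes $\sv^{\alpha_j^\vee}=\sv_{\mathrm{sh}}^{\langle\alpha_j^\vee,2\rho_K'\rangle}\sv_{\mathrm{lo}}^{\langle\alpha_j^\vee,2\rho_K\rangle}$, then invokes Lemma~\ref{lem:orthogonal} (that $\rho_K$ is orthogonal to the short simple roots and $\rho_K'$ to the long ones) to conclude; the non-reduced case is handled by a separate direct computation in the $\sBC$ root system. Your argument instead pairs roots under the $s_j$-action and uses only the $W_J$-invariance of $\sv$ together with $\langle\alpha_j^\vee,s_j\alpha\rangle=-\langle\alpha_j^\vee,\alpha\rangle$. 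This is more elementary and uniform: it avoids Lemma~\ref{lem:orthogonal} (whose non-simply-laced case is verified via the classification), does not require splitting by root length or by whether $\Phi_K$ is reduced, and treats parts (1) and (2) by the same mechanism. The paper's approach, on the other hand, makes visible the connection to the half-sums $\rho_K,\rho_K'$, which may be conceptually useful elsewhere.
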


\begin{proof}
Let $K\in\cK(J)$ be the connected component with $j\in K$. If $\Phi_K$ is reduced then there are at most two root lengths in $\Phi_K$ (with all roots long in the simply laced case), and since all roots of the same length are conjugate in $W_K$ (as $K$ is connected), for $j\in K$ we have
$$
\sv^{\alpha_j^{\vee}}=\prod_{\alpha\in\Phi_J^+}\sv_{\alpha}^{\langle\alpha_j^{\vee},\alpha\rangle}=\prod_{\alpha\in\Phi_K^+}\sv_{\alpha}^{\langle\alpha_j^{\vee},\alpha\rangle}=\sv_{\mathrm{sh}}^{\langle\alpha_j^{\vee},2\rho_K'\rangle}\sv_{\mathrm{lo}}^{\langle\alpha_j^{\vee},2\rho_K\rangle},
$$
where $\sv_{\mathrm{sh}}$ (respectively $\sv_{\mathrm{lo}}$) is the constant value of $\sv_{\beta}$ on short (respectively long) roots $\beta$ in $\Phi_K$, and where $\rho_K'$ and $\rho_K$ are as Section~\ref{sec:parabolics}. It follows by Lemma~\ref{lem:orthogonal} that $\sv^{\alpha_j^{\vee}}=\sv_{\alpha_j}^2$.

If $\Phi_K$ is not reduced then direct analysis of the $\sBC_n$ root system gives
$$
\sv^{\alpha_j^{\vee}}=\prod_{\alpha\in\Phi_K^+}\sv_{\alpha}^{\langle\alpha_j^{\vee},\alpha\rangle}=\sv_{\alpha_1}^{\langle \alpha_j^{\vee},2\rho_K'\rangle}(\sv_{\alpha_n}\sv_{2\alpha_n}^2)^{\langle \alpha_j^{\vee},2\rho_K\rangle}.
$$
If $j\in K\backslash\{n\}$ we have $\langle\alpha_j^{\vee},2\rho_K\rangle=0$ and $\langle \alpha_j^{\vee},2\rho_K'\rangle=2$ and so $\sv^{\alpha_j^{\vee}}=\sv_{\alpha_1}^2=\sv_{\alpha_j}^2$, and if $j=n$ then $\langle\alpha_n^{\vee},2\rho_K'\rangle=0$ and $\langle\alpha_n^{\vee},2\rho_K\rangle=2$ giving $\sv^{\alpha_n^{\vee}/2}=\sv_{\alpha_n}\sv_{2\alpha_n}^2$ as required. 
\end{proof}

\begin{lemma}\label{lem:Jparameter3} Let $K\in\cK(J)$ and let $\alpha\in\Phi_K^+$ be a long root of $\Phi_K$ (with all roots long if $\Phi_K$ is simply laced). Then $\sv^{\alpha^{\vee}}\sv(s_{\alpha})^{-1}=\sv_{\alpha}$. 
\end{lemma}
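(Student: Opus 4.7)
My plan is a pairing argument on $\Phi_K^+$. First, since distinct components of $\cK(J)$ are mutually orthogonal, $\langle\alpha^\vee,\beta\rangle=0$ for $\beta\in\Phi_{K'}$ with $K'\neq K$ and $\Phi(s_\alpha)\subseteq\Phi_K$; both $\sv^{\alpha^\vee}$ and $\sv(s_\alpha)$ therefore reduce to products over $\Phi_K^+$, so I may work entirely inside $\Phi_K$. I would partition $\Phi_K^+$ into four classes: a distinguished class $\cD$, equal to $\{\alpha\}$ when $\Phi_K$ is reduced and to $\{\alpha,\alpha/2\}$ when $\Phi_K=\sBC_m$ (where $\alpha=2e_k$ and so $\alpha/2=e_k\in\Phi_K$); a perpendicular class $\{\beta\in\Phi_K^+\setminus\cD:\langle\beta,\alpha\rangle=0\}$; A.1 pairs $\{\beta,s_\alpha\beta\}\subseteq\Phi_K^+\setminus(\cD\cup\Phi(s_\alpha))$; and A.2 pairs $\{\beta,-s_\alpha\beta\}\subseteq\Phi(s_\alpha)\setminus\cD$. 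A short verification---using that $\gamma\in\Phi^+$ with $\langle\gamma,\alpha\rangle\leq 0$ forces $s_\alpha\gamma\in\Phi^+$---shows these classes partition $\Phi_K^+$, and moreover that $\Phi(s_\alpha)$ coincides with $(\cD\cap\Phi_0)\cup\{\text{elements of A.2 pairs}\}$.

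Next I compute each class's contribution to the ratio $\sv^{\alpha^\vee}/\sv(s_\alpha)$. The perpendicular class contributes $1$ trivially. The distinguished class contributes $\sv_\alpha$: in the reduced case $\sv^{\alpha^\vee}$ picks up $\sv_\alpha^{\langle\alpha^\vee,\alpha\rangle}=\sv_\alpha^2$ while $\sv(s_\alpha)$ picks up $\sv_\alpha\sv_{2\alpha}=\sv_\alpha$, and in the non-reduced case, noting $\alpha\notin\Phi_0$ but $\alpha/2\in\Phi(s_\alpha)$ (since $s_\alpha=s_{\alpha/2}$), the pair $\{\alpha,\alpha/2\}$ yields $\sv_\alpha^2\sv_{\alpha/2}/(\sv_{\alpha/2}\sv_\alpha)=\sv_\alpha$. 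An A.1 pair $\{\beta,s_\alpha\beta\}$ contributes $1$: by $W_J$-invariance of $\sv$ we have $\sv_{s_\alpha\beta}=\sv_\beta$, and $\langle\alpha^\vee,s_\alpha\beta\rangle=-\langle\alpha^\vee,\beta\rangle$, so the exponents in $\sv^{\alpha^\vee}$ cancel, while neither root lies in $\Phi(s_\alpha)$.

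The main obstacle is showing each A.2 pair contributes $1$ to the ratio. Using $\sv_{-s_\alpha\beta}=\sv_\beta$, $\sv_{2(-s_\alpha\beta)}=\sv_{2\beta}$ and $\langle\alpha^\vee,-s_\alpha\beta\rangle=\langle\alpha^\vee,\beta\rangle$, a direct computation shows the pair contributes $\sv_\beta^{2\langle\alpha^\vee,\beta\rangle-2}\sv_{2\beta}^{-2}$, which equals $1$ precisely when $\langle\alpha^\vee,\beta\rangle=1$ and $2\beta\notin\Phi_K$. This is where the hypothesis that $\alpha$ is a longest root of $\Phi_K$ is essential. For the first condition, Cauchy--Schwarz gives $|\langle\alpha^\vee,\beta\rangle|\leq 2|\beta|/|\alpha|\leq 2$, with strict inequality unless $\beta$ is parallel to $\alpha$ and of the same length; the parallel cases $\beta\in\{\pm\alpha,\pm\alpha/2\}$ in $\Phi_K$ (with $\pm 2\alpha\notin\Phi_K$ since $\alpha$ is longest) are all absorbed into $\cD$, so for A.2 pairs $\langle\alpha^\vee,\beta\rangle$ is a positive integer strictly less than $2$, hence equal to $1$. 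For the second, $2\beta\in\Phi_K$ forces $\Phi_K=\sBC_m$ and $\beta=\pm e_l$; but $\pm e_k=\pm\alpha/2\in\cD$ and any $\pm e_l$ with $l\neq k$ is orthogonal to $\alpha^\vee=e_k$, hence in the perpendicular class. Combining all the contributions yields $\sv^{\alpha^\vee}/\sv(s_\alpha)=\sv_\alpha$, as claimed.
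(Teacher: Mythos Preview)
Your proof is correct and takes a genuinely different route from the paper. The paper argues by induction along a minimal-length chain in $W_K$ carrying $\alpha$ to a simple long root (or to $2\alpha_n$ in the non-reduced case): writing $\beta_r=s_{j_r}\beta_{r-1}$ with $\beta_\ell$ simple, one shows $\sv^{\beta_{r-1}^{\vee}}\sv(s_{\beta_{r-1}})^{-1}=\sv^{\alpha_{j_r}^{\vee}}\sv(s_{j_r})^{-2}\cdot\sv^{\beta_r^{\vee}}\sv(s_{\beta_r})^{-1}$, and then invokes Lemma~\ref{lem:Jparameter4} both for the base case and to kill the factor $\sv^{\alpha_{j_r}^{\vee}}\sv(s_{j_r})^{-2}$. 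The ``long'' hypothesis is used there to force $\langle\beta_{r-1}^{\vee},\alpha_{j_r}\rangle=1$ at each step (via minimality of $w$). Your argument is instead a direct orbit-pairing on $\Phi_K^+$: the distinguished class produces the $\sv_\alpha$, perpendicular roots and $\{\beta,s_\alpha\beta\}$-pairs outside $\Phi(s_\alpha)$ cancel trivially, and for A.2 pairs inside $\Phi(s_\alpha)$ you use the long hypothesis exactly to get $\langle\alpha^{\vee},\beta\rangle=1$ and $2\beta\notin\Phi_K$. Your approach is self-contained (it does not rely on Lemma~\ref{lem:Jparameter4}) and makes the role of the long-root hypothesis quite transparent; the paper's approach is shorter and more modular, leveraging the simple-root case already established.
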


\begin{proof}
Let $w=s_{j_1}\cdots s_{j_{\ell}}\in W_K$ be of minimal length subject to $w^{-1}\alpha=\alpha_k$ for some $k\in K$ (if $\Phi_K$ is reduced) or $w^{-1}\alpha=2\alpha_n$ (if $\Phi_K$ is not reduced). Let $\beta_0=\alpha$ and define $\beta_1,\ldots,\beta_{\ell}\in\Phi_K^+$ by $\beta_r=s_{j_r}\beta_{r-1}$ for $1\leq r\leq \ell$, and so $\beta_{\ell}=\alpha_k$ (or $\beta_{\ell}=2\alpha_n$ if $\Phi_K$ is not reduced). Since each $\beta_r$ is a long root of $\Phi_K$ we have $\langle\beta_{r-1}^{\vee},\alpha_{j_r}\rangle\in\{-1,0,1\}$ (see \cite[IV, \S1, Proposition~8]{Bou:02}) and so
$$
\beta_r^{\vee}=s_{j_r}\beta_{r-1}^{\vee}=\beta_{r-1}^{\vee}-\langle\beta_{r-1}^{\vee},\alpha_{j_r}\rangle\alpha_{j_r}^{\vee}=\beta_{r-1}^{\vee}-\alpha_{j_r}^{\vee}
$$
(because $\langle \beta_{r-1}^{\vee},\alpha_{j_r}\rangle\in\{0,-1\}$ contradicts minimality of $w$, see for example \cite[Lemma~1.7]{BH:93}). 

We claim that 
$$
\sv^{\beta_r^{\vee}}\sv(s_{\beta_r})^{-1}=\sv_{\alpha}\quad\text{for $0\leq r\leq \ell$}.
$$
We argue by downward induction on~$r$. If $r=\ell$ then $\beta_{\ell}=\alpha_k$ (if $\Phi_K$ is reduced) or $\beta_{\ell}=2\alpha_n$ (if $\Phi_K$ is not reduced), and the result follows from Lemma~\ref{lem:Jparameter4}, starting the induction. 

Since $s_{\beta_{r-1}}=s_{j_r}s_{\beta_{r}}s_{j_r}$ (with length adding) we have
\begin{align*}
\sv^{\beta_{r-1}^{\vee}}\sv(s_{\beta_{r-1}})^{-1}&=\sv^{\alpha_{j_r}^{\vee}}\sv(s_{j_r})^{-2}\sv^{\beta_r^{\vee}}\sv(s_{\beta_r})^{-1}.
\end{align*}
Note that if $\Phi_K$ is not reduced then $j_r\neq n$ for $1\leq r\leq \ell$ (because $s_{n-1}\cdots s_k(2e_k)=2e_n$), and so by Lemma~\ref{lem:Jparameter4} we have $\sv^{\alpha_{j_r}^{\vee}}\sv(s_{j_r})^{-2}=1$ for all $1\leq r\leq \ell$, and the result follows by induction. 
\end{proof}

\begin{lemma}\label{lem:Jparameter2}
Let $\sv$ be a $J$-parameter system. If $K\in\cK(J)$ and $y\in W_J$ then
 $$
 \frac{\sv(y)}{\sv(ys_{\varphi_K})}\sv^{y\varphi_K^{\vee}}=\begin{cases}
 \sv_{\varphi_K}&\text{if $y\varphi_K\in\Phi_J^+$}\\
 \sv_{\varphi_K}^{-1}&\text{if $y\varphi_K\in-\Phi_J^+$.}
 \end{cases}
 $$
\end{lemma}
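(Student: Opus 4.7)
The plan is to prove the identity by computing both sides as explicit products indexed by subsets of $\Phi_K^+$ and matching them, exploiting the $W_J$-invariance of $\sv$ together with the fact that $\varphi_K$ is the highest root of $\Phi_K$.

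First I would analyze $\Phi(y)\triangle\Phi(ys_{\varphi_K})$. Setting $\eta=y^{-1}\gamma$, one checks that $\gamma\in\Phi_J^+$ lies in the symmetric difference iff $\eta$ and $s_{\varphi_K}\eta$ have opposite signs, which yields
\begin{align*}
\Phi(ys_{\varphi_K})\setminus\Phi(y)&=\{y\delta:\delta\in\Phi(s_{\varphi_K}),\,y\delta>0\},\\
\Phi(y)\setminus\Phi(ys_{\varphi_K})&=\{-y\delta:\delta\in\Phi(s_{\varphi_K}),\,y\delta<0\}.
\end{align*}
Since $y\in W_J$ preserves $\Phi_K$, $W_J$-invariance of $\sv$ together with $\sv_{-\alpha}=\sv_\alpha$ gives
\begin{align*}
\frac{\sv(ys_{\varphi_K})}{\sv(y)}=\prod_{\delta\in\Phi(s_{\varphi_K})}(\sv_\delta\sv_{2\delta})^{\mathrm{sign}(y\delta)}.
\end{align*}

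Next I would reindex $\sv^{y\varphi_K^\vee}=\prod_{\alpha\in\Phi_J^+}\sv_\alpha^{\langle y\varphi_K^\vee,\alpha\rangle}$. Orthogonality of the components of $J$ restricts the product to $\alpha\in\Phi_K^+$; substituting $\delta=y^{-1}\alpha$, applying $W_J$-invariance, and reparametrizing by $\delta\in\Phi_K^+$ with a sign factor whenever $y\delta<0$ gives
\begin{align*}
\sv^{y\varphi_K^\vee}=\prod_{\delta\in\Phi_K^+}\sv_\delta^{\mathrm{sign}(y\delta)\langle\delta,\varphi_K^\vee\rangle}.
\end{align*}
Since $\varphi_K$ is the highest root, for $\delta\in\Phi_K^+$ the pairing $\langle\delta,\varphi_K^\vee\rangle$ lies in $\{0,1,2\}$ with value $2$ only at $\delta=\varphi_K$. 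Letting $S_1=\{\delta\in\Phi_K^+:\langle\delta,\varphi_K^\vee\rangle=1\}$, direct analysis of $s_{\varphi_K}\delta=\delta-\langle\delta,\varphi_K^\vee\rangle\varphi_K$ shows $\Phi(s_{\varphi_K})=S_1\cup\{\varphi_K\}$ when $\Phi_K$ is reduced, and $\Phi(s_{\varphi_K})=S_1$ when $\Phi_K$ is non-reduced (in which case $\varphi_K=2e_{k+1}\notin\Phi_0$).

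The final step is to verify that both sides of the asserted identity equal
\begin{align*}
\sv_{\varphi_K}^{2\,\mathrm{sign}(y\varphi_K)}\prod_{\delta\in S_1}\sv_\delta^{\mathrm{sign}(y\delta)}.
\end{align*}
For $\sv^{y\varphi_K^\vee}$ this is immediate from the partition of $\Phi_K^+$ by $\langle\delta,\varphi_K^\vee\rangle$. For $\sv_{\varphi_K}^{\mathrm{sign}(y\varphi_K)}\cdot\sv(ys_{\varphi_K})/\sv(y)$, in the reduced case all $\sv_{2\delta}$ factors are trivial and the $\delta=\varphi_K$ term of $\Phi(s_{\varphi_K})$ supplies the needed extra $\sv_{\varphi_K}^{\mathrm{sign}(y\varphi_K)}$. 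The main obstacle is the non-reduced case, where $\varphi_K\notin\Phi(s_{\varphi_K})$: there the compensating factor is furnished by the $\sv_{2\delta_0}=\sv_{\varphi_K}$ contribution at $\delta_0=e_{k+1}\in S_1$, using that $\mathrm{sign}(ye_{k+1})=\mathrm{sign}(y\varphi_K)$ since $\varphi_K=2e_{k+1}$. This careful accounting of the $\sv_{2\delta}$ contributions is the technical crux.
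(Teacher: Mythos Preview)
Your proof is correct and follows essentially the same strategy as the paper's: both compute $\sv(ys_{\varphi_K})/\sv(y)$ via the symmetric difference $\Phi(y)\triangle\Phi(ys_{\varphi_K})$, compute $\sv^{y\varphi_K^{\vee}}$ via the pairings $\langle\delta,\varphi_K^{\vee}\rangle\in\{0,1,2\}$, and match using the identification of $\Phi(s_{\varphi_K})$ with the roots of pairing~$1$ (together with $\varphi_K$ itself in the reduced case). The only cosmetic difference is that the paper indexes by $\alpha\in\Phi_K^+$ and looks at $y^{-1}\alpha$, whereas you reindex by $\delta=y^{-1}\alpha$; the paper also first reduces to $y\in W_K$, which you achieve implicitly via orthogonality of components.

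Your treatment of the non-reduced case is in fact more careful than the paper's. When $\Phi_K$ is non-reduced one has $\varphi_K=2e_{k+1}\notin\Phi_0$, so $\varphi_K\notin\Phi(s_{\varphi_K})$; you correctly identify the compensating $\sv_{\varphi_K}$ factor as arising from the $\sv_{2\delta_0}$ contribution at $\delta_0=e_{k+1}\in S_1$. The paper's intermediate formulae for $\sv^{y\varphi_K^{\vee}}$ and for $\sv(ys_{\varphi_K})/\sv(y)$ (borrowed verbatim ``as in the proof of Lemma~\ref{lem:Jparameter1}'') are each off by a factor $\sv_{\varphi_K}^{\epsilon}$ in the non-reduced case, but these discrepancies cancel in the quotient, so the paper's conclusion remains valid. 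Your explicit bookkeeping avoids this sleight of hand.
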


\begin{proof}
We may assume without loss of generality that $y\in W_K$ (making use of Lemma~\ref{lem:Jparameter3}). Let $\alpha\in\Phi_K^+$. If $y^{-1}\alpha\neq\pm\varphi_K$ we claim that
$$
\langle\varphi_K^{\vee},y^{-1}\alpha\rangle=\begin{cases}1&\text{if $y^{-1}\alpha\in\Phi(s_{\varphi_K})$}\\
-1&\text{if $y^{-1}\alpha\in-\Phi(s_{\varphi_K})$}\\
0&\text{otherwise}.
\end{cases}
$$
To see this, note that by \cite[Chapter VI, \S1.8]{Bou:02} if $\beta\in\Phi_K^+$ then $\langle\varphi_K^{\vee},\beta\rangle\in\{0,1\}$, and since $s_{\varphi_K}(\beta)=\beta-\langle\varphi_K^{\vee},\beta\rangle\varphi_K$ we have $\beta\in\Phi(s_{\varphi_K})$ if and only if $\langle\varphi_K^{\vee},\beta\rangle=1$, and the claim follows. 

Let $\epsilon\in\{-1,1\}$ be such that $y\varphi_K\in\epsilon \Phi^+$. Using the claim, and the fact that $\langle\varphi_K^{\vee},\varphi_K\rangle=2$, we have
\begin{align*}
\sv^{y\varphi_K^{\vee}}&=\prod_{\alpha\in\Phi_K^+}\sv_{\alpha}^{\langle\varphi_K^{\vee},y^{-1}\alpha\rangle}=\sv_{\varphi_K}^{\epsilon}\prod_{\alpha\in\Phi_K^+}\sv_{\alpha}^{\sigma_{\alpha}}\quad\text{where}\quad 
\sigma_{\alpha}=\begin{cases}
1&\text{if $y^{-1}\alpha\in\Phi(s_{\varphi_K})$}\\
-1&\text{if $y^{-1}\alpha\in-\Phi(s_{\varphi_K})$}\\
0&\text{otherwise}.
\end{cases}
\end{align*}

On the other hand, as in the proof of Lemma~\ref{lem:Jparameter1} we have
$$
\frac{\sv(ys_{\varphi_K})}{\sv(y)}=\prod_{\alpha\in\Phi_K^+}\sv_{\alpha}^{\sigma_{\alpha}'},
\quad\text{where}\quad 
\sigma_{\alpha}'=\begin{cases}
1&\text{if $\alpha\in\Phi(ys_{\varphi_K})\backslash\Phi(y)$}\\
-1&\text{if $\alpha\in\Phi(y)\backslash\Phi(ys_{\varphi_K})$}\\
0&\text{otherwise}.
\end{cases}
$$
The result now follows from the fact that if $\alpha\in\Phi_K^+$ then $\alpha\in\Phi(ys_{\varphi_K})\backslash \Phi(y)$ if and only if $y^{-1}\alpha\in\Phi(s_{\varphi_K})$, and $\alpha\in\Phi(y)\backslash \Phi(ys_{\varphi_K})$ if and only if $y^{-1}\alpha\in-\Phi(s_{\varphi_K})$. 
\end{proof}

\subsection{$J$-straightening}

Given a positively $J$-folded alcove path $p\in\cP_J(\vec{w},v)$ (with $v\in\WJ$) we define the \textit{$J$-straightened} alcove path $p_J$ to be the path obtained by straightening all bounces of $p$ (note that the folds are not straightened). More formally, let $p=(v_0,v_1,\ldots,v_{\ell},v_{\ell}\sigma)$ and suppose that the bounces occur at indices $1\leq k_1<k_2<\cdots<k_r\leq \ell$, and that they occur on the hyperplanes $H_{\beta_1,\nu_1},\ldots,H_{\beta_r,\nu_r}$ with $\beta_1,\ldots,\beta_r\in\Phi_J^+$ and $\nu_1,\ldots,\nu_r\in\{0,1\}$. Write $p=p_0\cdot p_1\cdot p_2 \cdots p_r$, where $p_0=(v_0,\ldots,v_{k_1-1})$, $p_j=(v_{k_j},\ldots,v_{k_{j+1}-1})$ for $1\leq j\leq r-1$, and $p_r=(v_{k_r},\ldots,v_{\ell},v_{\ell}\sigma)$. Then $p_J$ is the path 
$
p_0\cdot (s_{\beta_1,\nu_1}p_1)\cdot (s_{\beta_1,\nu_1}s_{\beta_2,\nu_2}p_2)\cdots  (s_{\beta_1,\nu_1}\cdots s_{\beta_r,\nu_r}p_r).
$

\begin{example}
The path in Figure~\ref{fig:G2examplestraight} is the $J$-straightening of the positively $J$-folded alcove path in Figure~\ref{fig:G2example}.
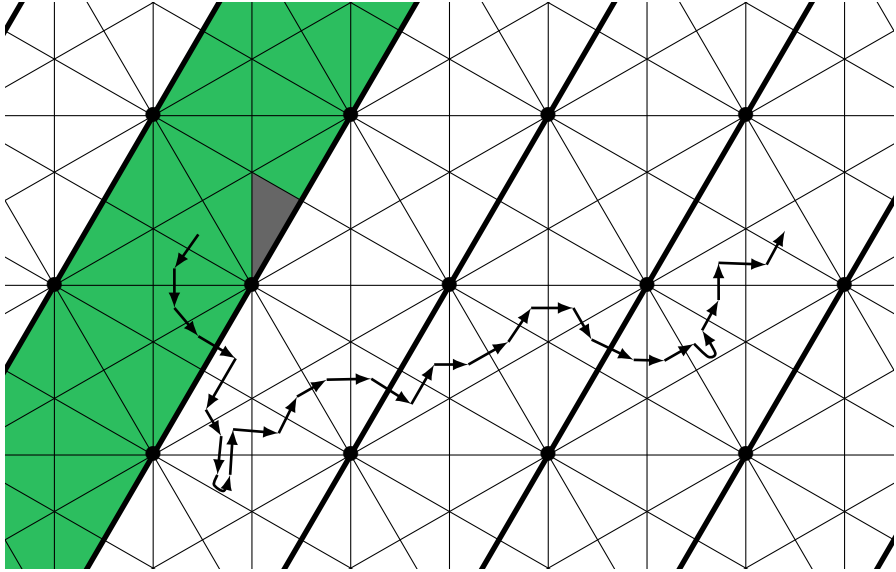
\begin{figure}[H]
\centering
\begin{tikzpicture}[scale=1.5]
\clip ({-5*0.866+0.433},{-2.5}) rectangle + ({9*0.866},{5});
\path [fill=green2] ({-4*0.866-1.732},-3)--({-2*0.866-1.732},-3)--({3*0.866-1.732},4.5)--({1*0.866-1.732},4.5)--({-4*0.866-1.732},-3);
\path [fill=black!60] (0-1.732,0) -- (0.433-1.732,0.75) -- (0-1.732,1) -- (0-1.732,0);
\draw(-4.33,4.5)--(4.33,4.5);
\draw(-4.33,3)--(4.33,3);
\draw(-4.33,1.5)--(4.33,1.5);
\draw(-4.33,0)--(4.33,0);
\draw(-4.33,-1.5)--(4.33,-1.5);
\draw(-4.33,-3)--(4.33,-3);
\draw(-4.33,-3)--(-4.33,4.5);
\draw(-3.464,-3)--(-3.464,4.5);
\draw(-2.598,-3)--(-2.598,4.5);
\draw(-1.732,-3)--(-1.732,4.5);
\draw(-.866,-3)--(-.866,4.5);
\draw(0,-3)--(0,4.5);
\draw(.866,-3)--(.866,4.5);
\draw(1.732,-3)--(1.732,4.5);
\draw(2.598,-3)--(2.598,4.5);
\draw(3.464,-3)--(3.464,4.5);
\draw(4.33,-3)--(4.33,4.5);
\draw(-4.33,3.5)--({-3*0.866},4.5);
\draw(-4.33,2.5)--({-1*0.866},4.5);
\draw(-4.33,1.5)--({1*0.866},4.5);
\draw(-4.33,.5)--({3*0.866},4.5);
\draw(-4.33,-.5)--(4.33,4.5);
\draw(-4.33,-1.5)--(4.33,3.5);
\draw(-4.33,-2.5)--(4.33,2.5);
\draw(-3.464,-3)--(4.33,1.5);
\draw(-1.732,-3)--(4.33,.5);
\draw(0,-3)--(4.33,-.5);
\draw(1.732,-3)--(4.33,-1.5);
\draw(3.464,-3)--(4.33,-2.5);
\draw(4.33,3.5)--({3*0.866},4.5);
\draw(4.33,2.5)--({1*0.866},4.5);
\draw(4.33,1.5)--({-1*0.866},4.5);
\draw(4.33,.5)--({-3*0.866},4.5);
\draw(4.33,-.5)--(-4.33,4.5);
\draw(4.33,-1.5)--(-4.33,3.5);
\draw(4.33,-2.5)--(-4.33,2.5);
\draw(3.464,-3)--(-4.33,1.5);
\draw(1.732,-3)--(-4.33,.5);
\draw(0,-3)--(-4.33,-.5);
\draw(-1.732,-3)--(-4.33,-1.5);
\draw(-3.464,-3)--(-4.33,-2.5);
\draw(-4.33,-1.5)--(-3.464,-3);
\draw(-4.33,1.5)--(-1.732,-3);
\draw(-4.33,4.5)--(0,-3);
\draw({-3*0.866},4.5)--(1.732,-3);
\draw({-1*0.866},4.5)--(3.464,-3);
\draw({1*0.866},4.5)--(4.33,-1.5);
\draw({3*0.866},4.5)--(4.33,1.5);
\draw(4.33,-1.5)--(3.464,-3);
\draw(4.33,1.5)--(1.732,-3);
\draw(4.33,4.5)--(0,-3);
\draw({3*0.866},4.5)--(-1.732,-3);
\draw({1*0.866},4.5)--(-3.464,-3);
\draw({-1*0.866},4.5)--(-4.33,-1.5);
\draw({-3*0.866},4.5)--(-4.33,1.5);
\draw[line width=2pt]({-5*0.866},1.5)--({-3*0.866},4.5);
\draw[line width=2pt]({-5*0.866},-1.5)--({-1*0.866},4.5);
\draw[line width=2pt]({-4*0.866},-3)--({1*0.866},4.5);
\draw[line width=2pt]({-2*0.866},-3)--({3*0.866},4.5);
\draw[line width=2pt]({0*0.866},-3)--({5*0.866},4.5);
\draw[line width=2pt]({2*0.866},-3)--({5*0.866},1.5);
\draw[line width=2pt]({4*0.866},-3)--({5*0.866},-1.5);
\draw[line width=1pt,-latex](-0.47-1.732,0.45)--(-0.68-1.732,0.15);
\draw[line width=1pt,-latex](-0.68-1.732,0.15)--(-0.68-1.732,-0.2);
\draw[line width=1pt,-latex](-0.68-1.732,-0.2)--(-0.47-1.732,-0.45);
\draw[line width=1pt,-latex](-0.47-1.732,-0.45)--(-0.1926-1.732+0.05,-0.7463+0.1);
\draw[line width=1pt,-latex](-0.1926-1.732+0.05,-0.7463+0.1)--(-0.4044-1.732,-1.0995);
\draw[line width=1pt,-latex](-0.4044-1.732,-1.0995)--(-0.269-1.732,-1.333);
\draw[line width=1pt,-latex](-0.269-1.732,-1.333)--(-0.3026-1.732,-1.676);
\draw[line width=1pt,-latex] plot[smooth] coordinates {(-0.3026-1.732,-1.676) (-0.333-1.732,-1.7627) (-0.2449-1.732,-1.816) (-0.191-1.732,-1.675)};
\draw[line width=1pt,-latex](-0.191-1.732,-1.675)--(-0.1655-1.732,-1.2733);
\draw[line width=1pt,-latex](-0.1655-1.732,-1.2733)--(0.235-1.732,-1.307);
\draw[line width=1pt,-latex](0.235-1.732,-1.307)--(0.395-1.732,-0.984);
\draw[line width=1pt,-latex](0.395-1.732,-0.984)--(0.655-1.732,-0.834);
\draw[line width=1pt,-latex](0.655-1.732,-0.834)--(0.996-1.732+0.05,-0.9258+0.1);
\draw[line width=1pt,-latex](0.996-1.732+0.05,-0.9258+0.1)--(-1.2+2.598-1.732,0.45-1.5);
\draw[line width=1pt,-latex](-1.2+2.598-1.732,0.45-1.5)--(-1+2.598-1.732,0.8-1.5);
\draw[line width=1pt,-latex](-1+2.598-1.732,0.8-1.5)--(-0.7+2.598-1.732,0.8-1.5);
\draw[line width=1pt,-latex](-0.7+2.598-1.732,0.8-1.5)--(-0.35+2.598-1.732,1-1.5);
\draw[line width=1pt,-latex](-0.35+2.598-1.732,1-1.5)--(-0.15+2.598-1.732,1.3-1.5);
\draw[line width=1pt,-latex](-0.15+2.598-1.732,1.3-1.5)--(0.22+2.598-1.732,1.3-1.5);
\draw[line width=1pt,-latex](0.22+2.598-1.732,1.3-1.5)--(0.38+2.598-1.732,1.02-1.5);
\draw[line width=1pt,-latex](0.38+2.598-1.732,1.02-1.5)--(0.749+2.598-1.732, 0.941-1.5-0.1);
\draw[line width=1pt,-latex](0.749+2.598-1.732, 0.941-1.5-0.1)--(1.019+2.598-1.732, 0.934-1.5-0.1);
\draw[line width=1pt,-latex](1.019+2.598-1.732, 0.934-1.5-0.1)--(1.278+2.598-1.732, 1.084-1.5-0.1);
\draw[line width=1pt,-latex] plot[smooth] coordinates {(1.278+2.598-1.732, 1.084-1.5-0.1)(1.403+2.598-1.732, 0.868-1.5)(1.464+2.598-1.732, 0.903-1.5)(1.349+2.598-1.732, 1.102-1.5)};
\draw[line width=1pt,-latex](1.349+2.598-1.732, 1.102-1.5)--(1.414+2.598-1.732+0.08, 1.370-1.5);
\draw[line width=1pt,-latex](1.414+2.598-1.732+0.08, 1.370-1.5)--(1.443+2.598-1.732+0.05, 1.699-1.5);
\draw[line width=1pt,-latex](1.443+2.598-1.732+0.05, 1.699-1.5)--(1.915+2.598-1.732, 1.683-1.5);
\draw[line width=1pt,-latex](1.915+2.598-1.732, 1.683-1.5)--(2.076+2.598-1.732, 1.983-1.5);
\node at ({-4*0.866},{2*1.5}) {\Large{$\bullet$}};
\node at ({-2*0.866},{2*1.5}) {\Large{$\bullet$}};
\node at ({0*0.866},{2*1.5}) {\Large{$\bullet$}};
\node at ({2*0.866},{2*1.5}) {\Large{$\bullet$}};
\node at ({4*0.866},{2*1.5}) {\Large{$\bullet$}};
\node at ({-3*0.866},{1.5}) {\Large{$\bullet$}};
\node at ({-1*0.866},{1.5}) {\Large{$\bullet$}};
\node at ({1*0.866},{1.5}) {\Large{$\bullet$}};
\node at ({3*0.866},{1.5}) {\Large{$\bullet$}};
\node at ({-4*0.866},0) {\Large{$\bullet$}};
\node at ({-2*0.866},0) {\Large{$\bullet$}};
\node at ({0*0.866},0) {\Large{$\bullet$}};
\node at ({2*0.866},0) {\Large{$\bullet$}};
\node at ({4*0.866},0) {\Large{$\bullet$}};
\node at ({-3*0.866},{-1*1.5}) {\Large{$\bullet$}};
\node at ({-1*0.866},{-1*1.5}) {\Large{$\bullet$}};
\node at ({1*0.866},{-1*1.5}) {\Large{$\bullet$}};
\node at ({3*0.866},{-1*1.5}) {\Large{$\bullet$}};
\end{tikzpicture}
\caption{$J$-straightening of the path in Figure~\ref{fig:G2example}}\label{fig:G2examplestraight}
\end{figure}
\end{example}

In Figure~\ref{fig:G2examplestraight} the $J$-straightening $p_J$ turned out to be positively folded. The following proposition shows that this is no coincidence.

\begin{prop}\label{prop:unfold}
Let $w\in\Wext$ and let $\vec{w}$ be any expression for $w$ (not necessarily reduced). Let $v\in\WJ$ and let $p\in\cP_J(\vec{w},v)$. The $J$-straightening map $p\mapsto p_J$ is a bijection from $\cP_J(\vec{w},v)$ to the set
$
\{p'\in\cP(\vec{w},v)\mid \text{ $p'$ has no folds on hyperplanes $H_{\alpha,k}$ with $\alpha\in\Phi_J^+$ and $k\in\ZZ$}\}.
$
\end{prop}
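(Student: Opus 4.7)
The plan is to verify that $p\mapsto p_J$ lands in the claimed set of positively folded paths, and then to construct a pointwise inverse by iteratively ``folding $p'$ back'' into $\cA_J$ at each crossing of a $J$-wall. The two structural facts driving the argument are: (i) $W_J$ preserves the partition $\Phi=\Phi_J\sqcup(\Phi\setminus\Phi_J)$, since elements of $W_J$ only alter coefficients of simple roots indexed by $J$; and (ii) the affine upgrade of (i), namely an element $u\in\WJaff$ with linear part $y\in W_J$ sends a non-$J$ hyperplane $H_{\alpha,k}$ ($\alpha\in\Phi^+\setminus\Phi_J$) to another non-$J$ hyperplane, and a direct computation shows it preserves the periodic orientation, because $y\alpha$ must retain the same sign as $\alpha$.

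To see that $p_J$ is a valid element of $\cP(\vec{w},v)$, the only nontrivial check is at a former bounce position $k_j$ on the hyperplane $H_{\beta_j,\nu_j}$: since this hyperplane separates $v_{k_j-1}A_0$ from $v_{k_j-1}s_{i_{k_j}}A_0$, we have the identity $v_{k_j-1}s_{i_{k_j}}=s_{\beta_j,\nu_j}v_{k_j-1}$, and hence the straightened step $u_{j-1}v_{k_j-1}\mapsto u_j v_{k_j-1}=u_{j-1}v_{k_j-1}s_{i_{k_j}}$ is a genuine $s_{i_{k_j}}$-crossing rather than a stay-put. By (i), folds of $p$ (which by Definition~\ref{defn:Jfold} live on non-$J$ hyperplanes) are sent to folds of $p_J$ on non-$J$ hyperplanes, so in particular $p_J$ has no folds on $J$-hyperplanes. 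Positivity of each such fold reduces via~(\ref{eq:affineroots}) to showing that for the linear part $y_j\in W_J$ of $u_j$ and the positive root $\alpha\notin\Phi_J$ underlying the fold, one has $y_j\alpha\in\Phi^+$---this is precisely~(ii). Hence $p_J$ lies in the target set.

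For the inverse, given a positively folded path $p'=(v_0',\ldots,v_\ell',v_\ell'\sigma)$ with $v_0'=v\in\WJ$ and no folds on $J$-hyperplanes, I would walk through $p'$ while maintaining an element $u\in\WJaff$ (initially $u=e$) such that the current vertex $u^{-1}v_k'$ always lies in $\WJ$. Folds of $p'$ become folds of $p$ with no change in $u$; a crossing step that stays within the current $J$-alcove $u\cA_J$ becomes a crossing in $p$ (setting $v_k:=u^{-1}v_k'\in\WJ$); and a crossing that exits $u\cA_J$ via a wall $uH_{\beta,\nu}$ (with $\beta\in\Phi_J^+$, $\nu\in\{0,1\}$) becomes a bounce in $p$---necessarily positive when $\nu=0$ and negative when $\nu=1$, since by Lemma~\ref{lem:boundingwalls} we have $\cA_J\subseteq H_{\beta,0}^+$ and $\cA_J\subseteq H_{\varphi_K,1}^-$---after which $u$ is updated to $u\cdot s_{\beta,\nu}$. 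A direct verification shows that the resulting $p$ is a $J$-folded alcove path and that $(p)_J=p'$, giving the required inverse. The main obstacle is the positivity of folds under straightening, which as noted reduces to the elementary but decisive observation~(ii); without this, both well-definedness of the straightening map and of its inverse could fail.
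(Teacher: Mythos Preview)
Your proposal is correct and follows essentially the same approach as the paper: both arguments hinge on the observation that $W_J$ preserves the partition $\Phi=\Phi_J\sqcup(\Phi\setminus\Phi_J)$ and fixes the sign of roots in $\Phi\setminus\Phi_J$, so that straightening a bounce sends later folds on non-$J$ walls to positive folds on non-$J$ walls. Your treatment is somewhat more explicit than the paper's---you spell out the crossing identity $v_{k_j-1}s_{i_{k_j}}=s_{\beta_j,\nu_j}v_{k_j-1}$ at a bounce and describe the inverse via the running element $u\in\WJaff$---whereas the paper handles the inverse in one sentence (``apply the reverse procedure, \ldots forcing the bounces on the $\Phi_J$-walls''), but the underlying idea is identical.
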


\begin{proof}
Consider the effect of straightening a bounce. Let $\alpha\in\Phi_J^+$ be the linear root associated to the wall on which the bounce occurs. A later positive fold that occurs on a $\beta$-wall (necessarily with $\beta\in\Phi^+\backslash \Phi_J$) now occurs on a $s_{\alpha}\beta$ wall. Since $\beta\notin\Phi_J$ and $s_{\alpha}\in W_J$ we have $s_{\alpha}\beta\in\Phi^+\backslash \Phi_J$ (as $s_{\alpha}$ permutes the set $\Phi_J$) and hence this reflected fold remains a positive fold (see~(\ref{eq:affineroots})), and it does not occur on a hyperplane $H_{\gamma,k}$ with $\gamma\in\Phi_J$ (a ``$\Phi_J$-wall''). This shows that $p_J$ is positively folded, with no folds occurring on $\Phi_J$-walls.  

A similar argument shows that one may apply the reverse procedure, starting with a positively folded alcove path $p'\in\cP(\vec{w},v)$ with no folds on $\Phi_J$-walls and forcing the bounces on the $\Phi_J$-walls. These operations are mutually inverse procedures, hence the result.
\end{proof}

We now give some more precise information on the $J$-straightening relating to the $\sv$-mass. If $p$ is a positively folded alcove path, for each $\alpha-k\delta\in\widetilde{\Phi}^+$ we define
$$
c_{\alpha,k}(p)=\#(\text{crossings in $p$ that occur on the hyperplane $H_{\alpha,k}$}).
$$
Note that in the non-reduced case, if $2\alpha\in\Phi$ then $c_{2\alpha,2k}(p)=c_{\alpha,k}$. 

Given a $J$-parameter system $\sv=(\sv_{\alpha})_{\alpha\in\Phi_J}$ we define
\begin{align}\label{eq:extendv}
\sv_{\alpha+k\delta}=\sv_{\alpha}\quad\text{if $\alpha\in\Phi_J$ and $k\in\ZZ$},
\end{align}
and we set $\sv_{\alpha+k\delta}=1$ if $\alpha+k\delta\notin\Phi_J+\ZZ\delta$.

\begin{prop}\label{prop:unfold2}
Let $p$ be a positively $J$-folded alcove path (not necessarily of reduced type) and let $\sv=(\sv_{\alpha})_{\alpha\in\Phi_J}$ be a $J$-parameter system. Then

$$
\cQ_{J,\sv}(p)=\cQ(p_J)\prod_{\alpha-k\delta\in\widetilde{\Phi}^+}\sv_{\alpha-k\delta}^{c_{\alpha,k}(p_J)}
$$
where $\cQ(\cdot)$ is as in Proposition~\ref{prop:basischange} (note that the product has finitely many terms $\neq 1$). 
\end{prop}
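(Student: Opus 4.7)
My plan is to track how the $J$-straightening map $p \mapsto p_J$ transforms each elementary step of $p$. Writing $p = p_0 \cdot p_1 \cdots p_r$ as the concatenation of maximal bounce-free subpaths, separated by bounces on hyperplanes $H_{\beta_1,\nu_1}, \ldots, H_{\beta_r,\nu_r}$ with $\beta_j \in \Phi_J^+$ and $\nu_j \in \{0,1\}$, the straightening is $p_J = p_0 \cdot (s_{\beta_1,\nu_1}p_1) \cdots (s_{\beta_1,\nu_1}\cdots s_{\beta_r,\nu_r}p_r)$ with each bounce replaced by a crossing at the same hyperplane.

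First I would verify that folds are preserved with their types: the condition $v_{k-1} = v_k$ defining an $i_k$-fold is invariant under left multiplication by any element of $\WJaff$, and the index $i_k$ is read off from $\vec{w}$ and therefore unchanged. Hence $f_i(p) = f_i(p_J)$ for every $i \in \Iaff$, which immediately gives $\cQ(p_J) = \prod_i(\sq_i-\sq_i^{-1})^{f_i(p)}$.

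Next I would show that the crossings of $p_J$ lying on $\Phi_J$-walls (i.e.\ walls $H_{\alpha,k}$ with $\alpha \in \Phi_J^+$) are exactly the $r$ straightened bounces of $p$. The path $p$ is entirely contained in $\WJ$, so no original crossing of $p$ lies on a $\Phi_J$-wall (as $\cA_J$ is by definition not divided by any such wall). The straightening applies the reflections $s_{\beta_j,\nu_j} \in \WJaff$ to successive tails, and their linear parts $s_{\beta_j} \in W_J$ permute both $\Phi_J$ and $\Phi \setminus \Phi_J$, so non-$\Phi_J$-crossings of $p$ remain non-$\Phi_J$-crossings of $p_J$. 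Meanwhile, Proposition~\ref{prop:unfold} guarantees that $p_J$ has no folds on $\Phi_J$-walls. Hence the only $\Phi_J$-crossings of $p_J$ are the $r$ straightened bounces, and the linear root of the $j$-th one is $s_{\beta_1}\cdots s_{\beta_{j-1}}\beta_j$, which lies in the $W_J$-orbit $W_J \cdot \beta_j \subseteq \Phi_J$.

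Finally I would compute the $\sv$-weighted contribution. By Definition~\ref{defn:Jparameters}, $\sv$ is constant on $W_J$-orbits in $\Phi_J$; combined with~(\ref{eq:extendv}) and the convention $\sv_{\alpha-k\delta} = 1$ for $\alpha \notin \Phi_J$, each straightened bounce contributes $\sv_{\beta_j}$ independently of the accumulated reflection, giving
\[
\prod_{\alpha - k\delta \in \widetilde{\Phi}^+} \sv_{\alpha - k\delta}^{c_{\alpha,k}(p_J)} \;=\; \prod_{j=1}^r \sv_{\beta_j} \;=\; \prod_{\alpha \in \Phi_J^+} \sv_\alpha^{b_\alpha(p)}.
\]
Multiplying by the fold identity then yields the proposition. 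The main obstacle is establishing the bijection in the second paragraph cleanly: both the stability of $\Phi \setminus \Phi_J$ under $W_J$ and the fold-freeness of $p_J$ on $\Phi_J$-walls are needed to exclude ``spurious'' $\Phi_J$-crossings, and once these are in hand the $W_J$-invariance of $\sv$ makes the weight matching immediate.
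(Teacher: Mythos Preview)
Your proof is correct and follows essentially the same approach as the paper: both argue that (i) fold types are preserved under $J$-straightening since $\WJaff$ acts type-preservingly, and (ii) the $\Phi_J$-wall crossings of $p_J$ are exactly the straightened bounces, with matching $\sv$-values because the relevant hyperplanes lie in the same $\WJaff$-orbit and $\sv$ is $W_J$-invariant. Your version is somewhat more explicit (computing the linear root $s_{\beta_1}\cdots s_{\beta_{j-1}}\beta_j$ and citing Definition~\ref{defn:Jparameters}(1) directly); the only caveat is that in the non-reduced case the middle expression $\prod_{j=1}^r\sv_{\beta_j}$ should read $\prod_{j=1}^r\sv_{\beta_j}\sv_{2\beta_j}$ to match the double-counting that occurs identically on both outer sides, but this does not affect the argument.
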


\begin{proof}
First, note that under the bijection $p\mapsto p_J$ the types of the folds are preserved (that is, $i$-folds in $p$ are mapped to $i$-folds in $p_J$) because the $J$-affine Weyl group $\WJaff$ is type preserving. This shows that 
$$
\prod_{i\in\Iaff}(\sq_i-\sq_i^{-1})^{f_i(p)}=\prod_{i\in\Iaff}(\sq_i-\sq_i^{-1})^{f_i(p_J)}=\cQ(p_J).
$$

Thus it remains to show that if $p\in\cP_J(\vec{w},v)$ then
$$
\prod_{\alpha\in\Phi_J^+}\sv_{\alpha}^{b_{\alpha}(p)}=\prod_{\alpha-k\delta\in\widetilde{\Phi}^+}\sv_{\alpha-k\delta}^{c_{\alpha,k}(p_J)},
$$
which in turn follows from the fact that each crossing of a $\Phi_J+\ZZ\delta$ wall in the $J$-straightened path $p_J$ corresponds to a bounce on a wall of $\cA_J$ in the path $p$, and from the definition of $J$-straightening if $p_J$ has a crossing on $H_{\beta,k}$ with $\beta\in\Phi_J$ then there is $v\in \WJaff$ with $vH_{\beta,k}$ equal to the wall $H_{\gamma,r}$ of $\cA_J$ on which the corresponding bounce occurred. 
 \end{proof}

\section{The $\Hext$-modules $M_{J,\sv}$}\label{sec:TheModule}

In this section we construct finite dimensional $\Hext$-modules $(\pi_{J,\sv},M_{J,\sv})$ (Theorem~\ref{thm:module}), and develop a combinatorial formula for the matrix entries of $\pi_{J,\sv}(T_w)$ in terms of positively $J$-folded alcove paths (Theorems~\ref{thm:mainpath1} and~\ref{thm:mainpath2}). We prove that these modules are irreducible (Corollary~\ref{cor:irreducible}) and realise them as induced representations from $1$-dimensional representations of Levi subalgebras (Theorem~\ref{thm:induced}). In fact we will show that all representations of $\Hext$ that are induced from $1$-dimensional representations of Levi subalgebras arise from our construction (after specialising certain ``variables'' appropriately, see Remark~\ref{rem:induced}). 

Our $\Hext$-modules are inspired by the modules constructed by Lusztig~\cite[Lemma~4.7]{Lus:97} and Deodhar \cite[Corollary~2.3]{Deo:87}, however note that those modules are infinite dimensional when applied to our setting. To motivate the general philosophy of Theorem~\ref{thm:module}, recall Lusztig's \textit{periodic Hecke module} $M=\bigoplus_{A\in\mathbb{A}} \sR A$ (with $\sR$ as in Section~\ref{sec:hecke}) with basis indexed by the (classical) alcoves and $\Hext$-action given by (see \cite[\S3.2]{Lus:97})
$$
(wA_0)\cdot T_i=\begin{cases}
ws_iA_0&\text{if $wA_0\,{^-}|^+\, ws_iA_0$}\\
ws_iA_0+(\sq_i-\sq_i^{-1})wA_0&\text{if $wA_0\,{^+}|^-\, ws_iA_0$}
\end{cases}
$$
for $i\in \{0\}\cup\Ifin$. The idea behind Theorem~\ref{thm:module} is to adapt this action to create a $J$-analogue that also incorporates the action of the translation group $\TJ$ on $\cA_J$ to create a finite dimensional $\Hext$-module. We shall be able to achieve this goal for each subset $J\subseteq \Ifin$ and each $J$-parameter system~$\sv$, thus constructing many ``combinatorial'' finite dimensional $\Hext$-modules.

\subsection{Construction of the module $M_{J,\sv}$}\label{sec:moduleconstruction}

Let $\{\zeta_i\mid i\in I\}$ be a family of commuting invertible indeterminates, and for $\la\in P$ let
$\zeta^{\la}=\prod_{i\in I}\zeta_i^{a_i}$ if $\la=\sum_{i\in I}a_i\omega_i$. For $J\subseteq\Ifin$, let $\mathcal{I}_J$ denote the ideal of the Laurent polynomial ring $\sR[\{\zeta_i^{\pm 1}\mid i\in I\}]$ generated by the elements $\zeta^{\alpha_j^{\vee}}-1$ for $j\in J$. Let 
$$
\zeta_J^{\la}=\zeta^{\la}+\mathcal{I}_J\quad\text{and write}\quad \sR[\zeta_J]=\sR[\{\zeta_i^{\pm 1}\mid i\in I\}]/\mathcal{I}_J.
$$
Thus the ring $\sR[\zeta_J]$ consists of elements of the form 
$$
\sum_{\la\in P}a_{\la}\zeta_J^{\la}
$$
with only finitely many nonzero terms, where $a_{\la}\in\sR$. We note that $\zeta_J^{\gamma}=1$ for all $\gamma\in Q_J$, and thus $\zeta_J^{\la}=\zeta_J^{\la^{(J)}}$ for all $\la\in P$.

The following theorem introduces the main objects of study in this paper. Recall that by~(\ref{eq:extendv}) we extend the definition of $\sv_{\alpha}$ to $\sv_{\alpha+k\delta}$ for all $\alpha+k\delta\in\widetilde{\Phi}$, and that $\alpha_0=-\varphi+\delta$.

\begin{thm}\label{thm:module}
Let $J\subseteq \Ifin$ and let $\sv$ be a $J$-parameter system. Let $M_{J,\sv}$ be a module over the ring $\sR[\zJ]$ with basis $\sB_{J,\sv}=\{\bm_u\mid u\in W^J\}$, and for $i\in\{0\}\cup\Ifin$ and $\sigma\in\Sigma$ define
\begin{align*}
\bm_u\cdot T_i&=\begin{cases}
\zJ^{\wt(us_i)}\bm_{\theta^J(us_i)}&\text{if $u\to us_i$ positive and $us_i\in\WJ$}\\
\zJ^{\wt(us_i)}\bm_{\theta^J(us_i)}+(\sq_i-\sq_i^{-1})\bm_u&\text{if $u\to us_i$ is negative and $us_i\in\WJ$}\\
\sv_{u\alpha_i}\sv_{2u\alpha_i}\bm_u&\text{if $us_i\notin\WJ$ (hence $u\alpha_i\in\Phi_J+\ZZ\delta$)}
\end{cases}\\
\bm_u\cdot T_{\sigma}&=\zeta_J^{\wt(u\sigma)}\bm_{\theta^J(u\sigma)}.
\end{align*}
This extends to a right action of $\Hext$ on the module $M_{J,\sv}$. 
\end{thm}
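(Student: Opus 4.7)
The plan is to verify that the given formulas respect every defining relation of $\Hext$ in the standard $T$-basis: the products $T_\sigma T_\tau = T_{\sigma\tau}$ and cross-relations $T_\sigma T_i = T_{\sigma(i)} T_\sigma$ for $\sigma,\tau\in \Sigma$, the quadratic relations $T_i^2 = I + (\sq_i-\sq_i^{-1})T_i$ for $i \in \Iaff$, and the braid relations $T_iT_jT_i\cdots = T_jT_iT_j\cdots$ (with $m_{ij}$ factors on each side). The $\Sigma$-relations are immediate from $\theta^J((u\sigma)\tau)=\theta^J(u(\sigma\tau))$ and the fact that $\Sigma$ permutes $\Iaff$ compatibly with the walls of $\cA_J$ and the $\TJ$-decomposition of $\WJ$, together with Convention~\ref{conv:parameters} giving $\sq_{\sigma(i)}=\sq_i$.

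For the quadratic relations I would argue by case analysis on the position of $u,us_i$ relative to $\cA_J$. The delicate case is when $us_i\notin \WJ$: here $u\alpha_i\in \Phi_J+\ZZ\delta$, so $\bm_u\cdot T_i=\sv_{u\alpha_i}\sv_{2u\alpha_i}\bm_u$ is a scalar action and iterating yields $(\sv_{u\alpha_i}\sv_{2u\alpha_i})^2\bm_u$. Using $W_J$-invariance of $\sv$ (Definition~\ref{defn:Jparameters}(1)), we may reduce $u\alpha_i$ to a simple root of $\Phi_J$, and then Definition~\ref{defn:Jparameters}(2)--(3) forces $\sv_{u\alpha_i}\sv_{2u\alpha_i}\in\{\sq_i,-\sq_i^{-1}\}$; each such value $x$ satisfies $x^2=1+(\sq_i-\sq_i^{-1})x$, giving the relation. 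When $us_i\in\WJ$, iterating is a routine computation using $\wt(us_i s_i)=\wt(u)$ and $\theta^J(us_i s_i)=u$, after splitting into positive and negative crossings.

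The braid relations are the main obstacle, and my plan is to reduce to a rank-$2$ dihedral computation: fixing $i\neq j$, the relation $T_iT_j\cdots=T_jT_i\cdots$ acting on $\bm_u$ only involves the geometry of the $\langle s_i,s_j\rangle$-orbit of $u$ and its interaction with the walls of $\cA_J$. For configurations lying entirely in the interior of $\cA_J$ the relation reduces to the familiar braid identities for Lusztig's periodic Hecke module~\cite{Lus:97}. The new cases involve the walls of $\cA_J$, and here the scalars produced by bounces must combine correctly; I expect to verify this using Lemmas~\ref{lem:Jparameter1}--\ref{lem:Jparameter3}, which govern how $\sv^\lambda$, $\sv(y)$ and the base parameters $\sv_\alpha$ transform under reflections in $\Phi_J$, together with the $\TJ$-twist $\zJ^{\wt(\cdot)}$.

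An alternative, and probably cleaner, route is to construct $M_{J,\sv}$ first as the $\Hext$-module induced from a generic $1$-dimensional character of the Levi subalgebra $\cL_J$ (in the spirit of Theorem~\ref{thm:induced}), where the relations hold automatically, and then identify the induced basis with $\{\bm_u\}_{u\in W^J}$ by directly computing the action of the generators $T_i$ and $T_\sigma$. Either way, the hardest point is checking that the $\sv$-factors arising from bounces combine correctly with the $\zJ$-twist in the $T_i$-action under the rank-$2$ braid moves, and this is where the detailed information about the $J$-parameter system from Definition~\ref{defn:Jparameters} and its consequences in Section~\ref{sec:Jpaths} is used in an essential way.
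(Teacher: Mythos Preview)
Your plan is correct in outline, and your second alternative (via the induced module) is exactly the independent proof the paper mentions in the remark following Theorem~\ref{thm:induced}. However, the paper's primary proof of Theorem~\ref{thm:module} takes a third route that neatly sidesteps the dihedral case analysis you anticipate as the ``main obstacle''.

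Rather than verifying the braid relations on each $\bm_u$ directly, the paper defines a linear map $\varpi_{J,\sv}:\Hext\to M_{J,\sv}$ on the Bernstein--Lusztig basis by
\[
\varpi_{J,\sv}(X_w)=(\sv\zJ)^{\wt(w)}\sv(\theta_J(w))^{-1}\bm_{\theta^J(w)},
\]
and then proves (Proposition~\ref{prop:module}) the single compatibility $\varpi_{J,\sv}(hT_i)=\varpi_{J,\sv}(h)\cdot T_i$ for all $h\in\Hext$. This proposition is where the real work happens: it is a case analysis on the sign of $w\to ws_i$ and on whether $us_i\in\WJ$, and it is precisely here that Lemmas~\ref{lem:Jparameter1}--\ref{lem:Jparameter3} and the auxiliary Lemmas~\ref{lem:littlebitsa}--\ref{lem:littlebitsb} are invoked. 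Once this is established, every relation follows for free: since $\bm_u=\varpi_{J,\sv}(X_u)$, one has
\[
(\cdots((\bm_u\cdot T_i)\cdot T_j)\cdots)=\varpi_{J,\sv}(X_uT_iT_j\cdots)=\varpi_{J,\sv}(X_uT_jT_i\cdots)=(\cdots((\bm_u\cdot T_j)\cdot T_i)\cdots),
\]
using only that the braid relation already holds in $\Hext$. The quadratic and $\Sigma$-relations follow identically.

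The upshot is that your direct approach trades one hard verification (the rank-$2$ braid analysis against the walls of $\cA_J$) for many, whereas the paper's $\varpi_{J,\sv}$ device reduces everything to a single-generator compatibility check. Your induced-module alternative achieves the same economy and is equally valid; the paper's approach has the minor advantage of not needing to set up the induced module or identify bases first.
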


Before proving Theorem~\ref{thm:module} we give some lemmas and auxiliary results.

\begin{lemma}\label{lem:littlebitsa}
Let $w,v\in\Wext$. Then
\begin{compactenum}[$(1)$]
\item $\theta^J(wv)=\theta^J(\theta^J(w)v)$.
\item $\theta_J(wv)=\theta_J(w)\theta_J(\theta^J(w)v)$
\item $\wt(wv)=\wt(w)+\theta_J(w)\wt(\theta^J(w)v)$. 
\end{compactenum}
\end{lemma}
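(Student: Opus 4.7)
The plan is to exploit the semidirect product structure $\Wext = P \rtimes W_0$ together with the uniqueness of the $W_J \cdot W^J$ decomposition of elements of $W_0$ (recall~\eqref{eq:WJdecomposition}).

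First I would establish the basic multiplicative formulas for $\theta$ and $\wt$ on products. Writing $w = t_{\wt(w)}\theta(w)$ and $v = t_{\wt(v)}\theta(v)$, direct computation in $P \rtimes W_0$ gives
\begin{equation*}
wv = t_{\wt(w) + \theta(w)\wt(v)}\theta(w)\theta(v),
\end{equation*}
so that $\theta(wv) = \theta(w)\theta(v)$ and $\wt(wv) = \wt(w) + \theta(w)\wt(v)$. Applying the same formula to the product $\theta^J(w) \cdot v$ (viewing $\theta^J(w) \in W_0 \subseteq \Wext$ as having $\wt(\theta^J(w)) = 0$ and $\theta(\theta^J(w)) = \theta^J(w)$) I obtain
\begin{equation*}
\theta(\theta^J(w)v) = \theta^J(w)\theta(v), \qquad \wt(\theta^J(w)v) = \theta^J(w)\wt(v).
\end{equation*}

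Next I would verify (1) and (2) simultaneously. Using $\theta(w) = \theta_J(w)\theta^J(w)$ together with the multiplicativity above,
\begin{equation*}
\theta(wv) = \theta_J(w) \cdot \theta^J(w)\theta(v) = \theta_J(w) \cdot \theta(\theta^J(w)v) = \theta_J(w)\theta_J(\theta^J(w)v) \cdot \theta^J(\theta^J(w)v).
\end{equation*}
The first factor $\theta_J(w)\theta_J(\theta^J(w)v)$ lies in $W_J$ (as a product of elements of the subgroup $W_J$) and the second factor $\theta^J(\theta^J(w)v)$ lies in $W^J$ by definition. By the uniqueness of the $W_J \cdot W^J$ decomposition of $\theta(wv) = \theta_J(wv)\theta^J(wv)$, I can read off
\begin{equation*}
\theta_J(wv) = \theta_J(w)\theta_J(\theta^J(w)v) \qquad \text{and} \qquad \theta^J(wv) = \theta^J(\theta^J(w)v),
\end{equation*}
establishing (1) and (2).

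Finally, (3) is an immediate substitution: from $\wt(wv) = \wt(w) + \theta(w)\wt(v)$ and $\theta(w) = \theta_J(w)\theta^J(w)$,
\begin{equation*}
\wt(wv) = \wt(w) + \theta_J(w)\theta^J(w)\wt(v) = \wt(w) + \theta_J(w)\wt(\theta^J(w)v),
\end{equation*}
using the formula $\wt(\theta^J(w)v) = \theta^J(w)\wt(v)$ derived in the first paragraph. There is no real obstacle here; the only thing to be careful about is invoking uniqueness of the $W_J \cdot W^J$ decomposition correctly, since the factorisation of $\theta(wv)$ produced above is not a priori the canonical one --- but membership of each factor in $W_J$ respectively $W^J$ is transparent from construction.
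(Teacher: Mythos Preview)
Your proof is correct and is essentially a detailed unpacking of the paper's one-line proof, which simply states that ``these all follow immediately from the definitions.'' You have made explicit the semidirect product computation and the appeal to uniqueness of the $W_J\cdot W^J$ factorisation that the paper leaves implicit.
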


\begin{proof}
We have
\begin{align*}
wv&=t_{\wt(w)}\theta_J(w)\theta^J(w)v\\
&=t_{\wt(w)}\theta_J(w)t_{\wt(\theta^J(w)v)}\theta_J(\theta^J(w)v)\theta^J(\theta^J(w)v)\\
&=t_{\wt(w)+\theta_J(w)\wt(\theta^J(w)v)}\theta_J(w)\theta_J(\theta^J(w)v)\theta^J(\theta^J(w)v)
\end{align*}
and hence (1), (2), and (3).
\end{proof}

\begin{lemma}\label{lem:bitofmagic} Let $w\in\Wext$ and $i\in\{0\}\cup\Ifin$. Let $u=\theta^J(w)$. Then
$$
\frac{\sv(\theta_J(w))}{\sv(\theta_J(ws_i))}\sv^{\wt(ws_i)-\wt(w)}=\begin{cases}
1&\text{if $us_i\in\WJ$}\\
\sv_{u\alpha_i}^{\epsilon}\sv_{2u\alpha_i}^{\epsilon}&\text{if $us_i\notin\WJ$}.
\end{cases}
$$
where $\epsilon\in\{-1,1\}$ is the sign of the crossing $w\to ws_i$.
\end{lemma}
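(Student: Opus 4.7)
The plan is to decompose $w = t_{\wt(w)}\,y\,u$ with $y = \theta_J(w)$ and $u = \theta^J(w)$, and analyse how $\theta_J(w)$ and $\wt(w)$ transform under right multiplication by $s_i$, splitting into $i \in I$ and $i = 0$.

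For $i \in I$ the weight is unchanged, so $\sv^{\wt(ws_i) - \wt(w)} = 1$. Using $us_i = s_{u\alpha_i}u$, I would first show that if $u\alpha_i \in \Phi_J^+$ then $u\alpha_i$ is a simple root $\alpha_j$ of $\Phi_J$: a non-trivial decomposition $u\alpha_i = \gamma_1 + \gamma_2$ in $\Phi_J^+$ would, since $\alpha_i$ is simple in $\Phi$, force one of $u^{-1}\gamma_k$ to be negative, contradicting $\Phi_J(u) = \emptyset$ from Lemma~\ref{lem:decomposition}; in the non-reduced setting $u\alpha_i \neq 2\alpha_n$ by length considerations. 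Hence $\theta_J(ws_i) = ys_j$, and the ratio $\sv(y)/\sv(ys_j)$ is computed directly from the change in the inversion set $\Phi(ys_j)$, combined with the $W_J$-invariance $\sv_{y\alpha_j} = \sv_{\alpha_j}$. Tracking the sign of $y\alpha_j$ against the sign convention for $w\alpha_i$ matches the answer to $\sv_{u\alpha_i}^{\epsilon}\sv_{2u\alpha_i}^{\epsilon}$. Otherwise $us_i \in W^J$, so $\theta_J(ws_i) = y$ and the LHS is $1$.

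For $i = 0$ I would write $ws_0 = t_{\wt(w) + yu\varphi^\vee}\,ys_{u\varphi}u$ via $s_0 = t_{\varphi^\vee}s_\varphi$ and $us_\varphi = s_{u\varphi}u$. The characterisation of the walls of $\cA_J$ in Lemma~\ref{lem:boundingwalls} gives $us_0 \notin \WJ$ iff $u\varphi = \varphi_K$ for some $K \in \cK(J)$, and a short dominance argument shows that the weaker condition $u\varphi \in \Phi_J$ already forces $u\varphi = \varphi_K$: first $u\varphi > 0$ (else $-u\varphi \in \Phi_J^+ \cap \Phi(u) = \emptyset$), and then writing $\varphi_K - u\varphi = \sum_{k \in K}c_k\alpha_k \geq 0$ and applying $u^{-1}$ (using $u^{-1}\alpha_k \in \Phi^+$ and the fact that $\varphi$ is the highest root of $\Phi$) forces every $c_k = 0$. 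When $u\varphi = \varphi_K$, the reflection $s_{\varphi_K} \in W_J$ and so $\theta_J(ws_0) = ys_{\varphi_K}$; Lemma~\ref{lem:Jparameter2} applied to $y$ and $\varphi_K$ then delivers $\sv_{\varphi_K}^{\epsilon}$, matching the right-hand side after identifying the sign via $w\alpha_0 = -y\varphi_K + (\cdots)\delta$ and noting that $2\varphi_K \notin \Phi$ gives $\sv_{2u\alpha_0} = 1$.

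The subtlest sub-case is $us_0 \in \WJ$ together with $u\varphi \notin \Phi_J$, where we must show LHS $= 1$, equivalently $\sv(\theta_J(ys_\beta u))/\sv(y) = \sv^{y\beta^\vee}$ with $\beta = u\varphi$. Using $u^{-1}s_\beta = s_\varphi u^{-1}$ one shows that for $\alpha \in \Phi_J^+$ the pairing $\langle\alpha, y\beta^\vee\rangle$ lies in $\{-1, 0, 1\}$ (since $y\beta$ is a long root of $\Phi$ not in $\Phi_J$, so $\alpha \neq \pm y\beta$), and that $\alpha \in \Phi_J(ys_\beta u)$ iff either $\alpha \notin \Phi(y)$ with $\langle\alpha, y\beta^\vee\rangle = 1$, or $\alpha \in \Phi(y)$ with $\langle\alpha, y\beta^\vee\rangle = 0$. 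The key structural point is the inclusion $D := \{\alpha \in \Phi_J^+ : \langle\alpha, y\beta^\vee\rangle = -1\} \subseteq \Phi(y)$, which follows from $u^{-1}(\Phi_J^+) \subseteq \Phi^+$ and the dominance of $\varphi^\vee$. This collapses the ratio to $\prod_{B}\sv_\alpha / \prod_{D}\sv_\alpha$ (the $\sv_{2\alpha}$ factors being trivial on $B \cup D$, since $\langle 2\alpha, y\beta^\vee\rangle \notin \{-1,0,1\}$ would force $2\alpha \notin \Phi_J$), which is precisely $\sv^{y\beta^\vee}$. The main obstacle throughout is keeping the bookkeeping of signs and dominance consistent; this final sub-case is where one truly exploits that $\varphi$ is the \emph{highest} root of $\Phi$, not merely a long one.
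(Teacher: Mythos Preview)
Your proof is correct and follows essentially the same case structure as the paper's: reduce to analysing how $\theta_J$ and $\wt$ change under right multiplication by $s_i$, split into $i\in I$ versus $i=0$, and within $i=0$ split further according to whether $u\varphi\in\Phi_J$. The paper streamlines the setup by first observing that the left-hand side is invariant under $w\mapsto t_\lambda w$, so one may assume $w\in W_0$ from the outset; you work with general $w$, which causes no difficulties but adds a layer of bookkeeping.

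The one genuinely different sub-case is $i=0$ with $us_0\in\WJ$. The paper notes that then $u\varphi^\vee\in\cA_J\cap P$, invokes Corollary~\ref{cor:splitting2} to get $\theta_J(us_0)=\sy_{u\varphi^\vee}$, and then Lemma~\ref{lem:Jparameter1} (applied with $\lambda=u\varphi^\vee$) immediately yields $\sv(y\sy_{u\varphi^\vee})=\sv^{yu\varphi^\vee}\sv(y)$. Your argument instead recomputes $\Phi_J(ys_\beta u)$ from scratch via the dominance of $\varphi^\vee$ and the inclusion $D\subseteq\Phi(y)$. This is correct and self-contained, but it is essentially a direct re-derivation of the special case of Lemma~\ref{lem:Jparameter1} needed here (your sets $B$ and $D$ recover exactly $\Phi(\sy_{u\varphi^\vee})$ and its negative, by Lemma~\ref{lem:utbasics1}). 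The paper's route is shorter because it reuses structure already established; yours has the virtue of making the role of the highest root explicit.
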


\begin{proof} The expression on the left hand side of the equation is invariant under replacing $w$ with $t_{\lambda}w$ for any $\lambda\in P$, and the sign of the crossing $w\to ws_i$ is the same as the sign of the crossing $t_{\lambda}w\to t_{\lambda}ws_i$. Therefore we can assume without loss of generality that $w\in W_0$.  

So $w=yu\in W_0$ with $y=\theta_J(w)$ and $u=\theta^J(w)$. Suppose first that $i\in \Ifin$. If $us_i\in \WJ$ then $us_i\in W^J$ and so $\theta^J(ws_i)=us_i$, and hence $\theta_J(ws_i)=y$, and the result follows. If $us_i\notin \WJ$ then $us_i=s_ju$ for a unique $j\in J$, and so $\theta_J(ws_i)=ys_j$. If $w\to ws_i$ is positive then $\ell(ws_i)=\ell(w)-1$ (as $w\in W_0$) and so $\ell(ys_j)=\ell(y)-1$, and if $w\to ws_i$ is negative then $\ell(ys_j)=\ell(y)+1$. Thus 
$$
\frac{\sv(\theta_J(w))}{\sv(\theta_J(ws_i))}\sv^{\wt(ws_i)-\wt(w)}=\frac{\sv(y)}{\sv(y)\sv_{\alpha_j}^{-\epsilon}\sv_{2\alpha_j}^{-\epsilon}}=\sv_{\alpha_j}^{\epsilon}\sv_{2\alpha_j}^{\epsilon},
$$
as required. 

Suppose now that $i=0$. If $us_0\in\WJ$ then $u\varphi^{\vee}\in P^{(J)}$, and so by Corollary~\ref{cor:splitting2} we have $\theta_J(us_0)=\sy_{u\varphi^{\vee}}$. Thus $\theta_J(ws_0)=y\sy_{u\varphi^{\vee}}$ and so 
$$
\frac{\sv(\theta_J(w))}{\sv(\theta_J(ws_i))}\sv^{\wt(ws_i)-\wt(w)}=\frac{\sv(y)}{\sv(y\sy_{u\varphi^{\vee}})}\sv^{yu\varphi^{\vee}},
$$
and since $u\varphi^{\vee}\in P^{(J)}$ the result follows by Lemma~\ref{lem:Jparameter1}. 

If $us_0\notin\WJ$ then $u\varphi^{\vee}\in\Phi_J$. We have $\wt(ws_0)=w\varphi^{\vee}$, and $ws_0=t_{w\varphi^{\vee}}yus_{\varphi}=t_{yu\varphi^{\vee}}(ys_{u\varphi})u$ and so $\theta_J(ws_0)=ys_{u\varphi}$ (as $s_{u\varphi}\in W_J$). Thus
$$
\frac{\sv(\theta_J(w))}{\sv(\theta_J(ws_i))}\sv^{\wt(ws_i)-\wt(w)}=\frac{\sv(y)}{\sv(ys_{u\varphi})}\sv^{yu\varphi^{\vee}}.
$$
Since $us_0\notin\WJ$ we have $u\alpha_0\in\Phi_J+\ZZ\delta$. Thus $u\varphi\in\Phi_J$ (since $\alpha_0=-\varphi+\delta$). Hence by Lemma~\ref{lem:boundingwalls} we have $u\varphi=\varphi_K$ for some $K\in\cK(J)$. Moreover $w\to ws_0$ is positive if and only if $yu\varphi\in\Phi_J^+$, if and only if $y\varphi_K\in\Phi_J^+$. The result now follows from Lemma~\ref{lem:Jparameter2}. 
\end{proof}

\begin{lemma}\label{lem:littlebitsb}
Let $w\in \Wext$ and $i\in\{0\}\cup\Ifin$. Write $u=\theta^J(w)$, and suppose that $us_i \in\WJ$. Then $w\to ws_i$ is a positive crossing if and only if $u\to us_i$ is a positive crossing. 
\end{lemma}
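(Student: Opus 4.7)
The plan is to reduce the claim to a comparison of root signs under the action of $y=\theta_J(w)\in W_J$, and to use the hypothesis $us_i\in\WJ$ to constrain which roots can appear.

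First I would write $w=t_\lambda yu$ with $\lambda=\wt(w)$, $y=\theta_J(w)$, $u=\theta^J(w)$, and apply the affine-root characterization~(\ref{eq:affineroots}). Since translation by $\lambda$ acts on an affine root $\gamma+m\delta$ by $\gamma+(m-\langle\lambda,\gamma\rangle)\delta$, it does not alter the linear part. Hence $w\alpha_i\in -\Phi^++\ZZ\delta$ if and only if the linear part of $yu\alpha_i$ lies in $-\Phi^+$. Let $\beta\in\Phi$ denote the linear part of $u\alpha_i$: explicitly $\beta=u\alpha_i$ if $i\in\Ifin$, and $\beta=-u\varphi$ if $i=0$ (since $\alpha_0=-\varphi+\delta$ and $u\in W_0$ acts linearly). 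Then the sign of $w\to ws_i$ is determined by whether $y\beta\in-\Phi^+$, and the sign of $u\to us_i$ by whether $\beta\in-\Phi^+$. Thus the lemma reduces to showing
\[
\beta\in -\Phi^+\ \Longleftrightarrow\ y\beta\in -\Phi^+.
\]

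The key step is that the assumption $us_i\in\WJ$ forces $\beta\notin \Phi_J\cup(-\Phi_J)$. Indeed, the hyperplane separating the alcoves $uA_0$ and $us_iA_0$ is the $u$-image of the wall of $A_0$ of type $i$, which is $H_{\alpha_i,0}$ for $i\in\Ifin$ and $H_{\varphi,1}$ for $i=0$. The linear part of this hyperplane is $\pm u\alpha_i$ (resp.\ $\pm u\varphi$), which is $\pm\beta$ in either case. If $\beta$ were in $\pm\Phi_J$, this hyperplane would belong to $\mathbb{H}_J$ and therefore would separate $uA_0$ and $us_iA_0$ into distinct $J$-alcoves, contradicting the fact that both alcoves lie in $\cA_J$ (which holds because $u\in W^J\subseteq \WJ$ and $us_i\in\WJ$ by hypothesis).

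Finally, one invokes the standard fact that $W_J$ preserves $\Phi^+\setminus\Phi_J$ (and hence $-\Phi^+\setminus(-\Phi_J)$): for $j\in J$, the reflection $s_j$ alters only the $\alpha_j$-coefficient in the simple-root expansion, so any root whose support contains some $\alpha_k$ with $k\notin J$ retains a positive $\alpha_k$-coefficient under $s_j$. Applying this inductively to $y\in W_J$ and $\beta\in\Phi\setminus(\Phi_J\cup -\Phi_J)$ yields $y\beta\in\Phi\setminus(\Phi_J\cup -\Phi_J)$ with the same sign as $\beta$, completing the proof. The main (and only nontrivial) obstacle is the geometric argument in the second paragraph; everything else is bookkeeping.
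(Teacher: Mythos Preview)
Your proof is correct and follows essentially the same approach as the paper: both write $w=t_\lambda yu$, observe that the translation does not affect the linear part of $w\alpha_i$, deduce from $us_i\in\WJ$ that the linear part of $u\alpha_i$ lies outside $\Phi_J$, and then use that $y\in W_J$ preserves the sign of roots outside $\Phi_J$. Your presentation is somewhat more unified (reducing to a single equivalence rather than treating the two directions separately) and your geometric justification for $\beta\notin\Phi_J$ is more explicit than the paper's one-line assertion, but the underlying argument is the same.
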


\begin{proof}
Since $us_i\in\WJ$ we have $u\alpha_i\notin\Phi_J+\ZZ\delta$. Suppose that $w\to ws_i$ is positive, and so  $w\alpha_i=-\alpha+k\delta\in -\Phi^++\ZZ\delta$ (recall~(\ref{eq:affineroots})). Write $w=t_{\lambda}yu$ with $\lambda=\wt(w)$, $y= \theta_J(w)$, and $u=\theta^J(w)$. Then
$
u\alpha_i=y^{-1}t_{-\lambda}(w\alpha_i)=y^{-1}t_{-\lambda}(-\alpha+k\delta)\in -y^{-1}\alpha+\ZZ\delta.
$
Since $y^{-1}\alpha\notin\Phi_J$ and $y\in W_J$ we have $\alpha\notin\Phi_J$. Then $y^{-1}\alpha$ is necessarily a positive (linear) root, and so $u\to us_i$ is positive. 

Conversely, suppose that $u\to us_i$ is positive. So $u\alpha_i=-\beta+k\delta$ with $\beta\in\Phi^+$, and since $us_i\in\WJ$ we have $\beta\notin\Phi_J$. Then 
$
w\alpha_i=t_{\lambda}yu\alpha_i=t_{\lambda}(-y\beta+k\delta)\in-y\beta+\ZZ\delta,
$
and since $\beta\notin\Phi_J$ we have $y\beta>0$, hence the result.
\end{proof}

The following proposition is the key to proving Theorem~\ref{thm:module}. Define a linear map $\varpi_{J,\sv}:\Hext\to M_{J,\sv}$ by linearly extending the definition
\begin{align*}
\varpi_{J,\sv}(X_w)&=(\sv\zJ)^{\wt(w)}\sv(\theta_J(w))^{-1}\bm_{\theta^J(w)}
\end{align*}
for $w\in \Wext$. (To motivate this definition, note that if we assume for the moment that the proposed action given in Theorem~\ref{thm:module} is indeed an action, then $\varpi_{J,\sv}(X_w)=\bm_e\cdot X_w$).

\begin{prop}\label{prop:module}
For $i\in\{0\}\cup\Ifin$ and $\sigma\in\Sigma$ we have
\begin{align*}
\varpi_{J,\sv}(hT_i)=\varpi_{J,\sv}(h)\cdot T_i\quad\text{and}\quad
\varpi_{J,\sv}(hT_{\sigma})=\varpi_{J,\sv}(h)\cdot T_{\sigma}
\end{align*}
for all $h\in\Hext$ (with $\bm\cdot T_i$ and $\bm\cdot T_{\sigma}$ as in the statement of Theorem~\ref{thm:module} for $\bm\in M_{J,\sv}$). 
\end{prop}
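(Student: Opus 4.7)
The plan is to verify both identities on the Bernstein--Lusztig basis $\{X_w\mid w\in\Wext\}$, which spans $\Hext$ over~$\sR$. Fix $w\in\Wext$ and set $u=\theta^J(w)$, $y=\theta_J(w)$, $\lambda=\wt(w)$, so that $\varpi_{J,\sv}(X_w)=(\sv\zJ)^{\lambda}\sv(y)^{-1}\bm_u$.

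For $T_\sigma$ with $\sigma\in\Sigma$, appending $\sigma$ to any expression of~$w$ yields an expression of $w\sigma$ with the same signed crossings, so directly from the construction of the $X$-basis we have $X_w T_\sigma=X_{w\sigma}$. Applying $\varpi_{J,\sv}$ to both sides and using Lemma~\ref{lem:littlebitsa} together with the fact that $u\sigma\in\WJ$ (since $\sigma A_0=A_0\subseteq\cA_J$), the identity reduces to a coefficient check for $u\sigma$ that follows from Lemma~\ref{lem:Jparameter1} applied to $y\cdot\sy_{\wt(u\sigma)}$, noting that $\zJ^{y\mu}=\zJ^{\mu^J}$ for $y\in W_J$ and $\mu\in P$.

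For $T_i$ with $i\in\{0\}\cup\Ifin$, using the splitting $X_w=X^{\lambda}X_{\theta(w)}$ from~\eqref{eq:splitting} and applying Proposition~\ref{prop:basischange} to the length-one expression $\vec{s_i}$ starting at $\theta(w)\in W_0$, I compute
\begin{align*}
X_w T_i \;=\; X_{ws_i}+\delta\,(\sq_i-\sq_i^{-1})\,X_w,
\end{align*}
where $\delta=1$ if the crossing $w\to ws_i$ is negative and $\delta=0$ otherwise (the signs of $w\to ws_i$ and $\theta(w)\to\theta(w)s_i$ coincide since translation preserves the periodic orientation of affine roots). Applying $\varpi_{J,\sv}$ and comparing with $\varpi_{J,\sv}(X_w)\cdot T_i$ produced by the defining action, I split into its three branches. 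When $us_i\in\WJ$, Lemma~\ref{lem:littlebitsb} identifies the crossing signs of $w\to ws_i$ and $u\to us_i$ so the correct $\delta$ is selected; Lemma~\ref{lem:littlebitsa} relates the $(\wt,\theta_J,\theta^J)$-data of $ws_i$ to that of $w$ and $us_i$; Lemma~\ref{lem:bitofmagic} (whose right-hand side equals~$1$) cancels the $\sv$-factors; and $\zJ^{y\wt(us_i)}=\zJ^{\wt(us_i)}$ supplies the weight shift appearing in the action, since $y\in W_J$ acts trivially on~$V^J$.

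The main obstacle is the case $us_i\notin\WJ$. Here $u\alpha_i\in\Phi_J+\ZZ\delta$, and Lemma~\ref{lem:boundingwalls} applied to the wall of $\cA_J$ separating $uA_0$ from $us_iA_0$ forces $u\alpha_i=\alpha_j$ for some $j\in J$ when $i\ne 0$, or $u\varphi=\varphi_K$ for some $K\in\cK(J)$ when $i=0$. In either case $s_{u\alpha_i}=us_iu^{-1}\in W_J$, and the equation $us_i=s_{u\alpha_i}u$ is length-additive by the general $W_J$-$W^J$ theory, so by Lemma~\ref{lem:littlebitsa} we have $\theta^J(ws_i)=u$ and $\theta_J(ws_i)=y\,s_{u\alpha_i}$. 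Moreover $\wt(us_i)\in V_J$ (trivially $0$ for $i\ne 0$; equal to $\varphi_K^\vee\in V_K\subseteq V_J$ for $i=0$), so the $\zJ$-factor disappears. Substituting this into the identity and applying Lemma~\ref{lem:bitofmagic} (with sign $\epsilon$ of $w\to ws_i$) reduces the check to
\begin{align*}
(\sv_{u\alpha_i}\sv_{2u\alpha_i})^{\epsilon}+\delta\,(\sq_i-\sq_i^{-1})\;=\;\sv_{u\alpha_i}\sv_{2u\alpha_i},
\end{align*}
where $\delta=(1-\epsilon)/2$. This follows from the quadratic relation $x^2-(\sq_i-\sq_i^{-1})x-1=0$ satisfied by $x=\sv_{u\alpha_i}\sv_{2u\alpha_i}\in\{\sq_j,-\sq_j^{-1}\}$ (Definition~\ref{defn:Jparameters}), using that $\sq_j=\sq_i$ because $s_i$ and $s_j$ are conjugate in~$\Waff$.
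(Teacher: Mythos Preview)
Your proof is correct and follows essentially the same strategy as the paper's: both reduce by linearity to $h=X_w$, expand $X_wT_i$ as $X_{ws_i}$ plus a possible $(\sq_i-\sq_i^{-1})X_w$ term according to the sign of $w\to ws_i$, and then match the three branches of the defining action using Lemmas~\ref{lem:littlebitsa}, \ref{lem:bitofmagic}, and~\ref{lem:littlebitsb} in exactly the way the paper does (its Cases~1--3). Your final quadratic identity $(\sv_{u\alpha_i}\sv_{2u\alpha_i})^{\epsilon}+\delta(\sq_i-\sq_i^{-1})=\sv_{u\alpha_i}\sv_{2u\alpha_i}$ is precisely the paper's rearrangement in Case~3.

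One harmless slip worth flagging: in the subcase $i=0$, $us_0\notin\WJ$ you write $s_{u\alpha_i}=us_iu^{-1}\in W_J$, but $us_0u^{-1}=s_{\varphi_K,1}$ is an \emph{affine} reflection and does not lie in $W_J\subseteq W_0$; what actually lies in $W_J$ is the linear reflection $s_{u\varphi}=s_{\varphi_K}$, and one has $us_0=t_{\varphi_K^\vee}s_{\varphi_K}u$, giving $\theta_J(ws_0)=ys_{\varphi_K}$ and $\wt(ws_0)=\lambda+y\varphi_K^\vee$. This does not damage your argument, since your reduction only invokes Lemma~\ref{lem:bitofmagic} directly together with $\wt(us_i)\in V_J$ (to kill the $\zJ$-factor), both of which you state correctly.
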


\begin{proof}
By linearity it is sufficient to prove that $\varpi_{J,\sv}(X_wT_i)=\varpi_{J,\sv}(X_w)\cdot T_i$ and $\varpi_{J,\sv}(X_wT_{\sigma})=\varpi_{J,\sv}(X_w)\cdot T_{\sigma}$ for all $w\in\Wext$, $i\in\{0\}\cup\Ifin$, and $\sigma\in\Sigma$. Consider the second formula. Let $\mu\in P$ be such that $t_{\mu}w\in\WJ$. Then $\theta(t_{\mu}w)=\theta(w)$ and so by Corollary~\ref{cor:splitting2} and Lemma~\ref{lem:Jparameter1} we have
$$
\sv(\theta_J(w))=\sv(\theta_J(t_{\mu}w))=\sv(\sy_{\wt(t_{\mu}w)})=\sv^{\wt(t_{\mu}w)}=\sv^{\mu+\wt(w)}.
$$
Since $t_{\mu}w\sigma\in \WJ$ (as $\sigma A_0=A_0$) we similarly have $\sv(\theta_J(w\sigma))=\sv^{\mu+\wt(w\sigma)}$, and it follows that 
$$
\sv^{\wt(w\sigma)}\sv(\theta_J(w\sigma))^{-1}=\sv^{-\mu}=\sv^{\wt(w)}\sv(\theta_J(w))^{-1}.
$$
Using this formula, along with the definition of $\varpi_{J,\sv}$, we have
\begin{align*}
\varpi_{J,\sv}(X_wT_{\sigma})=\varpi_{J,\sv}(X_{w\sigma})&=(\sv\zJ)^{\wt(w\sigma)}\sv(\theta_J(w\sigma))^{-1}\bm_{\theta^J(w\sigma)}\\
&=\zeta_J^{\wt(w\sigma)-\wt(w)}(\sv\zeta_J)^{\wt(w)}\sv(\theta_J(w))^{-1}\bm_{\theta^J(w\sigma)}
\end{align*}
and so $\varpi_{J,\sv}(X_wT_{\sigma})=\varpi_{J,\sv}(X_w)\cdot T_{\sigma}$ as required.

We now prove the first formula, $\varpi_{J,\sv}(X_wT_i)=\varpi_{J,\sv}(X_w)\cdot T_i$. Write $u=\theta^J(w)$. Since $T_i=T_i^{-1}+(\sq_i-\sq_i^{-1})$ we have $X_wT_i=X_{ws_i}$ if $w\to ws_i$ is positive, and $X_wT_i=X_{ws_i}+(\sq_i-\sq_i^{-1})X_w$ if $w\to ws_i$ is negative. Thus, since $\theta^J(ws_i)=\theta^J(us_i)$ (by Lemma~\ref{lem:littlebitsa}) we have
\begin{align*}
\varpi_{J,\sv}(X_wT_i)=\begin{cases}
(\sv\zJ)^{\wt(ws_i)}\sv(\theta_J(ws_i))^{-1}\bm_{\theta^J(us_i)}&\text{if $\epsilon=1$}\\
(\sv\zJ)^{\wt(ws_i)}\sv(\theta_J(ws_i))^{-1}\bm_{\theta^J(us_i)}+(\sq_i-\sq_i^{-1})(\sv\zJ)^{\wt(w)}\sv(\theta_J(w))^{-1}\bm_{u}&\text{if $\epsilon=-1$}
\end{cases}
\end{align*}
where $\epsilon\in\{-1,1\}$ is the sign of the crossing $w\to ws_i$. 

We now compute $\varpi_{J,\sv}(X_w)\cdot T_i$. There are various cases to consider. 
\smallskip

\noindent\textit{Case 1:} Suppose that $u\to us_i$ is positive and $us_i\in\WJ$. Then
\begin{align*}
\varpi_{J,\sv}(X_w)\cdot T_i&=(\sv\zJ)^{\wt(w)}\sv(\theta_J(w))^{-1}\bm_u\cdot T_i=(\sv\zJ)^{\wt(w)}\zJ^{\wt(us_i)}\sv(\theta_J(w))^{-1}\bm_{\theta^J(us_i)}
\end{align*}
By Lemma~\ref{lem:littlebitsb} we have that $w\to ws_i$ is necessarily positive. By Lemma~\ref{lem:littlebitsa} (with $v=s_i$) we have $\wt(ws_i)=\wt(w)+\theta_J(w)\wt(us_i)$, and since $\theta_J(w)\in W_J$ we have $\wt(ws_i)=\wt(w)+\wt(us_i)+\gamma$ for some $\gamma\in Q_J$. Since $\zJ^{\gamma}=1$ it follows that $\varpi_{J,\sv}(X_w)\cdot T_i=\varpi_{J,\sv}(X_wT_i)$ if and only if
$$
\sv^{\wt(w)}\sv(\theta_J(w))^{-1}=\sv^{\wt(ws_i)}\sv(\theta_J(ws_i))^{-1},
$$
and the result follows from Lemma~\ref{lem:bitofmagic}. 
\smallskip

\noindent\textit{Case 2:} Suppose that $u\to us_i$ is negative and $us_i\in\WJ$. Then Lemma~\ref{lem:littlebitsb} gives that $w\to ws_i$ is also negative. We compute 
\begin{align*}
\varpi_{J,\sv}(X_w)\cdot T_i&=(\sv\zJ)^{\wt(w)}\sv(\theta_J(w))^{-1}\bm_u\cdot T_i\\
&=(\sv\zJ)^{\wt(w)}\sv(\theta_J(w))^{-1}\big[\zJ^{\wt(us_i)}\bm_{\theta^J(us_i)}+(\sq_i-\sq_i^{-1})\bm_u\big].
\end{align*}
The same analysis as in Case 1 deals with the $\zJ$ factors, and the result again follows from Lemma~\ref{lem:bitofmagic}. 
\smallskip

\noindent\textit{Case 3:} Suppose that $us_i\notin\WJ$, and so $\theta^J(us_i)=u$. Then we have
\begin{align*}
\varpi_{J,\sv}(X_w)\cdot T_i&=(\sv\zJ)^{\wt(w)}\sv(\theta_J(w))^{-1}\sv_{u\alpha_i}\sv_{2u\alpha_i}\bm_u.
\end{align*}
Since $\theta^J(us_i)=u$, the calculation of $\varpi_{J,\sv}(X_wT_i)$ from above gives
\begin{align*}
\varpi_{J,\sv}(X_wT_i)=\begin{cases}
(\sv\zJ)^{\wt(ws_i)}\sv(\theta_J(ws_i))^{-1}\bm_{u}&\text{if $\epsilon=1$}\\
\big[(\sv\zJ)^{\wt(ws_i)}\sv(\theta_J(ws_i))^{-1}+(\sq_i-\sq_i^{-1})(\sv\zJ)^{\wt(w)}\sv(\theta_J(w))^{-1}\big]\bm_{u}&\text{if $\epsilon=-1$},
\end{cases}
\end{align*}
where, as before, $\epsilon$ is the sign of the crossing $w\to ws_i$. Thus if $\epsilon=1$ then $\varpi_{J,\sv}(X_w)\cdot T_i=\varpi_{J,\sv}(X_wT_i)$ if and only if 
$$
(\sv\zJ)^{\wt(w)}\sv(\theta_J(w))^{-1}\sv_{u\alpha_i}\sv_{2u\alpha_i}=(\sv\zJ)^{\wt(ws_i)}\sv(\theta_J(ws_i))^{-1}
$$
Recall that $\wt(ws_i)=\wt(w)+\theta_J(w)\wt(us_i)$. Moreover, since $us_i\notin\WJ$ we have $u\alpha_i\in \Phi_J+\ZZ\delta$, and hence $\wt(us_i)\in Q_J$. Thus $\zJ^{\wt(ws_i)}=\zJ^{\wt(w)}$, and the result follows from Lemma~\ref{lem:bitofmagic}. 

Finally, if $\epsilon=-1$ then $\varpi_{J,\sv}(X_w)\cdot T_i=\varpi_{J,\sv}(X_wT_i)$ if and only if
$$
\sv^{\wt(w)}\sv(\theta_J(w))^{-1}\sv_{u\alpha_i}\sv_{2u\alpha_i}=\big[\sv^{\wt(ws_i)}\sv(\theta_J(ws_i))^{-1}+(\sq_i-\sq_i^{-1})\sv^{\wt(w)}\sv(\theta_J(w))^{-1}\big]
$$
Rearranging, and noting that $\sv_{u\alpha_i}\sv_{2u\alpha_i}-\sq_i+\sq_i^{-1}=\sv_{u\alpha_i}^{-1}\sv_{2u\alpha_i}^{-1}$, we have $\varpi_{J,\sv}(X_w)\cdot T_i=\varpi_{J,\sv}(X_wT_i)$ if and only if 
$$
\sv^{\wt(w)}\sv(\theta_J(w))^{-1}\sv_{u\alpha_i}^{-1}\sv_{2u\alpha_i}^{-1}=\sv^{\wt(ws_i)}\sv(\theta_J(ws_i))^{-1}
$$
and the result again follows from Lemma~\ref{lem:bitofmagic}. 
\end{proof}

We now give the proof of Theorem~\ref{thm:module}. 

\begin{proof}[Proof of Theorem~\ref{thm:module}]
It is sufficient to check that the relations (for $i,j\in\{0\}\cup\Ifin$, $\sigma,\sigma'\in\Sigma$)
\begin{align*}
\cdots T_iT_jT_i=\cdots T_jT_iT_j,\quad 
T_i^2=1+(\sq_i-\sq_i^{-1})T_i,\quad 
T_{\sigma}T_i=T_{\sigma(i)}T_{\sigma},\quad 
T_{\sigma}T_{\sigma'}=T_{\sigma\sigma'}
\end{align*}
are respected by the proposed action (with $m_{ij}$ terms on either side in the first relation). 
For example, to verify that the braid relation is respected one use Proposition~\ref{prop:module} repeatedly to get
\begin{align*}
\varpi_{J,\sv}(X_uT_iT_jT_i\cdots)=(\cdots (((\bm_u\cdot T_i)\cdot T_j)\cdot T_i)\cdots )
\end{align*}
for all $u\in W^J$, and note that $\varpi_{J,\sv}(X_uT_iT_jT_i\cdots)=\varpi_{J,\sv}(X_uT_jT_iT_j\cdots)$. The remaining relations follow in the same way. 
\end{proof}

\begin{cor}\label{cor:action}
The map $\varpi_{J,\sv}:\Hext\to M_{J,\sv}$ satisfies
$$
\varpi_{J,\sv}(hh')=\varpi_{J,\sv}(h)\cdot h'\quad\text{for all $h,h'\in\Hext$.}
$$
\end{cor}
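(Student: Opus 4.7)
The plan is to reduce the general identity to the case of generators already handled by Proposition~\ref{prop:module}, using the fact (established in Theorem~\ref{thm:module}) that $M_{J,\sv}$ is a genuine right $\Hext$-module, so that the action symbol ``$\cdot$'' is associative.

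First, by bilinearity of both sides in $h$ and $h'$ it suffices to verify the identity when $h$ and $h'$ are basis elements. Since $\{X_w \mid w\in\Wext\}$ is an $\sR$-basis of $\Hext$ and $\{T_w \mid w \in \Wext\}$ is another, we fix $h \in \Hext$ arbitrary and prove $\varpi_{J,\sv}(hT_w) = \varpi_{J,\sv}(h)\cdot T_w$ for every $w \in \Wext$ by induction on the length $\ell(w)$.

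For the base case $\ell(w)=0$ we have $w = \sigma \in \Sigma$, and the identity is exactly the second assertion of Proposition~\ref{prop:module}. For the inductive step with $\ell(w) \geq 1$, choose a reduced expression $w = s_{i_1}\cdots s_{i_\ell}\sigma$ and set $w' = s_{i_1}\cdots s_{i_{\ell-1}}\sigma$, so that $\ell(w') = \ell(w)-1$ and $T_w = T_{w'}T_{i_\ell}$ (by the reduced-length multiplication rule in $\Hext$). Applying Proposition~\ref{prop:module} with the element $hT_{w'} \in \Hext$ in the role of ``$h$'', together with the inductive hypothesis and the associativity of the module action on $M_{J,\sv}$, gives
\begin{align*}
\varpi_{J,\sv}(hT_w) = \varpi_{J,\sv}((hT_{w'})T_{i_\ell}) = \varpi_{J,\sv}(hT_{w'})\cdot T_{i_\ell} = (\varpi_{J,\sv}(h)\cdot T_{w'})\cdot T_{i_\ell} = \varpi_{J,\sv}(h)\cdot T_w,
\end{align*}
which completes the induction.

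There is no real obstacle here: the proposition and theorem have done all the substantive work. The only minor point to be careful about is that when writing $T_w = T_{w'}T_{i_\ell}$ one needs $\ell(w) = \ell(w')+1$, which is precisely why one selects the factorisation from a reduced expression. Once this is observed, the argument is a direct induction.
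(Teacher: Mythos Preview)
Your approach is correct and is essentially what the paper's one-line proof (``immediate from Theorem~\ref{thm:module}'') is abbreviating: once the action is known to be associative (Theorem~\ref{thm:module}) and $\varpi_{J,\sv}$ intertwines right multiplication by generators (Proposition~\ref{prop:module}), induction on length gives the result for all $T_w$.

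There is one small slip in your inductive step. If $w=s_{i_1}\cdots s_{i_\ell}\sigma$ is reduced and you set $w'=s_{i_1}\cdots s_{i_{\ell-1}}\sigma$, then $w'^{-1}w=\sigma^{-1}s_{i_\ell}\sigma=s_{\sigma^{-1}(i_\ell)}$, so $T_w=T_{w'}T_{\sigma^{-1}(i_\ell)}$, not $T_{w'}T_{i_\ell}$. This does not affect the argument (Proposition~\ref{prop:module} applies to any generator index), but the index should be corrected. Alternatively, factoring from the left as $T_w=T_{i_1}T_{w''}$ with $w''=s_{i_2}\cdots s_{i_\ell}\sigma$ avoids the issue entirely.
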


\begin{proof}
This is immediate from Theorem~\ref{thm:module}.
\end{proof}

\subsection{The multiplicative character $\psi_{J,\sv}$}\label{sec:character}

The \textit{$J$-Levi subalgebra} of $\Hext$ is the subalgebra $\cL_J$ generated by the elements $T_j$, $j\in J$, and $X^{\lambda}$, $\lambda\in P$.  By construction of the module $M_{J,\sv}$ it is clear that $\sR[\zJ]\bm_e$ is stable under the action of $\cL_J$, and thus we may define a map $\psi_{J,\sv}:\cL_J\to \sR[\zJ]$ by
$$
\bm_e\cdot h=\psi_{J,\sv}(h)\bm_e\quad\text{for $h\in \cL_J$}.
$$
Recall the definition of $\st_{\lambda}$ and $\sy_{\lambda}$ from Definition~\ref{defn:utau}.

\begin{prop}\label{prop:multiplicativecharacter}
We have the following.
\begin{compactenum}[$(1)$]
\item The map $\psi_{J,\sv}$ is a multiplicative character (a $1$-dimensional representation) of $\cL_J$.
\item $\psi_{J,\sv}(T_j)=\sv_{\alpha_j}\sv_{2\alpha_j}\in\{\sq_j,-\sq_j^{-1}\}$ for all $j\in J$.
\item $\psi_{J,\sv}(T_y)=\sv(y)$ for all $y\in W_J$.
\item $\psi_{J,\sv}(X^{\lambda})=\sv^{\lambda}\zJ^{\lambda}$ for all $\lambda\in P$.
\item If $2\alpha_j\notin\Phi_J$ then $\psi(X^{\alpha_j^{\vee}})=\psi(T_j)^2$.
\item If $\Phi_J$ is not reduced then $\psi(X^{\alpha_n^{\vee}/2}T_n^{-1})\in\{\sq_0,-\sq_0^{-1}\}$. 
\item $\psi_{J,\sv}(T_{\sy_{\lambda}})=\sv^{\lambda}$ for all $\lambda\in P^{(J)}$.
\item $\psi_{J,\sv}(X_{\st_{\lambda}})=\zJ^{\lambda}$ for all $\lambda\in  P^{(J)}$. 
\end{compactenum}
\end{prop}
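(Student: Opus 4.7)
The plan is to verify the eight parts in order, using the observation that setting $h=1$ in Corollary~\ref{cor:action} yields $\bm_e\cdot h'=\varpi_{J,\sv}(h')$ for all $h'\in\Hext$ (since $\varpi_{J,\sv}(X_e)=\bm_e$). This reduces every computation to unwinding the explicit definition of $\varpi_{J,\sv}$.

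First I would establish (2) and (4) directly. For $j\in J$, the hyperplane $H_{\alpha_j,0}$ is a wall of $\cA_J$ by Lemma~\ref{lem:boundingwalls}, so $s_jA_0\not\subseteq\cA_J$, i.e.\ $s_j\notin\WJ$, and the third case of the action in Theorem~\ref{thm:module} gives $\bm_e\cdot T_j=\sv_{\alpha_j}\sv_{2\alpha_j}\bm_e$. For $\lambda\in P$, applying $\varpi_{J,\sv}$ to $X^\lambda=X_{t_\lambda}$, and noting $\wt(t_\lambda)=\lambda$ and $\theta(t_\lambda)=e$, gives $\bm_e\cdot X^\lambda=\sv^\lambda\zJ^\lambda\bm_e$. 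Since $\cL_J$ is generated by $\{T_j\mid j\in J\}\cup\{X^\lambda\mid\lambda\in P\}$, these two computations show that $\sR[\zJ]\bm_e$ is $\cL_J$-stable, proving~(1) and yielding (2) and (4) at once. Part~(3) then follows from (1) and (2): for a reduced expression $y=s_{j_1}\cdots s_{j_k}\in W_J$, multiplicativity gives $\psi_{J,\sv}(T_y)=\prod_i\sv_{\alpha_{j_i}}\sv_{2\alpha_{j_i}}$, which equals $\sv(y)=\prod_{\alpha\in\Phi(y)}\sv_\alpha\sv_{2\alpha}$ by the standard enumeration $\Phi(y)=\{s_{j_k}\cdots s_{j_{i+1}}\alpha_{j_i}\}$ combined with the $W_J$-invariance of $\sv$.

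Parts (5) and (6) are short consequences of (2), (4), and Lemma~\ref{lem:Jparameter4}: the key observation is that $\alpha_j^\vee\in Q_J$ (respectively $\alpha_n^\vee/2=(2\alpha_n)^\vee\in Q_J$) forces $\zJ^{\alpha_j^\vee}=1$ (respectively $\zJ^{\alpha_n^\vee/2}=1$), after which (5) is immediate and (6) reduces to the telescoping $\sv_{\alpha_n}\sv_{2\alpha_n}^2\cdot(\sv_{\alpha_n}\sv_{2\alpha_n})^{-1}=\sv_{2\alpha_n}\in\{\sq_0,-\sq_0^{-1}\}$. Part~(7) drops out of Lemma~\ref{lem:Jparameter1} at $y=e$: combined with (3) it gives $\psi_{J,\sv}(T_{\sy_\lambda})=\sv(\sy_\lambda)=\sv^\lambda\sv(e)=\sv^\lambda$. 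Finally (8) follows from the Bernstein--Lusztig splitting $X_{\st_\lambda}=X^\lambda T_{\sy_\lambda^{-1}}^{-1}$ of~\eqref{eq:splitting}: multiplicativity combined with (4), (7), and the identity $\sv(\sy_\lambda^{-1})=\sv(\sy_\lambda)$ (from $W_J$-invariance of $\sv$, since $\Phi(\sy_\lambda^{-1})=-\sy_\lambda\Phi(\sy_\lambda)$) yields $\psi_{J,\sv}(X_{\st_\lambda})=\sv^\lambda\zJ^\lambda\cdot\sv(\sy_\lambda)^{-1}=\zJ^\lambda$.

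The proof is essentially a structured unwinding of the module construction, with every part being a short application of a lemma already established. I do not foresee a serious conceptual obstacle: the one point requiring care is the verification that $s_j\notin\WJ$ for $j\in J$ (justifying that the third case of the action applies to $\bm_e\cdot T_j$), and afterwards everything is bookkeeping with the auxiliary identities of Lemmas~\ref{lem:Jparameter1}--\ref{lem:Jparameter4}, all of which feed into the desired character formulas.
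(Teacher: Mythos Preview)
Your proposal is correct and, for parts (1)--(4), (7), (8), follows essentially the same route as the paper (appeal to Corollary~\ref{cor:action} and the definition of the action for (1)--(4), and Lemma~\ref{lem:Jparameter1} with $y=e$ for (7) and hence (8)).

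The one genuine difference is in (5) and (6). The paper derives these directly from the Bernstein--Lusztig relation: for instance, rewriting $T_jX^{\omega_j}$ yields $T_j^{-1}X^{\omega_j}=X^{\omega_j-\alpha_j^\vee}T_j$, and applying any multiplicative character gives $\psi(X^{\alpha_j^\vee})=\psi(T_j)^2$ without ever computing $\sv^{\alpha_j^\vee}$. Similarly for (6) the paper obtains only a quadratic relation forcing $\psi(X^{\alpha_n^\vee/2}T_n^{-1})\in\{\sq_0,-\sq_0^{-1}\}$. Your route instead feeds the explicit formula~(4) into Lemma~\ref{lem:Jparameter4}, which is shorter given that lemma and in fact yields the sharper conclusion $\psi(X^{\alpha_n^\vee/2}T_n^{-1})=\sv_{2\alpha_n}$ in~(6). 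The paper's argument has the advantage of being an identity valid for \emph{any} one-dimensional representation of $\cL_J$, independent of the particular form of $\psi_{J,\sv}$; yours is more concrete to the situation at hand.
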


\begin{proof}
(1) follows from Corollary~\ref{cor:action}, and (2), (3) and (4) follow directly from the definition of the action. For (5) and (6), if $2\alpha_j\notin\Phi_J$ then the Bernstein-Lusztig relation gives
$
T_jX^{\omega_j}=X^{\omega_j-\alpha_j^{\vee}}T_j+(\sq_j-\sq_j^{-1})X^{\omega_j},
$
and so $T_j^{-1}X^{\omega_j}=X^{\omega_j-\alpha_j^{\vee}}T_j$, from which it follows that $\psi(X^{\alpha_j^{\vee}})=\psi(T_j)^2$. If $\Phi_J$ is not reduced then the Bernstein-Lusztig relation gives
$
T_n^{-1}X^{\omega_n}=X^{\omega_n-\alpha_n^{\vee}}T_n+(\sq_0-\sq_0^{-1})X^{\omega_n-\alpha_n^{\vee}/2},
$
and it follows that $\psi(X^{\alpha_n^{\vee}/2}T_n^{-1})$ satisfies a quadratic relation, giving 
$
\psi(X^{\alpha_n^{\vee}/2}T_n^{-1})\in\{\sq_0,-\sq_0^{-1}\}
$
as required. 

To prove (7) and (8), since $\psi_{J,\sv}(X^{\lambda})=\sv^{\lambda}\zJ^{\lambda}$ it is sufficient to prove that 
$
\psi_{J,\sv}(T_{\sy_{\lambda}})=\sv^{\lambda},
$ and since $\psi_{J,\sv}(T_{\sy_{\lambda}})=\sv(\sy_{\lambda})$ this follows from Lemma~\ref{lem:Jparameter1} (with $y=e$). 
\end{proof}

We also record the following fact. 

\begin{lemma}\label{lem:characterproperty} Let $K\in\cK(J)$ and let $\alpha\in\Phi_K^+$ be a long root of $\Phi_K$ (with all roots long if $\Phi_K$ is simply laced). Then $\psi_{J,\sv}(X^{\alpha^{\vee}}T_{s_{\alpha}}^{-1})=\sv_{\alpha}$. 
\end{lemma}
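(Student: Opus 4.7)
The plan is to reduce the identity to a direct application of Proposition~\ref{prop:multiplicativecharacter} combined with Lemma~\ref{lem:Jparameter3}, which already contains the essential content in the guise of a statement about the $J$-parameter system.

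First I would verify the membership $X^{\alpha^{\vee}}T_{s_{\alpha}}^{-1}\in\cL_J$. Since $\alpha\in\Phi_K\subseteq\Phi_J$, we have $s_{\alpha}\in W_J$, so $T_{s_{\alpha}}\in\cL_J$ (and it is invertible in $\Hext$); of course $X^{\alpha^{\vee}}\in\cL_J$ by construction of the Levi subalgebra. Then Proposition~\ref{prop:multiplicativecharacter}(1) allows us to split the character as
$$
\psi_{J,\sv}(X^{\alpha^{\vee}}T_{s_{\alpha}}^{-1})=\psi_{J,\sv}(X^{\alpha^{\vee}})\,\psi_{J,\sv}(T_{s_{\alpha}})^{-1},
$$
and parts (3) and (4) of the same proposition identify this with $\sv^{\alpha^{\vee}}\zJ^{\alpha^{\vee}}\sv(s_{\alpha})^{-1}$.

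Next I would dispose of the $\zJ$-factor. Since $V_J=\sum_{j\in J}\mathbb{R}\alpha_j^{\vee}=\sum_{j\in J}\mathbb{R}\alpha_j$ and $\alpha\in\Phi_J\subseteq V_J$, the coroot $\alpha^{\vee}$ lies in $V_J$, hence its orthogonal projection $(\alpha^{\vee})^J$ onto $V^J$ vanishes. By definition of $\zJ^{\lambda}=\zJ^{\lambda^J}$ this gives $\zJ^{\alpha^{\vee}}=1$, so we are reduced to showing
$$
\sv^{\alpha^{\vee}}\sv(s_{\alpha})^{-1}=\sv_{\alpha}.
$$
But this is exactly Lemma~\ref{lem:Jparameter3}, applied to the long root $\alpha\in\Phi_K^+$.

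There is no real obstacle here: all the work has been done in Lemma~\ref{lem:Jparameter3} (where the long-root hypothesis is used via the stability of long roots under reductions $\beta\mapsto s_j\beta$). The only point requiring a brief check is the vanishing of $\zJ^{\alpha^{\vee}}$, which follows from $\alpha^{\vee}\in V_J$.
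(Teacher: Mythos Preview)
Your proof is correct and follows essentially the same approach as the paper: both reduce $\psi_{J,\sv}(X^{\alpha^{\vee}}T_{s_{\alpha}}^{-1})$ to $\sv^{\alpha^{\vee}}\sv(s_{\alpha})^{-1}$ via Proposition~\ref{prop:multiplicativecharacter}(3),(4) and the vanishing $\zJ^{\alpha^{\vee}}=1$, then invoke Lemma~\ref{lem:Jparameter3}. Your version is slightly more explicit about membership in $\cL_J$ and the multiplicativity step, but the argument is the same.
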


\begin{proof}
Since $\alpha\in\Phi_J$ we have $\zJ^{\alpha^{\vee}}=1$, and so by Proposition~\ref{prop:multiplicativecharacter}(3) and (4) we have 
$$
\psi_{J,\sv}(X^{\alpha^{\vee}}T_{s_{\alpha}}^{-1})=\sv^{\alpha^{\vee}}\sv(s_{\alpha})^{-1},
$$
and the result follows from Lemma~\ref{lem:Jparameter3}. 
\end{proof}

\subsection{The $J$-affine Hecke algebra}\label{sec:JaffineHecke}

Recall the definition of $\WJaff$ and its Coxeter generators $\{s_j'\mid j\in\Jaff\}$ from Section~\ref{sec:JAffineWeyl}. For $K\in\cK(J)$ and $j\in J$ let 
$$
T_j'=T_j\quad\text{and}\quad T_{0_K}'=X^{\varphi_K^{\vee}}T_{s_{\varphi_K}}^{-1}.
$$
Let $\HJaff$ be the subalgebra of $\Hext$ generated by $\{T_j'\mid j \in\Jaff\}$. By the general theory of Section~\ref{sec:hecke} (in particular~(\ref{eq:T0})) the algebra $\HJaff$ is an affine Hecke algebra of type $\WJaff$, and 
$$
\HJaff=\prod_{K\in\cK(J)}\HKaff
$$
with each $\HKaff$ an affine Hecke algebra of irreducible type~$\WKaff$.

Since $s_{\varphi_K}\in W_J$ for $K\in\cK(J)$ the algebra $\HJaff$ is a subalgebra of the Levi subalgebra~$\cL_J$. Thus if $\sv=(\sv_{\alpha})_{\alpha\in\Phi_J}$ is a $J$-parameter system then $\psi_{J,\sv}$, restricted to $\HJaff$, gives a $1$-dimensional representation of $\HJaff$. By definition we have $\psi_{J,\sv}(T_j')=\sv_{\alpha_j}\sv_{2\alpha_j}$ for all $j\in J$. The following corollary allows us to compute $\psi_{J,\sv}(T_{0_K}')$ for $K\in\cK(J)$.

\begin{cor}\label{cor:psi(T0)}
For $K\in\cK(J)$ we have $\psi_{J,\sv}(T_{0_K}')=\sv_{\varphi_K}=\sv_{\varphi_K}\sv_{2\varphi_K}$.
\end{cor}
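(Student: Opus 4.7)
The plan is to apply Lemma~\ref{lem:characterproperty} essentially verbatim. By definition $T_{0_K}' = X^{\varphi_K^\vee} T_{s_{\varphi_K}}^{-1}$, and since $\varphi_K$ is the highest root of the irreducible root system $\Phi_K$, it is a long root of $\Phi_K$ (in every irreducible root system the highest root is long, with all roots considered long in the simply laced case). Thus Lemma~\ref{lem:characterproperty}, applied with $\alpha = \varphi_K$, yields
\[
\psi_{J,\sv}(T_{0_K}') \;=\; \psi_{J,\sv}\big(X^{\varphi_K^\vee} T_{s_{\varphi_K}}^{-1}\big) \;=\; \sv_{\varphi_K}.
\]

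For the second equality $\sv_{\varphi_K} = \sv_{\varphi_K}\sv_{2\varphi_K}$, I would simply observe that $\sv_{2\varphi_K} = 1$. Indeed, $2\varphi_K \notin \Phi_K$ because $\varphi_K$ is already the highest root of $\Phi_K$, and hence $2\varphi_K \notin \Phi_J$ by the disjoint decomposition~(\ref{eq:disjoint}); by the convention in Definition~\ref{defn:Jparameters} we therefore have $\sv_{2\varphi_K}=1$.

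There is essentially no obstacle here: the entire content of the corollary has been packaged into the earlier Lemma~\ref{lem:characterproperty} (which in turn reduces, via Lemma~\ref{lem:Jparameter3}, to the explicit parameter identities of Lemma~\ref{lem:Jparameter4}). The corollary is just the specialisation of that lemma to the highest root of each connected component, combined with the trivial remark that $2\varphi_K$ is never a root.
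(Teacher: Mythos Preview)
Your proof is correct and follows exactly the same approach as the paper: apply Lemma~\ref{lem:characterproperty} with $\alpha=\varphi_K$ (noting that the highest root is long), and then observe $\sv_{2\varphi_K}=1$ since $2\varphi_K\notin\Phi_J$. The only difference is that you spell out the justifications for ``$\varphi_K$ is long'' and ``$2\varphi_K\notin\Phi_J$'' a bit more explicitly than the paper does.
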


\begin{proof}
This follows from Lemma~\ref{lem:characterproperty} and the fact that $\sv_{2\varphi_K}=1$.
\end{proof}

\subsection{The path formula for $\pi_{J,\sv}(T_w)$}

We write $(\pi_{J,\sv},M_{J,\sv})$ for the representation of $\Hext$ in Theorem~\ref{thm:module}. In this section we prove the path formula (Theorem~\ref{thm:mainpath1}) for the representations $(\pi_{J,\sv},M_{J,\sv})$. We will need the following simple lemma.

\begin{lemma}\label{lem:walls}
If a wall $H$ contains panels of type $i$ and $j$ then $\sq_i=\sq_j$. 
\end{lemma}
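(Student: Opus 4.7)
The plan is to translate the geometric statement about panels into a conjugacy of Coxeter generators in the (non-extended) affine Weyl group $W$, and then to invoke the defining hypothesis on the parameters $\sq_i$.

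First I would use that $W$ acts simply transitively on the set of alcoves to write the two alcoves containing the given panels as $w_1 A_0$ and $w_2 A_0$ for unique $w_1, w_2 \in W$. By the definition of types (transported from $A_0$ via the $W$-action), the type $i$ panel of $w_1 A_0$ is shared with $w_1 s_i A_0$, while the type $j$ panel of $w_2 A_0$ is shared with $w_2 s_j A_0$; both of these panels, by hypothesis, lie on $H$.

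Next, since $H = H_{\alpha,k}$ for some $\alpha\in\Phi_1^+$ and $k\in\ZZ$, the reflection $s_H = s_{\alpha,k}$ is an element of $W$. It interchanges $w_1 A_0$ with $w_1 s_i A_0$, and $w_2 A_0$ with $w_2 s_j A_0$, and by simple transitivity of the $W$-action this forces
$$
s_H = w_1 s_i w_1^{-1} = w_2 s_j w_2^{-1}
$$
in $W$. Rearranging gives $s_i = (w_1^{-1} w_2) s_j (w_1^{-1} w_2)^{-1}$, so $s_i$ and $s_j$ are conjugate in $W$, and consequently $\sq_i = \sq_j$ by the defining hypothesis on the parameters in Section~\ref{sec:hecke}.

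The argument is essentially bookkeeping with the definitions, and no serious obstacle is expected; the only point requiring care is ensuring that $s_H$ is realized as an element of the non-extended group $W$ (rather than merely $\Wext$), so that the parameter equality can be read off directly from the conjugacy hypothesis on $W$ rather than having to invoke Convention~\ref{conv:parameters} to cover the action of $\Sigma$.
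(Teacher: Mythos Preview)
Your proof is correct and follows essentially the same approach as the paper's: both arguments deduce that $s_i$ and $s_j$ are conjugate in $W$ via the reflection in $H$, and then invoke the defining hypothesis on the parameters. The only cosmetic difference is that the paper first normalises so that one of the alcoves is $A_0$ (whence $s_H=s_i$) and then argues that $s_iw=ws_j$, whereas you work directly with $s_H$ and obtain $s_H=w_1s_iw_1^{-1}=w_2s_jw_2^{-1}$; these are the same computation.
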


\begin{proof}
Let $v,w\in\Waff$ be such that $vA_0\cap vs_iA_0$ and $wA_0\cap ws_jA_0$ are panels of $H$ (of types $i,j$ respectively). Applying $v^{-1}$ we may assume that $v=e$ (as $\Waff$ acts in a type preserving way), and replacing $w$ with $ws_j$ if necessary we may assume that $eA_0$ and $wA_0$ lie in the same halfspace determined by~$H$ (equivalently, $\ell(ws_j)=\ell(w)+1$). Then $ws_j=s_iw$ (because these elements lie on the same side of all walls), and so $s_j$ and $s_i$ are conjugate in $\Waff$. Hence $\sq_i=\sq_j$.
\end{proof}

The following theorem is a preliminary step to proving the path formula, and gives a $J$-analogue of Proposition~\ref{prop:basischange}.

\begin{thm}\label{thm:prelim}
For $u\in W^J$ and $w\in \Wext$ we have
\begin{align*}
\varpi_{J,\sv}(X_uT_w)=\sum_{p\in\mathcal{P}_{J}(\vec{w},u)}\cQ_{J,\sv}(p)\varpi_{J,\sv}(X_{\mathrm{end}(p)}),
\end{align*}
where $\vec{w}$ is any reduced expression for $w$.
\end{thm}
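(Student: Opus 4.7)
The approach is induction on $\ell(w)$, after first reducing to the case where the reduced expression $\vec{w}$ contains no $\Sigma$-suffix. Indeed, if $\vec{w} = \vec{w}_0\sigma$ with $\vec{w}_0$ reduced in $\Waff$ and $\sigma \in \Sigma$, then $T_w = T_{w_0}T_\sigma$, Corollary~\ref{cor:action} gives $\varpi_{J,\sv}(X_uT_w) = \varpi_{J,\sv}(X_uT_{w_0}) \cdot T_\sigma$, and appending the $\sigma$-step to any $\vec{w}_0$-path yields a $\vec{w}$-path of the same $\sv$-mass. The base case $\ell = 0$ is immediate: both sides equal $\varpi_{J,\sv}(X_u) = \bm_u$, since $u \in W^J$ satisfies $\wt(u) = 0$ and $\theta_J(u) = e$.

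For the inductive step, let $\vec{w}_0' = \vec{w}_0 s_j$ be reduced of length $\ell+1$, so that $T_{w_0'} = T_{w_0}T_{s_j}$. Corollary~\ref{cor:action} together with the inductive hypothesis yields
\[
\varpi_{J,\sv}(X_uT_{w_0'}) = \sum_{p \in \cP_J(\vec{w}_0, u)} \cQ_{J,\sv}(p)\,\varpi_{J,\sv}(X_{\mathrm{end}(p)}) \cdot T_{s_j}.
\]
Each $q \in \cP_J(\vec{w}_0', u)$ is obtained from a unique $p \in \cP_J(\vec{w}_0, u)$ by appending one step (a crossing, a fold, or a bounce), so it suffices to verify, for each $p$ with $v := \mathrm{end}(p)$, that $\varpi_{J,\sv}(X_v) \cdot T_{s_j}$ equals the sum over extensions $q$ of $p$ of the quantities $[\cQ_{J,\sv}(q)/\cQ_{J,\sv}(p)]\,\varpi_{J,\sv}(X_{\mathrm{end}(q)})$.

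Writing $v = \st_\lambda u'$ with $u' = \theta^J(v)$ and $\sy_\lambda = \theta_J(v)$ (Corollary~\ref{cor:splitting2}) gives $\varpi_{J,\sv}(X_v) = (\sv\zJ)^\lambda \sv(\sy_\lambda)^{-1}\bm_{u'}$, and the verification splits into the three cases of the $\bm_{u'}\cdot T_{s_j}$ formula in Theorem~\ref{thm:module}. When $u's_j \in \WJ$, the crossing extension $q = (p, vs_j)$ satisfies $vs_j = \st_{(\lambda+\mu)^{(J)}}u''$, where $u's_j = \st_\mu u''$ (Theorem~\ref{thm:WJ}); combining Lemma~\ref{lem:utbasics}(1)--(2) (giving $(\lambda+\mu)^{(J)} = \lambda + \sy_\lambda\mu$ and $\sy_{(\lambda+\mu)^{(J)}} = \sy_\lambda\sy_\mu$), Lemma~\ref{lem:Jparameter1} (giving $\sv(\sy_\lambda\sy_\mu) = \sv^{\sy_\lambda\mu}\sv(\sy_\lambda)$), and the fact that $\zJ^\gamma = 1$ for $\gamma \in Q_J$, one computes $\varpi_{J,\sv}(X_{vs_j}) = (\sv\zJ)^\lambda\sv(\sy_\lambda)^{-1}\zJ^\mu\bm_{u''}$, matching the crossing term. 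In Case 2 (negative crossing) the fold extension $(p, v)$ is permitted since $v \to vs_j$ is negative by Lemma~\ref{lem:littlebitsb}, and it contributes the remaining $(\sq_j - \sq_j^{-1})\bm_{u'}$ summand. When $u's_j \notin \WJ$ (Case 3), the unique bounce extension has hyperplane with linear root $\alpha = \pm\sy_\lambda u'\alpha_j$, which lies in the $W_J$-orbit of $u'\alpha_j$ because $\sy_\lambda \in W_J$; the $W_J$-invariance of $\sv$ from Definition~\ref{defn:Jparameters}(1) then gives the bounce mass factor $\sv_\alpha\sv_{2\alpha} = \sv_{u'\alpha_j}\sv_{2u'\alpha_j}$, matching the scalar in the action.

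The main obstacle is the bookkeeping of the $\sv$- and $\zJ$-factors in the crossing cases, where the shift $\lambda \mapsto (\lambda+\mu)^{(J)}$ must be absorbed without residue; this is precisely what Lemma~\ref{lem:Jparameter1} and the $Q_J$-invariance of $\zJ$ accomplish. The analysis treats $j = 0$ uniformly with $j \in \Ifin$ via the affine-root formalism underlying both the definition of $\bm_{u'}\cdot T_i$ and Lemma~\ref{lem:bitofmagic}.
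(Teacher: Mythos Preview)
Your proof is correct and follows the same inductive skeleton as the paper's: both argue by induction on $\ell(w)$, reduce to computing $\varpi_{J,\sv}(X_v)\cdot T_{s_j}$ for $v=\mathrm{end}(p)$, and match the result against the possible one-step extensions of $p$. The routes differ in how that match is carried out.

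For the crossing cases, the paper is more direct: it simply uses $X_vT_{s_j}=X_{vs_j}$ (positive crossing) or $X_vT_{s_j}=X_{vs_j}+(\sq_j-\sq_j^{-1})X_v$ (negative crossing) and applies $\varpi_{J,\sv}$. Your decomposition $v=\st_\lambda u'$ followed by an explicit verification via Lemmas~\ref{lem:utbasics} and~\ref{lem:Jparameter1} is correct but re-proves what Corollary~\ref{cor:action} already gives for free; you could shorten this considerably.

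For the bounce case the situations are reversed, and your argument is genuinely different and arguably cleaner. The paper splits into two subcases (negative bounce on $H_{\varphi_K,1}$ and positive bounce on $H_{\alpha_{j'},0}$), proves a Hecke-algebra identity such as $X_vT_k=X^{\varphi_K^{\vee}}T_{s_{\varphi_K}}^{-1}X_v$ via a reflected-path argument, and then invokes Lemma~\ref{lem:characterproperty} (which in turn rests on Lemma~\ref{lem:Jparameter3}) to evaluate the scalar. You instead read the scalar $\sv_{u'\alpha_j}\sv_{2u'\alpha_j}$ straight from the module action in Theorem~\ref{thm:module} and identify it with the bounce mass factor purely through the $W_J$-invariance of $\sv$ (Definition~\ref{defn:Jparameters}(1)) and the affine extension~(\ref{eq:extendv}). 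This treats positive and negative bounces uniformly and bypasses both the algebra-level manipulation and Lemma~\ref{lem:characterproperty}. One small point worth making explicit: for $j=0$ the symbol $u'\alpha_j$ is an affine root, so the equality $\sv_\alpha=\sv_{u'\alpha_j}$ relies on~(\ref{eq:extendv}) together with $-u'\varphi\in W_J(u'\varphi)$ (which holds since $s_{u'\varphi}\in W_J$).
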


\begin{proof}
We argue by induction on $\ell(w)$, with the case $\ell(w)=0$ true by definition of the action $\bm_u\cdot T_{\sigma}$. Suppose that $\ell(ws_k)=\ell(w)+1$. Then by the induction hypothesis
\begin{align*}
\varpi_{J,\sv}(X_uT_{ws_k})&=\varpi_{J,\sv}(X_uT_w)\cdot T_k=\sum_{p\in\mathcal{P}_{J}(\vec{w},u)}\mathcal{Q}_{J,\sv}(p)\varpi_{J,\sv}(X_{\mathrm{end}(p)}T_k).
\end{align*}
Let $p\in\mathcal{P}_{J}(\vec{w},u)$ and write $v=\mathrm{end}(p)$. There are 4 cases to consider.
\smallskip

\noindent\textit{Case 1:} If $vA_0\,{^-}\hspace{-0.1cm}\mid^+\, vs_kA_0$ with $vs_k\in\WJ$ then $\varpi_{J,\sv}(X_vT_k)=\varpi_{J,\sv}(X_{vs_k})$. Writing $p\epsilon_k^+$ for the path obtained from $p$ by appending a positive $s_k$-crossing we have $\cQ_{J,\sv}(p)=\cQ_{J,\sv}(p\epsilon_k^+)$ and $vs_k=\mathrm{end}(p\epsilon_k^+)$, and so
$$
\mathcal{Q}_{J,\sv}(p)\varpi_{J,\sv}(X_{v}T_{k})=\mathcal{Q}_{J,\sv}(p\epsilon_k^+)\varpi_{J,\sv}(X_{\mathrm{end}(p\epsilon_k^+)}).
$$ 

\noindent\textit{Case 2:}  If $vA_0\,{^+}\hspace{-0.1cm}\mid^-\, vs_kA_0$ with $vs_k\in\WJ$ then using $T_k=T_k^{-1}+	(\sq_k-\sq_k^{-1})$ gives
$$
\mathcal{Q}_{J,\sv}(p)\varpi_{J,\sv}(X_{v}T_k)=\mathcal{Q}_{J,\sv}(p\epsilon_k^-)\varpi_{J,\sv}(X_{\mathrm{end}(p \epsilon^-_k)})+\mathcal{Q}_{J,\sv}(pf_k)\varpi_{J,\sv}(X_{\mathrm{end}(pf_k)}),
$$ 
where $p\epsilon_k^-$ denotes the path obtained from $p$ by appending a negative $s_k$-crossing and $pf_k$ denotes the path obtained from $p$ by appending an $s_k$-fold.
\smallskip

\noindent\textit{Case 3:} If $vA_0\,{^-}\hspace{-0.1cm}\mid^+\, vs_kA_0$ with $vs_k\notin\WJ$ then by Lemma~\ref{lem:boundingwalls} the panel $vA_0\cap vs_kA_0$ is contained in $H_{\varphi_{K},1}$ for some $K\in \cK(J)$. Then $vs_k=s_{\varphi_{K},1}v$, and since $s_{\varphi_K,1}=t_{\varphi_K^{\vee}}s_{\varphi_K}$ we have 
$$
X_vT_k=X_{vs_k}=X_{t_{\varphi_K^{\vee}}s_{\varphi_K}v}=X^{\varphi_K^{\vee}}X_{s_{\varphi_K}v}.
$$
We claim that $X_{s_{\varphi_K}v}=T_{s_{\varphi_K}}^{-1}X_v$. To see this, let $\vec{v}$ be a path of reduced type from $e$ to $v$, and consider the reflected path $s_{\varphi_K}(\vec{v})$ (a path joining $s_{\varphi_K}$ to $s_{\varphi_K}v$). Since $v\in \WJ$ no reduced path from $e$ to $v$ crosses any hyperplanes parallel to any wall $H_{\alpha}$ with $\alpha\in\Phi_J^+$, and since $\Phi(s_{\varphi_K})\subseteq \Phi_K^+\subseteq \Phi_J^+$ it follows that each positive (respectively negative) crossing in $\vec{v}$ is mapped to a positive (respectively negative) crossing in $s_{\varphi_K}(\vec{v})$. Thus $X_{s_{\varphi_K}v}=X_{s_{\varphi_K}}X_v$, and since $s_{\varphi_K}\in W_0$ we have $X_{s_{\varphi_K}}=T_{s_{\varphi_K}}^{-1}$, and hence the claim.

Thus 
$
X_vT_k=X^{\varphi_K^{\vee}}T_{s_{\varphi_K}}^{-1}X_v$, and since $\varphi_K$ is long in $\Phi_K$ Lemma~\ref{lem:characterproperty}, and Proposition~\ref{prop:multiplicativecharacter}, gives
\begin{align*}
\mathcal{Q}_{J,\sv}(p)\varpi_{J,\sv}(X_{v}T_k)&=\cQ_{J,\sv}(p)\varpi_{J,\sv}(X^{\varphi_K^{\vee}}T_{s_{\varphi_K}}^{-1}X_v)\\
&=\cQ_{J,\sv}(p)\psi_{J,\sv}(X^{\varphi_K^{\vee}}T_{s_{\varphi_K}}^{-1})\varpi_{J,\sv}(X_v)\\
&=\cQ_{J,\sv}(p)\sv_{\varphi_K}\varpi_{J,\sv}(X_v)\\
&=\cQ_{J,\sv}(pb_{\varphi_K}^-)\varpi_{J,\sv}(X_{\mathrm{end}(pb_{\varphi_K}^-)}),
\end{align*}
where $pb_{\varphi_K}^-$ denotes the path obtained from $p$ by appending a negative bounce on the wall~$H_{\varphi_K;1}$ (note also that $\sv_{\varphi_K}=\sv_{\varphi_K}\sv_{2\varphi_K}$ to calculate $\cQ_{J,\sv}(pb_{\varphi_K}^-)$).
\smallskip

\noindent\textit{Case 4:} If $vA_0\,{^+}\hspace{-0.1cm}\mid^-\, vs_kA_0$ with $vs_k\notin\WJ$ then by Lemma~\ref{lem:boundingwalls} the panel $vA_0\cap vs_kA_0$ is contained in $H_{\alpha_j,0}$ for some $j\in J$. Using the formula $T_k=T_k^{-1}+(\sq_k-\sq_k^{-1})$ we have
\begin{align*}
\varpi_{J,\sv}(X_vT_k)&=\varpi_{J,\sv}(X_{vs_k})+(\sq_k-\sq_k^{-1})\varpi_{J,\sv}(X_v).
\end{align*}
Since $vs_k=s_jv$ we have $\varpi_{J,\sv}(X_vT_k)=\varpi_{J,\sv}(X_{s_jv})+(\sq_k-\sq_k^{-1})\varpi_{J,\sv}(X_v)$. Similar arguments to Case 3 shows that $X_{s_jv}=T_{s_j}^{-1}X_v$, and hence 
\begin{align*}
\varpi_{J,\sv}(X_vT_k)&=(\psi_{J,\sv}(T_j)^{-1}+\sq_k-\sq_k^{-1})\varpi_{J,\sv}(X_{v}).
\end{align*}
By Lemma~\ref{lem:walls} we have $\sq_k=\sq_j$, and so $\psi_{J,\sv}(T_j^{-1})+\sq_k-\sq_k^{-1}=\psi_{J,\sv}(T_j^{-1})+\sq_j-\sq_j^{-1}=\psi_{J,\sv}(T_j)$. If either $\Phi_J$ is reduced, or $\Phi_J$ is not reduced and $j\neq n$, then $\psi_{J,\sv}(T_j)=\sv_{\alpha_j}$. If $\Phi_J$ is not reduced and $j=n$ then $\psi_{J,\sv}(T_n)=\sv_{\alpha_n}\sv_{2\alpha_n}$. In all cases we have $\cQ_{J,\sv}(p)\psi_{J,\sv}(T_j)=\cQ_{J,\sv}(pb_{\alpha_j}^+)$ where $pb_{\alpha_j}^{+}$ denotes the path obtained from $p$ by appending a positive bounce on the wall~$H_{\alpha_j,0}$. Thus we have
$$
\cQ_{J,\sv}(p)\varpi_{J,\sv}(X_vT_k)=\cQ_{J,\sv}(p)\psi_{J,\sv}(T_j)\varpi_{J,\sv}(X_v)=\cQ_{J,\sv}(pb_{\alpha_j}^+)\varpi_{J,\sv}(X_{\mathrm{end}(pb_{\alpha_j}^+)}).
$$
Hence the result.
\end{proof}

We now prove the path formula for the matrix entries of $\pi_{J,\sv}(T_w)$ with respect to the basis $\sB_{J,\sv}=\{\bm_u\mid u\in W^J\}$ of $M_{J,\sv}$. A version of this formula for more general bases will be given in Theorem~\ref{thm:mainpath2}.

\begin{thm}\label{thm:mainpath1}
Let $w\in \Wext$. The matrix entries of $\pi_{J,\sv}(T_w)$ with respect to the basis $\sB_{J,\sv}$ are
\begin{align*}
[\pi_{J,\sv}(T_w)]_{u,v}=\sum_{\{p\in\mathcal{P}_J(\vec{w},u)\,\mid\,\theta^J(p)=v\}}\mathcal{Q}_{J,\sv}(p)\zJ^{\wt(p)}\quad\text{for $u,v\in W^J$},
\end{align*}
where $\vec{w}$ is any choice of reduced expression for~$w$.
\end{thm}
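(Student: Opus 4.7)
The plan is to deduce Theorem~\ref{thm:mainpath1} as a direct consequence of Theorem~\ref{thm:prelim}, once the two unpleasant factors $\sv^{\wt(p)}$ and $\sv(\theta_J(p))^{-1}$ in the formula for $\varpi_{J,\sv}(X_{\mathrm{end}(p)})$ are shown to cancel.

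First I would specialise Theorem~\ref{thm:prelim} to the starting element $u\in W^J$. Since $\wt(u)=0$, $\theta_J(u)=e$, and $\theta^J(u)=u$ for $u\in W^J$, the definition of $\varpi_{J,\sv}$ gives $\varpi_{J,\sv}(X_u)=\bm_u$. By Corollary~\ref{cor:action} we then have $\bm_u\cdot T_w=\varpi_{J,\sv}(X_uT_w)$, and Theorem~\ref{thm:prelim} yields
\begin{align*}
\bm_u\cdot T_w=\sum_{p\in\cP_J(\vec w,u)}\cQ_{J,\sv}(p)\,\varpi_{J,\sv}(X_{\mathrm{end}(p)}).
\end{align*}

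Next I would evaluate $\varpi_{J,\sv}(X_{\mathrm{end}(p)})$ explicitly. By definition,
\begin{align*}
\varpi_{J,\sv}(X_{\mathrm{end}(p)})=(\sv\zJ)^{\wt(p)}\,\sv(\theta_J(p))^{-1}\,\bm_{\theta^J(p)}.
\end{align*}
Because every intermediate vertex of a $J$-folded alcove path lies in $\WJ$, we have $\mathrm{end}(p)\in\WJ$, and Corollary~\ref{cor:splitting2} gives $\theta_J(\mathrm{end}(p))=\sy_{\wt(p)}$. Applying Lemma~\ref{lem:Jparameter1} with $y=e$ we obtain $\sv(\sy_{\wt(p)})=\sv^{\wt(p)}$. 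Thus the factors $\sv^{\wt(p)}$ and $\sv(\theta_J(p))^{-1}$ cancel, leaving
\begin{align*}
\varpi_{J,\sv}(X_{\mathrm{end}(p)})=\zJ^{\wt(p)}\,\bm_{\theta^J(p)}.
\end{align*}

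Substituting this back gives
\begin{align*}
\bm_u\cdot T_w=\sum_{p\in\cP_J(\vec w,u)}\cQ_{J,\sv}(p)\,\zJ^{\wt(p)}\,\bm_{\theta^J(p)},
\end{align*}
and reading off the coefficient of $\bm_v$ produces the stated formula for $[\pi_{J,\sv}(T_w)]_{u,v}$.

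There is no real obstacle here: the theorem is essentially a bookkeeping consequence of Theorem~\ref{thm:prelim} plus the cancellation observed above. The only point that needs attention is verifying that $\mathrm{end}(p)\in\WJ$ so that Corollary~\ref{cor:splitting2} applies, but this is immediate from Definition~\ref{defn:Jfold}, which forces $v_k\in\WJ$ at every step of~$p$.
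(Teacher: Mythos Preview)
Your proof is correct and follows essentially the same approach as the paper. The paper decomposes $X_{\mathrm{end}(p)}=X^{\lambda}T_{\sy_{\lambda}^{-1}}^{-1}T_{\theta^J(p)^{-1}}^{-1}$ and then appeals to Proposition~\ref{prop:multiplicativecharacter}(8) (namely $\psi_{J,\sv}(X_{\st_{\lambda}})=\zJ^{\lambda}$) to achieve the cancellation, whereas you apply the definition of $\varpi_{J,\sv}$ directly and invoke Lemma~\ref{lem:Jparameter1}; but since Proposition~\ref{prop:multiplicativecharacter}(7)--(8) is itself proved via Lemma~\ref{lem:Jparameter1} with $y=e$, the two arguments are the same underneath. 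One small point: when invoking Lemma~\ref{lem:Jparameter1} you should note that $\wt(p)\in\cA_J\cap P$, which follows from $\mathrm{end}(p)\in\WJ$ and Theorem~\ref{thm:WJ}.
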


\begin{proof}
Writing $\lambda=\wt(p)$ we have, by Corollary~\ref{cor:splitting2} and~(\ref{eq:splitting}),
$$
X_{\mathrm{end}(p)}=X^{\lambda}T_{\sy_{\lambda}^{-1}}^{-1}T_{\theta^{J}(p)^{-1}}^{-1}.
$$
Then by Theorem~\ref{thm:prelim} and Proposition~\ref{prop:multiplicativecharacter}
\begin{align*}
\varpi_{J,\sv}(X_u)\cdot T_w&=\sum_{p\in\mathcal{P}_{J}(\vec{w},u)}\cQ_{J,\sv}(p)\varpi_{J,\sv}(X^{\wt(p)}T_{\sy_{\wt(p)}^{-1}}^{-1}T_{\theta^{J}(p)^{-1}}^{-1})\\
&=\sum_{p\in\mathcal{P}_{J}(\vec{w},u)}\cQ_{J,\sv}(p)\psi_{J,\sv}(X^{\wt(p)}T_{\sy_{\wt(p)}^{-1}}^{-1})\varpi_{J,\sv}(T_{\theta^{J}(p)^{-1}}^{-1})\\
&=\sum_{p\in\mathcal{P}_{J}(\vec{w},u)}\cQ_{J,\sv}(p)\zJ^{\wt(p)}\varpi_{J,\sv}(T_{\theta^{J}(p)^{-1}}^{-1}),
\end{align*}
completing the proof since $\varpi_{J,\sv}(T_{\theta^{J}(p)^{-1}}^{-1}) = \bm_{\theta^J(p)}$.
\end{proof}

\subsection{Changing fundamental domains}

It will be useful to have a more general version of Theorem~\ref{thm:mainpath1} adapted to other choices of basis (for example, see Section~\ref{sec:An}). 

\begin{prop}\label{prop:basis1}
If $\sF$ is a fundamental domain for the action of $\TJ$ on $\WJ$ then 
$$
\sB_{\sF}=\{\varpi_{J,\sv}(X_u)\mid u\in \sF\}
$$
is a basis of $M_{J,\sv}$.
\end{prop}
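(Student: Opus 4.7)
The plan is to show that each basis vector $\varpi_{J,\sv}(X_u)$ for $u \in \sF$ is a scalar multiple of some $\bm_{v}$ with $v \in W^J$, with an invertible scalar, and that as $u$ ranges over $\sF$ the index $v$ ranges bijectively over $W^J$. This reduces the claim to a rescaling-and-reindexing of the known basis $\sB_{J,\sv} = \{\bm_v \mid v \in W^J\}$.

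First I would unpack $\varpi_{J,\sv}(X_u)$ for $u \in \sF$. Since $\sF \subseteq \WJ$, by Corollary~\ref{cor:splitting2} we have $\theta_J(u) = \sy_{\wt(u)}$, so setting $\lambda = \wt(u) \in \cA_J \cap P$ and $u' = \theta^J(u) \in W^J$, the definition of $\varpi_{J,\sv}$ gives
\[
\varpi_{J,\sv}(X_u) \;=\; (\sv\zJ)^{\lambda}\,\sv(\sy_{\lambda})^{-1}\,\bm_{u'}.
\]
The scalar $(\sv\zJ)^{\lambda}\sv(\sy_{\lambda})^{-1}$ is a product of invertible elements of $\sR[\zJ]$, hence is itself invertible.

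Next I would verify that the assignment $u \mapsto \theta^J(u)$ defines a bijection $\sF \to W^J$. By Corollary~\ref{cor:Tiso}, $\TJ$ acts freely on $\WJ$ with $W^J$ as a fundamental domain, so each $\TJ$-orbit in $\WJ$ meets $W^J$ in exactly one point, namely the element whose coweight is~$0$. For any $w \in \WJ$ the decomposition $w = \st_{\wt(w)}\,\theta^J(w)$ shows that $\theta^J(w) \in W^J$ lies in the same $\TJ$-orbit as $w$. Since $\sF$ also contains exactly one element from each $\TJ$-orbit, the map $u \mapsto \theta^J(u)$ is a bijection from $\sF$ onto $W^J$.

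Combining these observations, the transition ``matrix'' from $\sB_{J,\sv}$ to $\sB_{\sF}$ is a permutation matrix (through the bijection $u \mapsto \theta^J(u)$) whose nonzero entries are the invertible scalars $(\sv\zJ)^{\wt(u)}\sv(\sy_{\wt(u)})^{-1}$. It is therefore invertible over $\sR[\zJ]$, and $\sB_{\sF}$ is a basis of $M_{J,\sv}$. There is no substantive obstacle here; the proof is essentially the observation that $\varpi_{J,\sv}$ takes each $X_u$ with $u \in \WJ$ to a unit multiple of the basis vector indexed by $\theta^J(u)$, together with the fact that a fundamental domain for $\TJ$ on $\WJ$ is in canonical bijection with $W^J$.
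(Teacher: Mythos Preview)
Your proof is correct and follows essentially the same approach as the paper: both show that for $u\in\sF$ the element $\varpi_{J,\sv}(X_u)$ is an invertible scalar multiple of $\bm_{\theta^J(u)}$, and that $u\mapsto\theta^J(u)$ is a bijection $\sF\to W^J$. The only cosmetic difference is that the paper factorises $X_u=X_{\st_{\lambda}}X_{u'}$ and invokes Proposition~\ref{prop:multiplicativecharacter}(8) to identify the scalar as $\zJ^{\lambda}$, whereas you apply the definition of $\varpi_{J,\sv}$ directly (your scalar $(\sv\zJ)^{\lambda}\sv(\sy_{\lambda})^{-1}$ in fact simplifies to $\zJ^{\lambda}$ via Lemma~\ref{lem:Jparameter1}).
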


\begin{proof}
By construction of the module $M_{J,\sv}$ the result holds for the fundamental domain $\sF=W^J$. Now let $\sF$ be an arbitrary fundamental domain. For $u\in \sF$ define $u'\in W^J$ and $\lambda\in  P^{(J)}$ by $u=\st_{\lambda}u'$ (so $\lambda=\wt(u)$ and $u'=\theta^J(u)$). Thus by Proposition~\ref{prop:multiplicativecharacter} we have 
\begin{align*}
\varpi_{J,\sv}(X_u)&=\varpi_{J,\sv}(X_{\st_{\lambda}}X_{u'})=\bm_e\cdot X_{\st_{\lambda}}X_{u'}=\zJ^{\lambda}\bm_e\cdot X_{u'}=\zJ^{\lambda}\varpi_{J,\sv}(X_{u'}),
\end{align*}
and hence the result.
\end{proof}

Note that $\sB_{W^J}=\sB_{J,\sv}$. A version of Theorem~\ref{thm:mainpath1} for arbitrary fundamental domains is given below. Recall the definition of $\wt(p,\sF)$ and $\theta(p,\sF)$ from~(\ref{eq:generalweights}).

\begin{thm}\label{thm:mainpath2}
Let $\sF$ be a fundamental domain for the action of $\TJ$ on $\WJ$. With respect to the basis $\sB_{\sF}$ of $M_{J,\sv}$ from Proposition~\ref{prop:basis1}, the matrix entries of $\pi_{J,\sv}(T_w)$, with $w\in \Wext$, are
\begin{align*}
[\pi_{J,\sv}(T_w)]_{u,v}=\sum_{\{p\in\mathcal{P}_J(\vec{w},u)\,\mid\,\theta(p,\sF)=v\}}\mathcal{Q}_{J,\sv}(p)\zJ^{\wt(p,\sF)}\quad\text{for $u,v\in\sF$},
\end{align*}
where $\vec{w}$ is any choice of reduced expression for~$w$.
\end{thm}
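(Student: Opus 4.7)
The plan is to derive Theorem~\ref{thm:mainpath2} from Theorem~\ref{thm:prelim} via a single scalar identity that rewrites $\varpi_{J,\sv}(X_{\mathrm{end}(p)})$ in the $\sF$-basis. Starting from Theorem~\ref{thm:prelim} applied with initial point $u\in\sF\subseteq\WJ$ (combined with Corollary~\ref{cor:action}), I have
\begin{align*}
\varpi_{J,\sv}(X_u)\cdot T_w \;=\; \sum_{p\in\mathcal{P}_J(\vec{w},u)} \cQ_{J,\sv}(p)\,\varpi_{J,\sv}(X_{\mathrm{end}(p)}),
\end{align*}
so the whole theorem reduces to expressing each $\varpi_{J,\sv}(X_{\mathrm{end}(p)})$ as a scalar multiple of some basis element $\varpi_{J,\sv}(X_v)$ with $v\in\sF$.

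The core lemma I would prove is the following: for any $\lambda\in\cA_J\cap P$ and any $w\in\WJ$,
\begin{align*}
\varpi_{J,\sv}(X_{\st_\lambda w}) \;=\; \zJ^{\lambda}\,\varpi_{J,\sv}(X_w).
\end{align*}
This is verified by direct calculation from the definition $\varpi_{J,\sv}(X_w) = (\sv\zJ)^{\wt(w)} \sv(\theta_J(w))^{-1}\bm_{\theta^J(w)}$. Writing $\st_\lambda = t_\lambda\sy_\lambda$ with $\sy_\lambda\in W_J$, standard bookkeeping gives $\wt(\st_\lambda w) = \lambda + \sy_\lambda\wt(w)$, $\theta^J(\st_\lambda w) = \theta^J(w)$, and $\theta_J(\st_\lambda w) = \sy_\lambda\theta_J(w)$. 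Since $w$ and $\st_\lambda w$ both lie in $\WJ$, Corollary~\ref{cor:splitting2} identifies $\theta_J(w) = \sy_{\wt(w)}$ and $\theta_J(\st_\lambda w) = \sy_{(\lambda+\wt(w))^{(J)}}$, while Lemma~\ref{lem:utbasics}(1) gives $(\lambda+\wt(w))^{(J)} = \lambda + \sy_\lambda\wt(w)$. Applying Lemma~\ref{lem:Jparameter1} with $y=e$ yields $\sv(\sy_\mu) = \sv^\mu$ for each $\mu\in\cA_J\cap P$, so the $\sv$-factors telescope: the contribution $\sv^{\lambda+\sy_\lambda\wt(w)-\wt(w)}$ from the weight shift is cancelled exactly by the ratio $\sv(\sy_{\wt(w)})/\sv(\sy_{\lambda+\sy_\lambda\wt(w)}) = \sv^{\wt(w)-\lambda-\sy_\lambda\wt(w)}$. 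The $\zJ$-part collapses to $\zJ^{\lambda+\sy_\lambda\wt(w)-\wt(w)} = \zJ^{\lambda}$, using that $\sy_\lambda\wt(w) - \wt(w)\in Q_J$ (because $\sy_\lambda\in W_J$) together with $\zJ^\gamma = 1$ for $\gamma\in Q_J$. This proves the identity.

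To finish, I apply the identity to the unique $\sF$-decomposition $\mathrm{end}(p) = \st_{\wt(p,\sF)}\theta(p,\sF)$ of each path endpoint, obtaining $\varpi_{J,\sv}(X_{\mathrm{end}(p)}) = \zJ^{\wt(p,\sF)}\varpi_{J,\sv}(X_{\theta(p,\sF)})$. Substituting into the sum above and regrouping by the final direction $v = \theta(p,\sF)\in\sF$ yields
\begin{align*}
\varpi_{J,\sv}(X_u)\cdot T_w \;=\; \sum_{v\in\sF}\Bigg(\sum_{\{p\in\mathcal{P}_J(\vec{w},u)\,\mid\,\theta(p,\sF)=v\}} \cQ_{J,\sv}(p)\,\zJ^{\wt(p,\sF)}\Bigg) \varpi_{J,\sv}(X_v),
\end{align*}
which is exactly the matrix-entry formula claimed. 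The main technical point is the verification of the scalar identity above: one must show that both the $\sv$-weight shift and the $\sv(\theta_J)$-shift induced by left multiplication by $\st_\lambda$ cancel exactly, leaving only the $\zJ^\lambda$ factor. Once this cancellation is in place, the rest of the argument is routine re-indexing.
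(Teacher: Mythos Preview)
Your proof is correct, but it takes a different route from the paper's. One minor point to flag: Theorem~\ref{thm:prelim} is stated only for $u\in W^J$, whereas you apply it with $u\in\sF\subseteq\WJ$. This is harmless---the induction in the proof of Theorem~\ref{thm:prelim} only uses that paths stay in $\WJ$ and never uses any special property of $W^J$---but you should say so explicitly.

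The paper's own argument proceeds by change of basis through the already-established $W^J$ case (Theorem~\ref{thm:mainpath1}): it writes $u=\st_{h(u)}g(u)$ with $g(u)\in W^J$, applies Theorem~\ref{thm:mainpath1} at $g(u)$, and then uses the action of $\TJ$ on $J$-folded paths (Lemmas~\ref{lem:actiononpaths} and~\ref{lem:preserveQ}) to translate the sum over paths starting at $g(u)$ into a sum over paths starting at $u$, adjusting weights accordingly. Your approach instead goes straight from Theorem~\ref{thm:prelim} via the scalar identity $\varpi_{J,\sv}(X_{\st_\lambda w})=\zJ^{\lambda}\varpi_{J,\sv}(X_w)$, which absorbs all of the bookkeeping into one computation on $\varpi_{J,\sv}$ rather than on the paths themselves. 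Your route is arguably cleaner---it avoids invoking the path-translation lemmas and treats $\sF$ and $W^J$ on an equal footing---while the paper's route has the virtue of reducing to the previously-proved Theorem~\ref{thm:mainpath1} and making the geometric role of the $\TJ$-action on paths explicit. Both ultimately rest on the same cancellation (your core lemma is essentially the computation in the proof of Proposition~\ref{prop:basis1}, extended from $W^J$ to all of $\WJ$).
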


\begin{proof}
Using the fact that $\sF$ is a fundamental domain, define functions $g:\sF\to W^J$ and $h:\sF\to  P^{(J)}$ by the equation 
$u=\st_{h(u)}g(u)$ for $u\in \sF$. Then, as in the proof of Proposition~\ref{prop:basis1}, we have 
$
\varpi_{J,\sv}(X_u)=\zJ^{h(u)}\varpi_{J,\sv}(X_{g(u)}),
$
and by changing basis from the fundamental domain $W^J$ case (proved in Theorem~\ref{thm:mainpath1}) we have
$$
[\pi_{J,\sv}(T_w)]_{u,v}=\sum_{\{p\in\cP_J(\vec{w},g(u))\,\mid\, \theta(p,W^J)=g(v)\}}\cQ_{J,\sv}(p)\zJ^{\wt(p)+h(u)-h(v)}.
$$
Using Lemma~\ref{lem:actiononpaths} and Lemma~\ref{lem:preserveQ} it follows that 
\begin{align*}
[\pi_{J,\sv}(T_w)]_{u,v}&=\sum_{\{p\in\cP_J(\vec{w},g(u))\,\mid\, \theta(p,W^J)=g(v)\}}\cQ_{J,\sv}(\st_{h(u)}\cdot p)\zJ^{\wt(\st_{h(u)}\cdot p)-h(v)}\\
&=\sum_{\{p\in\cP_J(\vec{w},u)\,\mid\, \theta(p,\sF)=v\}}\cQ_{J,\sv}(p)\zJ^{\wt(p)-h(v)},
\end{align*}
and the result follows since $\wt(p,\sF)=\wt(p)-h(v)$ if $\theta(p,\sF)=v$. 
\end{proof}

\subsection{Intertwiners and irreducibility}

For $i\in\Ifin$ define \textit{intertwiners} $U_i\in\Hext$ by
$$
U_i=\begin{cases}
(1-X^{-\alpha_i^{\vee}})T_i-(\sq_i-\sq_i^{-1})&\text{if $2\alpha_i\notin\Phi$}\\
(1-X^{-\alpha_n^{\vee}})T_n-(\sq_n-\sq_n^{-1}+(\sq_0-\sq_0^{-1})X^{-\alpha_n^{\vee}/2})&\text{if $\Phi=\sBC_n$ and $i=n$ and $\sq_0\neq\sq_n$}\\
(1-X^{-\alpha_n^{\vee}/2})T_n-(\sq_n-\sq_n^{-1})&\text{if $\Phi=\sBC_n$ and $i=n$ and $\sq_0=\sq_n$}
\end{cases}
$$
The terminology comes from the fact that these elements ``intertwine'' the weight spaces of $\Hext$-modules (see Proposition~\ref{prop:weightspaces}(1)). A direct calculation, using the Bernstein-Lusztig relation, gives
\begin{align}\label{eq:tau2}
U_i^2&=\sq_i^2(1-\sq_i^{-2}X^{-\alpha_i^{\vee}})(1-\sq_i^{-2}X^{\alpha_i^{\vee}})&&\text{if $2\alpha_i\notin\Phi$},
\end{align}
while if $\Phi=\sBC_n$ and $i=n$ then if $\sq_0\neq \sq_n$ we have
$$
U_n^2=\sq_n^2(1-\sq_0^{-1}\sq_n^{-1}X^{-\alpha_n^{\vee}/2})(1+\sq_0\sq_n^{-1}X^{-\alpha_n^{\vee}/2})(1-\sq_0^{-1}\sq_n^{-1}X^{\alpha_n^{\vee}/2})(1+\sq_0\sq_n^{-1}X^{\alpha_n^{\vee}/2})
$$
and if $\sq_0=\sq_n$ then 
$
U_n^2=\sq_n^2(1-\sq_n^{-2}X^{-\alpha_n^{\vee}/2})(1-\sq_n^{-2}X^{\alpha_n^{\vee}/2}).
$

The elements $U_i$ satisfy the braid relations (see \cite[Proposition~2.14]{Ram:03}, however note that we have normalised these elements so that they are elements of the Hecke algebra), and hence for $w\in W_0$ we may define
$$
U_w=U_{i_1}\cdots U_{i_{\ell}}
$$
whenever $w=s_{i_1}\cdots s_{i_{\ell}}$ is a reduced expression. From the Bernstein-Lusztig relation we have $U_iX^{\lambda}=X^{s_i\lambda}U_i$, and hence we have the very useful relation
$$
U_wX^{\lambda}=X^{w\lambda}U_w\quad\text{for all $w\in W_0$ and $\lambda\in P$}.
$$

\begin{lemma}\label{lem:nonvanish}
If $\alpha\in\Phi\backslash \Phi_J$ then $\psi_{J,\sv}(X^{\alpha^{\vee}})\notin \sR$. 
\end{lemma}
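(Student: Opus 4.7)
The plan is to reduce the claim to a direct computation using Proposition~\ref{prop:multiplicativecharacter}(4), which gives
\[
\psi_{J,\sv}(X^{\alpha^\vee}) = \sv^{\alpha^\vee}\,\zJ^{\alpha^\vee},
\]
and then to argue that the $\zJ$ factor is a nontrivial Laurent monomial in the indeterminates $\zeta_i$, so the product cannot lie in $\sR$.

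First, I would observe that $\sv^{\alpha^\vee}\in\sR$ is a unit, since by definition $\sv^{\alpha^\vee}=\prod_{\beta\in\Phi_J^+}\sv_\beta^{\langle\alpha^\vee,\beta\rangle}$ and each $\sv_\beta$ is invertible in $\sR$ (belonging to $\{\pm\sq_i^{\pm 1}\}$, or equal to $1$ when $\beta\notin\Phi_J$). Thus it suffices to show that $\zJ^{\alpha^\vee}\notin\sR$, which by the definition $\zJ^{\lambda}=\zJ^{\lambda^J}$ amounts to showing that $(\alpha^\vee)^J\neq 0$ whenever $\alpha\notin\Phi_J$, since any nonzero element of $P^J$ gives a nontrivial Laurent monomial in the $\zeta_i$, which is not in $\sR$.

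The core step is then the following equivalence: for $\alpha\in\Phi$, we have $(\alpha^\vee)^J=0$ if and only if $\alpha\in\Phi_J$. To see this, recall the orthogonal decomposition $V=V_J\oplus V^J$ with $V^J=\sum_{i\in I\setminus J}\mathbb{R}\omega_i$; the projection $(\alpha^\vee)^J$ vanishes precisely when $\langle\alpha^\vee,\omega_i\rangle=0$ for every $i\in I\setminus J$. Writing $\alpha=\sum_{i\in I}c_i\alpha_i$ and using $\langle\omega_i,\alpha_j\rangle=\delta_{i,j}$, we get
\[
\langle\alpha^\vee,\omega_i\rangle=\frac{2}{\langle\alpha,\alpha\rangle}\,c_i,
\]
so $(\alpha^\vee)^J=0$ is equivalent to $c_i=0$ for all $i\in I\setminus J$, i.e.\ $\mathrm{supp}(\alpha)\subseteq J$, i.e.\ $\alpha\in\Phi_J$. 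Contrapositively, $\alpha\notin\Phi_J$ forces $(\alpha^\vee)^J\neq 0$, so $\zJ^{\alpha^\vee}$ is a nontrivial monomial in the $\zeta_i$ and the product $\sv^{\alpha^\vee}\zJ^{\alpha^\vee}$ lies in $\sR[\zJ]\setminus\sR$.

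There is no real obstacle here beyond cleanly separating the $\sR$-part from the $\zJ$-part of $\psi_{J,\sv}(X^{\alpha^\vee})$ and justifying that nonzero $\zJ$-exponents detect elements outside $\sR$; the latter just uses that $\sR[\zJ]=\sR[\{\zeta_i^{\pm 1}\mid i\in I\setminus J\}]$ is a free $\sR$-module on the monomials $\zJ^{\mu}$, $\mu\in P^J$, with $\mu=0$ giving precisely the copy of $\sR$.
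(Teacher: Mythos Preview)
Your proof is correct and follows essentially the same approach as the paper: both use the formula $\psi_{J,\sv}(X^{\alpha^{\vee}})=\sv^{\alpha^{\vee}}\zJ^{\alpha^{\vee}}$, observe that $\sv^{\alpha^{\vee}}$ is a unit in $\sR$, and then argue by contrapositive that $(\alpha^{\vee})^J=0$ forces $\alpha^{\vee}\in V_J$ and hence $\alpha\in\Phi_J$. Your version is simply more explicit about the support computation, whereas the paper states the chain $(\alpha^{\vee})^J=0\Rightarrow\alpha^{\vee}\in V_J\Rightarrow\alpha\in\Phi_J$ without further justification.
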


\begin{proof}
Let $\alpha\in \Phi$. If $\sR\ni\psi_{J,\sv}(X^{\alpha^{\vee}})=\sv^{\lambda}\zJ^{\alpha^{\vee}}$ then $\zJ^{\alpha^{\vee}}=1$. Thus $(\alpha^{\vee})^J=0$, and so $\alpha^{\vee}\in V_J$, and so $\alpha\in\Phi_J$. 
\end{proof}

\begin{prop}\label{prop:newbasis}
The module $M_{J,\sv}$ has basis 
$
\{\varpi_{J,\sv}(U_u)\mid u\in W^J\}.
$
\end{prop}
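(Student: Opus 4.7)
The plan is to realise each $\varpi_{J,\sv}(\tau_u)$, $u\in W^J$, as a nonzero simultaneous eigenvector for the commutative subalgebra $\sR[\zJ][X^\lambda\mid\lambda\in P]$ of $\Hext$, to establish that their eigencharacters are pairwise distinct, and then to conclude the basis property from Dedekind's linear independence of distinct characters together with the rank count $|W^J|=\mathrm{rank}_{\sR[\zJ]}M_{J,\sv}$.

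The first step will combine the intertwining identity $\tau_u X^\lambda = X^{u\lambda}\tau_u$, valid in $\Hext$ for all $u\in W_0$, with $\bm_e\cdot X^\mu = \sv^\mu\zJ^\mu\bm_e$ from Proposition~\ref{prop:multiplicativecharacter}(4) to give
\[
\varpi_{J,\sv}(\tau_u)\cdot X^\lambda \;=\; \sv^{u\lambda}\zJ^{u\lambda}\,\varpi_{J,\sv}(\tau_u)\quad\text{for all }\lambda\in P,
\]
so each $\varpi_{J,\sv}(\tau_u)$ is an $X$-weight vector of weight $\chi_u(\lambda)=\sv^{u\lambda}\zJ^{u\lambda}$. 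For the second step, suppose $\chi_u = \chi_v$ with $u,v\in W^J$. Since the $\sq_i$ and $\zeta_i$ are algebraically independent, matching $\zeta$-monomials forces $(u\lambda-v\lambda)^J = 0$ for every $\lambda\in P$, equivalently $(u-v)V\subseteq V_J$, so $uv^{-1}$ acts as the identity on $V/V_J$; since the pointwise stabiliser of $V^J$ in $W_0$ is $W_J$, this gives $uv^{-1}\in W_J$. Then $u = (uv^{-1})\cdot v$ is the (necessarily unique) parabolic decomposition~(\ref{eq:WJdecomposition}) of $u\in W^J$, forcing $uv^{-1}=e$ and hence $u=v$.

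For the third step I would show $\varpi_{J,\sv}(\tau_u)\neq 0$ by induction on $\ell(u)$, with $u=e$ immediate. For the inductive step, choose any right descent $s_i$ of $u$ and set $u' = us_i$. Using Lemma~\ref{lem:decomposition} one checks directly from $u\in W^J$ and $u\alpha_i<0$ that both $u'\in W^J$ and $u'\alpha_i\in\Phi\setminus\Phi_J$. Braiding then gives $\tau_u = \tau_{u'}\tau_i$, so $\varpi_{J,\sv}(\tau_u)\cdot\tau_i = \varpi_{J,\sv}(\tau_{u'})\cdot\tau_i^2$; applying~(\ref{eq:tau2}) to the weight vector $\varpi_{J,\sv}(\tau_{u'})$ rewrites the right-hand side as an explicit scalar in $\sR[\zJ]$ times $\varpi_{J,\sv}(\tau_{u'})$, whose factors $1-\sq_i^{-2}\psi_{J,\sv}(X^{\pm u'\alpha_i^\vee})$ are nonzero in the integral domain $\sR[\zJ]$ by Lemma~\ref{lem:nonvanish}. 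This gives $\varpi_{J,\sv}(\tau_u)\neq 0$ by induction, and together with the distinct weights and the rank count will complete the proof.

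The main subtlety is the distinct-weights step: distinct $u,v\in W^J$ may have $v^{-1}u\in W_J$, so the naive reading does not separate the weights, and it is essential to arrive at the condition $uv^{-1}\in W_J$ (the one that meshes with the unique $(W_J,W^J)$-decomposition) rather than $v^{-1}u\in W_J$.
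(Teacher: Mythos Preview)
Your proof is correct and takes a genuinely different route from the paper. The paper argues by \emph{triangularity}: expanding $\tau_u$ through the Bernstein--Lusztig basis yields
\[
\varpi_{J,\sv}(\tau_u)=\bigg[\prod_{\alpha\in\Phi(u)}\bigl(1-\psi_{J,\sv}(X^{-\alpha^{\vee}})\bigr)\bigg]\bm_u+(\text{Bruhat-lower terms}),
\]
and since $\Phi_J(u)=\emptyset$ for $u\in W^J$, Lemma~\ref{lem:nonvanish} makes the leading coefficient nonzero. Your argument is instead \emph{spectral}: you realise each $\varpi_{J,\sv}(\tau_u)$ as a nonzero $X$-weight vector and separate the weights. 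In effect you prove the content of Proposition~\ref{prop:weightspaces} first and deduce Proposition~\ref{prop:newbasis}, reversing the paper's order; your inductive nonvanishing via $\tau_i^2$ is precisely the bijectivity mechanism of Proposition~\ref{prop:weightspaces}(1), and your distinctness argument is the one the paper gives for Proposition~\ref{prop:weightspaces}(2). The triangularity approach buys an explicit relation to the basis $\{\bm_u\}$, while yours delivers the weight-space decomposition simultaneously with no forward reference. One small step to make explicit in your Step~2: passing from ``$uv^{-1}$ acts as the identity on $V/V_J$'' to ``$uv^{-1}$ fixes $V^J$ pointwise'' uses that for $\mu\in V^J$ with $uv^{-1}\mu-\mu\in V_J$, orthogonality of $V_J$ and $V^J$ together with $|uv^{-1}\mu|=|\mu|$ forces $uv^{-1}\mu=\mu$. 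Finally, note that both arguments establish linear independence (equivalently, a basis after passing to the fraction field of $\sR[\zJ]$); neither your $\tau_i^2$-scalars nor the paper's diagonal coefficients are units in $\sR[\zJ]$, so spanning over $\sR[\zJ]$ itself is not literally shown---but this is equally true of the paper's proof and is immaterial for the subsequent applications.
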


\begin{proof}
Let $u=s_{i_1}\cdots s_{i_{\ell}}\in W_0$ be a reduced expression. From the Bernstein-Lusztig relation we have
\begin{align*}
U_u&=(1-X^{-\alpha_{i_1}^{\vee}})T_{i_1}(1-X^{-\alpha_{i_2}^{\vee}})T_{i_2}\cdots (1-X^{-\alpha_{i_{\ell}}^{\vee}})T_{i_{\ell}}+\text{lower terms}\\
&=(1-X^{-\alpha_{i_1}^{\vee}})(1-X^{-s_{i_1}\alpha_{i_2}^{\vee}})\cdots (1-X^{-s_{i_1}\cdots s_{i_{\ell-1}}\alpha_{i_{\ell}}^{\vee}})T_u+\text{lower terms}\\
&=\bigg[\prod_{\alpha\in\Phi(u)}(1-X^{-\alpha^{\vee}})\bigg]T_u+\text{lower terms},
\end{align*}
where on each line ``lower terms'' denotes a linear combination of terms $p_v(X)T_v$ with $v<u$ (in Bruhat order). Since $T_u=X_u+\text{(terms $X_v$ with $v<u$)}$ and $\varpi_{J,\sv}(X_u)=\bm_u$ for $u\in W^J$ we have
$$
\varpi_{J,\sv}(U_u)=\bigg[\prod_{\alpha\in\Phi(u)}(1-\psi_{J,\sv}(X^{\alpha^{\vee}})^{-1})\bigg]\bm_u+\text{lower terms}.
$$
For $u\in W^J$ we have $\Phi_J(u)=\emptyset$ (by Lemma~\ref{lem:decomposition}), and so by Lemma~\ref{lem:nonvanish} the coefficient
$$
\prod_{\alpha\in\Phi(u)}(1-\psi_{J,\sv}(X^{-\alpha^{\vee}}))
$$
does not vanish, and the result follows. 
\end{proof}

The following proposition gives the decomposition of $M_{J,\sv}$ into weight spaces.

\begin{prop}\label{prop:weightspaces}
Let $J\subseteq\Ifin$ and let $\sv$ be a $J$-parameter system. For $u\in W_0$ let 
$$
M_u=\{\bm\in M_{J,\sv}\mid \bm\cdot X^{\lambda}=\psi_{J,\sv}(X^{u\lambda})\bm\text{ for all $\lambda\in P$}\}.
$$
\begin{compactenum}[$(1)$]
\item If $u,us_i\in W^J$ then the map $\tilde{U}_i:M_u\to M_{us_i}$ with $\tilde{U}_i(\bm)=\bm\cdot U_i$ is bijective. 
\item For $u\in W^J$ we have $M_u=\{r\varpi_{J,\sv}(U_u)\mid r\in \sR[\zJ]\}$, and
$$
M_{J,\sv}=\bigoplus_{u\in W^J}M_u.
$$
\end{compactenum}
\end{prop}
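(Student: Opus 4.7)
The plan is to prove part~(2) first by combining the already-established basis of Proposition~\ref{prop:newbasis} with a weight-separation argument, and then to derive part~(1) from the intertwining relation $\tau_iX^\lambda = X^{s_i\lambda}\tau_i$ together with an analysis of $\tau_i^2$.

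First I would show that $\varpi_{J,\sv}(\tau_u)\in M_u$ for every $u\in W^J$. From the definition of the action, $\bm_e\in M_e$. The Bernstein-Lusztig relation immediately gives $\tau_iX^\lambda = X^{s_i\lambda}\tau_i$, and iterating along a reduced expression yields $\tau_uX^\lambda = X^{u\lambda}\tau_u$ for all $u\in W_0$, hence
\[
\varpi_{J,\sv}(\tau_u)\cdot X^\lambda \;=\; \bm_e\cdot\tau_uX^\lambda \;=\; \bm_e\cdot X^{u\lambda}\tau_u \;=\; \psi_{J,\sv}(X^{u\lambda})\,\varpi_{J,\sv}(\tau_u).
\]
This provides the inclusion $\sR[\zJ]\varpi_{J,\sv}(\tau_u)\subseteq M_u$.

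For part~(2), the remaining point is that $M_u\cap M_v=0$ for distinct $u,v\in W^J$. Any $\bm$ in both spaces satisfies $\psi_{J,\sv}(X^{(u-v)\lambda})=1$ for all $\lambda\in P$. Writing $\psi_{J,\sv}(X^\mu)=\sv^\mu\zJ^{\mu^J}$ and using that the indeterminates $\zeta_i$ are algebraically independent of the $\sq_j$, this forces $((u-v)\lambda)^J=0$ for all $\lambda\in P$, i.e.\ $(u-v)V\subseteq V_J$. Equivalently $vu^{-1}$ fixes $V^J$ pointwise; but a generic point of $V^J$ has $W_0$-stabilizer equal to $W_J$ (since the roots orthogonal to $V^J$ are precisely those in $V_J=\sum_{j\in J}\RR\alpha_j^\vee$, and these are exactly the roots of $\Phi_J$), so $vu^{-1}\in W_J$. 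Minimality of $u,v$ in their common $W_J$-coset then gives $u=v$, a contradiction. Combining this with Proposition~\ref{prop:newbasis} yields the direct sum $M_{J,\sv}=\bigoplus_{u\in W^J}M_u$ and the identification $M_u=\sR[\zJ]\varpi_{J,\sv}(\tau_u)$.

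For part~(1), well-definedness is the computation
\[
(\bm\cdot\tau_i)\cdot X^\lambda \;=\; \bm\cdot X^{s_i\lambda}\tau_i \;=\; \psi_{J,\sv}(X^{us_i\lambda})(\bm\cdot\tau_i).
\]
For bijectivity, I would observe that $\tilde\tau_i\circ\tilde\tau_i:M_u\to M_u$ is the action of $\tau_i^2$, which by~(\ref{eq:tau2}) (or its $\sBC_n$ analogue) lies in the commutative subalgebra $\sR[X^\lambda:\lambda\in P]$ and therefore acts as the scalar obtained by the substitution $X^\lambda\mapsto\psi_{J,\sv}(X^{u\lambda})$. The hypothesis $u,us_i\in W^J$ implies $u\alpha_i\notin\Phi_J$, since $u\alpha_i\in\Phi_J^+$ would give $(us_i)^{-1}(u\alpha_i)=-\alpha_i<0$ and hence $u\alpha_i\in\Phi_J(us_i)=\emptyset$ by Lemma~\ref{lem:decomposition}. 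Lemma~\ref{lem:nonvanish} then guarantees that $\psi_{J,\sv}(X^{\pm u\alpha_i^\vee})$ are nontrivial Laurent monomials in the $\zeta$ variables, from which a direct inspection of the $\tau_i^2$-formula shows the scalar is nonzero. Since by part~(2) both $M_u$ and $M_{us_i}$ are rank-one with generators $\varpi_{J,\sv}(\tau_u)$ and $\varpi_{J,\sv}(\tau_{us_i})$, and $\tilde\tau_i$ sends the former to $\varpi_{J,\sv}(\tau_u\tau_i)$ — which equals $\varpi_{J,\sv}(\tau_{us_i})$ when $\ell(us_i)>\ell(u)$ and a nonzero scalar multiple of it otherwise via $\tau_u\tau_i=\tau_{us_i}\tau_i^2$ — bijectivity follows.

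The main obstacle will be the linear-algebraic fact that the pointwise stabilizer of $V^J$ in $W_0$ equals the parabolic $W_J$, underpinning the weight-separation in Step~2; and, secondarily, pinning down that the scalar by which $\tau_i^2$ acts is not merely nonzero but invertible in the sense needed for bijectivity — achieved most cleanly either by passing to the fraction field of $\sR[\zJ]$ or by directly identifying the matrix of $\tilde\tau_i$ via the braid relations for $\tau$.
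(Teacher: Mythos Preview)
Your proof is correct and follows essentially the same architecture as the paper's: both arguments combine Proposition~\ref{prop:newbasis} with a weight-separation step for part~(2), and both establish part~(1) via the intertwining relation $\tau_iX^\lambda=X^{s_i\lambda}\tau_i$ together with the explicit formula for $\tau_i^2$. The only substantive difference is in the weight-separation step: the paper applies the condition $u_1u_2^{-1}\lambda\in\lambda+V_J$ to $\lambda=\alpha^\vee\in\Phi_J^\vee$ and concludes that $u_1u_2^{-1}$ preserves $\Phi_J^\vee$ (hence lies in $W_J$), whereas you argue that $vu^{-1}$ fixes $V^J$ pointwise and invoke that the pointwise stabiliser of $V^J$ in $W_0$ is $W_J$. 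Your route is arguably cleaner, though the ``equivalently'' hides one small step (one first observes that $vu^{-1}$ preserves $V_J$, hence also $V^J$, before concluding pointwise fixity on $V^J$). Reversing the order of (1) and (2) is harmless, and your explicit observation that $\tilde\tau_i$ carries the generator $\varpi_{J,\sv}(\tau_u)$ to $\varpi_{J,\sv}(\tau_{us_i})$ in the length-increasing direction is a nice concrete complement to the $\tau_i^2$ argument; the residual invertibility issue you flag at the end is the same one implicit in the paper's proof.
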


\begin{proof}
(1) For $u\in W_0$ let $z_u:P\to \sR[\zJ]$ be the map $z_u^{\lambda}=\psi_{J,\sv}(X^{u\lambda})$. Then $z_u^{\lambda}=z^{u\lambda}$, where we write $z=z_e$, and 
$
M_u=\{\bm\in M_{J,\sv}\mid \bm\cdot X^{\lambda}=z_u^{\lambda}\bm\text{ for all $\lambda\in P$}\}.
$ For any $u\in W_0$ and $i\in\Ifin$, if $\bm\in M_u$ then 
$$
(\bm\cdot U_i)X^{\lambda}=(\bm\cdot X^{s_i\lambda})U_i=z_u^{s_i\lambda}(\bm\cdot U_i)=z_{us_i}^{\lambda}(\bm\cdot U_i),
$$
and so $\bm\cdot U_i\in M_{us_i}$. It follows that there are operators $\tilde{U}_i:M_u\to M_{us_i}$ and $\tilde{U}_i:M_{us_i}\to M_{u}$ given by $\tilde{U}_i(\bm)=\bm\cdot U_i$. Thus $\tilde{U}_i^2:M_u\to M_u$. If $2\alpha_i\notin\Phi$ then by (\ref{eq:tau2}) we have 
$$
\tilde{U}_i^2(\bm)=\bm\cdot U_i^2=\sq_i^2(1-\sq_i^{-2}z^{-u\alpha_i^{\vee}})(1-\sq_i^{-2}z^{u\alpha_i^{\vee}})\bm
$$
and so if $z^{u\alpha_i^{\vee}}\neq \sq_i^{\pm 2}$ then the operators $\tilde{U}_i:M_u\to M_{us_i}$ and $\tilde{U}_i:M_{us_i}\to M_u$ are bijective. If $\Phi$ is not reduced and $i=n$ then the same result holds for $\tilde{U}_n$ provided $z^{u\alpha_n^{\vee}/2}\neq (\sq_0\sq_n)^{\pm 1},-(\sq_0^{-1}\sq_n)^{\pm 1}$ (with the $-(\sq_0^{-1}\sq_n)^{\pm 1}$ case omitted if $\sq_0=\sq_n$). 

Suppose now that $u\in W^J$ and $i\in\Ifin$ with $us_i\in W^J$. Then $u\alpha_i\notin\Phi_J$, and thus Lemma~\ref{lem:nonvanish} gives $z^{u\alpha_i^{\vee}}\notin \sR$ (and also $z^{u\alpha_n^{\vee}/2}\notin\sR$ in the non reduced case) and so by the previous paragraph $\tilde{U}_i:M_u\to M_{us_i}$ is bijective, proving (1).

(2) It is clear that $\{r\varpi_{J,\sv}(U_u)\mid r\in\sR[\zJ]\}\subseteq M_u$ for each $u\in W^J$, and thus by Proposition~\ref{prop:newbasis} the spaces $M_u$, $u\in W^J$, span $M_{J,\sv}$. Thus to prove (2) it is sufficient to show that if $u_1,u_2\in W^J$ with $M_{u_1}=M_{u_2}$ then $u_1= u_2$. To see this, $M_{u_1}=M_{u_2}$ implies that $\psi_{J,\sv}(X^{u_1\lambda})=\psi_{J,\sv}(X^{u_2\lambda})$ for all $\lambda\in P$. Replacing $\lambda$ by $u_2^{-1}\lambda$ we have $\psi_{J,\sv}(X^{u_1u_2^{-1}\lambda-\lambda})=1$ for all $\lambda\in P$. Thus $(u_1u_2^{-1}\lambda-\lambda)^J=0$ for all $\lambda\in P$, and so $u_1u_2^{-1}\lambda\in \lambda+V_J$ for all $\lambda\in P$. Applying this to $\lambda=\alpha^{\vee}\in\Phi_J^{\vee}$ it follows that $u_1u_2^{-1}\alpha^{\vee}\in\Phi_J^{\vee}$ for all $\alpha^{\vee}\in \Phi_J^{\vee}$, and hence $u_1u_2^{-1}\in W_J$. So $u_1\in W_Ju_2$, forcing $u_1=u_2$ (as $u_1,u_2\in W^J$). 
\end{proof}
 
\begin{cor}\label{cor:irreducible}
Let $J\subseteq\Ifin$ and let $\sv$ be a $J$-parameter system. The representation $(\pi_{J,\sv},M_{J,\sv})$ is irreducible.
\end{cor}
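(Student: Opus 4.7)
The approach is to combine the weight-space decomposition of Proposition~\ref{prop:weightspaces} with the character-separation argument already present in its proof. Let $N$ be a nonzero $\Hext$-stable submodule of $M_{J,\sv}$; since each weight space $M_u$ is a free $\sR[\zJ]$-module of rank one, it suffices (possibly after passing to the fraction field $K = \mathrm{Frac}(\sR[\zJ])$) to prove that $N \cap M_u \neq 0$ for every $u \in W^J$.

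First, I push a nonzero element of $N$ into a single weight space by repeated application of operators of the form $X^\lambda - \psi_{J,\sv}(X^{u_1\lambda})$. Given $0 \neq m \in N$, write $m = \sum_{u \in W^J} c_u\, \varpi_{J,\sv}(\tau_u)$ and set $S = \{u \in W^J : c_u \neq 0\}$. If $|S| \geq 2$, pick distinct $u_0, u_1 \in S$; the argument in the last paragraph of the proof of Proposition~\ref{prop:weightspaces}(2) (via $u_0u_1^{-1} \notin W_J$) supplies $\lambda \in P$ with $\psi_{J,\sv}(X^{u_0\lambda}) \neq \psi_{J,\sv}(X^{u_1\lambda})$. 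Then $m \cdot (X^\lambda - \psi_{J,\sv}(X^{u_1\lambda})) \in N$ kills the $u_1$-component while retaining a nonzero $u_0$-component (since $\sR[\zJ]$ is an integral domain), so $|S|$ strictly decreases. Iterating yields a nonzero element of $N \cap M_{u^*}$ for some $u^* \in W^J$.

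Next, I propagate throughout all weight spaces using the intertwiners $\tau_i \in \Hext$. By Proposition~\ref{prop:weightspaces}(1), whenever $u, us_i \in W^J$ the right-action map $\tilde\tau_i : M_u \to M_{us_i}$ is bijective, so $N \cap M_u \neq 0$ forces $N \cap M_{us_i} \neq 0$. It remains to verify that the graph $\Gamma$ with vertex set $W^J$ and edges $\{u, us_i\}$ (whenever both endpoints lie in $W^J$, $i \in \Ifin$) is connected. For this, given $u \in W^J$ with $\ell(u) > 0$ and a reduced expression $u = s_{i_1}\cdots s_{i_k}$, the prefix $v = s_{i_1}\cdots s_{i_{k-1}}$ satisfies $\Phi_J(v) \subseteq \Phi_J(u) = \emptyset$ by Lemma~\ref{lem:decomposition}, so $v \in W^J$ and $u \sim v$ is an edge of $\Gamma$. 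Induction on length connects every element of $W^J$ to the identity, so $\Gamma$ is connected.

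Combining the two steps, $N \cap M_u \neq 0$ for all $u \in W^J$, and since each $M_u \otimes_{\sR[\zJ]} K$ is one-dimensional, $N \otimes K = M_{J,\sv} \otimes K$, giving the claimed irreducibility. The main subtlety of the proof will be in phrasing irreducibility precisely with respect to the coefficient ring $\sR[\zJ]$ versus its fraction field; the essential combinatorial content is already packaged in Proposition~\ref{prop:weightspaces}, and the argument above is mostly a formal exploitation of it.
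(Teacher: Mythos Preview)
Your proof is correct and follows essentially the same approach as the paper: both arguments use the weight-space decomposition of Proposition~\ref{prop:weightspaces}(2) to get a nonzero weight vector in $N$, and then the bijectivity of the intertwiners from Proposition~\ref{prop:weightspaces}(1) to propagate to all weight spaces. You are simply more explicit where the paper is terse---the paper compresses your weight-killing step into the single phrase ``it follows from Proposition~\ref{prop:weightspaces}(2) that $N\cap M_u\neq\emptyset$ for some $u$'', and leaves the connectivity of $W^J$ (your prefix argument via $\Phi_J(v)\subseteq\Phi_J(u)$) implicit in the phrase ``forces $N=M_{J,\sv}$''. Your remark about the base ring versus its fraction field is a genuine point the paper glosses over when it calls $M_u$ ``$1$-dimensional''.
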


\begin{proof}
Let $N$ be a nonzero $\Hext$-invariant submodule of $M_{J,\sv}$. Since $N$ is invariant under the action of the elements $X^{\lambda}$, $\lambda\in P$, it follows from Proposition~\ref{prop:weightspaces}(2) that $N\cap M_u\neq\emptyset$ for some $u\in W^J$. Since $M_u$ is $1$-dimensional we have $M_u\subseteq N$. But then Proposition~\ref{prop:weightspaces}(1), along with $\Hext$-invariance, forces $N=M_{J,\sv}$. 
\end{proof}

\subsection{Generic induced representations}

In this subsection we realise the combinatorial modules $M_{J,\sv}$ introduced in Theorem~\ref{thm:module} as induced representations from $1$-dimensional representations of a Levi subalgebra. Let $\psi_{J,\sv}$ be as in Section~\ref{sec:character}, and let $\xi_{J,\sv}$ be a generator of the $1$-dimensional $\cL_J$ module $\sR[\zJ]\xi_{J,\sv}$ affording the character $\psi_{J,\sv}$. That is,
$$
\xi_{J,\sv}\cdot h=\psi_{J,\sv}(h)\xi_{J,\sv}\quad\text{for all $h\in \cL_J$}.
$$
Let $M_{J,\sv}'=\mathrm{Ind}_{\cL_J}^{\Hext}(\psi_{J,\sv})=(\sR[\zJ]\xi_{J,\sv})\otimes_{\cL_J}\Hext$. 

\begin{prop}\label{prop:basis0}
The module $M_{J,\sv}'$ has basis 
$\{\xi_{J,\sv}\otimes X_u\mid u\in W^J\}$.
\end{prop}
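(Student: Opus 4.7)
The plan is to prove that $\Hext$ is a free left $\cL_J$-module with basis $\{X_u \mid u \in W^J\}$, from which the proposition follows immediately by the standard behaviour of induction with respect to free module structures.

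First I would establish the key factorisation. Each $w \in \Wext$ has a unique expression $w = t_{\lambda} yu$ with $\lambda = \wt(w) \in P$, $y = \theta_J(w) \in W_J$, and $u = \theta^J(w) \in W^J$, by (\ref{eq:weightandfinaldirection}) and (\ref{eq:WJdecomposition}); moreover $\ell(yu) = \ell(y) + \ell(u)$. By (\ref{eq:splitting}), $X_w = X^{\lambda} T_{\theta(w)^{-1}}^{-1}$, and the length additivity $\ell((yu)^{-1}) = \ell(u^{-1}) + \ell(y^{-1})$ gives $T_{\theta(w)^{-1}} = T_{u^{-1}} T_{y^{-1}}$, hence $T_{\theta(w)^{-1}}^{-1} = T_{y^{-1}}^{-1} T_{u^{-1}}^{-1}$. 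Since $u \in W_0$ we have $X_u = T_{u^{-1}}^{-1}$, which yields
$$X_w = \bigl(X^{\lambda} T_{y^{-1}}^{-1}\bigr) X_u, \qquad \text{with } X^{\lambda} T_{y^{-1}}^{-1} \in \cL_J.$$

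Next I would observe that since the Bernstein-Lusztig basis $\{X^{\lambda} T_{v^{-1}}^{-1} \mid \lambda \in P, v \in W_0\}$ of $\Hext$ reparametrises via the bijection $v \leftrightarrow (y,u) \in W_J \times W^J$ as $\{X^{\lambda} T_{y^{-1}}^{-1} X_u \mid \lambda \in P,\, y \in W_J,\, u \in W^J\}$, restricting to $u = e$ shows that $\{X^{\lambda} T_{y^{-1}}^{-1} \mid \lambda \in P,\, y \in W_J\}$ is an $\sR$-basis of $\cL_J$ (this is precisely the Bernstein-Lusztig presentation of $\cL_J$ as the affine Hecke algebra associated to the root subsystem $\Phi_J$ with the weight lattice $P$). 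Consequently
$$\Hext = \bigoplus_{u \in W^J} \cL_J \, X_u$$
as left $\cL_J$-modules, so $\Hext$ is a free left $\cL_J$-module of rank $|W^J|$ with basis $\{X_u \mid u \in W^J\}$.

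The proof is then finished by tensoring:
$$M_{J,\sv}' = (\sR[\zJ]\xi_{J,\sv}) \otimes_{\cL_J} \Hext \;=\; \bigoplus_{u \in W^J} (\sR[\zJ]\xi_{J,\sv}) \otimes_{\cL_J} \cL_J X_u \;\cong\; \bigoplus_{u \in W^J} \sR[\zJ]\, (\xi_{J,\sv} \otimes X_u).$$

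The main obstacle, such as it is, lies in justifying that $\{X^{\lambda} T_{y^{-1}}^{-1} \mid \lambda \in P, y \in W_J\}$ is indeed a basis of the Levi subalgebra $\cL_J$ (and not merely a spanning set); while standard, this requires invoking the Bernstein-Lusztig presentation for $\cL_J$ itself. Once this is in hand the argument is essentially bookkeeping with the factorisation $X_w = (X^{\lambda} T_{y^{-1}}^{-1}) X_u$ and the universal property of induced modules.
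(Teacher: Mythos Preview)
Your proof is correct and follows essentially the same approach as the paper. Both arguments rest on the factorisation $X_w = (X^{\lambda}T_{y^{-1}}^{-1})X_u$ with $X^{\lambda}T_{y^{-1}}^{-1}\in\cL_J$ and $u\in W^J$, and on the fact that $\{X^{\lambda}T_{v^{-1}}^{-1}T_{u^{-1}}^{-1}\mid \lambda\in P,\,v\in W_J,\,u\in W^J\}$ is an $\sR$-basis of $\Hext$; you package this as ``$\Hext$ is free of rank $|W^J|$ over $\cL_J$'' and then tensor, while the paper computes $\xi_{J,\sv}\otimes X_w=\psi_{J,\sv}(X^{\lambda}T_{y^{-1}}^{-1})(\xi_{J,\sv}\otimes X_u)$ directly and appeals to the same basis for linear independence. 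The only point where you go slightly beyond the paper is in explicitly identifying a basis of $\cL_J$, which the paper sidesteps by arguing linear independence directly from the basis of $\Hext$; either route is fine.
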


\begin{proof}
Since $\{X_w\mid w\in\Wext\}$ is a basis of $\Hext$ the set $\{\xi_{J,\sv}\otimes X_w\mid w\in\Wext\}$ spans $M_{J,\sv}'$. If $w\in\Wext$ then by~(\ref{eq:splitting}) we have $X_w=X^{\lambda}T_{u^{-1}}^{-1}$ where $\lambda=\wt(w)$ and $u=\theta(w)$. Write $u=u_1u_2\in W_JW^J$. Since $X^{\lambda}T_{u_1^{-1}}^{-1}\in \cL_J$ we have
$$
\xi_{J,\sv}\otimes X_w=\xi_{J,\sv}\otimes X^{\lambda}T_{u_1^{-1}}^{-1}T_{u_2^{-1}}^{-1}=\psi_{J,\sv}(X^{\lambda}T_{u_1^{-1}}^{-1})(\xi_{J,\sv}\otimes X_{u_2}).
$$
Thus $M_{J,\sv}'$ is spanned by $\{\xi_{J,\sv}\otimes X_u\mid u\in W^J\}$, and these elements are linearly independent because $\{X^{\lambda}T_{v^{-1}}^{-1}T_{u^{-1}}^{-1}\mid \lambda\in P,\,v\in W_J,\,u\in W^J\}$ is a basis of $\Hext$.
\end{proof}

The following theorem shows that $(\pi_{J,\sv},M_{J,\sv})$ is isomorphic to the representation $(\pi_{J,\sv}',M_{J,\sv}')$, and moreover identifies bases of each module giving an isomorphism of matrix representations. 

\begin{thm}\label{thm:induced}
We have $M_{J,\sv}\cong M_{J,\sv}'$. Moreover 
$$[\pi_{J,\sv}(h)]_{u,v}=[\pi_{J,\sv}'(h)]_{u,v}\quad\text{for all $h\in\Hext$ and $u,v\in W^J$},$$
where $M_{J,\sv}$ and $M_{J,\sv}'$ are endowed with the bases $\{\bm_u\mid u\in W^J\}$ and $\{\xi_{J,\sv}\otimes X_u\mid u\in W^J\}$ respectively.
\end{thm}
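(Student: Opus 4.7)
The plan is to construct an explicit $\Hext$-module isomorphism $\Phi: M_{J,\sv}' \to M_{J,\sv}$ carrying the basis $\{\xi_{J,\sv} \otimes X_u \mid u \in W^J\}$ onto the basis $\{\bm_u \mid u \in W^J\}$, so that the matrix entry statement is an automatic consequence. The natural candidate is the map induced by $h \mapsto \varpi_{J,\sv}(h) = \bm_e \cdot h$; more precisely, I would first define a right $\Hext$-linear map $\widetilde{\Phi}: \sR[\zJ]\xi_{J,\sv} \otimes_{\sR[\zJ]} \Hext \to M_{J,\sv}$ by $\xi_{J,\sv} \otimes h \mapsto \varpi_{J,\sv}(h)$, and then show that it descends to a map on $M_{J,\sv}' = \sR[\zJ]\xi_{J,\sv} \otimes_{\cL_J}\Hext$.

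The key verification is that the map is well-defined over $\cL_J$. By definition of induction, the relations to be killed are $\xi_{J,\sv}\otimes \ell h - \psi_{J,\sv}(\ell)\xi_{J,\sv}\otimes h$ for $\ell \in \cL_J$ and $h\in \Hext$. Under $\widetilde\Phi$ this becomes $\varpi_{J,\sv}(\ell h) - \psi_{J,\sv}(\ell)\varpi_{J,\sv}(h) = (\bm_e\cdot \ell)\cdot h - \psi_{J,\sv}(\ell)(\bm_e\cdot h)$, which vanishes because by the very definition of $\psi_{J,\sv}$ in Section~\ref{sec:character} we have $\bm_e \cdot \ell = \psi_{J,\sv}(\ell)\bm_e$ for all $\ell\in \cL_J$. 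The fact that $\widetilde\Phi$ is $\Hext$-linear on the right follows directly from Corollary~\ref{cor:action}. Thus $\Phi: M_{J,\sv}' \to M_{J,\sv}$ is a well-defined $\Hext$-module homomorphism.

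Next I would identify $\Phi$ on the two distinguished bases. For $u \in W^J$, the definition of $\varpi_{J,\sv}$ gives $\varpi_{J,\sv}(X_u) = \bm_u$ directly (since $u$ has $\wt(u) = 0$ and $\theta_J(u) = e$, so the prefactor is trivial). Hence $\Phi(\xi_{J,\sv}\otimes X_u) = \bm_u$, so $\Phi$ sends the basis of $M_{J,\sv}'$ from Proposition~\ref{prop:basis0} bijectively onto the basis $\sB_{J,\sv}$ of $M_{J,\sv}$. This proves that $\Phi$ is an isomorphism. The matrix entry equality $[\pi_{J,\sv}(h)]_{u,v} = [\pi_{J,\sv}'(h)]_{u,v}$ is then immediate: applying $\Phi$ to the identity $(\xi_{J,\sv}\otimes X_u)\cdot h = \sum_{v\in W^J} [\pi_{J,\sv}'(h)]_{u,v}\,(\xi_{J,\sv}\otimes X_v)$ yields the analogous identity in $M_{J,\sv}$ with the same scalars.

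There is essentially no serious obstacle here, since all the needed ingredients have already been established: the character $\psi_{J,\sv}$ is built into the construction of $M_{J,\sv}$, and Corollary~\ref{cor:action} is precisely the right-$\Hext$-equivariance of $\varpi_{J,\sv}$. The only subtle point worth pausing over is that one must use exactly the normalisation of $\varpi_{J,\sv}(X_w)$ involving the factor $(\sv\zJ)^{\wt(w)}\sv(\theta_J(w))^{-1}$ to ensure that $\varpi_{J,\sv}(X_u) = \bm_u$ on the nose for $u\in W^J$; with that in hand the identification of the two matrix representations is forced.
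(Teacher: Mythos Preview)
Your proof is correct but takes a genuinely different route from the paper. The paper proceeds by direct verification: it computes the action of each generator $T_i$ (and $T_\sigma$) on the basis $\{\xi_{J,\sv}\otimes X_u\}$ of $M_{J,\sv}'$ through a four-case analysis (mirroring the cases in the proof of Theorem~\ref{thm:prelim}) and checks that the resulting formulas coincide with those defining $M_{J,\sv}$ in Theorem~\ref{thm:module}. Your approach instead invokes the universal property of induction to build the map $\xi_{J,\sv}\otimes h\mapsto \varpi_{J,\sv}(h)$ in one stroke, with well-definedness over $\cL_J$ and right $\Hext$-equivariance coming for free from the definition of $\psi_{J,\sv}$ and Corollary~\ref{cor:action}. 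Your argument is shorter and avoids any case analysis. The paper's computational route, however, carries a bonus noted in the remark immediately following the theorem: because it rederives the action formulas of Theorem~\ref{thm:module} from scratch inside the induced module, it yields an \emph{independent} proof that those formulas define a genuine $\Hext$-action. Your proof, by contrast, takes Theorem~\ref{thm:module} (through Corollary~\ref{cor:action}) as an input, so it does not recover this independence.
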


\begin{proof}
We will show that the action of $T_i$ ($i\in\{0\}\cup\Ifin$) and $T_{\sigma}$ ($\sigma\in\Sigma$) on $M_{J,\sv}'$ with respect to the basis $\{\xi_{J,\sv}\otimes X_u\mid u\in W^J\}$ agrees with the action from Theorem~\ref{thm:module}. The analysis is easy for the cases $T_i$ with $i\in \Ifin$ and $T_{\sigma}$ with $\sigma\in\Sigma$, and so we focus on the action of $T_0$. 
\medskip

\noindent\textit{Case 1:} Suppose that $uA_0\,{^-}\hspace{-0.1cm}\mid^+\, us_0A_0$ with $us_0\in\WJ$. Then $X_uT_0=X_{us_0}$, and since $us_0\in\WJ$ Corollary~\ref{cor:splitting2} gives $us_0=\st_{\lambda}\theta^J(uw_0)$ where $\lambda=\wt(us_0)=u\varphi^{\vee}$. Then 
\begin{align*}
(\xi_{J,\sv}\otimes X_u)\cdot T_0&=\xi_{J,\sv}\otimes X_{us_0}=\xi_{J,\sv}\otimes X_{\st_{u\varphi^{\vee}}}X_{\theta^J(us_0)}=\psi_{J,\sv}(X_{\st_{u\varphi^{\vee}}})(\xi_{J,\sv}\otimes X_{\theta^J(us_0)}),
\end{align*}
and Proposition~\ref{prop:multiplicativecharacter}(8) gives $(\xi_{J,\sv}\otimes X_u)\cdot T_0=\zJ^{u\varphi^{\vee}}(\xi_{J,\sv}\otimes X_{\theta^J(us_0)})$, and hence the result in this case. 
\smallskip

\noindent\textit{Case 2:} Suppose that $uA_0\,{^+}\hspace{-0.1cm}\mid^-\, us_0A_0$ with $us_0\in\WJ$. Since $T_0=T_0^{-1}+(\sq_0-\sq_0^{-1})$ we have
\begin{align*}
(\xi_{J,\sv}\otimes X_u)\cdot T_0&=(\xi_{J,\sv}\otimes X_{us_0})+(\sq_0-\sq_0^{-1})(\xi_{J,\sv}\otimes X_u)\\
&=\zJ^{u\varphi^{\vee}}(\xi_{J,\sv}\otimes X_{\theta^J(us_0)})+(\sq_0-\sq_0^{-1})(\xi_{J,\sv}\otimes X_u),
\end{align*}
and hence the result in this case. 
\smallskip

\noindent\textit{Case 3:} If $uA_0\,{^-}\hspace{-0.1cm}\mid^+\, us_0A_0$ with $us_0\notin\WJ$ then, exactly as in Case 3 of the proof of Theorem~\ref{thm:prelim}, we have
$$
X_uT_0=X^{\varphi_K^{\vee}}T_{s_{\varphi_K}}^{-1}X_u
$$
for some $K\in\cK(J)$. Since $\varphi_K$ is long in $\Phi_K$ Lemma~\ref{lem:characterproperty} gives
\begin{align*}
(\xi_{J,\sv}\otimes X_u)\cdot T_0&=\xi_{J,\sv}\otimes X^{\varphi_K^{\vee}}T_{s_{\varphi_K}}^{-1}X_u=\psi_{J,\sv}(X^{\varphi_K^{\vee}}T_{s_{\varphi_K}}^{-1})(\xi_{J,\sv}\otimes X_u)=\sv_{\alpha}\sv_{2\alpha}(\xi_{J,\sv}\otimes X_u),
\end{align*}
as required.
\smallskip

\noindent\textit{Case 4:} If $uA_0\,{^+}\hspace{-0.1cm}\mid^-\, us_0A_0$ with $us_0\notin\WJ$  then by Lemma~\ref{lem:boundingwalls} the panel $uA_0\cap us_0A_0$ is contained in $H_{\alpha_j,0}$ for some $j\in J$. Thus this case is impossible, as $u\alpha_0=-u\varphi+\delta\neq\pm\alpha_i$ for any $i\in \Ifin$.
\end{proof}

\begin{remark}
If $J=\emptyset$ then $\sv$ is vacuous, and $(\pi_{\emptyset,\sv}',M_{\emptyset,\sv}')$ is the \textit{principal series representation} of $\Hext$ with central character~$\zeta=\zeta_I$. 
\end{remark}

\begin{remark}\label{rem:induced}
Up to specialising the ``variables'' $\zeta_i$ and extending scalars, all representations of $\Hext$ induced from $1$-dimensional representations of a Levi subalgebra can be realised by the construction in Theorem~\ref{thm:module}. Let us briefly explain this. Let $\psi:\cL_J\to \sR'$ be a one dimensional representation of $\cL_J$ over an integral domain $\sR'$ containing~$\sR$. For $\alpha\in\Phi_J$ let
$$
\sv_{\alpha}=\begin{cases}
\psi(T_j)&\text{if $\alpha\in W_J\alpha_j$ with $j\in J$ and $\alpha\in\Phi_0\cap\Phi_1$}\\
\psi(X^{\alpha_n^{\vee}/2})\psi(T_n)^{-1}&\text{if $\Phi_J$ not reduced and $\alpha\in W_J(2\alpha_n)$}\\
\psi(X^{\alpha_n^{\vee}/2})^{-1}\psi(T_n)^2&\text{if $\Phi_J$ is not reduced and $\alpha\in W_J\alpha_n$,}
\end{cases}
$$
and define $z:P\to \sR'$ by 
$
z^{\lambda}=\sv^{-\lambda}\psi(X^{\lambda}),
$ with $\sv^{\lambda}$ as in~(\ref{eq:vlambda}). It is not difficult to see that $\sv$ is a $J$-parameter system and that $z^{\gamma}=1$ for all $\gamma\in Q_J$. 
 
After extending the ring $\sR'$ to a ring $\sR''$ if necessary one can choose a specialisation $\zeta_i\mapsto z_i\in \sR''$ such that $\zJ^{\lambda}\mapsto z^{\lambda}$ for all $\lambda\in P$. Then, by Theorem~\ref{thm:induced}, the combinatorial module $M_{J,\sv}$ constructed in Theorem~\ref{thm:module} specialises to $\mathrm{Ind}_{\cL_J}^{\Hext}(\psi)$ (after extending scalars if necessary).
\end{remark}

\section{Bounded representations}\label{sec:boundedreps}

Let $L:\Waff\to \mathbb{N}$ be a \textit{positive weight function} on~$\Waff$. That is, $L(w)>0$ for $w\in \Waff\backslash\{e\}$ and $L(uv)=L(u)+L(v)$ whenever $\ell(uv)=\ell(u)+\ell(v)$. Let $\sq$ be an invertible indeterminate, and specialise $\sq_i=\sq^{L(s_i)}$ for $i\in\{0\}\cup \Ifin$. Without loss of generality we may assume that $\mathrm{gcd}(L(s_0),\ldots,L(s_n))=1$, and thus the ring $\sR$ specialises to $\mathbb{Z}[\sq,\sq^{-1}]$, and one can consider the associated \textit{weighted} Hecke algebras $\Haff(L)$ and $\Hext(L)$ over the ring $\ZZ[\sq,\sq^{-1}]$. In this context a $J$-parameter system becomes a \textit{weighted $J$-parameter system}, where each occurrence of $\sq_j$ in Definition~\ref{defn:Jparameters} (with $j\in \{0\}\cup J$) is replaced by $\sq^{L(s_j)}$.

 It is convenient to make the following convention throughout this section (by the natural symmetry present in $\sBC_n$ this convention does not result in a loss of generality). 

\begin{convention}\label{conv:sym}
If $\Phi$ is of type $\sBC_n$ we assume that $L(s_n)\geq L(s_0)$.
\end{convention}

\subsection{Boundedness}

In~\cite{GP:19,GP:19b} the first and third authors introduced the notion of a \textit{balanced system of cell representations}, inspired by the work of Geck~\cite{Geck:02,Geck:11} in the finite case, and we used this notion to prove Lusztig's Conjectures $\mathbf{P1}$--$\mathbf{P15}$ for rank~$3$ affine Coxeter systems with arbitrary positive weight function~$L$. A key part of this concept was the property of \textit{boundedness} of a matrix representation. We recall this theory here.

Every nonzero rational function $f(\sq)=a(\sq)/b(\sq)$ in $\sq$ can be written under the form $f(\sq)=\sq^{N}a'(\sq^{-1})/b'(\sq^{-1})$ with $N\in\ZZ$ and with $a'(\sq^{-1})$ and $b'(\sq^{-1})$ polynomials in $\sq^{-1}$ with nonzero constant term. The integer $N$ in this expression is unique, and we shall call it the \textit{degree} of the rational function~$f(\sq)$, written $\deg f(\sq)=N$. For example, $\deg((\sq^2+1)(\sq^3+1)/(\sq^7-\sq+1))=-2$, and if $f(\sq)$ is a polynomial then $\deg f(\sq)$ agrees with the usual degree. We set $\deg(0) = -\infty$.

By a \textit{matrix representation} of $\Hext(L)$ we shall mean a triple $(\pi,M,\sB)$ where $M$ is a right $\Hext(L)$-module over an $\ZZ[\sq,\sq^{-1}]$-polynomial ring $\mathsf{S}$, and $\sB$ is a basis of~$M$. We write (for $h\in\Hext(L)$ and $u,v\in\sB$)
$$
\pi(h,\sB)\quad\text{and}\quad [\pi(h,\sB)]_{u,v}
$$
for the matrix of $\pi(h)$ with respect to the basis $\sB$, and the $(u,v)^{th}$ entry of $\pi(h,\sB)$. When the basis $\sB$ is clear from context we omit it from the notation.

\begin{defn}
A matrix representation $(\pi,M,\sB)$ is called \textit{bounded} if 
$
\deg([\pi(T_w,\sB)]_{u,v})
$ is bounded from above for all $u,v\in \sB$ and all $w\in \Wext$. In this case we call the integer
\begin{align}\label{eq:replace1}
\ba_{\pi,M,\sB}=\max\{\deg([\pi(T_w,\sB)]_{u,v})\mid  u,v\in \sB,\,w\in \Wext\}
\end{align}
the \textit{bound} of the matrix representation. 
\end{defn}

\begin{defn}\label{defn:cellandleading}
If $(\pi,M,\sB)$ is a bounded matrix representation with bound $\ba_{\pi,M,\sB}$ then the \textit{cell recognised by $(\pi,M,\sB)$} is the set
$$
\Gamma_{\pi,M,\sB}=\{w\in\Wext\mid \deg([\pi(T_w,\sB)]_{u,v})=\ba_{\pi,M,\sB}\text{ for some $u,v\in \sB$}\}.
$$
If $w\in\Gamma_{\pi,M,\sB}$ the \textit{leading matrix} of $w$ is defined by
$$
\fc_{\pi,M,\sB}(w)=\mathrm{sp}_{\sq^{-1}=0}\big(\sq^{-\ba_{\pi,M,\sB}}\pi(T_w,\sB)\big),
$$
where $\mathrm{sp}_{\sq^{-1}=0}$ is specialisation at $\sq^{-1}=0$.
\end{defn}

The notion of leading matrices comes from the work of Geck~\cite{Geck:02,Geck:11}, and played a crucial role in \cite{GP:19,GP:19b}. We will not discuss leading matrices further in this work, until the final example in Section~\ref{sec:An}.

\begin{remark}\label{rem:changebasis}
If a finite dimensional representation $(\pi,M,\sB)$ is bounded then $(\pi,M,\sB')$ is bounded for all bases $\sB'$ of $M$, because the (finite) change of basis matrix has a bound on the degrees of its entries. Thus we may talk of a ``bounded (finite dimensional) representation'' without specifying the basis. However we note that both the value of the bound, and the cell recognised, are highly dependent on the particular basis. In this paper we will study the matrix representations $(\pi_{J,\sv},M_{J,\sv},\sB_{\sF})$ where $\sB_{\sF}$ is the basis from Proposition~\ref{prop:basis1} associated to a fundamental domain~$\sF$. These bases appear to have some remarkable and beautiful properties (see Conjectures~\ref{conj:strongconjecture} and~\ref{conj:cells}). We note that the basis from Proposition~\ref{prop:newbasis}, while easier to work with in many respects, does not appear to have the same remarkable properties. 
\end{remark}

Let $\sF$ and $\sF'$ be fundamental domains for the action $\TJ$ on $\WJ$. By Remark~\ref{rem:changebasis} $(\pi_{J,\sv},M_{J,\sv},\sB_{\sF})$ is bounded if and only if $(\pi_{J,\sv},M_{J,\sv},\sB_{\sF'})$ is bounded. However the connection is much stronger, as the following proposition shows. 

\begin{prop}\label{prop:basisindependent}
Let $\sF$ and $\sF'$ be fundamental domains for the action of $\TJ$ on $\WJ$ such that the associated matrix representations are bounded. Then the matrix representations have the same bound, and recognise the same cell. 
\end{prop}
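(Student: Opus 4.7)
The plan is to show that the change-of-basis matrix between $\sB_{\sF}$ and $\sB_{\sF'}$ is, up to a permutation of rows and columns, a diagonal matrix whose entries are Laurent monomials in the variables $\zeta_i$; then to observe that multiplication by such a monomial preserves the degree in $\sq$ of any matrix entry.

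First, I would use the basis $\sB_{W^J}$ as an intermediate. For any fundamental domain $\sG$ of $\TJ$ acting on $\WJ$, the assignment $u \mapsto \theta^J(u)$ restricts to a bijection $\sG \to W^J$: it is injective because two elements of $\sG$ with the same image under $\theta^J$ lie in the same $\TJ$-orbit (by Corollary~\ref{cor:splitting2}), and it is surjective because $W^J$ is itself a fundamental domain (Corollary~\ref{cor:Tiso}). The computation in the proof of Proposition~\ref{prop:basis1} then gives $\varpi_{J,\sv}(X_u) = \zJ^{\wt(u)} \varpi_{J,\sv}(X_{\theta^J(u)})$ for every $u \in \sG$. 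Applying this to both $\sF$ and $\sF'$, and letting $\tau \colon \sF' \to \sF$ be the unique bijection satisfying $\theta^J(\tau(u')) = \theta^J(u')$, one obtains
$$
\varpi_{J,\sv}(X_{u'}) = \zJ^{\wt(u') - \wt(\tau(u'))}\,\varpi_{J,\sv}(X_{\tau(u')}) \qquad (u' \in \sF').
$$

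Next, expanding $\varpi_{J,\sv}(X_{u'}) \cdot T_w$ in both bases using this identity and comparing coefficients, a short calculation yields
$$
[\pi_{J,\sv}(T_w,\sB_{\sF'})]_{u',v'} \;=\; \zJ^{(\wt(u')-\wt(\tau(u'))) - (\wt(v')-\wt(\tau(v')))}\,[\pi_{J,\sv}(T_w,\sB_{\sF})]_{\tau(u'),\tau(v')}
$$
for all $u',v' \in \sF'$ and all $w \in \Wext$. The conclusion now follows from the observation that a Laurent monomial in the $\zeta_i$ has degree zero in $\sq$: each matrix entry lies in $\ZZ[\sq^{\pm 1}][\zJ^{\pm 1}]$, and the degree function used in the definition of boundedness is (necessarily) the maximum $\sq$-degree over the $\zJ$-monomial components, which is invariant under multiplication by any $\zJ^\mu$. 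Combined with the bijectivity of $\tau \colon \sF' \to \sF$, this forces
$$
\deg[\pi_{J,\sv}(T_w,\sB_{\sF'})]_{u',v'} \;=\; \deg[\pi_{J,\sv}(T_w,\sB_{\sF})]_{\tau(u'),\tau(v')},
$$
so the maxima over $(u,v,w)$ coincide (equality of bounds) and the sets of $w$ achieving the maximum are in bijection via the identity on $w$ (equality of recognised cells).

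The only subtle point is the interpretation of $\deg$ on the coefficient ring $\ZZ[\sq^{\pm 1}][\zJ^{\pm 1}]$; since the $\zeta_i$ are indeterminates algebraically independent from $\sq$, there is essentially a unique consistent extension of the degree function, and the above argument is carried out within that extension. Everything else is routine bookkeeping with the bijection $\tau$ and the monomial change-of-basis formula.
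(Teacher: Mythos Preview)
Your proof is correct and follows essentially the same approach as the paper: both arguments reduce to $\sB_{W^J}$ as an intermediate basis and observe that the change-of-basis matrix is monomial with entries that are Laurent monomials in the $\zeta_i$ (hence of $\sq$-degree zero). Your version is simply more explicit in writing out the bijection $\tau$ and the resulting relation between matrix entries, whereas the paper compresses this into the single observation that the change-of-basis matrix is ``a monomial matrix with entries independent of~$\sq$.''
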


\begin{proof}
Define $g:\sF\to W^J$ and $h:\sF\to  P^{(J)}$ by the equation $u=\st_{h(u)}g(u)$ for $u\in \sF$ (as in the proof of Theorem~\ref{thm:mainpath2}). Then $\varpi_{J,\sv}(X_u)=\zJ^{h(u)}\bm_{g(u)}$, showing that the change of basis matrix from $\sB_{\sF}$ to $\sB_{J,\sv}=\sB_{W^J}$ is a monomial matrix with entries independent of~$\sq$. It follows that the matrix representations with respect to the bases $\sB_{\sF}$ and $\sB_{J,\sv}$ have the same bound and recognise the same elements, hence the result.
\end{proof}

Thus if $(\pi_{J,\sv},M_{J,\sv})$ is bounded we write $\ba_{J,\sv}=\ba_{\pi_{J,\sv},M_{J,\sv},\sB_{\sF}}$ and $\Gamma_{J,\sv}=\Gamma_{\pi_{J,\sv},M_{J,\sv},\sB_{\sF}}$ (for any choice of fundamental domain $\sF$ for the action of $\TJ$ on $\WJ$).

\begin{example}\label{ex:G21}
The concept of boundedness is already interesting and delicate for $1$-dimensional representations (note that the particular choice of basis is irrelevant in the $1$-dimensional case). Let $\pi_{I,\sv}=\psi_{I,\sv}$ be a $1$-dimensional representation of a weighted affine Hecke algebra $\Hext(L)$. We claim that $\pi_{I,\sv}$ is bounded if and only if $\deg\pi_{I,\sv}(X^{\omega_i})\leq 0$ for all $i\in I$. To see this, if $w\in\Wext$ then $w\in W_0t_{\la}W_0$ for some $\la\in P^+$. Then $w=um_{\la}v$ with $u,v\in W_0$ and $m_{\la}$ the minimal length element of $W_0t_{\la}W_0$, and $\ell(w)=\ell(u)+\ell(m_{\la})+\ell(v)$. Moreover, $t_{\la}=m_{\la}w_{\la}$ for some $w_{\la}\in W_0$ with $\ell(t_{\la})=\ell(m_{\la})+\ell(w_{\la})$. Thus $\pi_{I,\sv}(T_w)=\pi_{I,\sv}(T_u)\pi_{I,\sv}(X^{\la})\pi_{I,\sv}(T_{w_{\la}})^{-1}\pi_{I,\sv}(T_v)$, and the claim follows since $u,v,w_{\la}$ are in the finite group $W_0$ (Theorem~\ref{thm:bound} below generalises this argument to higher dimensional representations). 

For example, consider the $\tilde{\sG}_2$ case with weight function $L(s_0)=L(s_2)=b$ and $L(s_1)=a$.
\begin{center}
\begin{tikzpicture}[scale=0.75,baseline=-0.5ex]
\node [inner sep=0.8pt,outer sep=0.8pt] at (3,0) (3) {\Large{$\bullet$}};
\node [inner sep=0.8pt,outer sep=0.8pt] at (4,0) (4) {\Large{$\bullet$}};
\node [inner sep=0.8pt,outer sep=0.8pt] at (5,0) (5) {\Large{$\bullet$}};
\node at (3,-0.5) {$0$};
\node at (4,-0.5) {$2$};
\node at (5,-0.5) {$1$};
\node at (3,0.5) {$b$};
\node at (4,0.5) {$b$};
\node at (5,0.5) {$a$};
\draw (3,0)--(4,0);
\draw (4,0.1)--(5,0.1);
\draw (4,-0.1)--(5,-0.1);
\draw (4,0)--(5,0);
\draw (4.5-0.15,0.3) -- (4.5+0.08,0) -- (4.5-0.15,-0.3);
\end{tikzpicture}
\end{center}
Let $\pi=\psi_{I,\sv}$ be the $1$-dimensional representation of $\Hext(L)$ with $\sv_{\alpha_1}=\pi(T_1)=\sq^a$ and $\sv_{\alpha_2}=\pi(T_2)=\pi(T_0)=-\sq^{-b}$. Since $t_{\omega_1}=0212012121$ and $t_{\omega_2}=021212$ we have $\pi(X^{\omega_1})=4a-6b$ and $\pi(X^{\omega_2})=2a-4b$, and hence $\pi$ is bounded if and only if $a/b\leq 3/2$. Moreover, we claim that if $a/b\leq 3/2$ then the bound $\ba=\ba_{I,\sv}$ of $\pi$ and the cell $\Ga=\Ga_{I,\sv}$ recognised by $\pi$ are as follows:
\begin{compactenum}[$(1)$]
\item if $a/b<1$ then $\ba=a$ and $\Gamma=\{1\}$ (recall $1=s_1$);
\item if $a/b=1$ then $\ba=a=3a-2b$ and $\Gamma=\{1,121,12121\}$;
\item if $1<a/b< 3/2$  then $\ba=3a-2b$ and $\Gamma=\{12121\}$;
\item if $a/b=3/2$ then $\ba=3a-2b$ and $
\Gamma=\{(12121)\cdot(02121)^k\mid k\geq 0\}$.
\end{compactenum} 
We will prove (3) and (4), with the proofs of (1) and (2) being similar. Thus suppose that $1<a/b\leq 3/2$. Since $\deg\pi(T_{12121})=3a-2b$ we have $\ba\geq 3a-2b$. Let $w\in\Ga$, and so $\deg\pi(T_w)=\ba$. Let $D_L(w)$ denote the left descent set of~$w$. If $s\in D_L(w)$ then $w=sw_1$ with $\ell(sw_1)=\ell(w_1)+1$ and hence $\deg\pi(T_{w_1})=\ba-\deg\pi(T_s)$. If $s=s_0$ or $s=s_2$ then we have $\deg\pi(T_{w_1})=\ba+b>\ba$, a contradiction. Thus $D_L(w)=\{s_1\}$. Since $s_1\notin\Ga$ (as $\ba\geq 3a-2b>a$) we have $w_1\neq e$, and hence $D_L(w_1)=\{s_2\}$ (for if $s_0\in D_L(w_1)$ then $s_0\in D_L(w)$). Write $w=s_1s_2w_2$ with $\ell(w)=\ell(w_2)+2$. Since $s_1s_2\notin\Ga$ we have $w_2\neq e$. Note that $s_0\notin D_L(w_2)$, for otherwise $w=s_1s_2s_0w_3'$ with $\ell(w)=\ell(w_3')+3$, giving $\deg\pi(T_{w_3'})=\ba+2b-a\geq \ba+b/2>\ba$. Thus $D_L(w_2)=\{s_1\}$ and so $w=s_1s_2s_1w_3$ with $\ell(w)=\ell(w_3)+3$. Since $s_1s_2s_1\notin\Ga$ we have $w_3\neq e$ and hence $D_L(w_3)=\{s_2\}$ (for if $s_0\in D_L(w_3)$ then $s_0\in D_L(w_2)$). So $w=s_1s_2s_1s_2w_4$ with $\ell(w)=\ell(w_4)+4$. Since $s_1s_2s_1s_2\notin \Ga$ we have $w_4\neq e$. 

There are two cases. Suppose that $a/b<3/2$. Then $D_L(w_4)=\{s_1\}$ (for if $s_0\in D_L(w_4)$ then $w=s_1s_2s_1s_2s_0w_5'$ with $\ell(w)=\ell(w_5')+5$, but then $\deg\pi(T_{w_5'})=\ba+3b-2a>\ba$, a contradiction). Thus $w=s_1s_2s_1s_2s_1w_5$ with $\ell(w)=\ell(w_5)+5$. If $w_5\neq e$ then either $s_2\in D_L(w_5)$, which forces $s_2\in D_L(w)$, or $s_0\in D_L(w_5)$, which forces $s_0\in D_L(w_4)$, in both cases a contradiction. Thus $w_5=e$ and so $\ba=3a-2b$ and $\Ga=\{12121\}$, proving (3).

On the other hand, suppose that $a/b=3/2$. We have either $s_1\in D_L(w_4)$ or $s_0\in D_L(w_4)$. In the latter case, we have $w=s_1s_2s_1s_2s_0w_5'$ with $\deg\pi(T_{w_5'})=\ba$, and hence $w_5'\in\Ga$. Hence the above analysis gives $s_1\in D_L(w_5')$, and since $s_1$ and $s_0$ commute we have $s_1\in D_L(w_4)$. So $s_1\in D_L(w_4)$ in all cases, and so we have $w=s_1s_2s_1s_2s_1w_5$ with $\ell(w)=\ell(w_5)+5$. If $w_5\neq e$ then $D_L(w_5)=\{s_0\}$ (for if $s_2\in D_L(w_5)$ then $s_2\in D_L(w)$), and similar arguments to the above give $w=s_1s_2s_1s_2s_1s_0s_2s_1s_2s_1v$ with $\ell(w)=10+v$. Iterating the argument proves~(4). 
\end{example}

\begin{example}\label{ex:lowestcell} If $J=\emptyset$ then $(\pi_{\emptyset,\sv},M_{\emptyset,\sv},\sB_{\emptyset,\sv})=(\pi,M,\sB)$ is the principal series representation (see Theorem~\ref{thm:induced}). By \cite[Lemma~6.2]{GP:19} we have $\deg \cQ(p)\leq L(\sw_0)$ for all positively folded paths $p$ of reduced type, and hence by Theorem~\ref{thm:mainpath2} $(\pi,M,\sB)$ is bounded with bound $L(\sw_0)$.

For each $\lambda\in P$ let $W_{\lambda}$ be the stabiliser of $\lambda$ in $\Waff$ and let $\sw_{\lambda}$ be the longest element of $W_{\lambda}$. Let $P_0\subseteq P$ denote the set of $\la\in P$ for which $W_{\la}\cong \Wfin$ and $L(\sw_{\la})=L(\sw_0)$. By an analysis similar to that in \cite[Theorem~6.6]{GP:19} it can be shown that the set of elements of $\Wext$ that are recognised by $(\pi,M,\sB)$ is
$
\Gamma=\{w\in \Wext\mid w=w_1\cdot \sw_{\lambda}\cdot w_2,\,w_1,w_2\in \Wext,\,\lambda\in P_0\},
$
where the notation $u\cdot v$ means $\ell(uv)=\ell(u)+\ell(v)$ (recall Conventions~\ref{conv:parameters} and~\ref{conv:sym} are in force, and we further assume that $L(s_n)>L(s_0)$ for the above statement, and so in particular for the $\sBC_n$ case $\{\sw_{\lambda}\mid\lambda\in P_0\}=\{\sw_0\}$). 
\end{example}

\begin{remark}\label{rem:cancellations}
A main motivation for our path formula in Theorem~\ref{thm:mainpath2} is to give a combinatorial approach to studying bounded representations. Indeed positively $J$-folded alcove paths are very useful in studying boundedness (see, for example, Theorems~~\ref{thm:bound} and \ref{thm:negbound}, and the work in~\cite{GP:19,GP:19b}), however before proceeding we briefly discuss some complications and subtleties in the theory. 

Let $u,v\in W^J$ and let $\vec{w}$ be a reduced expression for $w\in\Wext$. If there is $N>0$ such that $\deg\cQ_{J,\sv}(p)\leq N$ for all $p\in\cP_J(\vec{w},u)$ with $\theta^J(p)=v$ then by Theorem~\ref{thm:mainpath2}  we have $\deg[\pi_{J,\sv}(T_w)]_{u,v}\leq N$, showing that boundedness of paths leads directly to boundedness of matrix entries. It is important to note that the reverse direction is more subtle, as this fact significantly complicates the theory. 

For example, let $\Phi$ be of type $\sA_3$ with $J=\{1\}$ and let $\sv$ be the $J$-parameter system with $\sv_{\alpha_1}=-\sq^{-1}$. Let $\vec{\sw}_0=323123$ (a reduced expression for the longest element of $W_0$). The paths $p\in\cP_J(\vec{\sw}_0,e)$ with $\theta^J(p)=e$, along with their respective $\sv$-masses, are as follows (where $\hat{i}$ denotes a fold, $\check{i}$ denotes a bounce and we omit the $s$'s to lighten the notation): 
\begin{align*}
p_1&=\hat{3}\hat{2}\hat{3}\check{1}\hat{2}\hat{3}&p_2&=\hat{3}2\hat{3}\hat{1}2\hat{3}&p_3&=\hat{3}\hat{2}3\check{1}\hat{2}3&p_4&=3\hat{2}3\check{1}\hat{2}\hat{3}&p_5&=32\hat{3}\hat{1}23,
\end{align*}
with $\cQ_{J,\sv}(p_1)=-\sq^{-1}(\sq-\sq^{-1})^5$, $\cQ_{J,\sv}(p_2)=(\sq-\sq^{-1})^4$, $\cQ_{J,\sv}(p_3)=-\sq^{-1}(\sq-\sq^{-1})^3$, $\cQ_{J,\sv}(p_4)=-\sq^{-1}(\sq-\sq^{-1})^3$, and $\cQ_{J,\sv}(p_5)=(\sq-\sq^{-1})^2$. Note that $\deg\cQ_{J,\sv}(p_1)=\deg\cQ_{J,\sv}(p_2)=4$. However by Theorem~\ref{thm:mainpath1} we have
$$
[\pi_{J,\sv}(T_{\sw_0})]_{e,e}=\sum_{i=1}^5\cQ_{J,\sv}(p_i)=\sq^{-4}(\sq-\sq^{-1})^2,
$$
which has degree $-2$. Note that the leading terms have cancelled. The combinatorics of this cancellation of leading terms in matrix coefficients appears to be rather delicate. In \cite{GP:19,GP:19b} the authors were able to deal with this phenomena in affine rank~$3$ Hecke algebras, as the cancellations are rather rare and tame in that low dimension case. Understanding these cancellations in arbitrary rank would lead to a significant advance in understanding boundedness, with implications to Kazhdan-Lusztig theory (as illustrated by the discussion in the following sections, and in the work~\cite{GP:19,GP:19b}). For example, in Section~\ref{sec:An} we are able to sufficiently control the cancellations that occur in the $\sA_n$ case with $J=\{1,2,\ldots,n-1\}$. 
\end{remark}

\subsection{Classification of bounded modules $M_{J,\sv}$}\label{sec:classification}

In this section we classify the weighted $J$-parameter systems for which $(\pi_{J,\sv},M_{J,\sv})$ is bounded (c.f. Remark~\ref{rem:changebasis}). Recall from Section~\ref{sec:JaffineHecke} that the $J$-affine Hecke algebra is a subalgebra of $\cL_J$, and hence $\psi_{J,\sv}$ restricts to a $1$-dimensional representation of~$\HJaff$ (see Corollary~\ref{cor:psi(T0)}).

\begin{thm}\label{thm:bound}
Let $\sv$ be a weighted $J$-parameter system. The following are equivalent.
\begin{compactenum}[$(1)$]
\item The representation $(\pi_{J,\sv},M_{J,\sv})$ is bounded.
\item We have $\deg \sv^{\lambda}\leq 0$ for all $\lambda\in P^+$. 
\item We have $\deg \sv^{\omega_j}\leq 0$ for all $j\in J$.
\item We have $\deg \sv^{u\lambda}\leq 0$ for all $\lambda\in P^+$ and all $u\in W^J$.
\item The associated $1$-dimensional representation $\psi_{J,\sv}$ of $\HJaff(L)$ is bounded.
\item There is a uniform bound $\deg\cQ_{J,\sv}(p)\leq N$ for all positively $J$-folded alcove paths of reduced type.
\end{compactenum}
\end{thm}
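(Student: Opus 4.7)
The plan is to establish the cycle $(6)\Rightarrow(1)\Rightarrow(4)\Rightarrow(2)\iff(3)$ together with the equivalence $(2)\iff(5)$ and the closing arrow $(2)\Rightarrow(6)$.

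The implication $(6)\Rightarrow(1)$ is immediate from Theorem~\ref{thm:mainpath1}: the matrix entry $[\pi_{J,\sv}(T_w)]_{u,v}$ is a sum of finitely many terms $\cQ_{J,\sv}(p)\zJ^{\wt(p)}$, and since the indeterminates $\zeta_i$ have $\sq$-degree zero, a uniform bound on $\deg\cQ_{J,\sv}(p)$ yields a uniform bound on the matrix entries. The equivalence $(2)\iff(3)$ is elementary: $\sv^{\omega_i}=1$ for $i\notin J$, and writing $\lambda=\sum_{i\in I}c_i\omega_i\in P^+$ with $c_i\in\ZZ_{\ge 0}$ gives $\deg\sv^\lambda=\sum_{i\in J}c_i\deg\sv^{\omega_i}$. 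Also, $(4)\Rightarrow(2)$ is the special case $u=e$.

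For $(1)\Rightarrow(4)$, I would pass to the basis $\{\varpi_{J,\sv}(\tau_u)\mid u\in W^J\}$ of Proposition~\ref{prop:newbasis}; by Remark~\ref{rem:changebasis} boundedness is preserved under this finite change of basis. By Proposition~\ref{prop:weightspaces}(2) each $\varpi_{J,\sv}(\tau_u)$ is an eigenvector for $X^\lambda$ with eigenvalue $\psi_{J,\sv}(X^{u\lambda})=\sv^{u\lambda}\zJ^{u\lambda}$. For $\lambda\in P^+$, every crossing in the straight path from $e$ to $t_\lambda$ is positive, so $X^\lambda=T_{t_\lambda}$; hence the diagonal matrix entry of $\pi_{J,\sv}(T_{t_{n\lambda}})$ at $\varpi_{J,\sv}(\tau_u)$ equals $\sv^{nu\lambda}\zJ^{nu\lambda}$, of $\sq$-degree $n\deg\sv^{u\lambda}$. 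Letting $n\to\infty$ forces $\deg\sv^{u\lambda}\le 0$, which is $(4)$.

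The equivalence $(2)\iff(5)$ is verified componentwise. Writing $\HJaff=\prod_{K\in\cK(J)}\HKaff$, the parameters are $L_J(s_j')=L(s_j)$ for $j\in J$ and $L_J(s_{0_K}')=L(s_{\varphi_K})$ (by Lemma~\ref{lem:walls} applied to $H_{\varphi_K,1}$). By Proposition~\ref{prop:multiplicativecharacter} and Corollary~\ref{cor:psi(T0)} the restriction $\psi_{J,\sv}|_{\HKaff}$ is a $1$-dimensional representation of the weighted affine Hecke algebra $\HKaff(L_J)$ sending standard generators to $\sv_{\alpha_j}\sv_{2\alpha_j}$ and $\sv_{\varphi_K}$. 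For such a character $\psi$, boundedness is equivalent to $\deg\psi(X^\mu)\le 0$ for all $\mu\in Q_K$ which are dominant in $W_K$ (since $X^\mu=T_{t_\mu}$ in $\HKaff$ for such $\mu$, and one can replace $\mu$ by $n\mu$). Since $\omega_j$ and its projection $\tilde{\omega}_j\in V_K$ agree when paired against roots of $\Phi_K$, the values $\sv^{\omega_j}=\sv^{\tilde{\omega}_j}$ coincide for each $j\in K$, and every dominant $\mu\in Q_K$ is a non-negative combination $\mu=\sum_{j\in K}\langle\mu,\alpha_j\rangle\tilde{\omega}_j$. Hence the boundedness condition on each $\HKaff$ factor reduces to $\deg\sv^{\omega_j}\le 0$ for every $j\in K$, which is $(3)$ after summing over $K\in\cK(J)$.

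The main obstacle is the remaining implication $(2)\Rightarrow(6)$: a uniform bound on $\deg\cQ_{J,\sv}(p)$ over \emph{all} $J$-folded alcove paths $p$ of reduced type. My approach is to apply the $J$-straightening bijection of Proposition~\ref{prop:unfold} and the factorisation of Proposition~\ref{prop:unfold2},
\begin{equation*}
\cQ_{J,\sv}(p) = \cQ(p_J)\prod_{\alpha-k\delta\in\widetilde{\Phi}^+}\sv_{\alpha-k\delta}^{c_{\alpha,k}(p_J)}.
\end{equation*}
The factor $\cQ(p_J)$ has $\sq$-degree at most $L(\sw_0)$ by Example~\ref{ex:lowestcell}, since $p_J$ is a positively folded alcove path of reduced type. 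The remaining factor records the crossings of $p_J$ through the hyperplanes $H_{\alpha,k}$ with $\alpha\in\Phi_J^+$: reading these crossings in order, the simply transitive action of $\WJaff$ on $\mathbb{A}_J$ associates to $p$ an element $w_p\in\WJaff$ together with a word in the generators $\{s_j'\mid j\in\Jaff\}$, and the product rewrites as $\psi_{J,\sv}(T'_{w_p})$ up to a controlled sign. Under $(5)$, which we have just shown is equivalent to $(2)$, this quantity has uniformly bounded $\sq$-degree. The technical heart of the argument, and the place where the analysis is most delicate, is to control this product when the word of $\Phi_J$-crossings is not a reduced expression in $\WJaff$; this requires iterative application of the quadratic relations $(T_j')^2=1+(\sq_j-\sq_j^{-1})T_j'$ within the $1$-dimensional representation $\psi_{J,\sv}$, with extra care in the non-reduced $\sBC_n$ case to track the mixed role of the $\sq_0$ and $\sq_n$ parameters. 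This technical step is the higher-rank analogue of the rank-$2$ computations carried out in~\cite{GP:19b,GP:19}.
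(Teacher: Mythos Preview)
Your implications $(6)\Rightarrow(1)$, $(1)\Rightarrow(4)$, $(4)\Rightarrow(2)\Leftrightarrow(3)$ and $(2)\Leftrightarrow(5)$ are all correct and close in spirit to the paper's arguments (the paper runs the cycle $(1)\Rightarrow(2)\Rightarrow(3)\Rightarrow(4)\Rightarrow(1)$ instead, but your route via the $\tau_u$-basis for $(1)\Rightarrow(4)$ is perfectly valid).

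There is a genuine gap in your argument for $(2)\Rightarrow(6)$. The product $\prod_{\alpha-k\delta}\sv_{\alpha-k\delta}^{c_{\alpha,k}(p_J)}$ does \emph{not} rewrite as $\psi_{J,\sv}(T'_{w_p})$ for a single element $w_p\in\WJaff$. If $s'_{j_1}\cdots s'_{j_m}$ is the word of $\Phi_J$-crossings, the product equals $\prod_r\psi_{J,\sv}(T'_{j_r})$, and since $\psi_{J,\sv}$ is multiplicative this is $\psi_{J,\sv}(T'_{j_1}\cdots T'_{j_m})$, not $\psi_{J,\sv}(T'_{s'_{j_1}\cdots s'_{j_m}})$. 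Applying quadratic relations inside the one-dimensional representation is vacuous: it is just the identity $\psi_{J,\sv}(T'_j)^2=1+(\sq_j-\sq_j^{-1})\psi_{J,\sv}(T'_j)$, which gives no control on $\prod_r\psi_{J,\sv}(T'_{j_r})$ when $m$ is large and some $\sv_{\alpha_j}$ have positive degree.

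The missing idea is to decompose $p_J$ at its \emph{folds}. Since $p_J$ is positively folded of reduced type, it has at most $\ell(\sw_0)$ folds, so $p_J=p_0\cdot f_1\cdot p_1\cdots f_k\cdot p_k$ with $k\le\ell(\sw_0)$ and each $p_r$ straight. A straight segment of reduced type is a minimal gallery, so its $\Phi_J$-wall crossings give a \emph{reduced} word for some $z_r\in\WJaff$ (namely $z_r=x_r^{-1}y_r$ where $p_r$ runs from $x_r\cA_J$ to $y_r\cA_J$), and its contribution to the product is exactly $\psi_{J,\sv}(T'_{z_r})$. By $(5)$ each factor has degree at most some fixed $N'$, and there are at most $\ell(\sw_0)+1$ factors, giving the uniform bound
\[
\deg\cQ_{J,\sv}(p)\le \sum_{i}L(s_i)f_i(p)+(\ell(\sw_0)+1)N'\le L(\sw_0)+(\ell(\sw_0)+1)N'.
\]
This is precisely the paper's argument for $(1)\Rightarrow(6)$; no ``iterative quadratic relations'' are needed, and there is nothing special about the $\sBC_n$ case once one decomposes at folds.
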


\begin{proof}
(1)$\Longrightarrow$(2). Suppose that $(\pi_{J,\sv},M_{J,\sv})$ is bounded. Since
$
\bm_e\cdot X^{\lambda}=\psi_{J,\sv}(X^{\lambda})\bm_e=\sv^{\lambda}\zJ^{\lambda}\bm_e
$
we have $[\pi_{J,\sv}(X^{\lambda})]_{e,e}=\sv^{\lambda}\zJ^{\lambda}$
and so $\deg \sv^{\lambda}\leq 0$ for all $\lambda\in P^+$ (for if not the degree of $[\pi_{J,\sv}(T_{t_{N\lambda}})]_{e,e}=[\pi_{J,\sv}(X^{N\lambda})]_{e,e}$ is unbounded for $N\in\NN$). 

(2)$\Longrightarrow$(3) is trivial.

(3)$\Longrightarrow$(4). Since $\sv^{\omega_i}=1$ for $i\in I\backslash J$ we have
$$
\sv^{u\lambda}=\prod_{j\in J}\sv^{\langle u\lambda,\alpha_j\rangle\omega_j}.
$$
If $u\in W^J$ then $u^{-1}\alpha\in\Phi^+$ for all $\alpha\in\Phi_J^+$, and hence $\langle u\lambda,\alpha\rangle=\langle\lambda,u^{-1}\alpha\rangle\geq 0$ for all $\lambda\in P^+$, $u\in W^J$, and $\alpha\in\Phi_J^+$. Thus $\deg \sv^{u\lambda}\leq 0$. 

(4)$\Longrightarrow$(1). For $\lambda\in P^+$ let $m_{\lambda}$ denote the unique minimal length element of $W_0t_{\lambda}W_0$. Write $t_{\lambda}=m_{\lambda}w_{\lambda}$ where $w_{\lambda}\in W_0$ and $\ell(t_{\lambda})=\ell(m_{\lambda})+\ell(w_{\lambda})$. Each $w\in \Wext$ can be written as $w=um_{\lambda}v$ with $u,v\in W_0$ and $\lambda\in P^+$, and moreover $\ell(w)=\ell(u)+\ell(m_{\lambda})+\ell(v)$. Thus 
$$
T_w=T_uT_{m_{\lambda}}T_v=T_uT_{t_{\lambda}}T_{w_{\lambda}}^{-1}T_v=T_uX^{\lambda}T_{w_{\lambda}}^{-1}T_v.
$$
Let $\sB_{J,\sv}'=\{\xi_{J,\sv}\otimes U_u\mid u\in W^J\}$ denote the basis of $M_{J,\sv}$ from Proposition~\ref{prop:newbasis}, and write $\pi_{J,\sv}'(h)=\pi_{J,\sv}(h,\sB_{J,\sv}')$. We have
$
\pi_{J,\sv}'(T_w)=\pi_{J,\sv}'(T_u)\pi_{J,\sv}'(X^{\lambda})\pi_{J,\sv}'(T_{w_{\lambda}}^{-1})\pi_{J,\sv}'(T_v).
$
Since $u,v,w_{\lambda}$ are in the finite group $W_0$ there is a global bound on the degree of the entries of the first, third, and fourth matrices in this product. Moreover, since $U_uX^{\lambda}=X^{u\lambda}U_u$ the matrix $\pi_{J,\sv}'(X^{\lambda})$ is diagonal with entries $\psi_{J,\sv}(X^{u\lambda})=\sv^{u\lambda}\zJ^{u\lambda}$ for $u\in W^J$, and hence by assumption the degree of the entries of this matrix is bounded for $\lambda\in P^+$. Thus the degree of the entries of $\pi_{J,\sv}'(T_w)$ is bounded. Hence the result (c.f. Remark~\ref{rem:changebasis}).

(3)$\Longleftrightarrow$(5). We have established the equivalence of (1) and (3). Applying this to the case $J=I$ we see that a $1$-dimensional representation $\psi_{I,\sv}$ of $\Hext(L)$ is bounded if and only if $\deg \sv^{\omega_i}\leq 0$ for all $i\in I$. Applying this to the case of the weighted $J$-affine Hecke algebra we see that $\psi_{J,\sv}$ is a bounded $1$-dimensional representation of this algebra if and only if $\deg\sv^{\omega_j}\leq 0$ for all $j\in J$, hence the result.

(1)$\Longleftrightarrow$(6). Theorem~\ref{thm:mainpath1} shows that (6) implies (1). The reverse implication is complicated by the cancellations discussed in Remark~\ref{rem:cancellations}, and we argue as follows. Suppose that (1) holds. Let $p$ be a positively $J$-folded path of reduced type, and let $p_J$ be the associated $J$-straightening. Since $p_J$ is positively folded a well known result (see \cite[Lemma~7.7]{Lus:85} or \cite[Lemma~6.2]{GP:19}) gives $f(p_J)\leq \ell(\sw_0)$, where $f(p_J)$ denotes the number of folds in $p_J$. Decompose $p_J$ as $p_0\cdot f_1\cdot p_1\cdot f_2\cdots\cdot f_k\cdot p_k$, where $p_0,\ldots,p_k$ are straight (ie, no folds) and $f_1,\ldots,f_k$ are the folds (so $k\leq \ell(\sw_0)$). Consider the path $p_j$. Let $x,y\in \WJaff$ such that $p_j$ starts in $x\cA_J$ and ends in $y\cA_J$. It is not difficult to see, using Proposition~\ref{prop:unfold2}, that the contribution of $p_j$ to $\cQ_{J,\sv}(p)$ is $\psi_{J,\sv}(T_{x^{-1}y}')$, where $T_w'$ denotes the standard basis of $\HJaff$ (see Section~\ref{sec:JaffineHecke}). We have already seen that (1) implies (5), and hence there is a bound $\deg\psi_{J,\sv}(T_{w}')\leq N'$ for all $w\in\WJaff$. Thus 
$$
\deg\cQ_{J,\sv}(p)\leq \sum_{i\in\{0\}\cup\Ifin}L(s_i)f_i(p)+(\ell(\sw_0)+1)N'
$$
(with the first sum coming from the folds) hence (6).
\end{proof}

\begin{remark}
On extending scalars to $\CC$ and specialising $\sq\to q$ and $\zeta_i\to z_i$ for $i\in I\backslash J$ with $q>1$ and $z_i\in\CC$ with $|z_i|=1$, Casselman's criteria for temperedness (see \cite[Lemma~2.20]{Opd:04}) along with the equivalence of (1) and (2) in Theorem~\ref{thm:bound} shows that $(\pi_{J,\sv},M_{J,\sv})$ is bounded if and only if the specialised representation is tempered.
\end{remark}

By Theorem~\ref{thm:bound} determining boundedness of $(\pi_{J,\sv},M_{J,\sv})$ is equivalent to determining boundedness of the associated $1$-dimensional representation $\psi_{J,\sv}$ of $\HJaff$. The latter is a much simpler task. Since it suffices to consider each irreducible component of $\HJaff$, it is sufficient to determine the bounded $1$-dimensional representations of an affine Hecke algebra $\Hext(L)$ with irreducible root system~$\Phi$. Since $\Phi$ is irreducible the degrees of freedom in choosing a weighted $I$-parameter system is equal to the number of root lengths in~$\Phi$. If $\Phi$ is simply laced then $\sv_{\alpha}=\sv$ for all $\alpha\in\Phi$ and we write $b=L(s_i)$ for any $i\in I$. If $\Phi$ is reduced with two distinct root lengths we write $\sv_{\mathrm{sh}}=\sv_{\alpha}$ for any short root $\alpha$, and $\sv_{\mathrm{lo}}=\sv_{\alpha}$ for any long root $\alpha$, and we write $a=L(s_j)$ is $\alpha_j$ is short, and $b=L(s_j)$ if $\alpha_j$ is long. Then $\sv_{\mathrm{sh}}\in\{\sq^a,-\sq^{-a}\}$ and $\sv_{\mathrm{lo}}\in\{\sq^b,-\sq^{-b}\}$.

\begin{prop}\label{prop:classifybounded}
The bounded $1$-dimensional representations of $\Hext(L)$ are the maps $\psi_{I,\sv}$ where $\sv=(\sv_{\alpha})_{\alpha\in\Phi}$ is a weighted $I$-parameter system appearing in the list below.
\begin{compactenum}[$(1)$]
\item If $\Phi$ is simply laced then $\sv_{\alpha}=-\sq^{-b}$ for all $\alpha\in\Phi$.
\item If $\Phi$ is reduced and not simply laced then the possible values of $(\sv_{\mathrm{sh}},\sv_{\mathrm{lo}})$ are as follows, with the stated constraints on $a,b$:
$$
\begin{array}{|c||c|c|c|c|}\hline
(\sv_{\mathrm{sh}},\sv_{\mathrm{lo}})&\sB_n&\sC_n&\sF_4&\sG_2\\
\hline
(-\sq^{-a},-\sq^{-b})&a,b\geq 1&a,b\geq 1 &a,b\geq 1&a,b\geq 1\\
\hline
(\sq^{a},-\sq^{-b})&a/b\leq n-1&a/b\leq 1/(n-1)&a/b\leq 6/5&a/b\leq 3/2\\
\hline
(-\sq^{-a},\sq^{b})&a/b\geq 2(n-1)&a/b\geq 2/(n-1)&a/b\geq 5/3&a/b\geq 2\\
\hline
\end{array}
$$
\item If $\Phi$ is of type $\sBC_n$ then the possible values of $(\sv_{\alpha_1},\sv_{\alpha_n}\sv_{2\alpha_n},\sv_{2\alpha_n})$ are as follows, with the stated constraints on $a=L(s_n)$, $b=L(s_1)$, and $c=L(s_0)$ (and Convention~\ref{conv:sym} in force):
$$
\begin{array}{|c||c|}\hline
(\sv_{\alpha_1},\sv_{\alpha_n}\sv_{2\alpha_n},\sv_{2\alpha_n})&\sBC_n\\
\hline
(-\sq^{-b},-\sq^{-a},-\sq^{-c})&a,b,c\geq 1\\
\hline
(-\sq^{-b},-\sq^{-a},\sq^{c})&a,b,c\geq 1\\
\hline
(\sq^{b},-\sq^{-a},-\sq^{-c})&a/b+c/b\geq 2(n-1)\\
\hline
(\sq^{b},-\sq^{-a},\sq^{c})&a/b-c/b\geq 2(n-1)\\
\hline
(-\sq^{-b},\sq^{a},-\sq^{-c})&a/b-c/b\leq n-1\\
\hline
(-\sq^{-b},\sq^{a},\sq^{c})&a/b+c/b\leq n-1\\
\hline
\end{array}
$$
\end{compactenum}
\end{prop}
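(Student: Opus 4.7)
The plan is to apply Theorem~\ref{thm:bound} to reduce boundedness of $\psi_{I,\sv}$ to the finitely many inequalities $\deg \sv^{\omega_i}\leq 0$ ($i\in I$), and then to solve these inequalities explicitly by type. Since $\sv_{\alpha}$ depends only on the $W_0$-orbit of $\alpha$, one has
\[
\deg \sv^{\omega_i}=\sum_{\cO} (\deg \sv_{\cO}) N_{\cO,i},\qquad N_{\cO,i}=\sum_{\alpha\in \cO\cap \Phi^+}\langle \omega_i,\alpha\rangle,
\]
where $\cO$ ranges over the $W_0$-orbits of $\Phi$. Each $N_{\cO,i}$ is a nonnegative integer read off from Bourbaki's tables, and the classification reduces to a small linear-programming problem in $a,b$ (and $c$ in the $\sBC_n$ case).

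For simply laced $\Phi$ there is a single orbit, so $\deg \sv^{\omega_i}=N_i\deg \sv_{\alpha}$ with each $N_i>0$; hence boundedness forces $\deg \sv_{\alpha}\leq 0$, i.e.\ $\sv_{\alpha}=-\sq^{-b}$, giving (1). For reduced non-simply-laced $\Phi$ (types $\sB_n,\sC_n,\sF_4,\sG_2$), with signs $\epsilon_{\mathrm{sh}},\epsilon_{\mathrm{lo}}\in\{\pm 1\}$ of $\sv_{\mathrm{sh}},\sv_{\mathrm{lo}}$, the condition becomes
\[
\epsilon_{\mathrm{sh}}\,a\,N_i^{\mathrm{sh}}+\epsilon_{\mathrm{lo}}\,b\,N_i^{\mathrm{lo}}\leq 0\quad\text{for all } i\in I.
\]
The case $(-,-)$ is automatic and $(+,+)$ is impossible; the mixed cases reduce to $a/b\leq \min_i N^{\mathrm{lo}}_i/N^{\mathrm{sh}}_i$ or $a/b\geq \max_i N^{\mathrm{lo}}_i/N^{\mathrm{sh}}_i$. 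The extrema are computed directly: for $\sB_n$ (with $\omega_i=e_1+\cdots+e_i$ for $i<n$ and $\omega_n=(e_1+\cdots+e_n)/2$) one gets the ratio $2n-1-i$ for $i<n$ and $n-1$ for $i=n$, with min $n-1$ and max $2(n-1)$; for $\sC_n$ one gets $2/(2n-i-1)$, with extrema $1/(n-1)$ and $2/(n-1)$; and the analogous finite computations in $\sF_4,\sG_2$ yield $6/5,5/3$ and $3/2,2$, matching (2).

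For non-reduced $\sBC_n$ there are three orbits: short roots $e_i\pm e_j$, medium roots $e_k$, and long roots $2e_k$. Setting $B=\deg \sv_{\alpha_1}\in\{\pm b\}$, $A=\deg(\sv_{\alpha_n}\sv_{2\alpha_n})\in\{\pm a\}$, $C=\deg \sv_{2\alpha_n}\in\{\pm c\}$, and using $\deg \sv_{\alpha_n}=A-C$, a direct count of $\langle \omega_i,\alpha\rangle$ with $\omega_i=e_1+\cdots+e_i$ yields the clean identity
\[
\deg \sv^{\omega_i}=i\bigl[(2n-i-1)B+A+C\bigr],\qquad i=1,\ldots,n.
\]
Thus boundedness is equivalent to $(2n-i-1)B+A+C\leq 0$ for every $i\in\{1,\ldots,n\}$. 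When $B=-b$ the binding index is $i=n$, producing the constraints $a/b\pm c/b\leq n-1$ in the mixed cases; when $B=+b$ the binding index is $i=1$, producing $a/b\pm c/b\geq 2(n-1)$. Convention~\ref{conv:sym} ($a\geq c$) rules out the two sign patterns with $B=+b$ and $A+C\geq 0$, leaving precisely the six rows of the $\sBC_n$ table.

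The main obstacle is the clean packaging of the $\sBC_n$ case: one must simultaneously handle three signs, verify the compact formula above, and correctly invoke Convention~\ref{conv:sym} to excise the unbounded sign patterns. Once this reduction is performed, the rest of the argument is routine finite computation in root-system coordinates.
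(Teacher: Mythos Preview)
Your proposal is correct and follows essentially the same route as the paper: reduce via Theorem~\ref{thm:bound} to the inequalities $\deg\sv^{\omega_i}\le 0$, then evaluate these type by type using Bourbaki coordinates, with the ratios $\langle\omega_i,2\rho\rangle/\langle\omega_i,2\rho'\rangle$ governing the reduced non-simply-laced cases. Your $\sBC_n$ packaging via the identity $\deg\sv^{\omega_i}=i[(2n-i-1)B+A+C]$ is a slightly cleaner rearrangement of the paper's formula $\deg(\sv_{\alpha_1}^{i(2n-i-1)}\sv_{\alpha_n}^{i}\sv_{2\alpha_n}^{2i})$, but the substance is identical.
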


\begin{proof}
By Theorem~\ref{thm:bound} it is sufficient to determine the weighted $I$-parameter systems $\sv$ with $\deg\sv^{\lambda}\leq 0$ for all $\lambda\in P^+$, and this in turn is equivalent to $\deg\sv^{\omega_i}\leq 0$ for all $i\in I$. Write $\rho=\rho_I$ and $\rho'=\rho_I'$ (see Section~\ref{sec:parabolics}). If $\Phi$ is simply laced then $\sv_{\alpha}=\sv$ is constant for all $\alpha\in\Phi$, and hence $\sv^{\omega_i}=\sv^{\langle \omega_i,2\rho\rangle}$ for all $i\in I$. Since $\sv\in\{\sq^b,-\sq^{-b}\}$ and $\langle\omega_i,2\rho\rangle>0$ the result follows in this case. 

Suppose now that $\Phi$ is reduced with two distinct root lengths. Then
$
\sv^{\omega_i}=\sv_{\mathrm{sh}}^{\langle \omega_i,2\rho'\rangle}\sv_{\mathrm{lo}}^{\langle \omega_i,2\rho\rangle}.
$ Thus if $(\sv_{\mathrm{sh}},\sv_{\mathrm{lo}})=(\sq^{a},-\sq^{-b})$ we require $a/b\leq \langle\omega_i,2\rho\rangle/\langle\omega_i,2\rho'\rangle$ for all $1\leq i\leq n$, and if $(\sv_{\mathrm{sh}},\sv_{\mathrm{lo}})=(-\sq^{-a},\sq^{b})$ we require $a/b\geq \langle\omega_i,2\rho\rangle/\langle\omega_i,2\rho'\rangle$ for all $1\leq i\leq n$.

The result now follows by considering each case. If $\Phi$ is of type $\sB_n$ then in Bourbaki conventions \cite{Bou:02} we have $2\rho=2(n-1)e_1+2(n-2)e_2+\cdots+2e_{n-1}$ and $2\rho'=e_1+e_2+\cdots+e_n$. Since $\omega_i=e_1+\cdots+e_i$ we have $\langle\omega_i,2\rho'\rangle=i$ and $\langle\omega_i,2\rho\rangle=i(2n-i-1)$ for $1\leq i\leq n$. Thus if $(\sv_{\mathrm{sh}},\sv_{\mathrm{lo}})=(\sq^{a},-\sq^{-b})$ we have $a/b\leq 2n-i-1$ for all $1\leq i\leq n$, and so $a/b\leq n-1$. If $(\sv_{\mathrm{sh}},\sv_{\mathrm{lo}})=(-\sq^{-a},\sq^{b})$ then $a/b\geq 2n-i-1$ for all $1\leq i\leq n$, and so $a/b\geq 2(n-1)$. 

The remaining reduced cases are similar. In type $\sC_n$ we have $2\rho=2(e_1+\cdots+e_n)$, $2\rho'=2(n-1)e_1+2(n-2)e_2+\cdots+2e_{n-1}$, $\omega_i=e_1+\cdots+e_i$ for $1\leq i<n$, and $\omega_n=\frac{1}{2}(e_1+\cdots+e_n)$. In type $\sF_4$ we have $2\rho'=5e_1+e_2+e_3+e_4$, $2\rho=6e_1+4e_2+2e_3$, $\omega_1=e_1+e_2$, $\omega_2=2e_1+e_2+e_3$, $\omega_3=3e_1+e_2+e_3+e_4$, and $\omega_4=2e_1$. In type $\sG_2$ we have $2\rho'=-e_2+e_3$, $2\rho=-e_1-e_2+2e_3$, $\omega_1=-e_2+e_3$, and $\omega_2=\frac{1}{3}(-e_1-e_2+2e_3)$.

Finally, consider the $\sBC_n$ case. We have
$$
\sv^{\lambda}=\sv_{\alpha_1}^{\langle\lambda,2(n-1)e_1+2(n-2)e_2+\cdots+2e_{n-1}\rangle}\sv_{\alpha_n}^{\langle \lambda, e_1+\cdots+e_n\rangle}\sv_{2\alpha_n}^{2\langle \lambda, e_1+\cdots+e_n\rangle}.
$$
Since $\omega_i=e_1+\cdots+e_i$ we require 
$
\deg(\sv_{\alpha_1}^{i(2n-i-1)}\sv_{\alpha_n}^i\sv_{2\alpha_n}^{2i})\leq 0$ for all $1\leq i\leq n$, and the result follows by considering the possibilities $\sv_{\alpha_1}\in\{\sq^b,-\sq^{-b}\}$, $\sv_{\alpha_n}\sv_{2\alpha_n}\in\{\sq_a,-\sq^{-a}\}$, and $\sv_{2\alpha_n}\in\{\sq^c,-\sq^{-c}\}$.
\end{proof}

\begin{example}\label{ex:F4} Combining Theorem~\ref{thm:bound} and Proposition~\ref{prop:classifybounded} gives a very explicit classification of the bounded modules $M_{J,\sv}$. For example, in type $\sF_4$ the bounded representations are listed below. We encode the representations by the Dynkin diagram of type $\sF_4$ in which the nodes~$j\in J$ are encircled. An encircled node~$j$ is white if $\psi_{J,\sv}(T_j)=-\sq^{-L(s_j)}$ and black if $\psi_{J,\sv}(T_j)=\sq^{L(s_j)}$. Let $L(s_i)=a$ if $\alpha_i$ is short and $L(s_i)=b$ if $\alpha_i$ is long (thus $L(s_1)=L(s_2)=b$ and $L(s_3)=L(s_4)=a$). In some cases there are constraints on $a/b$ for the representation to be bounded, and these constraints are indicated under the diagram. 

$$
\begin{array}{|c|c|c|c|c|c|}\hline
\begin{tikzpicture}[scale=0.5,baseline=-0.5ex]
\node [inner sep=0.8pt,outer sep=0.8pt] at (0,0) (1) {$\circ$};
\node [inner sep=0.8pt,outer sep=0.8pt] at (1,0) (2) {$\circ$};
\node [inner sep=0.8pt,outer sep=0.8pt] at (2,0) (3) {$\circ$};
\node [inner sep=0.8pt,outer sep=0.8pt] at (3,0) (4) {$\circ$};
\node at (0,0.6) {};
\draw (0.12,0.01)--(0.88,0.01);
\draw (2.12,0.01)--(2.88,0.01);
\draw (1.1,0.1)--(1.9,0.1);
\draw (1.1,-0.075)--(1.9,-0.075);
\draw ({1.5-0.125},0.25) -- (1.5+0.08,0) -- (1.5-0.125,-0.25);
\end{tikzpicture}&
\begin{tikzpicture}[scale=0.5,baseline=-0.5ex]
\node [inner sep=0.8pt,outer sep=0.8pt] at (0,0) (1) {$\circ$};
\node [inner sep=0.8pt,outer sep=0.8pt] at (1,0) (2) {$\circ$};
\node [inner sep=0.8pt,outer sep=0.8pt] at (2,0) (3) {$\circ$};
\node [inner sep=0.8pt,outer sep=0.8pt] at (3,0) (4) {$\circ$};
\node at (0,0.6) {};
\draw (0.12,0.01)--(0.88,0.01);
\draw (2.12,0.01)--(2.88,0.01);
\draw (1.1,0.1)--(1.9,0.1);
\draw (1.1,-0.075)--(1.9,-0.075);
\draw ({1.5-0.125},0.25) -- (1.5+0.08,0) -- (1.5-0.125,-0.25);
\draw [line width=0.5pt,line cap=round,rounded corners] (1.north west)  rectangle (1.south east);
\end{tikzpicture}&
\begin{tikzpicture}[scale=0.5,baseline=-0.5ex]
\node [inner sep=0.8pt,outer sep=0.8pt] at (0,0) (1) {$\circ$};
\node [inner sep=0.8pt,outer sep=0.8pt] at (1,0) (2) {$\circ$};
\node [inner sep=0.8pt,outer sep=0.8pt] at (2,0) (3) {$\circ$};
\node [inner sep=0.8pt,outer sep=0.8pt] at (3,0) (4) {$\circ$};
\node at (0,0.6) {};
\draw (0.12,0.01)--(0.88,0.01);
\draw (2.12,0.01)--(2.88,0.01);
\draw (1.1,0.1)--(1.9,0.1);
\draw (1.1,-0.075)--(1.9,-0.075);
\draw ({1.5-0.125},0.25) -- (1.5+0.08,0) -- (1.5-0.125,-0.25);
\draw [line width=0.5pt,line cap=round,rounded corners] (2.north west)  rectangle (2.south east);
\end{tikzpicture}&
\begin{tikzpicture}[scale=0.5,baseline=-0.5ex]
\node [inner sep=0.8pt,outer sep=0.8pt] at (0,0) (1) {$\circ$};
\node [inner sep=0.8pt,outer sep=0.8pt] at (1,0) (2) {$\circ$};
\node [inner sep=0.8pt,outer sep=0.8pt] at (2,0) (3) {$\circ$};
\node [inner sep=0.8pt,outer sep=0.8pt] at (3,0) (4) {$\circ$};
\node at (0,0.6) {};
\draw (0.12,0.01)--(0.88,0.01);
\draw (2.12,0.01)--(2.88,0.01);
\draw (1.1,0.1)--(1.9,0.1);
\draw (1.1,-0.075)--(1.9,-0.075);
\draw ({1.5-0.125},0.25) -- (1.5+0.08,0) -- (1.5-0.125,-0.25);
\draw [line width=0.5pt,line cap=round,rounded corners] (3.north west)  rectangle (3.south east);
\end{tikzpicture}&
\begin{tikzpicture}[scale=0.5,baseline=-0.5ex]
\node [inner sep=0.8pt,outer sep=0.8pt] at (0,0) (1) {$\circ$};
\node [inner sep=0.8pt,outer sep=0.8pt] at (1,0) (2) {$\circ$};
\node [inner sep=0.8pt,outer sep=0.8pt] at (2,0) (3) {$\circ$};
\node [inner sep=0.8pt,outer sep=0.8pt] at (3,0) (4) {$\circ$};
\node at (0,0.6) {};
\draw (0.12,0.01)--(0.88,0.01);
\draw (2.12,0.01)--(2.88,0.01);
\draw (1.1,0.1)--(1.9,0.1);
\draw (1.1,-0.075)--(1.9,-0.075);
\draw ({1.5-0.125},0.25) -- (1.5+0.08,0) -- (1.5-0.125,-0.25);
\draw [line width=0.5pt,line cap=round,rounded corners] (4.north west)  rectangle (4.south east);
\end{tikzpicture}&
\begin{tikzpicture}[scale=0.5,baseline=-0.5ex]
\node [inner sep=0.8pt,outer sep=0.8pt] at (0,0) (1) {$\circ$};
\node [inner sep=0.8pt,outer sep=0.8pt] at (1,0) (2) {$\circ$};
\node [inner sep=0.8pt,outer sep=0.8pt] at (2,0) (3) {$\circ$};
\node [inner sep=0.8pt,outer sep=0.8pt] at (3,0) (4) {$\circ$};
\node at (0,0.6) {};
\draw (0.12,0.01)--(0.88,0.01);
\draw (2.12,0.01)--(2.88,0.01);
\draw (1.1,0.1)--(1.9,0.1);
\draw (1.1,-0.075)--(1.9,-0.075);
\draw ({1.5-0.125},0.25) -- (1.5+0.08,0) -- (1.5-0.125,-0.25);
\draw [line width=0.5pt,line cap=round,rounded corners] (1.north west)  rectangle (2.south east);
\end{tikzpicture}\\
\hline
\begin{tikzpicture}[scale=0.5,baseline=-0.5ex]
\node [inner sep=0.8pt,outer sep=0.8pt] at (0,0) (1) {$\circ$};
\node [inner sep=0.8pt,outer sep=0.8pt] at (1,0) (2) {$\circ$};
\node [inner sep=0.8pt,outer sep=0.8pt] at (2,0) (3) {$\circ$};
\node [inner sep=0.8pt,outer sep=0.8pt] at (3,0) (4) {$\circ$};
\node at (0,0.6) {};
\draw (0.12,0.01)--(0.88,0.01);
\draw (2.12,0.01)--(2.88,0.01);
\draw (1.1,0.1)--(1.9,0.1);
\draw (1.1,-0.075)--(1.9,-0.075);
\draw ({1.5-0.125},0.25) -- (1.5+0.08,0) -- (1.5-0.125,-0.25);
\draw [line width=0.5pt,line cap=round,rounded corners] (2.north west)  rectangle (3.south east);
\end{tikzpicture}&
\begin{tikzpicture}[scale=0.5,baseline=-0.5ex]
\node [inner sep=0.8pt,outer sep=0.8pt] at (0,0) (1) {$\circ$};
\node [inner sep=0.8pt,outer sep=0.8pt] at (1,0) (2) {$\circ$};
\node [inner sep=0.8pt,outer sep=0.8pt] at (2,0) (3) {$\circ$};
\node [inner sep=0.8pt,outer sep=0.8pt] at (3,0) (4) {$\circ$};
\node at (0,0.6) {};
\draw (0.12,0.01)--(0.88,0.01);
\draw (2.12,0.01)--(2.88,0.01);
\draw (1.1,0.1)--(1.9,0.1);
\draw (1.1,-0.075)--(1.9,-0.075);
\draw ({1.5-0.125},0.25) -- (1.5+0.08,0) -- (1.5-0.125,-0.25);
\draw [line width=0.5pt,line cap=round,rounded corners] (3.north west)  rectangle (4.south east);
\end{tikzpicture}&
\begin{tikzpicture}[scale=0.5,baseline=-0.5ex]
\node [inner sep=0.8pt,outer sep=0.8pt] at (0,0) (1) {$\circ$};
\node [inner sep=0.8pt,outer sep=0.8pt] at (1,0) (2) {$\circ$};
\node [inner sep=0.8pt,outer sep=0.8pt] at (2,0) (3) {$\circ$};
\node [inner sep=0.8pt,outer sep=0.8pt] at (3,0) (4) {$\circ$};
\node at (0,0.6) {};
\draw (0.12,0.01)--(0.88,0.01);
\draw (2.12,0.01)--(2.88,0.01);
\draw (1.1,0.1)--(1.9,0.1);
\draw (1.1,-0.075)--(1.9,-0.075);
\draw ({1.5-0.125},0.25) -- (1.5+0.08,0) -- (1.5-0.125,-0.25);
\draw [line width=0.5pt,line cap=round,rounded corners] (1.north west)  rectangle (1.south east);
\draw [line width=0.5pt,line cap=round,rounded corners] (3.north west)  rectangle (3.south east);
\end{tikzpicture}&
\begin{tikzpicture}[scale=0.5,baseline=-0.5ex]
\node [inner sep=0.8pt,outer sep=0.8pt] at (0,0) (1) {$\circ$};
\node [inner sep=0.8pt,outer sep=0.8pt] at (1,0) (2) {$\circ$};
\node [inner sep=0.8pt,outer sep=0.8pt] at (2,0) (3) {$\circ$};
\node [inner sep=0.8pt,outer sep=0.8pt] at (3,0) (4) {$\circ$};
\node at (0,0.6) {};
\draw (0.12,0.01)--(0.88,0.01);
\draw (2.12,0.01)--(2.88,0.01);
\draw (1.1,0.1)--(1.9,0.1);
\draw (1.1,-0.075)--(1.9,-0.075);
\draw ({1.5-0.125},0.25) -- (1.5+0.08,0) -- (1.5-0.125,-0.25);
\draw [line width=0.5pt,line cap=round,rounded corners] (1.north west)  rectangle (1.south east);
\draw [line width=0.5pt,line cap=round,rounded corners] (4.north west)  rectangle (4.south east);
\end{tikzpicture}&
\begin{tikzpicture}[scale=0.5,baseline=-0.5ex]
\node [inner sep=0.8pt,outer sep=0.8pt] at (0,0) (1) {$\circ$};
\node [inner sep=0.8pt,outer sep=0.8pt] at (1,0) (2) {$\circ$};
\node [inner sep=0.8pt,outer sep=0.8pt] at (2,0) (3) {$\circ$};
\node [inner sep=0.8pt,outer sep=0.8pt] at (3,0) (4) {$\circ$};
\node at (0,0.6) {};
\draw (0.12,0.01)--(0.88,0.01);
\draw (2.12,0.01)--(2.88,0.01);
\draw (1.1,0.1)--(1.9,0.1);
\draw (1.1,-0.075)--(1.9,-0.075);
\draw ({1.5-0.125},0.25) -- (1.5+0.08,0) -- (1.5-0.125,-0.25);
\draw [line width=0.5pt,line cap=round,rounded corners] (2.north west)  rectangle (2.south east);
\draw [line width=0.5pt,line cap=round,rounded corners] (4.north west)  rectangle (4.south east);
\end{tikzpicture}&
\begin{tikzpicture}[scale=0.5,baseline=-0.5ex]
\node [inner sep=0.8pt,outer sep=0.8pt] at (0,0) (1) {$\circ$};
\node [inner sep=0.8pt,outer sep=0.8pt] at (1,0) (2) {$\circ$};
\node [inner sep=0.8pt,outer sep=0.8pt] at (2,0) (3) {$\circ$};
\node [inner sep=0.8pt,outer sep=0.8pt] at (3,0) (4) {$\circ$};
\node at (0,0.6) {};
\draw (0.12,0.01)--(0.88,0.01);
\draw (2.12,0.01)--(2.88,0.01);
\draw (1.1,0.1)--(1.9,0.1);
\draw (1.1,-0.075)--(1.9,-0.075);
\draw ({1.5-0.125},0.25) -- (1.5+0.08,0) -- (1.5-0.125,-0.25);
\draw [line width=0.5pt,line cap=round,rounded corners] (1.north west)  rectangle (3.south east);
\end{tikzpicture}\\
\hline
\begin{tikzpicture}[scale=0.5,baseline=-0.5ex]
\node [inner sep=0.8pt,outer sep=0.8pt] at (0,0) (1) {$\circ$};
\node [inner sep=0.8pt,outer sep=0.8pt] at (1,0) (2) {$\circ$};
\node [inner sep=0.8pt,outer sep=0.8pt] at (2,0) (3) {$\circ$};
\node [inner sep=0.8pt,outer sep=0.8pt] at (3,0) (4) {$\circ$};
\node at (0,0.6) {};
\draw (0.12,0.01)--(0.88,0.01);
\draw (2.12,0.01)--(2.88,0.01);
\draw (1.1,0.1)--(1.9,0.1);
\draw (1.1,-0.075)--(1.9,-0.075);
\draw ({1.5-0.125},0.25) -- (1.5+0.08,0) -- (1.5-0.125,-0.25);
\draw [line width=0.5pt,line cap=round,rounded corners] (2.north west)  rectangle (4.south east);
\end{tikzpicture}&
\begin{tikzpicture}[scale=0.5,baseline=-0.5ex]
\node [inner sep=0.8pt,outer sep=0.8pt] at (0,0) (1) {$\circ$};
\node [inner sep=0.8pt,outer sep=0.8pt] at (1,0) (2) {$\circ$};
\node [inner sep=0.8pt,outer sep=0.8pt] at (2,0) (3) {$\circ$};
\node [inner sep=0.8pt,outer sep=0.8pt] at (3,0) (4) {$\circ$};
\node at (0,0.6) {};
\draw (0.12,0.01)--(0.88,0.01);
\draw (2.12,0.01)--(2.88,0.01);
\draw (1.1,0.1)--(1.9,0.1);
\draw (1.1,-0.075)--(1.9,-0.075);
\draw ({1.5-0.125},0.25) -- (1.5+0.08,0) -- (1.5-0.125,-0.25);
\draw [line width=0.5pt,line cap=round,rounded corners] (1.north west)  rectangle (1.south east);
\draw [line width=0.5pt,line cap=round,rounded corners] (3.north west)  rectangle (4.south east);
\end{tikzpicture}&
\begin{tikzpicture}[scale=0.5,baseline=-0.5ex]
\node [inner sep=0.8pt,outer sep=0.8pt] at (0,0) (1) {$\circ$};
\node [inner sep=0.8pt,outer sep=0.8pt] at (1,0) (2) {$\circ$};
\node [inner sep=0.8pt,outer sep=0.8pt] at (2,0) (3) {$\circ$};
\node [inner sep=0.8pt,outer sep=0.8pt] at (3,0) (4) {$\circ$};
\node at (0,0.6) {};
\draw (0.12,0.01)--(0.88,0.01);
\draw (2.12,0.01)--(2.88,0.01);
\draw (1.1,0.1)--(1.9,0.1);
\draw (1.1,-0.075)--(1.9,-0.075);
\draw ({1.5-0.125},0.25) -- (1.5+0.08,0) -- (1.5-0.125,-0.25);
\draw [line width=0.5pt,line cap=round,rounded corners] (1.north west)  rectangle (2.south east);
\draw [line width=0.5pt,line cap=round,rounded corners] (4.north west)  rectangle (4.south east);
\end{tikzpicture}&
\begin{tikzpicture}[scale=0.5,baseline=-0.5ex]
\node [inner sep=0.8pt,outer sep=0.8pt] at (0,0) (1) {$\circ$};
\node [inner sep=0.8pt,outer sep=0.8pt] at (1,0) (2) {$\circ$};
\node [inner sep=0.8pt,outer sep=0.8pt] at (2,0) (3) {$\circ$};
\node [inner sep=0.8pt,outer sep=0.8pt] at (3,0) (4) {$\circ$};
\node at (0,0.6) {};
\draw (0.12,0.01)--(0.88,0.01);
\draw (2.12,0.01)--(2.88,0.01);
\draw (1.1,0.1)--(1.9,0.1);
\draw (1.1,-0.075)--(1.9,-0.075);
\draw ({1.5-0.125},0.25) -- (1.5+0.08,0) -- (1.5-0.125,-0.25);
\draw [line width=0.5pt,line cap=round,rounded corners] (1.north west)  rectangle (4.south east);
\end{tikzpicture}&
\begin{tikzpicture}[scale=0.5,baseline=-0.5ex]
\node [inner sep=0.8pt,outer sep=0.8pt] at (0,0) (1) {$\circ$};
\node [inner sep=0.8pt,outer sep=0.8pt] at (1,0) (2) {$\circ$};
\node [inner sep=0.8pt,outer sep=0.8pt] at (2,0) (3) {$\bullet$};
\node [inner sep=0.8pt,outer sep=0.8pt] at (3,0) (4) {$\circ$};
\node at (0,0.6) {};
\node at (1.5,-0.9) {\scriptsize{$a/b\leq 2$}};
\draw (0.12,0.01)--(0.88,0.01);
\draw (2.12,0.01)--(2.88,0.01);
\draw (1.1,0.1)--(1.9,0.1);
\draw (1.1,-0.075)--(1.9,-0.075);
\draw ({1.5-0.125},0.25) -- (1.5+0.08,0) -- (1.5-0.125,-0.25);
\draw [line width=0.5pt,line cap=round,rounded corners] (1.north west)  rectangle (3.south east);
\end{tikzpicture}&
\begin{tikzpicture}[scale=0.5,baseline=-0.5ex]
\node [inner sep=0.8pt,outer sep=0.8pt] at (0,0) (1) {$\bullet$};
\node [inner sep=0.8pt,outer sep=0.8pt] at (1,0) (2) {$\bullet$};
\node [inner sep=0.8pt,outer sep=0.8pt] at (2,0) (3) {$\circ$};
\node [inner sep=0.8pt,outer sep=0.8pt] at (3,0) (4) {$\circ$};
\node at (0,0.6) {};
\node at (1.5,-0.9) {\scriptsize{$a/b\geq 4$}};
\draw (0.12,0.01)--(0.88,0.01);
\draw (2.12,0.01)--(2.88,0.01);
\draw (1.1,0.1)--(1.9,0.1);
\draw (1.1,-0.075)--(1.9,-0.075);
\draw ({1.5-0.125},0.25) -- (1.5+0.08,0) -- (1.5-0.125,-0.25);
\draw [line width=0.5pt,line cap=round,rounded corners] (1.north west)  rectangle (3.south east);
\end{tikzpicture}\\
\hline
\begin{tikzpicture}[scale=0.5,baseline=-0.5ex]
\node [inner sep=0.8pt,outer sep=0.8pt] at (0,0) (1) {$\circ$};
\node [inner sep=0.8pt,outer sep=0.8pt] at (1,0) (2) {$\bullet$};
\node [inner sep=0.8pt,outer sep=0.8pt] at (2,0) (3) {$\circ$};
\node [inner sep=0.8pt,outer sep=0.8pt] at (3,0) (4) {$\circ$};
\node at (0,0.6) {};
\node at (1.5,-0.9) {\scriptsize{$a/b\geq 1$}};
\draw (0.12,0.01)--(0.88,0.01);
\draw (2.12,0.01)--(2.88,0.01);
\draw (1.1,0.1)--(1.9,0.1);
\draw (1.1,-0.075)--(1.9,-0.075);
\draw ({1.5-0.125},0.25) -- (1.5+0.08,0) -- (1.5-0.125,-0.25);
\draw [line width=0.5pt,line cap=round,rounded corners] (2.north west)  rectangle (4.south east);
\end{tikzpicture}&
\begin{tikzpicture}[scale=0.5,baseline=-0.5ex]
\node [inner sep=0.8pt,outer sep=0.8pt] at (0,0) (1) {$\circ$};
\node [inner sep=0.8pt,outer sep=0.8pt] at (1,0) (2) {$\circ$};
\node [inner sep=0.8pt,outer sep=0.8pt] at (2,0) (3) {$\bullet$};
\node [inner sep=0.8pt,outer sep=0.8pt] at (3,0) (4) {$\bullet$};
\node at (0,0.6) {};
\node at (1.5,-0.9) {\scriptsize{$a/b\leq 1/2$}};
\draw (0.12,0.01)--(0.88,0.01);
\draw (2.12,0.01)--(2.88,0.01);
\draw (1.1,0.1)--(1.9,0.1);
\draw (1.1,-0.075)--(1.9,-0.075);
\draw ({1.5-0.125},0.25) -- (1.5+0.08,0) -- (1.5-0.125,-0.25);
\draw [line width=0.5pt,line cap=round,rounded corners] (2.north west)  rectangle (4.south east);
\end{tikzpicture}&
\begin{tikzpicture}[scale=0.5,baseline=-0.5ex]
\node [inner sep=0.8pt,outer sep=0.8pt] at (0,0) (1) {$\circ$};
\node [inner sep=0.8pt,outer sep=0.8pt] at (1,0) (2) {$\circ$};
\node [inner sep=0.8pt,outer sep=0.8pt] at (2,0) (3) {$\bullet$};
\node [inner sep=0.8pt,outer sep=0.8pt] at (3,0) (4) {$\circ$};
\node at (0,0.6) {};
\node at (1.5,-0.9) {\scriptsize{$a/b\geq 2$}};
\draw (0.12,0.01)--(0.88,0.01);
\draw (2.12,0.01)--(2.88,0.01);
\draw (1.1,0.1)--(1.9,0.1);
\draw (1.1,-0.075)--(1.9,-0.075);
\draw ({1.5-0.125},0.25) -- (1.5+0.08,0) -- (1.5-0.125,-0.25);
\draw [line width=0.5pt,line cap=round,rounded corners] (2.north west)  rectangle (3.south east);
\end{tikzpicture}&
\begin{tikzpicture}[scale=0.5,baseline=-0.5ex]
\node [inner sep=0.8pt,outer sep=0.8pt] at (0,0) (1) {$\circ$};
\node [inner sep=0.8pt,outer sep=0.8pt] at (1,0) (2) {$\bullet$};
\node [inner sep=0.8pt,outer sep=0.8pt] at (2,0) (3) {$\circ$};
\node [inner sep=0.8pt,outer sep=0.8pt] at (3,0) (4) {$\circ$};
\node at (0,0.6) {};
\node at (1.5,-0.9) {\scriptsize{$a/b\leq 1$}};
\draw (0.12,0.01)--(0.88,0.01);
\draw (2.12,0.01)--(2.88,0.01);
\draw (1.1,0.1)--(1.9,0.1);
\draw (1.1,-0.075)--(1.9,-0.075);
\draw ({1.5-0.125},0.25) -- (1.5+0.08,0) -- (1.5-0.125,-0.25);
\draw [line width=0.5pt,line cap=round,rounded corners] (2.north west)  rectangle (3.south east);
\end{tikzpicture}&
\begin{tikzpicture}[scale=0.5,baseline=-0.5ex]
\node [inner sep=0.8pt,outer sep=0.8pt] at (0,0) (1) {$\bullet$};
\node [inner sep=0.8pt,outer sep=0.8pt] at (1,0) (2) {$\bullet$};
\node [inner sep=0.8pt,outer sep=0.8pt] at (2,0) (3) {$\circ$};
\node [inner sep=0.8pt,outer sep=0.8pt] at (3,0) (4) {$\circ$};
\node at (0,0.6) {};
\node at (1.5,-0.9) {\scriptsize{$a/b\geq 5/3$}};
\draw (0.12,0.01)--(0.88,0.01);
\draw (2.12,0.01)--(2.88,0.01);
\draw (1.1,0.1)--(1.9,0.1);
\draw (1.1,-0.075)--(1.9,-0.075);
\draw ({1.5-0.125},0.25) -- (1.5+0.08,0) -- (1.5-0.125,-0.25);
\draw [line width=0.5pt,line cap=round,rounded corners] (1.north west)  rectangle (4.south east);
\end{tikzpicture}&
\begin{tikzpicture}[scale=0.5,baseline=-0.5ex]
\node [inner sep=0.8pt,outer sep=0.8pt] at (0,0) (1) {$\circ$};
\node [inner sep=0.8pt,outer sep=0.8pt] at (1,0) (2) {$\circ$};
\node [inner sep=0.8pt,outer sep=0.8pt] at (2,0) (3) {$\bullet$};
\node [inner sep=0.8pt,outer sep=0.8pt] at (3,0) (4) {$\bullet$};
\node at (0,0.6) {};
\node at (1.5,-0.9) {\scriptsize{$a/b\leq 6/5$}};
\draw (0.12,0.01)--(0.88,0.01);
\draw (2.12,0.01)--(2.88,0.01);
\draw (1.1,0.1)--(1.9,0.1);
\draw (1.1,-0.075)--(1.9,-0.075);
\draw ({1.5-0.125},0.25) -- (1.5+0.08,0) -- (1.5-0.125,-0.25);
\draw [line width=0.5pt,line cap=round,rounded corners] (1.north west)  rectangle (4.south east);
\end{tikzpicture}\\
\hline
\end{array}
$$
\end{example}

\subsection{The bound $\ba_{J,\sv}$}

In this section we use the path formula (Theorem~\ref{thm:mainpath1}) to give an upper bound for the bound $\ba_{J,\hat{\sv}}$ for the representations associated to the simplest weighted $J$-parameter system $\hat{\sv}=(\sv_{\alpha})_{\alpha\in \Phi_J}$ given by
\begin{align*}
\sv_{\alpha_j}&=-\sq^{-L(s_j)}&&\text{whenever $j\in J$ with $2\alpha_j\notin\Phi_J$}\\
\sv_{\alpha_n}\sv_{2\alpha_n}&=-\sq^{-L(s_n)}&&\text{if $\Phi_J$ is not reduced}\\
\sv_{2\alpha_n}&=-\sq^{-L(s_0)}&&\text{if $\Phi_J$ is not reduced}.
\end{align*}

With Convention~\ref{conv:sym} in force we have the following result.

\begin{thm}\label{thm:negbound}
The bound of $(\pi_{J,\hat{\sv}},M_{J,\hat{\sv}},\sB_{J,\hat{\sv}})$ satisfies $\ba_{J,\hat{\sv}}\leq L(\sw_0)$.
\end{thm}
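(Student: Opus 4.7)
The plan is to reduce the bound on matrix entries to a bound on the $\hat{\sv}$-mass of individual $J$-folded alcove paths of reduced type, and then to use the $J$-straightening bijection to transport this to the known bound on $\cQ(p')$ for ordinary positively folded paths.

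First I would apply Theorem~\ref{thm:mainpath1} to write
$$
[\pi_{J,\hat{\sv}}(T_w)]_{u,v}=\sum_{\{p\in\mathcal{P}_J(\vec{w},u)\,\mid\,\theta^J(p)=v\}}\mathcal{Q}_{J,\hat{\sv}}(p)\,\zJ^{\wt(p)}
$$
for $u,v\in W^J$. Since the indeterminates $\zeta_i$ are independent of $\sq$, the factor $\zJ^{\wt(p)}$ has $\sq$-degree zero. Thus to prove the theorem it is enough to show the uniform bound $\deg\cQ_{J,\hat{\sv}}(p)\leq L(\sw_0)$ over all $J$-folded alcove paths $p$ of reduced type.

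Next I would invoke the $J$-straightening bijection of Proposition~\ref{prop:unfold} and the factorisation of Proposition~\ref{prop:unfold2}, which gives
$$
\cQ_{J,\hat{\sv}}(p)=\cQ(p_J)\prod_{\alpha-k\delta\in\widetilde{\Phi}^+}\hat{\sv}_{\alpha-k\delta}^{c_{\alpha,k}(p_J)}.
$$
By the choice of the weighted $J$-parameter system $\hat{\sv}$, every value $\hat{\sv}_\beta$ with $\beta\in\Phi_J+\ZZ\delta$ equals $-\sq^{-L(s_j)}$ for some $j\in\{0\}\cup\Ifin$, and all other $\hat{\sv}_\beta$ equal~$1$; in particular $\deg\hat{\sv}_\beta\leq 0$ in every case. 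Hence the ``bounce product'' on the right contributes non-positively to the $\sq$-degree, and we obtain $\deg\cQ_{J,\hat{\sv}}(p)\leq\deg\cQ(p_J)$.

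Finally, $p_J$ is a positively folded alcove path of reduced type by Proposition~\ref{prop:unfold}, and $\cQ(p_J)=\prod_{i\in\Iaff}(\sq_i-\sq_i^{-1})^{f_i(p_J)}$ has $\sq$-degree exactly $\sum_iL(s_i)f_i(p_J)$. The standard weighted bound recalled in Example~\ref{ex:lowestcell} (from \cite[Lemma~7.7]{Lus:85}, \cite[Lemma~6.2]{GP:19}) then gives $\deg\cQ(p_J)\leq L(\sw_0)$. Combining these inequalities yields $\deg\cQ_{J,\hat{\sv}}(p)\leq L(\sw_0)$ for every $J$-folded alcove path $p$ of reduced type, so each summand in the expression for $[\pi_{J,\hat{\sv}}(T_w)]_{u,v}$ has $\sq$-degree at most $L(\sw_0)$, and therefore so does their sum. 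There is no real obstacle here: because every $\hat{\sv}_\alpha$ has non-positive degree, the awkward cancellations of leading terms emphasised in Remark~\ref{rem:cancellations} can only help us, so a termwise estimate suffices and the proof is essentially a direct reduction to the classical positively folded case.
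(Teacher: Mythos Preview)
Your proof is correct and follows essentially the same approach as the paper: use the path formula to reduce to bounding $\deg\cQ_{J,\hat{\sv}}(p)$, apply Proposition~\ref{prop:unfold2} to factor this as $\cQ(p_J)$ times a product of $\hat{\sv}$-values, observe that the latter has non-positive degree, and invoke the classical bound $\deg\cQ(p_J)\leq L(\sw_0)$. One small imprecision: in the non-reduced case $\hat{\sv}_{\alpha_n}=\sq^{L(s_0)-L(s_n)}$ is not literally of the form $-\sq^{-L(s_j)}$, but Convention~\ref{conv:sym} ensures $L(s_0)-L(s_n)\leq 0$, so your conclusion $\deg\hat{\sv}_\beta\leq 0$ still holds.
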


\begin{proof}
By Theorem~\ref{thm:mainpath1} we have, for $w\in\Wext$ and $u,v\in\sB_{J,\hat{\sv}}$,
\begin{align}\label{eq:ineq}
\deg[\pi_{J,\hat{\sv}}(T_w)]_{u,v}\leq \max\{\deg \cQ_{J,\hat{\sv}}(p)\mid p\in \cP_J(\vec{w},u)\text{ with }\theta^J(p)=v\},
\end{align}
and by Proposition~\ref{prop:unfold2} we have 
\begin{align}\label{eq:secondform}
\cQ_{J,\hat{\sv}}(p)=\cQ(p_J)\prod_{\alpha-k\delta\in\widetilde{\Phi}^+}\sv_{\alpha-k\delta}^{c_{\alpha,k}(p_J)}
\end{align}
for all positively $J$-folded paths~$p$. It is well known that $\deg\cQ(p')\leq L(\sw_0)$ for all positively folded alcove paths $p'$ (see \cite[Lemma~7.7]{Lus:85} or \cite[Lemma~6.2]{GP:19}). Since the second term in~(\ref{eq:secondform}) has degree bounded above by~$0$ we have $\deg\cQ_{J,\hat{\sv}}(p)\leq L(\sw_0)$.
\end{proof}

\subsection{Conjectures}\label{sec:conj}

The precise value of the bound $\ba_{J,\sv}$ appears to be a very subtle statistic, and we conjecture (based on the analysis in \cite{GP:19,GP:19b} and the examples below) that it is intimately connected to Lusztig's $\ba$-function~\cite{Lus:03}, Macdonald's $c$-function~\cite{Mac:71}, and Opdam's Plancherel Theorem~\cite{Opd:04}. Again Convention~\ref{conv:sym} is assumed to be in force. 
Firstly, we believe that the upper bound $L(\sw_0)$ for $\ba_{J,\hat{\sv}}$ given in Theorem~\ref{thm:negbound} applies more generally. 

\begin{conjecture}\label{conj:upperbound}
 If $(\pi_{J,\sv},M_{J,\sv},\sB_{J,\sv})$ is bounded then the bound $\ba_{J,\sv}$ satisfies $\ba_{J,\sv}\leq L(\sw_0)$ with equality if and only if $J=\emptyset$.
 \end{conjecture}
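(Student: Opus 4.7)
The plan is to attack this conjecture via the path formula (Theorem~\ref{thm:mainpath1}) combined with the $J$-straightening decomposition (Proposition~\ref{prop:unfold2}), in parallel with the strategy used for Theorem~\ref{thm:negbound}. By Theorem~\ref{thm:mainpath1} and the triangle inequality we have
$$
\deg[\pi_{J,\sv}(T_w)]_{u,v}\leq \max\{\deg\cQ_{J,\sv}(p)\mid p\in\cP_J(\vec w,u),\,\theta^J(p)=v\},
$$
so the main task for the upper bound is to show $\deg\cQ_{J,\sv}(p)\leq L(\sw_0)$ for every $J$-folded alcove path $p$ of reduced type. The equality criterion then has to be handled by a more delicate analysis, potentially complicated by the cancellations described in Remark~\ref{rem:cancellations}.

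For the upper bound, first decompose the $J$-straightening $p_J$ into maximal straight sub-paths $p_0,p_1,\ldots,p_k$ separated by folds $f_1,\ldots,f_k$ (as in the proof of Theorem~\ref{thm:bound}). Then by Proposition~\ref{prop:unfold2}
$$
\cQ_{J,\sv}(p)=\cQ(p_J)\prod_{j=0}^{k}\psi_{J,\sv}(T'_{w_j}),
$$
where $w_j\in\WJaff$ is determined by the endpoints of $p_j$ in the $\WJaff$-action on $J$-alcoves. Since $p_J$ is positively folded of reduced type we have $k=f(p_J)\leq \ell(\sw_0)$ (the classical bound), giving $\deg\cQ(p_J)\leq L(\sw_0)$. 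The remaining task is to show $\sum_j\deg\psi_{J,\sv}(T'_{w_j})\leq L(\sw_0)-\deg\cQ(p_J)$; the natural approach is to decompose each $w_j$ as a translation times an element of the finite Weyl group and use Theorem~\ref{thm:bound}(2) (namely $\deg\sv^{\lambda}\leq 0$ for $\lambda\in P^+$) together with a finite uniform bound coming from the finite part, tracking lengths carefully against the fact that folds in $p_J$ and crossings of $\Phi_J$-walls are complementary features of the underlying reduced expression.

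Equality for $J=\emptyset$ is exactly Example~\ref{ex:lowestcell}, so it remains to prove the strict inequality $\ba_{J,\sv}<L(\sw_0)$ whenever $J\neq\emptyset$. The plan here is to argue that attaining the bound $L(\sw_0)$ forces the existence of a $J$-folded path $p$ of reduced type with $\cQ(p_J)$ of maximal degree $L(\sw_0)$ and all $\psi_{J,\sv}(T'_{w_j})$ of degree $0$. Maximal degree of $\cQ(p_J)$ requires $f(p_J)=\ell(\sw_0)$ folds, each placed on a distinct parallelism class of walls in $\Phi^+$. Since no fold of $p$ occurs on a $\Phi_J$-wall, the $\ell(\sw_0)$ walls used for folds in $p_J$ must all lie in parallelism classes of $\Phi^+\setminus\Phi_J$; however when $J\neq\emptyset$ there are only $|\Phi^+|-|\Phi_J^+|<\ell(\sw_0)$ such parallelism classes available among the $|\Phi^+|=\ell(\sw_0)$ needed, giving the desired contradiction at the level of path masses.

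The main obstacle is bridging the gap between ``every individual summand has degree $<L(\sw_0)$'' and ``the full matrix coefficient has degree $<L(\sw_0)$.'' As emphasised in Remark~\ref{rem:cancellations}, it is not automatic that a uniform bound on $\deg\cQ_{J,\sv}(p)$ transfers to the bound $\ba_{J,\sv}$ on matrix entries, only that the converse implication is easy; however here the implication we need is the easy direction (paths bounded strictly by $N$ implies matrix entries bounded by $N$), so the combinatorial argument of the previous paragraph actually suffices to give strict inequality without worrying about cancellations. The subtler part is the quantitative estimate in the upper-bound paragraph, where one must show that any excess in $\sum_j\deg\psi_{J,\sv}(T'_{w_j})$ beyond its ``expected'' value is compensated by a corresponding deficit in $\deg\cQ(p_J)$; this is likely to require a length-additivity argument coordinating the decomposition of $w$ with the $J$-alcove traversal of $p$.
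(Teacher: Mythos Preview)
This statement is labelled a \emph{conjecture} in the paper and is not proved there in full generality. The paper establishes only (i) the upper bound $\ba_{J,\hat{\sv}}\leq L(\sw_0)$ for the particular $J$-parameter system $\hat{\sv}$ with all $\sv_\alpha$ of nonpositive degree (Theorem~\ref{thm:negbound}), and (ii) that the full statement would follow from the stronger Conjecture~\ref{conj:strongconjecture}. So there is no ``paper's own proof'' to compare against; you are attempting to settle an open statement.

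Your proposal has two genuine gaps. First, for the upper bound you correctly arrive at $\deg\cQ_{J,\sv}(p)=\deg\cQ(p_J)+\sum_j\deg\psi_{J,\sv}(T'_{w_j})$, but then state that ``the remaining task'' is to show $\sum_j\deg\psi_{J,\sv}(T'_{w_j})\leq L(\sw_0)-\deg\cQ(p_J)$. This \emph{is} the conjecture: for a general bounded $\sv$ the contributions $\deg\psi_{J,\sv}(T'_{w_j})$ can be strictly positive (e.g.\ in Example~\ref{ex:G21}(3) with $J=I$ one has no folds at all and $\deg\cQ_{J,\sv}(p)=\deg\psi_{J,\sv}(T'_w)=3a-2b>0$), so there is no reason the sum is $\leq 0$, and your sketch (``decompose each $w_j$ as a translation times a finite part'') does not explain how to trade off positive $\psi$-degree against a deficit in $\deg\cQ(p_J)$. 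The argument works exactly when $\sv=\hat{\sv}$, recovering Theorem~\ref{thm:negbound}, but not beyond.

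Second, your strict-inequality argument for $J\neq\emptyset$ begins ``attaining the bound $L(\sw_0)$ forces $\cQ(p_J)$ of maximal degree $L(\sw_0)$ and all $\psi_{J,\sv}(T'_{w_j})$ of degree $0$''. This inference already presupposes $\sum_j\deg\psi_{J,\sv}(T'_{w_j})\leq 0$, which as noted is false in general. Without that, a path could achieve total degree $L(\sw_0)$ with $\deg\cQ(p_J)<L(\sw_0)$ and positive $\psi$-contribution, and your parallelism-class counting argument never engages. (Even for $\hat{\sv}$ the paper does not claim the strict inequality; Theorem~\ref{thm:negbound} gives only $\leq$.)
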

 
In fact, we shall state a considerably stronger conjecture giving a formula for $\ba_{J,\sv}$. For $\alpha\in\Phi$ we define $\sq_{\alpha}\in\ZZ[\sq,\sq^{-1}]$ as follows. If $\Phi$ is reduced, let
$$
\sq_{\alpha}=\sq^{L(s_i)}\quad\text{if $\alpha\in W_0\alpha_i$},
$$
and if $\Phi$ is not reduced let
$$
\sq_{\alpha}=\begin{cases}
\sq^{L(s_i)}&\text{if $\alpha\in W_0\alpha_i$ with $i\neq n$}\\
\sq^{L(s_n)-L(s_0)}&\text{if $\alpha\in W_0\alpha_n$}\\
\sq^{L(s_0)}&\text{if $\alpha$ is long.}
\end{cases}
$$

\begin{conjecture}\label{conj:strongconjecture}
If $(\pi_{J,\sv},M_{J,\sv},\sB_{J,\sv})$ is bounded then the bound $\ba_{J,\sv}$ is given by
\begin{align*}
\ba_{J,\sv}=L(\sw_0)-\frac{1}{2}\deg {\prod_{\alpha\in\Phi}}'\frac{1-\sq_{\alpha/2}^{-1}\sv^{\alpha^{\vee}}}{1-\sq_{\alpha/2}^{-1}\sq_{\alpha}^{-2}\sv^{\alpha^{\vee}}}
\end{align*}
where ${\prod}'$ indicates that any factors in the numerator or denominator that are~$0$ are omitted. 
\end{conjecture}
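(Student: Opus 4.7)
The overall strategy is to combine the path formula of Theorem~\ref{thm:mainpath1} with the structure of Opdam's Plancherel density, of which the displayed rational function on the right-hand side is essentially the Macdonald $c$-function attached to the character $\psi_{J,\sv}$. First, by Theorem~\ref{thm:mainpath1} and Proposition~\ref{prop:unfold2}, the matrix entry $[\pi_{J,\sv}(T_w)]_{u,v}$ is a $\ZZ[\sq^{\pm 1}]$-linear combination, indexed by positively folded alcove paths $p'$ (of reduced type~$\vec{w}$ starting at $u$) whose folds avoid every $\Phi_J$-wall, of monomials of the form
$$
\cQ(p')\prod_{\alpha-k\delta\in\widetilde{\Phi}^+}\sv_{\alpha-k\delta}^{c_{\alpha,k}(p')}.
$$
This reduces the determination of $\ba_{J,\sv}$ to understanding the maximal degree attained by such monomials, together with a cancellation analysis of the type highlighted in Remark~\ref{rem:cancellations}.

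For the upper bound, I would fix a sequence $w_N = t_{N\lambda}$ of translations for a dominant coweight $\lambda$, together with galleried reduced expressions $\vec{w}_N$ adapted to an antidominant-walk, and track two contributions separately: folds contribute at most $L(\sw_0)$ in total by \cite[Lemma~7.7]{Lus:85}, while crossings of $\Phi_J$-walls contribute the $\sv$-monomial part. Grouping the $\Phi_J$-crossings by parallelism class and summing over $k$, the $\alpha$-contribution becomes (up to lower order terms in $N$) a geometric series in $\sq_{\alpha/2}^{-1}\sv^{\alpha^{\vee}}$ whose dominant degree is $\deg(1-\sq_{\alpha/2}^{-1}\sv^{\alpha^{\vee}})-\deg(1-\sq_{\alpha/2}^{-1}\sq_{\alpha}^{-2}\sv^{\alpha^{\vee}})$; this is precisely the contribution of $\alpha$ to the claimed formula. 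The factor $1/2$ reflects the appearance of each pair $\{\alpha,-\alpha\}$, while the shift $L(\sw_0)$ records the maximal $\cQ(p')$-contribution from the finite Weyl group portion of the walk. The inequality $\ba_{J,\sv}\leq L(\sw_0)-\tfrac{1}{2}\deg(\cdots)$ should then follow by combining Theorem~\ref{thm:bound} with an orbit-counting argument reducing the sum over $\widetilde{\Phi}^+$-crossings to the claimed product over~$\Phi$.

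For the lower bound one must exhibit, for arbitrarily large~$N$, matrix coefficients actually attaining the asserted degree. My plan here is to construct explicit extremal $J$-folded paths $p$ whose $J$-straightening $p_J$ performs no folds (so that $\cQ(p_J)=\sq^{L(\sw_0)}$-type contribution is maximal), while the induced crossing statistics on the $\Phi_J$-walls realize the above extremal degrees for each simple class. The existence of such paths can be arranged by placing the folds of $p_J$ along the unique antidominant gallery in a single $W_0$-translate of $\cA_J$, and then showing that the total contribution of all such extremal paths, for fixed end-alcove, is a non-zero element of $\sR[\zJ]$ after specialising $\zeta$ generically; this non-vanishing should follow from the analogue of the $c$-function expansion used on the upper-bound side. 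The alternative approach, via Casselman's criterion and Opdam's Plancherel formula for the tempered representation associated to $\psi_{J,\sv}$, would directly identify the Plancherel density $\mu(\psi_{J,\sv})$ with the reciprocal of the product in the conjecture; the degree of $\mu(\psi_{J,\sv})$ governs the leading exponential growth of matrix coefficients of the induced representation and should translate into the asserted value of $\ba_{J,\sv}$.

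The main obstacle is the cancellation phenomenon of Remark~\ref{rem:cancellations}: the naive upper bound on path masses can significantly exceed $\ba_{J,\sv}$, and one has to organise the $J$-folded paths into packets within which the cancellations are transparent. In affine rank~$3$ this is handled case-by-case in \cite{GP:19b,GP:19}; for general rank one likely needs to introduce a leading-term refinement of the path model, for instance by defining equivalence classes of paths modulo local flips that preserve $\theta^J(p)$ and $\wt(p)$ but change $\cQ_{J,\sv}(p)$ by lower-degree terms, in such a way that each non-trivial equivalence class sums to a quantity of controllable degree. Once such a packet structure is found, matching the result against the Macdonald $c$-function expression should be a purely combinatorial computation using Lemmas~\ref{lem:Jparameter1}--\ref{lem:Jparameter2}.
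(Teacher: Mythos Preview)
The statement you are attempting to prove is stated in the paper as a \emph{conjecture}, not a theorem; the paper does not provide a proof in general. The paper only verifies it in the cases $J=\emptyset$ (all types) and affine rank $\leq 3$ (Theorem~\ref{thm:verify1}), and separately for $\sA_n$ with $J=\{1,\ldots,n-1\}$ (Section~\ref{sec:An}). In each of these verifications the argument is essentially case-by-case, using either known cell computations or explicit control of paths in a low-complexity setting. Remark~\ref{rem:origins} explains the Plancherel-theoretic heuristic behind the conjectural formula, but this is presented as motivation, not as a proof strategy the paper carries out.

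Your plan correctly identifies the relevant ingredients (the path formula, $J$-straightening, the Macdonald $c$-function, Opdam's Plancherel theorem), but it does not close the gap that keeps the statement conjectural. Concretely: for the upper bound, your ``geometric series in $\sq_{\alpha/2}^{-1}\sv^{\alpha^{\vee}}$'' step is heuristic---the crossing counts $c_{\alpha,k}(p_J)$ depend on the individual path, and the sum over paths is precisely where the cancellations of Remark~\ref{rem:cancellations} intervene, so one cannot read off the degree term-by-term. Note that even the much weaker bound $\ba_{J,\sv}\leq L(\sw_0)$ (Conjecture~\ref{conj:upperbound}) is only proved in the paper for the special parameter system $\hat{\sv}$ (Theorem~\ref{thm:negbound}), and is shown to follow from Conjecture~\ref{conj:strongconjecture} rather than being established independently. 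For the lower bound, your proposed extremal paths are not specified, and the assertion that their total contribution is nonzero ``after specialising $\zeta$ generically'' is exactly the non-cancellation statement that is not known. Your final paragraph acknowledges this: the ``packet structure'' you propose is a reasonable aspiration, but you have not defined the equivalence relation or shown that packets sum to controllable degree. Until that is done, what you have is a restatement of the problem together with the same heuristic (Plancherel/$c$-function) that the paper already records in Remark~\ref{rem:origins}.
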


We make the following conjecture, linking bounded representations to \textit{Lusztig's $\ba$ function} (see \cite{Lus:03} for the definition of the $\ba$-function). In particular, note that Conjecture~\ref{conj:cells} combined with Conjecture~\ref{conj:strongconjecture} give a conjectural formula for the value of Lusztig's $\ba$-function at elements $w\in\Waff$ that are recognised by some bounded representation $(\pi_{J,\sv},M_{J,\sv},\sB_{J,\sv})$. 

\begin{conjecture}\label{conj:cells}
Let $\Gamma_{J,\sv}$ be the cell recognised by the bounded representation~$(\pi_{J,\sv},M_{J,\sv},\sB_{J,\sv})$. Then
\begin{compactenum}[$(1)$]
\item Lusztig's $\ba$-function satisfies $\ba(w)=\ba_{J,\sv}$ for all $w\in\Gamma_{J,\sv}\cap\Waff$.
\item The set $\Gamma_{J,\sv}\cap \Waff$ is contained in a two sided Kazhdan-Lusztig cell of the weighted Coxeter group $(\Waff,L)$. 
\end{compactenum}
\end{conjecture}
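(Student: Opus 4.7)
The plan is to attack both parts of Conjecture~\ref{conj:cells} via Geck's framework of leading matrices and balanced representations, using the path formula (Theorem~\ref{thm:mainpath1}) as the main combinatorial engine and the irreducibility of $M_{J,\sv}$ (Corollary~\ref{cor:irreducible}) as the main structural input. The overall strategy mirrors the one carried out for rank $2$ in \cite{GP:19b,GP:19}, where the bounded module $M_{J,\sv}$ is promoted to a ``cell module'' realising the two-sided cell attached to~$\ba_{J,\sv}$.

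To prove part (1), I would first establish the easy inequality $\ba(w) \geq \ba_{J,\sv}$ for $w \in \Gamma_{J,\sv} \cap \Waff$. By definition there exist $u,v \in \sB_{J,\sv}$ with $\deg[\pi_{J,\sv}(T_w)]_{u,v} = \ba_{J,\sv}$, so the leading matrix $\fc_{J,\sv}(w)$ is nonzero. Expanding $T_w$ in the Kazhdan-Lusztig basis $\{C_y\}$ and using the standard degree estimate on Kazhdan-Lusztig polynomials, one reduces the claim to an induction on the Bruhat order: if $\ba(y) < \ba_{J,\sv}$ for all $y < w$ with $y \notin \Gamma_{J,\sv}$, then only $C_w$ can contribute in top degree. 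For the harder direction $\ba(w) \leq \ba_{J,\sv}$, the plan is to apply Proposition~\ref{prop:unfold2} to rewrite the matrix coefficients as weighted sums over positively folded paths avoiding folds on $\Phi_J$-walls, which is essentially the generic situation Geck analyses, and then to extract bounds on the structure constants $h_{x,y,w}$ in $T_xT_y = \sum_z h_{x,y,z}T_z$. Concretely, one controls $\deg h_{x,y,w}$ by controlling $\deg [\pi_{J,\sv}(T_x T_y)]_{u,v}$ for suitable $(u,v)$, which the path formula splits into a controlled sum.

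For part (2), the argument proceeds as follows. From part (1) one gets that $\Gamma_{J,\sv} \cap \Waff$ is contained in a union of two-sided cells having the same $\ba$-value, and the task is to show that only one such cell intervenes. Since $M_{J,\sv}$ is irreducible, the algebra $\cJ_{J,\sv}$ generated by the leading matrices $\{\fc_{J,\sv}(w) : w \in \Gamma_{J,\sv}\}$ acts irreducibly on the leading-term realisation of $M_{J,\sv}$. Combining this with the asymptotic ring $\cJ$ of Lusztig (whose simple modules are indexed by two-sided cells via $\mathbf{P4}$--$\mathbf{P11}$), the quotient of $\cJ$ associated to any two-sided cell meeting $\Gamma_{J,\sv}\cap\Waff$ must act on $M_{J,\sv}$, so irreducibility forces all such cells to coincide. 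The precise form of this argument follows the pattern used in~\cite{Geck:11} for finite Hecke algebras, adapted to the affine setting using Proposition~\ref{prop:weightspaces} to identify leading matrices with well-understood operators on the weight-space decomposition.

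The main obstacle is the cancellation phenomenon highlighted in Remark~\ref{rem:cancellations}: the path formula typically exhibits massive cancellation in top degree, and controlling these cancellations in arbitrary rank is precisely what makes Conjectures~\ref{conj:strongconjecture} and~\ref{conj:cells} hard. In the equal-parameter case one could hope to invoke positivity via Elias--Williamson, but here parameters are unequal and positivity fails, so a purely combinatorial handling of the cancellations is needed. A promising route is to work first in the intertwiner basis of Proposition~\ref{prop:newbasis}, where the action is diagonalised on weight spaces and the leading matrices become computable via the intertwiner relation~(\ref{eq:tau2}), and only then pass back to $\sB_{J,\sv}$ via a controlled change of basis. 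In this approach, Conjecture~\ref{conj:strongconjecture} would likely have to be established in tandem with Conjecture~\ref{conj:cells}, since the explicit formula for $\ba_{J,\sv}$ in terms of Macdonald's $c$-function provides exactly the scalar needed to match $\ba(w)$ via the Plancherel density, thereby closing the loop.
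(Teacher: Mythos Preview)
The statement you are trying to prove is a \emph{conjecture}, and the paper does not claim to prove it in general. The paper only verifies Conjecture~\ref{conj:cells} in specific cases: Theorem~\ref{thm:verify1} handles $J=\emptyset$ (via \cite{Gui:08}) and the rank~$\leq 3$ cases (via the explicit computations of \cite{GP:19b,GP:19}), and Section~\ref{sec:An} handles $\tilde{\sA}_n$ with $J=\{1,\ldots,n-1\}$ by a direct combinatorial analysis of the unique-reduced-expression cell $\Gamma_!$ together with the known cell structure from \cite{Bon:17}. In each of these verifications the Kazhdan--Lusztig cell decomposition and the $\ba$-function are already known independently, and the work consists of matching $\ba_{J,\sv}$ and $\Gamma_{J,\sv}$ against that pre-existing data. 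No general argument is offered.

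Your proposal, by contrast, attempts a general proof, and it has two genuine gaps. First, the ``harder direction'' $\ba(w)\leq\ba_{J,\sv}$ cannot be obtained by controlling $\deg[\pi_{J,\sv}(T_xT_y)]_{u,v}$: the representation $\pi_{J,\sv}$ is finite dimensional and hence far from faithful, so bounds on its matrix entries say nothing about the degrees of the structure constants $h_{x,y,z}$ defining $\ba$. (Indeed many $T_z$ with large $\ba(z)$ may contribute to $T_xT_y$ and cancel after applying $\pi_{J,\sv}$.) Second, your argument for part~(2) invokes Lusztig's asymptotic algebra $\cJ$ and properties $\mathbf{P4}$--$\mathbf{P11}$; in the unequal-parameter setting these are themselves open conjectures, so this step is circular. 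The paper is explicit that the point of constructing the modules $M_{J,\sv}$ is to provide tools \emph{towards} proving $\mathbf{P1}$--$\mathbf{P15}$, not to assume them.
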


\begin{remark}
It is not necessarily true that $\Gamma_{J,\sv}$ \textit{equals} a two sided Kazhdan-Lusztig cell. For example, in Example~\ref{ex:G21}(2) and (4) the set $\Gamma_{J,\sv}$ is strictly contained in a two sided cell (see \cite[Figure~2]{GP:19} for the cell decomposition of $\tilde{\sG}_2$). 
\end{remark}

\newpage

\begin{example}
Consider the case $\Phi=\sF_4$, with notation as in Example~\ref{ex:F4}. Writing $r=a/b$, the conjectural bounds $\ba_{\pi}$ (from Conjecture~\ref{conj:strongconjecture}) for a selection of the bounded representations of $\tilde{\sF}_4$ are as follows:
\begin{compactenum}[$(1)$]
\item \begin{tikzpicture}[scale=0.5,baseline=-0.5ex]
\node [inner sep=0.8pt,outer sep=0.8pt] at (0,0) (1) {$\circ$};
\node [inner sep=0.8pt,outer sep=0.8pt] at (1,0) (2) {$\circ$};
\node [inner sep=0.8pt,outer sep=0.8pt] at (2,0) (3) {$\bullet$};
\node [inner sep=0.8pt,outer sep=0.8pt] at (3,0) (4) {$\circ$};
\node at (0,0.6) {};
\draw (0.12,0.01)--(0.88,0.01);
\draw (2.12,0.01)--(2.88,0.01);
\draw (1.1,0.1)--(1.9,0.1);
\draw (1.1,-0.075)--(1.9,-0.075);
\draw ({1.5-0.125},0.25) -- (1.5+0.08,0) -- (1.5-0.125,-0.25);
\draw [line width=0.5pt,line cap=round,rounded corners] (1.north west)  rectangle (3.south east);
\end{tikzpicture} $\ba_{\pi}=2a+2b,5a,6a-b,11a-7b$ for $r\in (0,2/3],[2/3,1],[1,6/5],[6/5,2]$. 
\item \begin{tikzpicture}[scale=0.5,baseline=-0.5ex]
\node [inner sep=0.8pt,outer sep=0.8pt] at (0,0) (1) {$\bullet$};
\node [inner sep=0.8pt,outer sep=0.8pt] at (1,0) (2) {$\bullet$};
\node [inner sep=0.8pt,outer sep=0.8pt] at (2,0) (3) {$\circ$};
\node [inner sep=0.8pt,outer sep=0.8pt] at (3,0) (4) {$\circ$};
\node at (0,0.6) {};
\draw (0.12,0.01)--(0.88,0.01);
\draw (2.12,0.01)--(2.88,0.01);
\draw (1.1,0.1)--(1.9,0.1);
\draw (1.1,-0.075)--(1.9,-0.075);
\draw ({1.5-0.125},0.25) -- (1.5+0.08,0) -- (1.5-0.125,-0.25);
\draw [line width=0.5pt,line cap=round,rounded corners] (1.north west)  rectangle (3.south east);
\end{tikzpicture} $\ba_{\pi}=4a+12b$ for $r\in [4,\infty)$. 
\item \begin{tikzpicture}[scale=0.5,baseline=-0.5ex]
\node [inner sep=0.8pt,outer sep=0.8pt] at (0,0) (1) {$\circ$};
\node [inner sep=0.8pt,outer sep=0.8pt] at (1,0) (2) {$\bullet$};
\node [inner sep=0.8pt,outer sep=0.8pt] at (2,0) (3) {$\circ$};
\node [inner sep=0.8pt,outer sep=0.8pt] at (3,0) (4) {$\circ$};
\node at (0,0.6) {};
\draw (0.12,0.01)--(0.88,0.01);
\draw (2.12,0.01)--(2.88,0.01);
\draw (1.1,0.1)--(1.9,0.1);
\draw (1.1,-0.075)--(1.9,-0.075);
\draw ({1.5-0.125},0.25) -- (1.5+0.08,0) -- (1.5-0.125,-0.25);
\draw [line width=0.5pt,line cap=round,rounded corners] (2.north west)  rectangle (4.south east);
\end{tikzpicture} $\ba_{\pi}=-2a+11b,a+6b,2a+3b$ for $r\in [1,5/3],[5/3,3],[3,\infty)$. 
\item \begin{tikzpicture}[scale=0.5,baseline=-0.5ex]
\node [inner sep=0.8pt,outer sep=0.8pt] at (0,0) (1) {$\circ$};
\node [inner sep=0.8pt,outer sep=0.8pt] at (1,0) (2) {$\circ$};
\node [inner sep=0.8pt,outer sep=0.8pt] at (2,0) (3) {$\bullet$};
\node [inner sep=0.8pt,outer sep=0.8pt] at (3,0) (4) {$\bullet$};
\node at (0,0.6) {};
\draw (0.12,0.01)--(0.88,0.01);
\draw (2.12,0.01)--(2.88,0.01);
\draw (1.1,0.1)--(1.9,0.1);
\draw (1.1,-0.075)--(1.9,-0.075);
\draw ({1.5-0.125},0.25) -- (1.5+0.08,0) -- (1.5-0.125,-0.25);
\draw [line width=0.5pt,line cap=round,rounded corners] (2.north west)  rectangle (4.south east);
\end{tikzpicture} $\ba_{\pi}=6a+4b,9a+3b$ for $r\in (0,1/3],[1/3,1/2]$. 
\item \begin{tikzpicture}[scale=0.5,baseline=-0.5ex]
\node [inner sep=0.8pt,outer sep=0.8pt] at (0,0) (1) {$\circ$};
\node [inner sep=0.8pt,outer sep=0.8pt] at (1,0) (2) {$\circ$};
\node [inner sep=0.8pt,outer sep=0.8pt] at (2,0) (3) {$\bullet$};
\node [inner sep=0.8pt,outer sep=0.8pt] at (3,0) (4) {$\circ$};
\node at (0,0.6) {};
\draw (0.12,0.01)--(0.88,0.01);
\draw (2.12,0.01)--(2.88,0.01);
\draw (1.1,0.1)--(1.9,0.1);
\draw (1.1,-0.075)--(1.9,-0.075);
\draw ({1.5-0.125},0.25) -- (1.5+0.08,0) -- (1.5-0.125,-0.25);
\draw [line width=0.5pt,line cap=round,rounded corners] (2.north west)  rectangle (3.south east);
\end{tikzpicture} $\ba_{\pi}=4a+12b,6a+4b$ for $r\in [2,4],[4,\infty)$. 
\item \begin{tikzpicture}[scale=0.5,baseline=-0.5ex]
\node [inner sep=0.8pt,outer sep=0.8pt] at (0,0) (1) {$\circ$};
\node [inner sep=0.8pt,outer sep=0.8pt] at (1,0) (2) {$\bullet$};
\node [inner sep=0.8pt,outer sep=0.8pt] at (2,0) (3) {$\circ$};
\node [inner sep=0.8pt,outer sep=0.8pt] at (3,0) (4) {$\circ$};
\node at (0,0.6) {};
\draw (0.12,0.01)--(0.88,0.01);
\draw (2.12,0.01)--(2.88,0.01);
\draw (1.1,0.1)--(1.9,0.1);
\draw (1.1,-0.075)--(1.9,-0.075);
\draw ({1.5-0.125},0.25) -- (1.5+0.08,0) -- (1.5-0.125,-0.25);
\draw [line width=0.5pt,line cap=round,rounded corners] (2.north west)  rectangle (3.south east);
\end{tikzpicture} $\ba_{\pi}=3a+6b,11a+2b$ for $r\in(0,1/2], [1/2,1]$. 
\item \begin{tikzpicture}[scale=0.5,baseline=-0.5ex]
\node [inner sep=0.8pt,outer sep=0.8pt] at (0,0) (1) {$\bullet$};
\node [inner sep=0.8pt,outer sep=0.8pt] at (1,0) (2) {$\bullet$};
\node [inner sep=0.8pt,outer sep=0.8pt] at (2,0) (3) {$\circ$};
\node [inner sep=0.8pt,outer sep=0.8pt] at (3,0) (4) {$\circ$};
\node at (0,0.6) {};
\draw (0.12,0.01)--(0.88,0.01);
\draw (2.12,0.01)--(2.88,0.01);
\draw (1.1,0.1)--(1.9,0.1);
\draw (1.1,-0.075)--(1.9,-0.075);
\draw ({1.5-0.125},0.25) -- (1.5+0.08,0) -- (1.5-0.125,-0.25);
\draw [line width=0.5pt,line cap=round,rounded corners] (1.north west)  rectangle (4.south east);
\end{tikzpicture} $\ba_{\pi}=-2a+11b,-a+9b,6b$ for $r\in [5/3,2],[2,3],[3,\infty)$. 
\item \begin{tikzpicture}[scale=0.5,baseline=-0.5ex]
\node [inner sep=0.8pt,outer sep=0.8pt] at (0,0) (1) {$\circ$};
\node [inner sep=0.8pt,outer sep=0.8pt] at (1,0) (2) {$\circ$};
\node [inner sep=0.8pt,outer sep=0.8pt] at (2,0) (3) {$\bullet$};
\node [inner sep=0.8pt,outer sep=0.8pt] at (3,0) (4) {$\bullet$};
\node at (0,0.6) {};
\draw (0.12,0.01)--(0.88,0.01);
\draw (2.12,0.01)--(2.88,0.01);
\draw (1.1,0.1)--(1.9,0.1);
\draw (1.1,-0.075)--(1.9,-0.075);
\draw ({1.5-0.125},0.25) -- (1.5+0.08,0) -- (1.5-0.125,-0.25);
\draw [line width=0.5pt,line cap=round,rounded corners] (1.north west)  rectangle (4.south east);
\end{tikzpicture} $\ba_{\pi}=3a,5a-b,11a-7b$ for $r\in (0,1/2],[1/2,1],[1,6/5]$.
\end{compactenum}\smallskip

\noindent Thus Conjecture~\ref{conj:cells} predicts the existence of elements of the affine Weyl group of type $\tilde{\sF}_4$ with the above $\ba$-function values in the respective parameter ranges. 
\end{example}

\begin{prop}
Conjecture~\ref{conj:strongconjecture} implies Conjecture~\ref{conj:upperbound}. 
\end{prop}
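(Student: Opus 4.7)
The plan is to derive Conjecture~\ref{conj:upperbound} directly from the degree of the rational function
\begin{equation*}
P_{J,\sv}(\sq):={\prod_{\alpha\in\Phi}}'\frac{1-\sq_{\alpha/2}^{-1}\sv^{\alpha^{\vee}}}{1-\sq_{\alpha/2}^{-1}\sq_{\alpha}^{-2}\sv^{\alpha^{\vee}}}
\end{equation*}
that appears in Conjecture~\ref{conj:strongconjecture}. I would first establish $\ba_{J,\sv}\leq L(\sw_0)$ by showing $\deg P_{J,\sv}\geq 0$. The key step is to pair roots in the product as $\{\alpha,-\alpha\}$. With the notation $d_\alpha=\deg \sv^{\alpha^\vee}$, $\mu_\alpha=\deg \sq_\alpha$, $\mu_{\alpha/2}=\deg \sq_{\alpha/2}$ (and the convention $\sq_{\alpha/2}:=1$ if $\alpha/2\notin\Phi$), and using the elementary identity $\deg(1-c\sq^k)=\max(0,k)$ for nonzero scalar~$c$, a short case analysis in the three regimes $|d_\alpha|\leq \mu_{\alpha/2}$, $\mu_{\alpha/2}<|d_\alpha|\leq \mu_{\alpha/2}+2\mu_\alpha$, and $|d_\alpha|>\mu_{\alpha/2}+2\mu_\alpha$ yields the joint pairwise contribution
\begin{equation*}
c_\alpha\;=\;\max\bigl(0,\ \min(|d_\alpha|-\mu_{\alpha/2},\,2\mu_\alpha)\bigr)\geq 0.
\end{equation*}
A separate verification confirms that the prime-exclusion of vanishing factors does not alter $c_\alpha$, since in each pair a vanishing numerator is always accompanied by a vanishing denominator. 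Summing over $\alpha\in\Phi^+$ gives $\deg P_{J,\sv}=\sum_{\alpha\in\Phi^+}c_\alpha\geq 0$, hence $\ba_{J,\sv}\leq L(\sw_0)$.

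For the equality case I would introduce the vector $\rho_\sv=\sum_{\beta\in\Phi_J^+}(\deg \sv_\beta)\,\beta\in V_J$, which satisfies $d_\alpha=\langle \alpha^\vee,\rho_\sv\rangle$ for every $\alpha\in\Phi$. For $J=\emptyset$ one has $\rho_\sv=0$, a direct computation yields $\deg P_{\emptyset,\sv}=0$, and the conjectural formula gives $\ba_{\emptyset,\sv}=L(\sw_0)$. Conversely, $\deg P_{J,\sv}=0$ forces every $c_\alpha=0$, hence $d_\alpha=0$ for every $\alpha\in\Phi_1$ (the roots with $\alpha/2\notin\Phi$, for which $\mu_{\alpha/2}=0$). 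Since $\Phi_1^\vee$ already spans $V$, this forces $\rho_\sv=0$.

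It remains to show $\rho_\sv\neq 0$ whenever $J\neq \emptyset$. The decomposition $\rho_\sv=\sum_{K\in\cK(J)}\rho_\sv^{(K)}$ with each $\rho_\sv^{(K)}\in V_K$ and the components lying in pairwise orthogonal subspaces of $V_J$ reduces the question to connected $J$. The $W_J$-invariance built into Definition~\ref{defn:Jparameters}(1) lets one rewrite $\rho_\sv$ as a linear combination, with nonzero coefficients of the form $\pm L(s_k)$, of the orbit half-sums $\sum_{\beta\in\cO}\beta$ over the $W_J$-orbits $\cO$ of $\Phi_J^+$. By Lemma~\ref{lem:orthogonal} these half-sums are supported on pairwise disjoint subsets of the dual basis $\{\tilde\omega_j\}_{j\in J}$ of $V_J$ and are therefore linearly independent, forcing $\rho_\sv\neq 0$. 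The main obstacle is the non-reduced case: when $\Phi_J$ has a $\sBC_m$ factor, the short-root and long-root orbit half-sums of that factor are proportional, so the linear-independence argument must be supplemented by invoking the middle-root half-sum (which is independent of the short/long direction for $m\geq 2$), together with a delicate additional argument in the extreme case $\Phi_J=\sBC_1$ that exploits the extra slack $\mu_{\alpha/2}>0$ on long roots to rule out the equality~$\ba_{J,\sv}=L(\sw_0)$.
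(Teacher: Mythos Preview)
Your overall strategy is sound and closely parallels the paper's own argument for the inequality $\deg P_{J,\sv}\ge 0$: both pair $\alpha$ with $-\alpha$ and compute the degree contribution of each pair. Your formula $c_\alpha=\max\bigl(0,\min(|d_\alpha|-\mu_{\alpha/2},2\mu_\alpha)\bigr)$ is correct. Two small slips: first, what you call $\Phi_1$ (``the roots with $\alpha/2\notin\Phi$'') is by the paper's convention $\Phi_0$, not $\Phi_1$; this is harmless since your argument only needs that this set has $\mu_{\alpha/2}=0$ and that its coroots span $V$, both of which hold for $\Phi_0$. Second, your justification for the prime convention (``a vanishing numerator is always accompanied by a vanishing denominator'') is false as stated --- a numerator factor can vanish while the corresponding denominator factor does not --- but your conclusion that $c_\alpha$ is unaffected is nonetheless correct, because a vanishing factor always has degree parameter $0$ and $\max(0,0)=0$ either way.

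Where your approach genuinely diverges from the paper is in the equality analysis. The paper, in the non-reduced case, groups the four factors for $\pm\alpha,\pm 2\alpha$ into a single quantity $C_\alpha$ indexed by $\alpha\in\Phi_0^+$, and shows directly that $\deg C_\alpha=0$ if and only if $\deg\sv^{\alpha^\vee}=0$; the step ``this forces $J=\emptyset$'' then follows in one line from Lemma~\ref{lem:Jparameter4} (for $j\in J$ one has $\sv^{\alpha_j^\vee}=\sv_{\alpha_j}^2$ or $\sv^{\alpha_n^\vee}=\sv_{\alpha_n}^2\sv_{2\alpha_n}^4$, and under the standing conventions $L(s_n)>L(s_0)$ neither can have degree zero). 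Your route via the vector $\rho_\sv$ and linear independence of orbit half-sums also works, but it is more elaborate than necessary, and your worry about a ``delicate additional argument'' for $\Phi_J=\sBC_1$ is misplaced: under Convention~\ref{conv:sym} together with Convention~\ref{conv:parameters} one has $L(s_n)>L(s_0)$ in the $\sBC$ setting, so the coefficient $\deg(\sv_{\alpha_n}\sv_{2\alpha_n})+\deg\sv_{2\alpha_n}=\pm L(s_n)\pm L(s_0)$ in your $\rho_\sv$ is automatically nonzero and no extra argument is needed. In short, the paper's grouping of $\alpha$ with $2\alpha$ buys a cleaner equality criterion and avoids the orbit-sum bookkeeping; your separated treatment is correct but longer.
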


\begin{proof}
We have
$$
{\prod_{\alpha\in\Phi}}'\frac{1-\sq_{\alpha/2}^{-1}\sv^{\alpha^{\vee}}}{1-\sq_{\alpha/2}^{-1}\sq_{\alpha}^{-2}\sv^{\alpha^{\vee}}}={\prod_{\alpha\in\Phi^+}}'\frac{(1-\sq_{\alpha/2}^{-1}\sv^{\alpha^{\vee}})(1-\sq_{\alpha/2}^{-1}\sv^{-\alpha^{\vee}})}{(1-\sq_{\alpha/2}^{-1}\sq_{\alpha}^{-2}\sv^{\alpha^{\vee}})(1-\sq_{\alpha/2}^{-1}\sq_{\alpha}^{-2}\sv^{-\alpha^{\vee}})}.
$$
Suppose that $\alpha\in\Phi_0\cap\Phi_1$ (that is, $\alpha/2,2\alpha\notin\Phi$). Then $\sq_{\alpha}=\sq^a$ for some $a>0$, and writing $\sv^{\alpha^{\vee}}=\sq^{k}$ for some $k\in\ZZ$ gives
\begin{align*}
\frac{(1-\sq_{\alpha/2}^{-1}\sv^{\alpha^{\vee}})(1-\sq_{\alpha/2}^{-1}\sv^{-\alpha^{\vee}})}{(1-\sq_{\alpha/2}^{-1}\sq_{\alpha}^{-2}\sv^{\alpha^{\vee}})(1-\sq_{\alpha/2}^{-1}\sq_{\alpha}^{-2}\sv^{-\alpha^{\vee}})}&=
\frac{(1-\sq^k)(1-\sq^{-k})}{(1-\sq^{k-2a})(1-\sq^{k-2a})}
\end{align*}
Recalling the convention that any factors that are identically $0$ are removed, it follows that 
$$
\deg\frac{(1-\sq_{\alpha/2}^{-1}\sv^{\alpha^{\vee}})(1-\sq_{\alpha/2}^{-1}\sv^{-\alpha^{\vee}})}{(1-\sq_{\alpha/2}^{-1}\sq_{\alpha}^{-2}\sv^{\alpha^{\vee}})(1-\sq_{\alpha/2}^{-1}\sq_{\alpha}^{-2}\sv^{-\alpha^{\vee}})}=\begin{cases}
|k|&\text{if $0\leq |k|\leq 2a$}\\
2a&\text{if $2a\leq |k|$}.
\end{cases}
$$

If $\alpha\notin\Phi_0\cap\Phi_1$ (this only occurs in the non-reduced case) then we pair the four terms in the product related to $\alpha$ and $\alpha/2$ (if $\alpha\in\Phi_1$) or $2\alpha$ (if $\alpha\in\Phi_0$). We may thus assume that $\alpha\in\Phi_0$ (and so $\sq_{\alpha/2}=1$, $\sq_{\alpha}=\sq^{L(s_n)-L(s_0)}$, and $\sq_{2\alpha}=\sq^{L(s_0)}$), and the four terms combine to give
\begin{align*}
C_{\alpha}=&\frac{(1-\sv^{\alpha^{\vee}})(1-\sv^{-\alpha^{\vee}})(1-\sq_{\alpha}^{-1}\sv^{\alpha^{\vee}/2})(1-\sq_{\alpha}^{-1}\sv^{-\alpha^{\vee}/2})}{(1-\sq_{\alpha}^{-2}\sv^{\alpha^{\vee}})(1-\sq_{\alpha}^{-2}\sv^{-\alpha^{\vee}})(1-\sq_{\alpha}^{-1}\sq_{2\alpha}^{-2}\sv^{\alpha^{\vee}/2})(1-\sq_{\alpha}^{-1}\sq_{2\alpha}^{-2}\sv^{-\alpha^{\vee}/2})}\\
&=\frac{(1-\sv^{\alpha^{\vee}})(1-\sv^{-\alpha^{\vee}})}{(1+\sq^{c-a}\sv^{\alpha^{\vee}/2})(1+\sq^{c-a}\sv^{-\alpha^{\vee}/2})(1-\sq^{-a-c}\sv^{\alpha^{\vee}/2})(1-\sq^{-a-c}\sv^{-\alpha^{\vee}/2})},
\end{align*}
where $a=L(s_n)$ and $c=L(s_0)$. Recall that by Convention~\ref{conv:sym} we have $a-c>0$. Writing $\sv^{\alpha^{\vee}}=\sq^{2k}$ for some $k\in\ZZ$ then 
$$
\deg C_{\alpha}=\begin{cases}
2|k|&\text{if $0\leq |k|\leq a-c$}\\
|k|+a-c&\text{if $a-c\leq |k|\leq a+c$}\\
2a&\text{if $2a\leq |k|$}.
\end{cases}
$$

In summary, once the terms in the product are suitably grouped together we may write the product as ${\prod_{\alpha\in\Phi_0^+}}'C_{\alpha}$ where $\deg C_{\alpha}\geq 0$ with $\deg C_{\alpha}=0$ if and only if $\sv^{\alpha^{\vee}}=1$. Thus 
$$
L(\sw_0)-\frac{1}{2}\deg {\prod_{\alpha\in\Phi}}'\frac{1-\sq_{\alpha/2}^{-1}\sv^{\alpha^{\vee}}}{1-\sq_{\alpha/2}^{-1}\sq_{\alpha}^{-2}\sv^{\alpha^{\vee}}}\leq L(\sw_0)
$$
with equality if and only if $\sv^{\alpha^{\vee}}=1$ for all $\alpha\in\Phi_0^+$, which in turn forces $J=\emptyset$.
\end{proof}

\begin{thm}\label{thm:verify1}
Conjectures~\ref{conj:strongconjecture} and~\ref{conj:cells} hold in the following cases.
\begin{compactenum}[$(1)$]
\item All weighted affine Hecke algebras in the case $J=\emptyset$.
\item All weighted affine Hecke algebras of dimension~$1$ or $2$ (rank $2$ or $3$). 
\end{compactenum}
\end{thm}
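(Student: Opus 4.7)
The plan is to treat the two cases separately: Case~(1) is a uniform structural computation for all ranks, while Case~(2) is a finite verification leveraging the detailed analysis of \cite{GP:19b,GP:19}.

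For Case~(1), with $J=\emptyset$, the module $M_{\emptyset,\sv}$ is the principal series representation (Theorem~\ref{thm:induced}), and Example~\ref{ex:lowestcell} already records that $(\pi_{\emptyset,\sv},M_{\emptyset,\sv},\sB)$ is bounded with $\ba_{\emptyset,\sv}=L(\sw_0)$ and recognised set
$$
\Gamma=\{w_1\cdot \sw_\lambda\cdot w_2\mid w_1,w_2\in\Waff,\,\lambda\in P\}.
$$
The first step is to match this bound against Conjecture~\ref{conj:strongconjecture} by a direct degree calculation: when $J=\emptyset$ we have $\sv^{\alpha^\vee}=1$ for every $\alpha\in\Phi$, so each numerator factor $1-\sq_{\alpha/2}^{-1}\sv^{\alpha^\vee}=1-\sq_{\alpha/2}^{-1}$ vanishes exactly when $\alpha/2\notin\Phi$, and after omitting these zero factors according to the convention, the symmetry $\sq_\alpha=\sq_{-\alpha}$ forces both the surviving numerator and the surviving denominator product to have $\sq$-degree~$0$; hence the bracket evaluates to $L(\sw_0)$. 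For Conjecture~\ref{conj:cells}, the set $\Gamma\cap\Waff$ is identified with the weighted lowest two-sided Kazhdan--Lusztig cell, on which Lusztig's $\ba$-function is known to take the constant value $L(\sw_0)$; the geometric argument sketched in Example~\ref{ex:lowestcell} can be refined to verify both the $\ba$-value and cell containment.

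For Case~(2), the plan is to enumerate the bounded pairs $(J,\sv)$ for each affine type $\tilde{\sA}_1,\tilde{\sBC}_1,\tilde{\sA}_2,\tilde{\sC}_2,\tilde{\sG}_2,\tilde{\sBC}_2$ and each parameter regime using Theorem~\ref{thm:bound} and Proposition~\ref{prop:classifybounded}, and then to verify the two conjectures for each entry in the resulting finite list. For a given pair $(J,\sv)$ the bound $\ba_{J,\sv}$ is computed directly from the path formula (Theorem~\ref{thm:mainpath1}) combined with $J$-straightening (Proposition~\ref{prop:unfold2}), and the match with the Euler product in Conjecture~\ref{conj:strongconjecture} is then a finite algebraic manipulation. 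The recognised set $\Gamma_{J,\sv}$ is compared to the known decompositions of $\Waff$ into two-sided Kazhdan--Lusztig cells: for $\tilde{\sC}_2$ and $\tilde{\sG}_2$ with arbitrary (unequal) parameters this comparison is essentially carried out in \cite{GP:19b,GP:19}, where combinatorial modules analogous to $M_{J,\sv}$ were used to realise balanced systems of cell representations; for the equal-parameter types $\tilde{\sA}_1,\tilde{\sA}_2$ the cell decomposition is classical, and the non-reduced types $\tilde{\sBC}_1,\tilde{\sBC}_2$ are handled by treating the additional short-root parameter explicitly.

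The main obstacle will be the cancellation phenomenon highlighted in Remark~\ref{rem:cancellations}: $J$-folded paths of reduced type give an immediate upper bound on $\deg[\pi_{J,\sv}(T_w)]_{u,v}$, but cancellations among maximal-degree paths can produce an actual matrix entry of strictly smaller degree, so that the path formula alone is insufficient to determine $\ba_{J,\sv}$. In dimension~$\leq 2$ the geometry of the fundamental $J$-alcove $\cA_J$ is simple enough (an interval, a triangle, or an infinite strip) that these cancellations can be tracked by direct inspection over a small set of critical reduced expressions, and the sharp value of $\ba_{J,\sv}$ can be read off explicitly; this is the combinatorial heart of the rank-$3$ arguments of \cite{GP:19b,GP:19}, and it is precisely the step that resists extension to higher rank without new input, which is why the conjectures remain open in general.
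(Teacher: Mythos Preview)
Your proposal is correct and follows essentially the same route as the paper's proof. A few small points of comparison: in Case~(1) for Conjecture~\ref{conj:strongconjecture}, the ``symmetry $\sq_\alpha=\sq_{-\alpha}$'' is not really what gives degree~$0$---each surviving factor is individually of the form $1-\sq^{-k}$ with $k>0$ and hence has degree~$0$; and for Conjecture~\ref{conj:cells} the paper invokes \cite[Theorem~4.6]{Gui:08} to identify $\Gamma$ with the lowest two-sided cell rather than refining Example~\ref{ex:lowestcell} directly. In Case~(2) the paper does not recompute the bounds $\ba_{J,\sv}$ via the path formula but simply quotes the values already established in \cite[Theorem~7.10]{GP:19} and \cite[Theorems~6.15, 6.21, 6.22]{GP:19b}, and then explicitly evaluates the Euler product in Conjecture~\ref{conj:strongconjecture} (worked out in detail for $\sG_2$) to check agreement---so your final paragraph on cancellations, while accurate, describes work already absorbed into those citations rather than work that needs to be redone here.
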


\begin{proof}
(1) follows from Example~\ref{ex:lowestcell} and \cite[Theorem~4.6]{Gui:08}, which shows that the set $\Gamma$ from Example~\ref{ex:lowestcell} is the lowest two sided Kazhdan-Lusztig cell of $(W,L)$, and it is well known that $\ba(w)=L(\sw_0)$ for all elements of this cell.

(2) The cases $\Phi\in\{\sA_1,\sBC_1,\sA_2\}$ are easy exercises and are omitted (see \cite[Figure~1]{Lus:85} and \cite[\S7]{Lus:03} for the decomposition of $\sA_2$, $\sA_1$, and $\sBC_1$ into cells). Conjecture~\ref{conj:cells} for $\Phi\in\{\sC_2,\sBC_2,\sG_2\}$ follows from the results of \cite{GP:19,GP:19b}. Thus it only remains to verify Conjecture~\ref{conj:strongconjecture} in the cases $\Phi\in\{\sC_2,\sBC_2,\sG_2\}$. 

Consider the case $\Phi=\sG_2$. The case $J=\emptyset$ is covered by~(1). Suppose that $J=\{1\}$. The only bounded representation with $J=\{1\}$ has $\sv_{\alpha_1}=-\sq^{-a}$, where we use the conventions of Example~\ref{ex:G21} ($L(s_1)=a$ and $L(s_2)=L(s_0)=b$). Thus we have $\sv^{\alpha_1^{\vee}}=\sq^{-2a}$ and $\sv^{\alpha_2^{\vee}}=-\sq^a$. Since $\Phi^{\vee}=\pm\{\alpha_1^{\vee},\alpha_1^{\vee}+3\alpha_2^{\vee},2\alpha_1^{\vee}+3\alpha_2^{\vee},\alpha_2^{\vee},\alpha_1^{\vee}+\alpha_2^{\vee},\alpha_1^{\vee}+2\alpha_2^{\vee}\}$ we have
\begin{align*}
{\prod_{\alpha\in\Phi}}'\frac{1-\sq_{\alpha/2}^{-1}\sv^{\alpha^{\vee}}}{1-\sq_{\alpha/2}^{-1}\sq_{\alpha}^{-2}\sv^{\alpha^{\vee}}}=\frac{(1-\sq^{-2a})(1-\sq^{2a})(1+\sq^a)^4(1+\sq^{-a})^2}{(1-\sq^{-4a})(1+\sq^{-3a})^2(1-\sq^{-2b})^2(1+\sq^{a-2b})^2(1+\sq^{-a-2b})^2}
\end{align*}
Thus, since $L(\sw_0)=3a+3b$, we have
$$
L(\sw_0)-\frac{1}{2}\deg{\prod_{\alpha\in\Phi}}'\frac{1-\sq_{\alpha/2}^{-1}\sv^{\alpha^{\vee}}}{1-\sq_{\alpha/2}^{-1}\sq_{\alpha}^{-2}\sv^{\alpha^{\vee}}}=\begin{cases}
3b&\text{if $a-2b<0$}\\
a+b&\text{if $a-2b\geq 0$}.
\end{cases}
$$
By \cite[Theorem~7.10]{GP:19} these are equal to the bound of the induced representation $\pi_{\{1\},\sv}$, confirming the Conjecture~\ref{conj:strongconjecture} in this case. 

The case $J=\{2\}$ is similar (using again the values of the bounds from \cite[Theorem~7.10]{GP:19}) and we omit the details. 

Consider now $\Phi=\sG_2$ and $J=I=\{1,2\}$. By Proposition~\ref{prop:classifybounded} there are three bounded $I$-parameter systems. The case $(\sv_{\alpha_1},\sv_{\alpha_2})=(-\sq^{-a},-\sq^{-b})$ is clearly bounded by $0$ and recognises the trivial Kazhdan-Lusztig cell $\{e\}$. Consider the case $(\sv_{\alpha_1},\sv_{\alpha_2})=(\sq^a,-\sq^{-b})$ (which is bounded if and only if $a/b\leq 3/2$). We have $\sv^{\alpha_1^{\vee}}=\sq^{2a}$ and $\sv^{\alpha_2^{\vee}}=\sq^{-2b}$, and hence 
\begin{align*}
&{\prod_{\alpha\in\Phi}}'\frac{1-\sq_{\alpha/2}^{-1}\sv^{\alpha^{\vee}}}{1-\sq_{\alpha/2}^{-1}\sq_{\alpha}^{-2}\sv^{\alpha^{\vee}}}\\
&\quad=\frac{(1-\sq^{2a})(1-\sq^{-2a+6b})(1-\sq^{4a-6b})(1-\sq^{-2b})(1-\sq^{2b})(1-\sq^{2a-2b})(1-\sq^{-2a+4b})}{(1-\sq^{-4a})(1-\sq^{-6b})(1-\sq^{-6a+6b})(1-\sq^{-4b})(1-\sq^{2a-6b})}
\end{align*}
(with the convention on removing zero products if required). It follows that 
$$
L(\sw_0)-\frac{1}{2}\deg{\prod_{\alpha\in\Phi}}'\frac{1-\sq_{\alpha/2}^{-1}\sv^{\alpha^{\vee}}}{1-\sq_{\alpha/2}^{-1}\sq_{\alpha}^{-2}\sv^{\alpha^{\vee}}}=\begin{cases}
3a-2b&\text{if $1\leq a/b\leq 3/2$}\\
a&\text{if $a/b\leq 1$},
\end{cases}
$$
agreeing with the bounds given in Example~\ref{ex:G21}. Moreover, the cell recognised by this representation is given in Example~\ref{ex:G21}, and by \cite[Figure~2]{GP:19} these cells are contained in Kazhdan-Lusztig cells for the relevant parameter values. 

The case $(\sv_{\alpha_1},\sv_{\alpha_2})=(-\sq^{-a},\sq^b)$ is similar, and we omit the details. Moreover, the analysis for the cases $\Phi=\sC_2$ and $\sBC_2$ is similar, using~\cite[Theorems~6.15, 6.21, 6.22]{GP:19b}, and we again omit the calculations.
\end{proof}

\begin{remark}\label{rem:origins}
We provide some comments on the origin of Conjecture~\ref{conj:strongconjecture}. Recall that for a finite dimensional weighted Hecke algebra $\cH_0$ the ``canonical trace'' $\mathrm{Tr}(\sum a_w T_w)=a_e$ on $\cH_0$ decomposes as a sum of irreducible characters as $\mathrm{Tr}=\sum m_{\lambda}\chi_{\lambda}$ where the elements $m_{\lambda}$ are rational functions in~$\sq$ known as the ``generic degrees'' of $\cH_0$ (see \cite[Chapter 11]{GP:00}). There is a connection between the generic degrees and Lusztig's $\ba$-function, stated roughly that if $w$ is in the cell ``corresponding'' to $\lambda$ then $\deg m_{\lambda}=2\ba(w)$ (see \cite{Geck:11}). 

In the affine case, the decomposition of the canonical trace takes the form of an integral over tempered representations of the affine Hecke algebra, and the generic degrees are replaced by the ``Plancherel measure''. The \textit{Macdonald $c$-function} is (see Macdonald~\cite{Mac:71})
$$
c(X)=\prod_{\alpha\in\Phi^+}\frac{1-\sq_{\alpha/2}^{-1}\sq_{\alpha}^{-2}X^{-\alpha^{\vee}}}{1-\sq_{\alpha/2}^{-1}X^{-\alpha^{\vee}}}.
$$
By Opdam's work \cite[Theorem~3.25]{Opd:04} the reciprocal of the term $\psi_{J,\sv}(\sq^{2L(\sw_0)}c(X)c(X^{-1}))'$ (where the prime indicates that any factors that are $0$ on evaluation by $\psi_{J,\sv}$ are to be omitted) appears as the mass of the character $\chi_{J,\sv}$ of $\pi_{J,\sv}$ in the Plancherel formula for $\Hext(L)$ (once the parameters $\sq$ and $\zeta_i$ are specialised appropriately and scalars are extended, see \cite[\S9.4]{GP:19} for further discussion). Thus since
$$
L(\sw_0)-\frac{1}{2}\deg{\prod_{\alpha\in\Phi}}'\frac{1-\sq_{\alpha/2}^{-1}\sv^{\alpha^{\vee}}}{1-\sq_{\alpha/2}^{-1}\sq_{\alpha}^{-2}\sv^{\alpha^{\vee}}}=\frac{1}{2}\deg\psi_{J,\sv}\bigg(\sq^{2L(\sw_0)}c(X)c(X^{-1})\bigg)'
$$
our conjecture can be seen as an affine analogue of the finite dimensional situation, giving conjectural connections between Kazhdan-Lusztig Theory and Opdam's Plancherel Theorem. 
\end{remark}

\section{The case $\Phi=\sA_n$ with $J=\{1,2,\ldots,n-1\}$}\label{sec:An}

In this section we illustrate the theory in the case $\Phi=\sA_n$ with $J=\{1,2,\ldots,n-1\}$. We will apply Theorem~\ref{thm:mainpath2} to prove Conjectures~\ref{conj:strongconjecture} and~\ref{conj:cells} in this case. 

The only $J$-parameter system for which $\pi_{J,\sv}$ is bounded is $\sv=(-\sq^{-1})_{\alpha\in\Phi_J}$ (see Theorem~\ref{thm:bound} and Proposition~\ref{prop:classifybounded}), and so the symbol $\sv$ will be suppressed in the notation. Thus, for example, we shall write $\pi_J=\pi_{J,\sv}$ and $\varpi_J=\varpi_{J,\sv}$. 

We have $\sR[\zeta_J]=\sR[\zeta]$ where $\zeta=\zeta_J^{\omega_1}$, since $\zeta_J^{\omega_i}=\zeta^i$ for $1\leq i\leq n$. Since $\sy_{\omega_1}=\sw_{\{2,3,\ldots,n-1\}}\sw_{\{1,2,\ldots,n-1\}}=s_1\cdots s_{n-1}$ we have
$$
\st_{\omega_1}=t_{\omega_1}\sy_{\omega_1}=(s_0s_ns_{n-1}\cdots s_2\sigma)(s_1\cdots s_{n-1})=s_0\sigma,
$$
where $\sigma\in\Sigma$ is given by $\sigma s_i\sigma^{-1}=s_{i+1}$ (with indices read cyclically modulo $n+1$, and so in particular $s_{n+1}=s_0$; recall the definition of $\Sigma$ from Section~\ref{sec:affineWeyl}). 

The following choice of fundamental domain leads to a matrix representation with very symmetric matrices.

\goodbreak

\begin{lemma}
The set $\Sigma$ is a fundamental domain for the action of $\TJ$ on $\WJ$.
\end{lemma}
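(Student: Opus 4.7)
The plan is to leverage the fact (Corollary~\ref{cor:Tiso}) that $W^J$ is already a free fundamental domain for $\TJ$ on $\WJ$. Since $[W_0 : W_J] = (n+1)!/n! = n+1 = |\Sigma|$, it is enough to check two things: (a) $\Sigma \subseteq \WJ$, and (b) distinct elements of $\Sigma$ lie in distinct $\TJ$-orbits. Assertion (a) is immediate, since each $\sigma^k \in \Sigma$ stabilises $A_0$ and $A_0 \subseteq \cA_J$.

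For (b), I first observe that the $\TJ$-action on $\WJ$ preserves the distinguished $W^J$-coordinate $\theta^J$: if $w = \st_\mu u$ with $u \in W^J$, then Lemma~\ref{lem:utbasics}(3) gives $\st_\lambda w = \st_{(\lambda+\mu)^{(J)}} u$, so $\theta^J(\st_\lambda w) = u = \theta^J(w)$. Hence the $\TJ$-orbit of $w$ is determined by $\theta^J(w) \in W^J$, and the task reduces to showing that $\theta^J(\sigma^0), \theta^J(\sigma^1), \ldots, \theta^J(\sigma^n)$ are pairwise distinct. To compute these directions I work in the Bourbaki model $W_0 = S_{n+1}$ and let $\bar\sigma \in W_0$ be the cyclic permutation $e_i \mapsto e_{i+1 \bmod (n+1)}$. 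The type-rotating action of $\sigma$ on the vertices of $A_0$ together with $\sigma(0) = \omega_1$ force $\sigma = t_{\omega_1} \bar\sigma$ in the semidirect decomposition $\Wext = P \rtimes W_0$, and a short induction using $\omega_1 + \bar\sigma\omega_1 + \cdots + \bar\sigma^{k-1}\omega_1 = e_1 + \cdots + e_k = \omega_k$ yields $\sigma^k = t_{\omega_k} \bar\sigma^k$. In particular $\theta(\sigma^k) = \bar\sigma^k$, so $\theta^J(\sigma^j) = \theta^J(\sigma^k)$ is equivalent to $W_J \bar\sigma^j = W_J \bar\sigma^k$, i.e.\ to $\bar\sigma^{j-k} \in W_J$.

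It remains to analyse when $\bar\sigma^m \in W_J$. Since $W_J = \langle s_1, \ldots, s_{n-1}\rangle$ is the stabiliser in $S_{n+1}$ of the index $n+1$, we have $\bar\sigma^m \in W_J$ iff $\bar\sigma^m(n+1) = n+1$ iff $m \equiv 0 \pmod{n+1}$. Thus $W_J\bar\sigma^j = W_J\bar\sigma^k$ forces $j \equiv k \pmod{n+1}$, and for $j,k \in \{0, \ldots, n\}$ this means $j = k$, as required. The only genuine computation is the identification $\sigma = t_{\omega_1} \bar\sigma$, which is the heart of the argument; once this is in hand, the transversality of the cyclic subgroup $\langle\bar\sigma\rangle$ and the parabolic $W_J$ in $S_{n+1}$ is essentially automatic, so I anticipate no serious obstacle.
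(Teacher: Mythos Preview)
Your proof is correct, but it takes a different route from the paper's. The paper argues by direct computation: having already established $\st_{\omega_1}=s_0\sigma$, it verifies the explicit identity
\[
\st_{\omega_1}^{\,i}\,(s_ns_{n-1}\cdots s_{n-i+1})=\sigma^i\qquad(1\le i\le n),
\]
which exhibits each element of $\Sigma$ as a $\TJ$-translate of a distinct element of $W^J$. Your argument instead computes the linear part $\theta(\sigma^k)=\bar\sigma^{\,k}$ (the $k$th power of the $(n{+}1)$-cycle) and then observes that the cyclic subgroup $\langle\bar\sigma\rangle$ is transversal to the point stabiliser $W_J=\mathrm{Stab}_{S_{n+1}}(n{+}1)$, so the cosets $W_J\bar\sigma^{\,k}$ are pairwise distinct. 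Both approaches rest on the same counting observation $|\Sigma|=|W^J|=n{+}1$, but the paper's explicit identity also records \emph{which} $\TJ$-element carries each $W^J$-representative to the corresponding element of $\Sigma$, information that feeds directly into the change-of-basis computation used immediately afterwards. Your approach is a bit more conceptual and makes the symmetric-group structure transparent; either is perfectly adequate for the lemma as stated.
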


\begin{proof}
We have $W^J=\{e,s_n,s_ns_{n-1},\ldots,s_ns_{n-1}\cdots s_2s_1\}$, and for $1\leq i\leq n$ we compute
\begin{align*}
\st_{\omega_1}^is_ns_{n-1}\cdots s_{n-i+1}=\sigma^i,
\end{align*}
hence the result
\end{proof}

Fix the following order on the basis:
\begin{align}\label{eq:orderedbasis}
(\varpi_J(X_{\sigma^{-1}}),\varpi_J(X_{\sigma^{-2}}),\ldots,\varpi_J(X_{\sigma^{-n}}),\varpi_J(X_{\sigma^{-n-1}})).
\end{align}
For $1\leq i\leq n+1$ we have $\varpi_J(X_{\sigma^{-i}})\cdot T_w=\varpi_J(X_e)\cdot T_{\sigma^{-i}(w)}X_{\sigma^{-i}}$, and it follows that 
\begin{align}\label{eq:permute}
[\pi_J(T_w)]_{i,j}=[\pi_J(T_{\sigma^{-i}(w)})]_{n+1,\sigma^{-i}(j)}\quad\text{for $1\leq i,j\leq n+1$}.
\end{align}

\begin{cor}
The matrices for $\pi_J(T_i)$, $0\leq i\leq n$, with respect to the ordered basis~(\ref{eq:orderedbasis}) are 
\begin{align*}
\pi_J(T_i)&=\begin{bmatrix}
-\sq^{-1}\mathsf{I}_{i-1}&0&0&0\\
0&0&\zeta&0\\
0&\zeta^{-1}&\sq-\sq^{-1}&0\\
0&0&0&-\sq^{-1}\mathsf{I}_{n-i}
\end{bmatrix}&\pi_J(T_0)&=\begin{bmatrix}
\sq-\sq^{-1}&0&\zeta^{-1}\\
0&-\sq^{-1}\mathsf{I}_{n-1}&0\\
\zeta&0&0
\end{bmatrix}
\end{align*}
where $\mathsf{I}_k$ is the $k\times k$ identity matrix.
\end{cor}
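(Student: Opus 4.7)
The plan is to apply Theorem~\ref{thm:module} directly to compute $\bm_u\cdot T_j$ for each $u\in W^J$ and each $j\in\{0,1,\ldots,n\}$, then translate into the ordered basis (\ref{eq:orderedbasis}) via the identity $\varpi_J(X_{\sigma^{n+1-i}}) = \zeta^{n+1-i}\bm_{u_i}$ for $1\leq i\leq n$ and $\varpi_J(X_e)=\bm_e$ at position $n+1$. This identity is obtained by rewriting $\sigma^i = \st_{\omega_1}^i\, s_n s_{n-1}\cdots s_{n-i+1}$ and noting that $\psi_{J,\sv}(X_{\st_{\omega_1}^i}) = \zeta^i$, so that $\varpi_J(X_{\sigma^i}) = \zeta^i\bm_{u_i'}$ with $u_i' = s_n\cdots s_{n-i+1}\in W^J$. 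Writing $u_k = s_n s_{n-1}\cdots s_k$ for $1\leq k\leq n$ and $u_{n+1}=e$, this gives a bijection between basis positions $i$ and elements $u_i\in W^J$.

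Viewing $W_0$ as $S_{n+1}$, each $u_k$ is the $(n+2-k)$-cycle with one-line notation $(1,\ldots,k-1,n+1,k,\ldots,n)$, from which one reads off $u_k\alpha_j=\alpha_j$ for $j<k-1$, $u_k\alpha_j=\alpha_{j-1}$ for $j>k$, $u_k\alpha_{k-1}=\alpha_{k-1}+\alpha_k+\cdots+\alpha_n$, and $u_k\alpha_k=-(\alpha_k+\cdots+\alpha_n)$; for $j=0$ one has $u_k\alpha_0=-u_k\varphi+\delta$ with $u_k\varphi=\alpha_1+\cdots+\alpha_{n-1}\in\Phi_J$ precisely when $1<k\leq n$ (while $u_1\varphi=-\alpha_n$ and $e\varphi=\varphi\notin\Phi_J$). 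Whenever the linear part of $u_k\alpha_j$ lies in $\Phi_J$, the third case of Theorem~\ref{thm:module} applies, and since $\Phi_J$ is simply-laced of type $\sA_{n-1}$ all its roots are $W_J$-conjugate, giving $\bm_{u_k}\cdot T_j=-\sq^{-1}\bm_{u_k}$; this accounts uniformly for every diagonal $-\sq^{-1}$ entry in all three matrix templates. The exceptional pairs $(k,j)$ with $u_ks_j\in\WJ$ are exactly $(j,j)$ and $(j+1,j)$ for $j\in J$, $(n,n)$ and $(n+1,n)$ for $j=n$, and $(1,0)$ and $(n+1,0)$ for $j=0$. In each of the first two families, $u_js_j=u_{j+1}$ is a positive crossing and $u_{j+1}s_j=u_j$ is negative, so $\bm_{u_j}\cdot T_j=\bm_{u_{j+1}}$ and $\bm_{u_{j+1}}\cdot T_j=\bm_{u_j}+(\sq-\sq^{-1})\bm_{u_{j+1}}$; multiplying by the basis factors $\zeta^{n+1-j}$ and $\zeta^{n-j}$ produces the central $2\times 2$ block $\left(\begin{smallmatrix}0&\zeta\\ \zeta^{-1}&\sq-\sq^{-1}\end{smallmatrix}\right)$ at rows/columns $(j,j+1)$.

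The main obstacle is the $T_0$ case. Writing $s_0=t_{\varphi^{\vee}}s_{\varphi}$, the decomposition $s_\varphi = (s_1s_2\cdots s_{n-1})\,u_1$ in $W_JW^J$ (readily verified by computing the action on the standard basis of $\RR^{n+1}$) yields $\theta^J(s_0)=u_1$ and $\wt(s_0)=\varphi^{\vee}$. Analogously, using $u_1\varphi=-\alpha_n$ one finds $u_1s_0 = t_{-\alpha_n^{\vee}}(u_1s_{\varphi})$ with $u_1s_\varphi = s_{n-1}s_{n-2}\cdots s_1\in W_J$, giving $\theta^J(u_1s_0)=e$ and $\wt(u_1s_0)=-\alpha_n^{\vee}$; the crossing $e\to s_0$ is positive ($e\alpha_0=-\varphi+\delta$) while $u_1\to u_1s_0$ is negative ($u_1\alpha_0=\alpha_n+\delta$). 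To evaluate the $\zJ$ factors one uses that in type $\sA_n$ the orthogonal projection of $\alpha_n^{\vee}$ onto $V^J=\RR\omega_n$ equals $\tfrac{n+1}{n}\omega_n=(n+1)\overline{\omega}_n$ (since $\|\omega_n\|^2=n/(n+1)$ and $\langle\alpha_n^{\vee},\omega_n\rangle=1$), whence $\zJ^{\varphi^{\vee}}=\zJ^{\alpha_n^{\vee}}=\zeta^{n+1}$. Combining this with the basis factor $\zeta^n$ at position~$1$ produces exactly the corner entries $\zeta$, $\zeta^{-1}$, and $\sq-\sq^{-1}$ appearing in $\pi_J(T_0)$, completing the derivation of all three matrices. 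Formula (\ref{eq:permute}) then serves as an independent consistency check among the rows.
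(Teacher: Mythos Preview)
Your proof is correct. The paper's own proof is a single sentence invoking Theorem~\ref{thm:mainpath2} (the path formula for an arbitrary fundamental domain), whereas you compute directly from Theorem~\ref{thm:module} in the basis $\{\bm_u\mid u\in W^J\}$ and then perform the explicit change of basis $\varpi_J(X_{\sigma^{-i}})=\zeta^{n+1-i}\bm_{u_i}$ to $\sB_{\Sigma}$. For a single generator $T_j$ the $J$-folded paths in Theorem~\ref{thm:mainpath2} have length~$1$, so the case analysis there collapses precisely to the three branches of Theorem~\ref{thm:module}; in that sense the two routes are essentially the same computation, and your version simply makes the casework and the monomial change-of-basis explicit. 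Your root computations $u_k\alpha_j$, the identification $\theta^J(s_0)=u_1$, $\theta^J(u_1s_0)=e$, and the projections $(\varphi^{\vee})^J=(\alpha_n^{\vee})^J=(n+1)\overline{\omega}_n$ are all correct and yield exactly the stated matrices.
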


\begin{proof} 
This follows directly from Theorem~\ref{thm:mainpath2}. 
\end{proof}

\begin{defn}
Recall that $\Waff$ is the non-extended affine Weyl group. Let
$$
\Gamma_{!}=\{w\in W\backslash\{e\}\mid \text{$w$ has a unique reduced expression}\}.
$$
\end{defn}

By \cite[Chapter~12]{Bon:17} for any Coxeter group $(W,S)$ with weight function $L=\ell$ (equal parameters) the set $\Gamma_!$ forms a two-sided Kazhdan-Lusztig cell, and the right cells in $\Gamma_!$ are the sets $\{w\in\Gamma_!\mid D_L(w)=s\}$ for $s\in S$ (with $D_L$ the left descent set). In type $\tilde{\sA}_n$ the elements of $\Gamma_!$ are precisely the nontrivial elements with a reduced expression with no subwords $s_is_j$ with $m_{ij}=2$ or $s_is_js_i$ with $m_{ij}=3$ (by Tits' solution to the Word Problem).

\begin{remark} For irreducible affine Coxeter groups $\Waff$ the set $\Gamma_!$ is infinite if and only if $\Waff$ is of type $\tilde{\sA}_n$ or $\tilde{\sC}_n$ (see \cite[Proposition~12.1.14]{Bon:17}). We expect that the techniques of this section could be applied to the $\tilde{\sC}_n$ case, with the relevant set in that case being $J=\{2,3,\ldots,n\}$.
\end{remark}

If $x\in\Gamma_!$ then the unique reduced expression of $x$ is of the form $s_is_{i+1}s_{i+2}\cdots$ or $s_is_{i-1}s_{i-2}\cdots$ for some $0\leq i\leq n$ (recall the indices are read cyclically). The final generator that appears in the reduced expression will be important to keep track of, and so we denote
\begin{align*}
x^{\uparrow}_{i,j,\ell}&=s_is_{i+1}s_{i+2}\cdots s_j \\
x^{\downarrow}_{i,j,\ell}&=s_is_{i-1}s_{i-2}\cdots s_j
\end{align*}
where $\ell=\ell(x^{\uparrow}_{i,j,\ell})=\ell(x^{\downarrow}_{i,j,\ell})$  (note that many cycles are allowed in these expressions, and that there is a compatibility condition between $i,j,\ell$, however this condition will not play a role). Sometimes either the length or the final generator will not be important to keep track of, and in this case we will abbreviate $x^{\uparrow}_{i,j,\ell}$ to $x^{\uparrow}_{i,j}$ or $x^{\uparrow}_i$, and similarly for $x^{\downarrow}_{i,j,\ell}$. Thus, for example, $x^{\downarrow}_5$ represents a word of the form $s_5s_4s_3s_2s_1s_0s_ns_{n-1}\cdots$ with no constraint on the length or final generator.

\goodbreak

\begin{lemma}\label{lem:pathsinGamma}
Let $x\in \Gamma_!$ and let $p$ be a positively $J$-folded alcove path of type $x$. Then $\deg \cQ_J(p)\leq 1$ with equality if and only if $p$ has exactly one fold and no bounces. Moreover if $1\leq i\leq n+1$ then the paths $p$ of type $x\in\Gamma_!$ starting at $\sigma^{-i}$ with $\deg\cQ_J(p)=1$ are the paths
\begin{compactenum}[$(1)$]
\item $p=(i-1)i(i+1)(i+2)\cdots n12\cdots$, where the first step is a fold and all remaining steps are crossings, and 
\item $p=(i-1)(i-2)\cdots 210n(j+1)j$ for some $j$, where the final step is a fold and all other steps are crossings. 
\end{compactenum}
\end{lemma}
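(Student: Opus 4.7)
Since $\sv_{\alpha} = -\sq^{-1}$ for every $\alpha \in \Phi_J$, the $\sv$-mass of any $J$-folded alcove path simplifies to
\[
\cQ_J(p) = (-\sq^{-1})^{b(p)}(\sq - \sq^{-1})^{f(p)},
\]
so $\deg \cQ_J(p) = f(p) - b(p)$ where $f(p)$ and $b(p)$ denote the total number of folds and bounces of $p$ respectively. The desired inequality $\deg \cQ_J(p) \leq 1$ is therefore equivalent to the purely combinatorial statement $f(p) \leq b(p) + 1$, with equality forcing $f(p) = 1$ and $b(p) = 0$. This reduction is the starting point.

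The plan is to proceed by induction on $\ell(x)$, exploiting the rigidity of the unique reduced expression of $x \in \Gamma_!$. By Tits' solution to the Word Problem, $x$ admits a reduced expression $s_{i_1} s_{i_2} \cdots s_{i_\ell}$ with $i_{k+1} \equiv i_k \pm 1 \pmod{n+1}$ using a consistent sign throughout; that is, the expression marches cyclically monotonically around the extended Dynkin diagram in one fixed direction. This is the structural feature that drives everything. The base case $\ell(x)=1$ is immediate: the single step is a crossing, a fold, or a bounce, and the inequality together with its equality classification is read off from the definitions in Section~\ref{sec:Jpaths}, producing the first or last step of a path in family (1) or (2) depending on whether $i_1 \in J$.

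For the inductive step, I would write $x = x' s_{i_\ell}$ with $x' \in \Gamma_!$ of length $\ell - 1$, and let $p'$ be the prefix of $p$ of type $x'$. The final step of $p$ changes $(f, b)$ by $(0, 0)$, $(+1, 0)$, or $(0, +1)$ according to whether it is a crossing, fold, or bounce, so the only way to violate the inductive bound is to append a fold onto an extremal $p'$ with $f(p') - b(p') = 1$. The crux of the proof is to rule this scenario out. By the inductive classification, an extremal $p'$ is of the form (1) or (2), so its terminal alcove $v_{\ell-1}$ is pinned down explicitly via Theorem~\ref{thm:WJ} and the decomposition $v_{\ell-1} = \st_{\lambda} u$. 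One then checks, using the description of the walls of $\cA_J$ in Lemma~\ref{lem:boundingwalls} together with the fact that $s_{i_\ell}$ is forced by the cyclic monotonicity to continue the same direction around the cycle, that either $v_{\ell-1} s_{i_\ell} \notin \WJ$ (so the last step must be a bounce, not a fold) or that the sign condition required for a fold fails. This rules out appending a second fold and completes the inductive bound.

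For the classification (1) and (2), I would track where equality is attained: the single fold must occur at some step $k$ satisfying $v_{k-1} s_{i_k} \in \WJ$ with the positive-side condition, and no step before or after may produce a bounce. Imposing $b(p) = 0$ forces the straight portion of $p$ to miss all walls of $\cA_J$, and cyclic monotonicity then pins the sequence of generators to be precisely $s_{i-1} s_i s_{i+1} \cdots$ (with the fold at the start, giving family (1)) or $s_{i-1} s_{i-2} \cdots s_{j+1} s_j$ (with the fold at the end, giving family (2)), the two possibilities corresponding to the two directions of cyclic monotonicity. The main obstacle is the alcove-geometric verification inside the inductive step that forbids a fold after an extremal $p'$; once this is in place, the remaining bookkeeping — checking sides, verifying membership in $\WJ$, and matching the enumeration — is routine.
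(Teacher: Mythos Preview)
Your reduction $\deg \cQ_J(p) = f(p) - b(p)$ is correct, but the assertion that equality in $f-b\le 1$ ``forces $f(p)=1$ and $b(p)=0$'' is precisely part of what must be proved, not a consequence of the reduction: $f-b=1$ has infinitely many nonnegative solutions. More seriously, your back-peeling induction has a gap in the classification step. If $p$ is extremal then either $p'$ is extremal and the last step is a crossing, or $p'$ has degree $0$ and the last step is a fold. In the first case your inductive classification of $p'$ applies. In the second it does not: the inductive hypothesis says nothing about the structure of degree-$0$ prefixes, yet to conclude that $p$ is of form (2) you need $p'$ to consist entirely of crossings. Degree-$0$ paths with $f(p')=b(p')>0$ certainly exist along $\Gamma_!$-words, and you give no argument ruling out that such a $p'$ can be followed by a fold. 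You also need, in the first case, that form (2) cannot be followed by a crossing (only by a bounce), which your bound argument does not address since it only rules out folds after extremal prefixes.

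The paper's proof is structurally different: it peels from the front rather than the back. After reducing to start $e$ via the cyclic shift, the first steps are forced by the starting generator and direction of $x$, and in every non-extremal branch a block of at least $n-1$ consecutive forced bounces appears; the remaining tail is again a $\Gamma_!$-type path at a translated start (handled via the $\TJ$-action of Lemma~\ref{lem:actiononpaths}), and the bounces already push the degree down to at most $2-n\le 0$. This stronger inequality in every non-extremal branch means the degree-$1$ paths are isolated directly by the case analysis, and no auxiliary control over degree-$0$ paths is needed. Your ``one then checks'' step is essentially the same alcove-geometric verification the paper carries out inside those cases, so the real work is comparable, but the front-peeling organisation is what closes the induction cleanly.
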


\begin{proof}
We argue by induction on the length of the path, with the result clear for paths of length~$1$. It is sufficient to consider paths starting at $e$, because starting at $\sigma^{-i}$ for some $i$ is equivalent to starting $e$ and performing a cyclic shift of the generators of the expression~$x$. Thus we consider paths of $x^{\uparrow}_i$ and $x^{\downarrow}_i$ starting at $e$ for each choice of $0\leq i\leq n$. 

In the following analysis we start with a straight path $p=i_1i_2\cdots$ (we omit the $s$'s to lighten the notation) and construct from it a positively $J$-folded path, starting from left to right and deciding if the steps are folds, bounces, or crossings. We will use the following coding to represent a partially positively $J$-folded alcove path: $\hat{i}$ (or $i^{\vee}$) for a fold (hence degree contribution $+1$ to $\cQ_J(p)$), $\check{i}$ (or $i^{\vee}$) for a bounce (hence degree contribution $-1$ to $\cQ_J(p)$), and $\overline{i}$ for a crossing (hence zero degree contribution). Parts of the path that are grey are understood to remain ``straight'', and are yet to be determined. For example, given that $p$ starts at $e$, the expression $p=\overline{n}(n-1)^{\vee}\overline{n}\check{1}\overline{0}\bla{1234}$ represents a path where the first $5$ steps have been positively $J$-folded (a crossing, followed by a fold, a crossing, a bounce, and a crossing), and the remaining $4$ steps have not yet been determined. An expression like $[123]^{\vee}$ indicates that all three steps are bounces. 

Consider first paths $p$ of type $x=x^{\uparrow}_i$ starting at $e$. If $i=0$ then $p=\overline{0}\,\overline{1}\,\overline{2}\cdots $ consists entirely of positive crossings staying in $\cA_J$, and hence is positively $J$-folded with $\deg\cQ_J(p)=0$. 

If $i=n$ then either $p=\hat{n}\bla{012}\cdots$ or $p=\overline{n}\,\bla{012}\cdots$. In the first case the remainder of the path consists entirely of positive crossings staying in $\cA_J$, and so $\deg\cQ_J(p)=1$ and the path is as in case (1) of the statement of the lemma. We must show that in the second case the degree is bounded by $0$. In the second case the next $n-1$ steps are forced bounces and the following step is a positive crossing, giving $p=\overline{n}\,[012\cdots(n-2)]^{\vee}(\overline{n-1})\bla{n01}\cdots$. The remainder of the path is a path $p'$ of type $x^{\uparrow}_{n-1}$ starting at $s_n$. Note that $\st_{\omega_1}s_n=s_0\sigma s_n=\sigma$, and so by the action of $\TJ$ on paths (see Lemma~\ref{lem:preserveQ}) the path $\st_{\omega_1}\cdot p'$ starts at $\sigma$ and $\deg\cQ_J(p')=\deg\cQ_J(\st_{\omega_1}\cdot p')\leq 1$ (by the induction hypothesis). Since $\cQ_J(p)=(-\sq^{-1})^{n-1}\cQ_J(p')$ we have $\deg\cQ_J(p)\leq 2-n\leq 0$ as required. 

If $i\in J$ then the first $n-i$ steps are forced bounces, and so $p=[i\cdots(n-1)]^{\vee}\bla{n012}\cdots$. The remainder of the path is a path $p'$ of type $\bla{n012}\cdots$ starting at $e$, and by the $i=n$ case in the previous paragraph we have $\deg\cQ_J(p)\leq i-n+1\leq 0$. 

Now consider paths $p$ of type $x=x^{\downarrow}_i$ starting at $e$. If $i=0$ then $p=\overline{0}\bla{n(n-1)}\cdots$. The next $n-1$ steps are necessarily bounces, giving $p=\overline{0}[n(n-1)\cdots 2]^{\vee}\bla{10n(n-1)}\cdots$. The remainder of the path is a path $p'$ of type $x^{\downarrow}_1$ starting at $s_0$. Since $\st_{\omega_1}^{-1}s_0=\sigma^{-1}$ the path $\st_{\omega_1}^{-1}\cdot p'$ starts at $\sigma^{-1}$ and by induction $\deg\cQ_J(p')=\deg\cQ_J(\st_{\omega_1}^{-1}\cdot p')\leq 1$. Then $\deg\cQ_J(p)=-n+1+\deg\cQ_J(p')\leq 2-n\leq 0$. 

If $i=n$ then $p=\bla{n(n-1)(n-2)}\cdots$ has each step negative and stays in~$\cA_J$. Thus we either have no folds, and so $p=\overline{n(n-1)(n-2)\cdots}$ has $\deg\cQ_J(p)=0$, or we fold at some step. Thus suppose that $p=\overline{n(n-1)\cdots (j+1)}\hat{j}\bla{(j-1)}\cdots \bla{10n(n-1)}\cdots$ (possibly with multiple cycles before the fold). Either the path terminates immediately after the fold (and so $\deg\cQ_J(p)=1$ and $p$ is as in case (2) of the statement of the lemma), or there is a sequence of forced bounces. In fact the following $n-1$ steps (or up to the end of the path, whichever is first) are forced to be bounces. To see this, one checks that $s_ns_{n-1}\cdots s_{j+1}\alpha\in\Phi_J$ for $\alpha\in\{\alpha_{j-1},\ldots,\alpha_1,\varphi,\alpha_n,\ldots,\alpha_{j+1}\}$. Thus  
\begin{center}
$p=\overline{n(n-1)\cdots(j+1)}\hat{j}[(j-1)\cdots 10n(n-1)\cdots(j+1)]^{\vee}\bla{j(j-1)}\cdots$.
\end{center}
The remainder of the path is a path $p'$ of type $x^{\downarrow}_j$ starting at $s_n\cdots s_{j+1}$, and since $\st_{\omega_1}^{n-j}s_n\cdots s_{j+1}=\sigma^{n-j}$ we use Lemma~\ref{lem:preserveQ} and the induction hypothesis to give 
$$
\deg\cQ_J(p)=1-n+\deg\cQ_J(\st_{\omega_1}^{n-j}\cdot p')\leq 2-n\leq 0.
$$

Finally, if $i\in J$ then $p=[i(i-1)\cdots 1]^{\vee}\bla{0n(n-1)}\cdots$ and we return to the $i=0$ case, concluding the proof.
\end{proof}

Let $\Mat_{n+1}(\ZZ[\sq^{-1}])$ denote the ring of $(n+1)\times (n+1)$ matrices with entries in $\ZZ[\sq^{-1}]$. For $1\leq i,j\leq n+1$ let $E_{i,j}$ be the $(n+1)\times (n+1)$ matrix with $1$ in the $(i,j)$-th place, and zeros elsewhere.

\begin{cor}\label{cor:leading1}
We have
\begin{align*}
\pi_J(T_{x^{\uparrow}_{i,j,\ell}})&\in\sq\zeta^{\ell-1}E_{i+1,j+1}+\Mat_{n+1}(\ZZ[\sq^{-1}])\\
\pi_J(T_{x^{\downarrow}_{i,j,\ell}})&\in\sq\zeta^{-\ell+1}E_{i+1,j+1}+\Mat_{n+1}(\ZZ[\sq^{-1}])
\end{align*}
\end{cor}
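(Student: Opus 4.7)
The plan is to apply Theorem~\ref{thm:mainpath2} with the fundamental domain $\sF=\Sigma$ and then use Lemma~\ref{lem:pathsinGamma} to pinpoint the unique $J$-folded alcove paths that contribute a leading $\sq$ term. By Theorem~\ref{thm:mainpath2}, each matrix entry of $\pi_J(x^{\uparrow}_{i,j,\ell})$ is a sum of terms $\cQ_J(p)\zJ^{\wt(p,\sF)}$ over $J$-folded alcove paths of type $x^{\uparrow}_{i,j,\ell}$ with prescribed starting alcove and final direction in $\Sigma$. Lemma~\ref{lem:pathsinGamma} forces $\deg\cQ_J(p)\leq 1$ for every such path, with equality only on the two explicit families~(1) and~(2) described in that lemma. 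Since the type $x^{\uparrow}_{i,j,\ell}=s_is_{i+1}\cdots s_j$ begins with an ascending step, form~(2) of the lemma (which begins with $s_{k-1}s_{k-2}\cdots$) cannot match, while matching a form~(1) path with the prescribed type forces the starting alcove to be $u=\sigma^{-(i+1)}$, i.e.\ row $i+1$ of the ordered basis~(\ref{eq:orderedbasis}).

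Having isolated the unique path $p$ contributing a term of $\sq$-degree~$1$, the next step is to locate its column and compute the accompanying $\zeta$ factor. The path $p$ folds at step~$1$ and crosses positively at each of the subsequent $\ell-1$ steps, so $\cQ_J(p)=\sq-\sq^{-1}$, with leading term $\sq$. Using the commutation identity $\sigma^{-m}s_k=s_{k-m}\sigma^{-m}$ (indices mod $n+1$) together with the identity $\st_{\omega_1}^m=s_0s_1\cdots s_{m-1}\sigma^m$ verified inductively from $\st_{\omega_1}=s_0\sigma$, a short induction on $\ell$ gives
$$
\mathrm{end}(p)\;=\;\sigma^{-(i+1)}s_{i+1}s_{i+2}\cdots s_j\;=\;\st_{\omega_1}^{\ell-1}\sigma^{-(j+1)}.
$$
Hence $\theta(p,\sF)=\sigma^{-(j+1)}$ (the column labelled $j+1$) and, by Lemma~\ref{lem:utbasics}, $\wt(p,\sF)=((\ell-1)\omega_1)^{(J)}$. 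Since $\omega_1^J=\overline{\omega}_n$, this gives $\zJ^{\wt(p,\sF)}=\zeta^{\ell-1}$, so the total leading contribution to the $(i+1,j+1)$ entry is $\sq\zeta^{\ell-1}$, while Lemma~\ref{lem:pathsinGamma} ensures that every other matrix entry has $\sq$-degree at most~$0$.

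The case $x^{\downarrow}_{i,j,\ell}$ is strictly analogous, with the roles of forms~(1) and~(2) interchanged. Only form~(2) has the matching type; the corresponding path starts at $u=\sigma^{-(i+1)}$, proceeds by $\ell-1$ straight negative crossings through $s_i,s_{i-1},\ldots,s_{j+1}$, and terminates with a fold at $s_j$. Using the analogous identity $\st_{\omega_1}^{-m}=s_ns_{n-1}\cdots s_{n-m+1}\sigma^{-m}$ in place of the positive one above, one finds $\mathrm{end}(p)=\st_{\omega_1}^{-(\ell-1)}\sigma^{-(j+1)}$, yielding the leading term $\sq\zeta^{-(\ell-1)}$ in position $(i+1,j+1)$. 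The main technical subtlety in both cases is the bookkeeping of cyclic indices in the commutation identity when $\ell$ exceeds $n+1$, but the identities for $\st_{\omega_1}^{\pm m}$ remain valid under cyclic indexing, so I anticipate no substantive obstacle: the geometry has already been fully elucidated by Lemma~\ref{lem:pathsinGamma}, and what remains is a straightforward, if slightly finicky, computation.
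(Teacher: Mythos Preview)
Your proposal is correct and follows essentially the same approach as the paper's proof, which simply cites Theorem~\ref{thm:mainpath2} and Lemma~\ref{lem:pathsinGamma} together with the weight facts $\wt(p,\Sigma)^J=\pm(\ell(p)-1)\overline{\omega}_n$ for the two families of paths. You have filled in considerably more detail than the paper does, in particular deriving the end position and weight explicitly via the identities $\sigma^{-m}s_k=s_{k-m}\sigma^{-m}$ and $\st_{\omega_1}^m=s_0s_1\cdots s_{m-1}\sigma^m$, which the paper leaves implicit.
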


\begin{proof}
This follows from Theorem~\ref{thm:mainpath2} and Lemma~\ref{lem:pathsinGamma} and the fact that in (1) of Lemma~\ref{lem:pathsinGamma} we have $\wt(p,\Sigma)^J=(\ell(p)-1)\overline{\omega}_n$, and in (2) we have $\wt(p,\Sigma)^J=(-\ell(p)+1)\overline{\omega}_n$.
\end{proof}

\begin{lemma}\label{lem:rows}
Let $0\leq i,j\leq n$ with $m_{ij}\in\{2,3\}$ and let $\sw_{ij}$ be the longest element of the parabolic subgroup $\langle s_i,s_j\rangle$. Let $y\in\Gamma_!\cup\{e\}$ and suppose that $\ell(\sw_{ij}y)=\ell(\sw_{ij})+\ell(y)$. Then
\begin{compactenum}[$(1)$]
\item $\pi_J(T_{\sw_{ij}y})\in\Mat_{n+1}(\ZZ[\sq^{-1}])$, and 
\item if $\deg[\pi_J(T_{\sw_{ij}y})]_{k,l}= 0$ then $k\in \{i+1,j+1\}$. 
\end{compactenum}
\end{lemma}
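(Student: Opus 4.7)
The plan is to use the length-additive factorisation $\pi_J(T_{\sw_{ij}y})=\pi_J(T_{\sw_{ij}})\,\pi_J(T_y)$ together with Corollary~\ref{cor:leading1}, which gives $\pi_J(T_y)=\sq\zeta^{\pm(\ell-1)}E_{a,b}+R$ with $R\in\Mat_{n+1}(\ZZ[\sq^{-1}])$, where $a=i'+1$ and $b=j'+1$ with $s_{i'},s_{j'}$ the first and last generators of the unique reduced expression of $y\in\Gamma_!$ (and $\pi_J(T_e)$ is the identity when $y=e$). The reducedness hypothesis $\ell(\sw_{ij}y)=\ell(\sw_{ij})+\ell(y)$ forces $i'\notin\{i,j\}$, so $a\notin\{i+1,j+1\}$; this disjointness is the key property that will drive the argument.

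First I would compute $\pi_J(T_{\sw_{ij}})$ explicitly from the formulae for $\pi_J(T_k)$. For $m_{ij}=2$, direct multiplication of the two nearly-diagonal matrices $\pi_J(T_i)$ and $\pi_J(T_j)$ produces a matrix in which (a) for $k\notin\{i+1,j+1\}$, row $k$ is supported at a single column of degree $-2$ (when $k\notin\{i,j\}$) or at the column $i+1$ (resp.\ $j+1$) of degree $-1$ when $k=i$ (resp.\ $k=j$); (b) each row $k\in\{i+1,j+1\}$ is supported only at columns $\{i,i+1\}$ or $\{j,j+1\}$ respectively, with $\sq$-degree $0$ occurring solely at the diagonal entry and degree $-1$ at the other. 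The case $m_{ij}=3$ is analogous but involves three overlapping $2\times 2$ blocks: the computation yields row $k=i$ supported only at column $i+2$ (degree $-1$), rows $k=i+1$ and $k=j+1$ of degree at most $0$ with equality only at the opposite column of $\{i+1,j+1\}$, and all remaining rows equal to $-\sq^{-3}\mathbf{e}_k^T$. The upshot, valid in both cases, is that the degree-raising columns (degree $0$ for $k\in\{i+1,j+1\}$ and degree $-1$ for $k\notin\{i+1,j+1\}$) always lie in $\{i+1,j+1\}$.

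Next I would analyse the entry formula
$$
[\pi_J(T_{\sw_{ij}y})]_{k,l}=\sq\zeta^{\pm(\ell-1)}[\pi_J(T_{\sw_{ij}})]_{k,a}\,\delta_{l,b}+\sum_m[\pi_J(T_{\sw_{ij}})]_{k,m}R_{m,l}.
$$
Since $R\in\Mat_{n+1}(\ZZ[\sq^{-1}])$ the second sum has degree at most $\max_m\deg[\pi_J(T_{\sw_{ij}})]_{k,m}$, which is $\leq -1$ for $k\notin\{i+1,j+1\}$ and $\leq 0$ otherwise. For the first summand to respect the same bounds one needs $\deg[\pi_J(T_{\sw_{ij}})]_{k,a}\leq-2$ when $k\notin\{i+1,j+1\}$ and $\leq-1$ when $k\in\{i+1,j+1\}$. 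This is precisely where the disjointness $a\notin\{i+1,j+1\}$ is used: the ``dangerous'' columns identified in the previous paragraph are exactly the forbidden ones, so the bound on $[\pi_J(T_{\sw_{ij}})]_{k,a}$ drops by one, giving the required gain. The case $y=e$ is read directly from the explicit form of $\pi_J(T_{\sw_{ij}})$.

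The main obstacle will be the bookkeeping in the $m_{ij}=3$ computation, where three matrices with overlapping $2\times 2$ blocks must be multiplied carefully and the resulting row structures recorded. Some care is also needed in low rank, notably $n=2$, where $\{i,i+1,i+2\}$ exhausts the index set cyclically and the ``outside rows'' argument becomes vacuous; in that case the claim is read off directly from the $3\times 3$ matrix produced in the explicit step.
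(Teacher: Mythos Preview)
Your approach is correct and is genuinely different from the paper's. The paper proves the lemma via a direct path analysis: it uses the symmetry~\eqref{eq:permute} to reduce to row $k=n+1$, then applies Theorem~\ref{thm:mainpath2} and works through seven cases for $(i,j)$, tracking the $J$-folded paths of type $\sw_{ij}y$ step by step and invoking Lemma~\ref{lem:pathsinGamma} repeatedly. One case (their case~(4), with $(j,i)=(n-1,n)$) requires pairing two paths and cancelling their leading terms, because path-by-path degree bounds are insufficient there. Your factorisation $\pi_J(T_{\sw_{ij}y})=\pi_J(T_{\sw_{ij}})\,\pi_J(T_y)$, combined with Corollary~\ref{cor:leading1} and the explicit computation of the small matrix $\pi_J(T_{\sw_{ij}})$, sidesteps both the seven-case split and the cancellation argument: the latter is automatically absorbed in the matrix product. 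In effect you are reusing, one step earlier, exactly the mechanism the paper itself employs to derive Corollary~\ref{cor:join} from this lemma. The paper's route keeps the path combinatorics visible; yours is shorter and purely matrix-algebraic. Two minor presentational points: your row and column labels should be read cyclically modulo $n+1$ (so ``row $i$'' for $i=0$ means row $n+1$), and the ring $\ZZ[\sq^{-1}]$ should be understood as $\ZZ[\sq^{-1}][\zeta,\zeta^{-1}]$ throughout, as in the paper's own statement of Corollary~\ref{cor:leading1}.
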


\begin{proof}
By~(\ref{eq:permute}) it is sufficient to prove the result for $k=n+1$. That is, we must show that if $0\leq i,j\leq n$ with $m_{ij}\in\{2,3\}$, and if $y\in\Gamma_!\cup\{e\}$ with $\ell(\sw_{ij}y)=\ell(\sw_{ij})+\ell(y)$, then $\deg[\pi_J(T_{\sw_{ij}y})]_{n+1,l}\leq 0$ for all $1\leq l\leq n+1$, and if $\deg[\pi_J(T_{\sw_{ij}y})]_{n+1,l}=0$ then either $i=n$ or $j=n$. By Theorem~\ref{thm:mainpath2} we have
$$
[\pi_J(T_{\sw_{ij}y})]_{n+1,l}=\sum_p\cQ_J(p)\zeta_J^{\wt(p,\Sigma)}
$$
where the sum is over paths $p\in\cP_J(\sw_{ij}y,e)$ with $\theta(p,\Sigma)=\sigma^{-l}$. To fix notation, we will choose the reduced expression $\sw_{ij}=s_is_j$ or $\sw_{ij}=s_is_js_i$ with $j<i$. It will turn out to be sufficient to bound the degree of $\cQ_J(p)$ for each path $p\in\cP_J(\sw_{ij}y,e)$ with the exception of case (4) below, where cancellations come into play (meaning that the path-by-path degree is higher than the degree of the associated matrix entry, and more care is required). In the analysis below we adopt the same coding as in the proof of Lemma~\ref{lem:pathsinGamma}.

Let $p\in\cP_J(\sw_{ij}y,e)$. Suppose first that $m_{ij}=3$. 
\begin{compactenum}[$(1)$]
\item Suppose that $1\leq j<i\leq n-1$. Then $p=\check{i}\check{j}\check{i}\bla{y}$. The remaining path is of type $y\in\Gamma_!$, and hence has degree bounded by~$1$ by Lemma~\ref{lem:pathsinGamma}, and hence $\deg\cQ_J(p)\leq -2$. 
\item Suppose that $(j,i)=(0,1)$. Then $p=\check{1}\overline{01}\bla{y}$.  Let $s_r$ be the first generator in the unique reduced expression for $y$, and write $y=s_ry'$. Since $\sw_{ij}y$ is reduced we have $r\notin\{0,1\}$. If $r\neq 2$ or $r\neq n$ then $p=\check{1}\overline{01}\check{r}\bla{y'}$ and $\deg\cQ_J(p)\leq -1$ by Lemma~\ref{lem:pathsinGamma}.
If $r=2$ then $p=\check{1}\overline{012}\bla{y'}$ and either $\bla{y'}=\bla{34}\cdots$ or $\bla{y'}=\bla{10}\cdots$. In the first case 
all remaining steps are positive crossings staying in $\cA_J$, and so $p=\check{1}\overline{0123\cdots}$ and $\deg\cQ_J(p)= -1$. In the second case we must have $p=\check{1}\overline{012}\check{1}\cdots$ and $\deg\cQ_J(p)\leq -1$ by Lemma~\ref{lem:pathsinGamma}. The case $r=0$ is similar. 
\item Suppose that $(j,i)=(0,n)$. Then either $p=\overline{n}\,\check{0}\,\overline{n}\bla{y}$ or $p=\hat{n}\overline{0}\check{n}\bla{y}$. In the first case Lemma~\ref{lem:pathsinGamma} gives $\deg\cQ_J(p)\leq 0$, and so consider the second case. Let $s_r$ be the first generator in the unique reduced expression for $y$, and write $y=s_ry'$. Since $\sw_{ij}y$ is reduced we have $r\notin\{0,n\}$. If $2\leq r\leq n-1$ then $s_{\varphi}\alpha_r=\alpha_r\in\Phi_J$ and so $s_0s_r\notin\cA_J$, giving $p=\hat{n}\overline{0}\check{n}\check{r}\bla{y'}$ and so again $\deg\cQ_J(p)\leq 0$. If $r=1$ then $p=\hat{n}\overline{0}\check{n}\overline{1}\bla{y'}$. Either $\bla{y'}=\bla{234}\cdots$ or $\bla{y'}=\bla{0n(n-1)}\cdots$. In the first case all remaining steps are positive crossings staying in $\cA_J$, and so $p=\hat{n}\overline{0}\check{n}\overline{1234\cdots}$ with $\deg\cQ_J(p)=0$. In the second case since $s_0s_1s_0\notin \cA_J$ we have $p=\hat{n}\overline{0}\check{n}\overline{1}\check{0}\bla{n(n-1)}\cdots$ and Lemma~\ref{lem:pathsinGamma} gives $\deg\cQ_J(p)\leq 0$ as required. 
\item Suppose that $(j,i)=(n-1,n)$. The possibilities are $p=\overline{n(n-1)}\check{n}\bla{y}$, $p=\hat{n}(n-1)^{\vee}\overline{n}\bla{y}$, $p=\overline{n}(n-1)^{\wedge}\overline{n}\bla{y}$, or $p=\hat{n}(n-1)^{\vee}\hat{n}\bla{y}$. In the first case Lemma~\ref{lem:pathsinGamma} gives $\deg\cQ_J(p)\leq 0$. In the second case, with $y=s_ry'$ as above, since $r\notin \{n,n-1\}$ the next step is forced to be a bounce, giving $p=\hat{n}(n-1)^{\vee}\overline{n}\check{r}\bla{y'}$ with $\deg\cQ_J(p)\leq 0$. Thus consider the third and fourth cases. Note that the initial segments $\overline{n}(n-1)^{\wedge}\overline{n}$ and $\hat{n}(n-1)^{\vee}\hat{n}$ both end at $e$. Thus for each $p'\in\cP_J(y,e)$ we have a pair of paths $p_1,p_2\in \cP_J(\sw_{n-1,n}y,e)$ given by appending each initial segment to the beginning of $p'$. Then 
$$
\cQ_J(p_1)+\cQ_J(p_2)=(\sq-\sq^{-1})\cQ_J(p')-\sq^{-1}(\sq-\sq^{-1})^2\cQ_J(p')=(\sq^{-1}+\sq^{-3})\cQ_J(p').
$$
By Lemma~\ref{lem:pathsinGamma} we have $\deg\cQ_J(p')\leq 1$, and so the combined contribution to the matrix entry $[\pi_J(T_{\sw_{n-1,n}y})]_{n+1,l}$ (where $\sigma^{-l}=\theta(p',\Sigma)$) has degree at most $0$, as required. 
\end{compactenum}
\smallskip

\noindent Now suppose that $m_{ij}=2$.
\begin{compactenum}
\item[$(5)$] Suppose that $1\leq j<i\leq n-1$. Then $p=\check{i}\check{j}\bla{y}$, and by Lemma~\ref{lem:pathsinGamma} we have $\deg\cQ_J(p)\leq -1$. 
\item[$(6)$] Suppose that $j=0$ and $1\leq i\leq n-1$. Then $p=\check{i}\overline{0}\bla{y}$. Let $s_r$ be the first generator in the unique reduced expression for $y$. Then $r\neq 0,i$ (as $\sw_{ij}y$ is reduced). If $r\in\{1,2,\ldots,n\}\backslash\{1,i\}$ then writing $y=s_ry'$ we have $p=\check{i}\overline{0}\check{r}\bla{y'}$, and as in the previous case the degree is bounded by~$-1$. So suppose that $r=1$. Then $p=\check{i}\overline{01}\bla{y'}$. Either $\bla{y'}=\bla{234}\cdots$ or $\bla{y'}=\bla{0n(n-1)}\cdots$. In the first case all remaining steps are positive crossings staying in $\cA_J$, and so we have $p=\check{i}\overline{0123\cdots}$ with $\deg\cQ_J(p)=-1$. In the second case, since $s_{\varphi}s_{\alpha_1}\varphi\in \Phi_J$, we have $p=\check{i}\overline{01}\check{0}\bla{n(n-1)}\cdots$. By Lemma~\ref{lem:pathsinGamma} the degree contribution of the remainder of the path is bounded by $1$, hence $\deg\cQ_J(p)\leq -1$. 
\item[$(7)$] Suppose that $i=n$. Then $j\in J\backslash\{n-1\}$ (because the cases $j=0,n-1$ are impossible as $m_{ij}=2$). Thus $s_{\alpha_n}\alpha_j\in\Phi_J$ and so $p=\hat{n}\check{j}\bla{y}$ or $p=\overline{n}\check{j}\bla{y}$. In the second case Lemma~\ref{lem:pathsinGamma} gives $\deg\cQ_J(p)\leq 0$ as required. Thus consider the first case. As before, let $s_r$ be the first generator in the unique reduced expression for $y$, and write $y=s_ry'$. Since $\sw_{ij}y$ is reduced we have $r\notin\{j,n\}$. If $r\in J$ then $p=\hat{n}\check{j}\check{r}\bla{y'}$, and so again $\deg\cQ_J(p)\leq 0$. If $r=0$ then $\bla{y'}=\bla{123}\cdots$ or $\bla{y}=\bla{n(n-1)(n-2)}\cdots$ and so $p=\hat{n}\check{j}\overline{0}\bla{123}\cdots$ or $p=\hat{n}\check{j}\overline{0}\bla{n(n-1)(n-2)}\cdots$. In the first case the remainder of the path consists of positive crossings staying in $\cA_J$, and hence $\deg\cQ_J(p)=0$. In the second case we have $s_0s_n\notin\cA_J$ and hence $p=\hat{n}\check{j}\overline{0}\check{n}\bla{(n-1)(n-2)}\cdots$, and again by Lemma~\ref{lem:pathsinGamma} we have $\deg\cQ_J(p)\leq 0$. 
\end{compactenum}
Hence the result.
\end{proof}

\begin{cor}\label{cor:join} Let $x\in\Gamma_!$ and $y\in\Gamma_!\cup\{e\}$ and $0\leq i<j\leq n$. Let $\sw_{ij}$ be the longest element of $W_{ij}=\langle s_i,s_j\rangle$, and suppose that $\ell(x\sw_{ij}y)=\ell(x)+\ell(\sw_{ij})+\ell(y)$. Then $\pi_J(T_w)\in\Mat_{n+1}(\ZZ[\sq^{-1}])$. 
\end{cor}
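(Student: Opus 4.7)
The plan is to combine Corollary~\ref{cor:leading1} with Lemma~\ref{lem:rows} by controlling the positions where positive powers of $\sq$ can appear. First, the length-additivity hypothesis $\ell(x\sw_{ij}y)=\ell(x)+\ell(\sw_{ij})+\ell(y)$ yields the factorisation $T_w=T_xT_{\sw_{ij}y}$ in $\Hext$, so everything reduces to analysing a matrix product.

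Since $x\in\Gamma_!$ it has a unique reduced expression of the form $x^{\uparrow}_{a,b,\ell}$ or $x^{\downarrow}_{a,b,\ell}$, where $a$ is the first and $b$ the last generator. By Corollary~\ref{cor:leading1}, the matrix $\pi_J(T_x)$ lies in $\Mat_{n+1}(\ZZ[\sq^{-1}])$ except for a single entry in position $(a+1,b+1)$, which equals $\sq\zeta^{\pm(\ell-1)}$. By Lemma~\ref{lem:rows}, the matrix $\pi_J(T_{\sw_{ij}y})$ has all entries of $\sq$-degree $\leq 0$, and entries in any row $k+1$ with $k\notin\{i,j\}$ actually have $\sq$-degree $\leq -1$.

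The key combinatorial point is to show $b\notin\{i,j\}$. Since $\sw_{ij}$ is the longest element of $W_{ij}$ (with $m_{ij}\in\{2,3\}$), it admits reduced expressions beginning with $s_i$ and with $s_j$. The length-additivity $\ell(x\sw_{ij})=\ell(x)+\ell(\sw_{ij})$ means that concatenating the reduced expression for $x$ with \emph{any} reduced expression of $\sw_{ij}$ produces a reduced expression for $x\sw_{ij}$. If $b\in\{i,j\}$, choosing the reduced expression for $\sw_{ij}$ beginning with $s_b$ would create the non-reduced subword $s_bs_b$, a contradiction.

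With $b\notin\{i,j\}$ in hand, expand $\pi_J(T_w)=\pi_J(T_x)\pi_J(T_{\sw_{ij}y})$ entrywise. Any product term involving an entry of $\pi_J(T_x)$ other than the exceptional $(a+1,b+1)$ entry automatically contributes $\sq$-degree $\leq 0$. The sole possibly problematic contribution comes from the $(a+1,b+1)$ entry of $\pi_J(T_x)$ paired with the entries of row $b+1$ of $\pi_J(T_{\sw_{ij}y})$; but by Lemma~\ref{lem:rows}(2) and $b\notin\{i,j\}$ those entries have $\sq$-degree $\leq -1$, so the product has $\sq$-degree $\leq 1+(-1)=0$. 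Hence $\pi_J(T_w)\in\Mat_{n+1}(\ZZ[\sq^{-1}])$, as required. The only genuinely non-routine step is the reduction argument $b\notin\{i,j\}$; everything else is a direct assembly of the previous results.
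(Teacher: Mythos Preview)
Your proof is correct and follows essentially the same approach as the paper: factor $\pi_J(T_w)=\pi_J(T_x)\pi_J(T_{\sw_{ij}y})$, use Corollary~\ref{cor:leading1} to locate the unique row/column where $\pi_J(T_x)$ has a degree~$1$ entry, use Lemma~\ref{lem:rows} to see that $\pi_J(T_{\sw_{ij}y})$ can have degree~$0$ entries only in rows $i+1,j+1$, and conclude from $b\notin\{i,j\}$. The paper's proof is terser (it simply asserts $r\neq i,j$ from length-additivity without spelling out the $s_bs_b$ contradiction), but the logic is identical.
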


\begin{proof}
We have
$$
\pi_J(T_{x\sw_{ij}y})=\pi_J(T_x)\pi_J(T_{\sw_{ij}y}).
$$
Let $s_r$ be the last generator in the unique reduced expression for~$x$. By Corollary~\ref{cor:leading1} we have $\pi_J(T_x)\in \sq\zeta^aE_{k,r+1}+\Mat_{n+1}(\ZZ[\sq^{-1}])$ for some $1\leq k\leq n+1$ and $a\in\ZZ$. Moreover, by Lemma~\ref{lem:rows} $\pi_J(T_{\sw_{ij}y})$ satisfies
$$
\pi_J(T_{\sw_{ij}y})\in \sum_{l=1}^{n+1}(f_lE_{i+1,l}+g_lE_{j+1,l})+\sq^{-1}\Mat_{n+1}(\ZZ[\sq^{-1}]),
$$
for some $f_l,g_l\in\ZZ[\zeta,\zeta^{-1}]$. Since $\ell(x\sw_{ij}y)=\ell(x)+\ell(y)+\ell(\sw_{ij})$ we have $r\neq i,j$, and hence $\pi_J(T_x)\pi_J(T_{\sw_{ij}y})\in\Mat_{n+1}(\ZZ[\sq^{-1}])$ as required. 
\end{proof}

\goodbreak

\begin{cor}\label{cor:outofGamma}
If $w\in\Waff\backslash\Gamma_!$ then $\deg\pi(T_w)\leq 0$. 
\end{cor}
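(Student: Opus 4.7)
The plan is to proceed by induction on $\ell(w)$. The base case $w=e$ is immediate since $\pi_J(T_e)$ is the identity matrix. For the inductive step, take $w\in\Waff\setminus\Gamma_!$ with $\ell(w)\geq 1$. Since $w\neq e$ and $w\notin\Gamma_!$, any reduced expression for $w$ must admit a local braid move: either a commuting pair $s_is_j$ with $m_{ij}=2$ or a braid triple $s_is_js_i$ with $m_{ij}=3$. Fix a reduced expression $w=s_{i_1}\cdots s_{i_\ell}$ and let $k$ be the smallest index where such a pattern begins. Writing $\sw_{ij}$ for the longest element of $\langle s_{i_k},s_{i_{k+1}}\rangle$, this yields a length-additive decomposition
\[
w \;=\; x \cdot \sw_{ij} \cdot y', \qquad x=s_{i_1}\cdots s_{i_{k-1}}, \qquad y'=s_{i_{k+\ell(\sw_{ij})}}\cdots s_{i_\ell}.
\]
By the minimality of $k$, the prefix $x$ contains no braid pattern and hence has a unique reduced expression (by Tits' theorem), so $x\in\Gamma_!\cup\{e\}$. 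Moreover $\ell(y')<\ell(w)$.

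Now split into cases according to whether or not $y'\in\Gamma_!\cup\{e\}$. If it does, then applying Corollary~\ref{cor:join} (in the case $x\in\Gamma_!$) or Lemma~\ref{lem:rows}(1) (in the degenerate case $x=e$, where $w=\sw_{ij}y'$), we obtain $\pi_J(T_w)\in\Mat_{n+1}(\ZZ[\sq^{-1}])$, which is exactly $\deg\pi_J(T_w)\leq 0$. Otherwise $y'$ is a non-identity element of $\Waff\setminus\Gamma_!$, so the inductive hypothesis yields $\deg\pi_J(T_{y'})\leq 0$. Applying Corollary~\ref{cor:join} with trivial right factor (taking $y=e$), or Lemma~\ref{lem:rows}(1) again in the case $x=e$, we also get $\pi_J(T_{x\sw_{ij}})\in\Mat_{n+1}(\ZZ[\sq^{-1}])$. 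Since
\[
\pi_J(T_w) \;=\; \pi_J(T_{x\sw_{ij}})\,\pi_J(T_{y'})
\]
is then a product of two matrices whose entries are all of degree $\leq 0$ in $\sq$, we conclude $\deg\pi_J(T_w)\leq 0$.

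The substantive analytic content lies entirely inside Corollary~\ref{cor:join} and Lemma~\ref{lem:rows}(2): the top-degree contribution of $\pi_J(T_x)$, confined to column $r+1$ (with $s_r$ the last letter of the unique reduced expression of $x$), meets only the strictly subleading contribution of $\pi_J(T_{\sw_{ij}y'})$ because $r\notin\{i,j\}$ by length additivity, producing the needed cancellation. The present argument only orchestrates the reduction to that cancellation via the \emph{leftmost} braid pattern; the main potential pitfall is keeping track of the degenerate cases $x=e$ and $y'=e$, which are correctly matched to Lemma~\ref{lem:rows} rather than Corollary~\ref{cor:join}. I expect no further obstacle.
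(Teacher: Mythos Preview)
Your proof is correct and follows essentially the same approach as the paper: both arguments locate a braid pattern $\sw_{ij}$ in a reduced expression for $w$, decompose accordingly, and invoke Corollary~\ref{cor:join} (or Lemma~\ref{lem:rows}(1) when the left factor is trivial). The only cosmetic difference is that the paper writes out a full factorisation $w=y_1\sw_1 y_2\sw_2\cdots y_m\sw_m y_{m+1}$ at once and groups the factors as $\pi_J(T_{y_1\sw_1y_2})\pi_J(T_{\sw_2})\pi_J(T_{y_3\sw_3y_4})\cdots$, whereas you peel off the leftmost braid pattern and recurse via induction on $\ell(w)$; your inductive formulation is arguably tidier since it avoids having to specify how the grouping terminates for different parities of~$m$.
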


\begin{proof}
The result is clear if $w=e$. If $w\neq e$ choose any reduced expression for $w$, and write this expression in the form $w=y_1\sw_1 y_2\sw_2y_3\cdots y_m\sw_m y_{m+1}$ where $y_k\in \Gamma_!\cup\{e\}$ and $\sw_k$ is either of the form $sts$ with $m_{st}=3$ or $st$ with $m_{st}=2$. We have
$$
\pi_J(T_w)=\pi_J(T_{y_1\sw_1y_2})\pi_J(T_{\sw_2})\pi_J(T_{y_3\sw_3 y_4})\cdots,
$$
and applying Lemma~\ref{lem:rows}(1) and Corollary~\ref{cor:join} to see that each matrix is in $\Mat_{n+1}(\ZZ[\sq^{-1}])$, hence the result. 
\end{proof}

The following theorem shows, in particular, that Conjectures~\ref{conj:strongconjecture} and~\ref{conj:cells} hold for the case $\Phi=\sA_n$ and $J=\{1,2,\ldots,n-1\}$. Recall the definition of leading matrices from Definition~\ref{defn:cellandleading}.

\begin{thm}
Let $\Phi=\sA_n$ and $J=\{1,2,\ldots,n-1\}$. Let $\sF$ be any fundamental domain for the action of $\TJ$ on $\WJ$, and let $\Pi=(\pi_J,M_J,\sB_{\sF})$. 
\begin{compactenum}[$(1)$]
\item The matrix representation $\Pi$ is bounded with bound~$\ba_{\Pi}=1$.
\item We have $\Gamma_{\Pi}=\{w\sigma\mid w\in\Gamma_!,\,\sigma\in\Sigma\}$.
\item The leading matrices $\{\fc(w)\mid w\in\Gamma_{\Pi}\cap\Waff\}$ form a basis of the $\ZZ$-module $\Mat_{n+1}(\ZZ[\zeta,\zeta^{-1}])$.
\end{compactenum}
\end{thm}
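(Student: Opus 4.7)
The plan is to assemble the three claims from the technical lemmas already established in this section. By Proposition~\ref{prop:basisindependent}, I may work with any one fundamental domain, and I choose $\sF = \Sigma$. Every $w \in \Wext$ factors uniquely as $w = w_0\sigma$ with $w_0 \in \Waff$ and $\sigma \in \Sigma$, and $\pi_J(T_\sigma)$ is a permutation matrix with entries in $\ZZ$, so multiplication by it preserves the entry-wise $\sq$-degrees of $\pi_J(T_{w_0})$. Hence the entire analysis reduces to $w_0 \in \Waff$.

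For parts (1) and (2), I would combine Corollary~\ref{cor:leading1} with Corollary~\ref{cor:outofGamma}. If $w_0 \in \Gamma_!$, Corollary~\ref{cor:leading1} yields $\pi_J(T_{w_0}) \in \sq\zeta^k E_{r,s} + \Mat_{n+1}(\ZZ[\sq^{-1}])$ with a nonzero leading coefficient, so $\deg\pi_J(T_{w_0}) = 1$. If $w_0 \in \Waff \setminus \Gamma_!$, Corollary~\ref{cor:outofGamma} gives $\deg\pi_J(T_{w_0}) \leq 0$. Combining with the degree-preservation by $\pi_J(T_\sigma)$, we conclude $\ba_\Pi = 1$ and $\Gamma_\Pi = \{w_0\sigma : w_0 \in \Gamma_!,\ \sigma \in \Sigma\}$, establishing (1) and (2).

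For (3), Corollary~\ref{cor:leading1} supplies explicit leading matrices: $\fc(x^{\uparrow}_{i,j,\ell}) = \zeta^{\ell-1} E_{i+1,j+1}$ and $\fc(x^{\downarrow}_{i,j,\ell}) = \zeta^{-\ell+1} E_{i+1,j+1}$. For a fixed target position $(i+1, j+1)$ in $\tilde{\sA}_n$, the lengths compatible with $x^{\uparrow}_{i,j,\ell}$ are exactly the positive integers with $\ell - 1 \equiv j - i \pmod{n+1}$, so $\fc(x^{\uparrow}_{i,j,\ell})$ runs over all $\zeta^{k} E_{i+1,j+1}$ with $k \geq 0$ and $k \equiv j - i \pmod{n+1}$; symmetrically $x^{\downarrow}$ supplies all such $k \leq 0$. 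Modulo the single overlap at $\ell = 1$ (the generators $s_i$, shared by both parametrizations), these matrices are distinct and form a $\ZZ$-basis of the graded $\ZZ$-subalgebra of $\Mat_{n+1}(\ZZ[\zeta^{\pm 1}])$ whose $(r,s)$-entry lies in $\zeta^{s-r}\ZZ[\zeta^{\pm(n+1)}]$; this subalgebra is canonically isomorphic as a $\ZZ$-algebra to $\Mat_{n+1}(\ZZ[\zeta,\zeta^{-1}])$ via the $\Sigma = \ZZ/(n+1)\ZZ$-grading induced by the cyclic symmetry of the type $\tilde{\sA}_n$ Coxeter diagram. The main obstacle is the bookkeeping in this last step, verifying that the leading matrices exhaust the target algebra precisely without omissions or redundancies; parts (1) and (2) are then mechanical consequences of the preceding combinatorial analysis of $J$-folded paths.
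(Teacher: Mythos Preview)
Your proposal is correct and follows essentially the same route as the paper: parts (1) and (2) come from combining Corollary~\ref{cor:leading1} with Corollary~\ref{cor:outofGamma} together with the fact that $\pi_J(T_\sigma)$ is a monomial matrix with $\sq$-free entries, and part (3) follows from the explicit leading matrices in Corollary~\ref{cor:leading1}. Your treatment of (3) is in fact more careful than the paper's one-line appeal to Corollary~\ref{cor:leading1}: you correctly observe that the leading matrices literally span only the graded subalgebra with $(r,s)$-entry in $\zeta^{s-r}\ZZ[\zeta^{\pm(n+1)}]$, which is isomorphic (via conjugation by a diagonal matrix in $\zeta$) to $\Mat_{n+1}(\ZZ[\zeta,\zeta^{-1}])$ rather than equal to it.
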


\begin{proof}
Corollaries~\ref{cor:leading1} and~\ref{cor:outofGamma} show for $w\in\Waff$ the degree of the matrix entries of $\pi_J(T_w)$ is bounded by $1$, and that this bound is attained in the matrix $\pi_J(T_w)$ if and only if $w\in\Gamma_!$. Since $\pi_J(T_{\sigma})$ is a monomial matrix with entries in $\ZZ[\zeta,\zeta^{-1}]$ both (1) and (2) follow. The statement (3) follows from Corollary~\ref{cor:leading1}. 
\end{proof}

%
%
%
%
%
\bibliographystyle{plain}

\bigskip
  \footnotesize

  \noindent J\'er\'emie Guilhot, \textsc{Institut Denis Poisson, Universit\'e de Tours,
    37200 Tours, France}\par\nopagebreak
 \noindent \textit{E-mail address:} \texttt{Jeremie.Guilhot@lmpt.univ-tours.fr}
 \smallskip
 
  \noindent Eloise Little, \textsc{School of Mathematics and Statistics, University of Sydney, NSW 2006, Australia}\par\nopagebreak
 \noindent \textit{E-mail address:} \texttt{E.Little@maths.usyd.edu.au}
\smallskip

 \noindent James Parkinson, \textsc{School of Mathematics and Statistics, University of Sydney, NSW 2006, Australia}\par\nopagebreak
 \noindent \textit{E-mail address:} \texttt{jamesp@maths.usyd.edu.au}

\end{document}